\DeclareFontFamily{U}{mathx}{\hyphenchar\font45}
\DeclareFontShape{U}{mathx}{m}{n}{
      <5> <6> <7> <8> <9> <10>
      <10.95> <12> <14.4> <17.28> <20.74> <24.88>
      mathx10
      }{}
\DeclareSymbolFont{mathx}{U}{mathx}{m}{n}
\DeclareMathAccent{\widecheck}{0}{mathx}{"71}
\numberwithin{equation}{section}
\newtheorem{theorem}{Theorem}
\makeatletter \@addtoreset{theorem}{section}\makeatother
\newtheorem*{theorem*}{Theorem}
\newtheorem{proposition}[theorem]{Proposition}
\newtheorem{prop}[theorem]{Proposition}
\newtheorem{corollary}[theorem]{Corollary}
\newtheorem{remark}[theorem]{Remark}
\newtheorem*{remark*}{Remark}
\newtheorem{lemma}[theorem]{Lemma}
\newtheorem*{lemma*}{Lemma}
\newtheorem*{claim*}{Claim}
\newtheorem{query*}[theorem]{Query*}
\newcommand{\MF}{{\mathcal{M}_F}}
\newcommand{\Rd}{ {\mathbb{R}}^d}
\newcommand{\Rtwo}{ {\mathbb{R}}^2}
\newcommand{\BMr}{ {B^N_r}}
\newcommand{\BMrx}{ {B^N_r(x)}}
\newcommand{\BMrz}{ {B^N_r(z)}}
\newcommand{\BNr}{ {B^N_r}}
\newcommand{\1}{\mbox {\bf 1}}
\newcommand{\nn}{{\mathbb N}}
\newcommand{\cala}{{\mathcal A}}
\newcommand{\calf}{{\mathcal F}}
\newcommand{\cale}{{\mathcal E}}
\newcommand{\cali}{{\mathcal I}}
\newcommand{\veps}{\varepsilon}
\newcommand{\vep}{\varepsilon}
\newcommand{\beq}{\begin{eqnarray*}}
\newcommand{\feq}{\end{eqnarray*}}
\newcommand{\beqn}{\begin{eqnarray}}
\newcommand{\feqn}{\end{eqnarray}}
\makeatletter \@addtoreset{theorem}{section}\makeatother
\newcommand{\ds}{\displaystyle}
\newcommand{\Z}{{\mathbb Z}}
\newcommand{\N}{{\mathbb N}}
\newcommand{\R}{{\mathbb R}}
\newcommand{\cF}{{\mathcal F}}
\newcommand{\cM}{{\mathcal M}}
\newcommand{\txi}{\tilde \xi}
\newcommand{\tgamma}{{\tilde \gamma}}
\begin{document}
\begin{frontmatter}

\title{Rescaling  the spatial lambda\\
  Fleming-Viot process and convergence to super-Brownian motion}
\runtitle{Convergence of SLFV to SBM}

\author{\fnms{J.\ Theodore}
  \snm{Cox}\thanksref{t1}\corref{}\ead[label=e1]{jtcox@syr.edu}} 
\address{Department of Mathematics\\ Syracuse University\\
  \printead{e1}}
\author{\fnms{Edwin A.} \snm{Perkins}\thanksref{t2}
\ead[label=e3]{perkins@math.ubc.ca}}
\address{Department of Mathematics\\
The University of British Columbia\\
\printead{e3}}
\thankstext{t1}{Supported in part by an NSERC Discovery Grant}
\thankstext{t2}{Supported in part by an NSERC Discovery Grant}

\runauthor{Cox and Perkins}

\begin{abstract}
We show that a space-time rescaling of the spatial
Lamba-Fleming-Viot process of Barton and Etheridge converges to
super-Brownian motion. This can be viewed as an extension of a result of
Chetwynd-Diggle and Etheridge \cite{CDE}. In that work the scaled impact
factors (which govern the event based dynamics) vanish in the limit,
here we drop that requirement. The analysis is particularly interesting
in the biologically relevant two-dimensional case.
\end{abstract}

\begin{keyword}[class=MSC2010]
\kwd[Primary ]{60J68,60K35}
\kwd[; Secondary ]{92D25}
\end{keyword}

\begin{keyword}
\kwd{Super-Brownian motion}
\kwd{spatial lambda Fleming-Viot process}
\end{keyword}


\end{frontmatter}

\section{Introduction}
Our purpose in this paper is to extend a result in
  \cite{CDE} which shows that certain suitably rescaled
  spatial Lambda-Fleming-Viot (SLFV) processes converge weakly
  to super-Brownian motion (SBM). Our extension is analogous to that of allowing 
  nearest neighbour interactions in interacting particle models, as opposed to taking
  long range limits, and is particularly delicate in the critical two-dimensional case. SBM 
is a well known measure-valued diffusion, introduced in
  \cite{W68} and \cite{D75}, for which there is an
extensive research literature (e.g., for reviews see \cite{D93}, \cite{E00} and 
  \cite{P02}). SLFV processes were introduced
more recently, in \cite{E08}, to serve as models for
the evolution of allele frequencies in populations
  distributed across spatial continua. An analytic
  construction was given in \cite{BEV}, along with a
  discussion of the biological significance of the model.  A
  more probabilistic construction was given in \cite{VW},
  one which gives a very useful connection between SLFV
  processes and their duals.  Following \cite{CDE}, we
  consider here a neutral two-type version of the
  general SLFV model, taking ``space'' to be
  $\Rd$. Informally, our process (constructed below) is
  a Markov process 
  $(\mu_t)_{t\ge 0}$ where for each $x\in\Rd$, $\mu_t(x)$ is
  a probability distribution on the type space $\{0,1\}$, with the
  interpretation that $\int_B\mu_t(x)(\{i\})dx$ represents the
  proportion of the population of type $i$ in a
  region $B\subset\Rd$ at time $t$. 
 We will consider an extension of the fixed radius case from \cite{CDE} (Theorem 2.6 of that reference) and not the 
interesting variable radius case, also discussed there in Theorem 2.7, in which stable branching arises in the limit.

SBM arises as the limit under Brownian space-time rescaling of a range of critical spatially interacting models in mathematical physics and biology {\it above the critical dimension} including critical oriented percolation \cite{HS03}, critical lattice trees \cite{HHP17}, the critical contact process \cite{HS10}, and the voter model \cite{CDP}; it is believed to be the scaling of critical ordinary percolation in the same regime.  The only scaling limit of the above which
has been verified {\it at the critical dimension} is the voter model \cite{CDP} where the critical dimension is two. In this case the simple nature of the dual process, a coalescing random walk, allows one to carry out the required explicit calculations. Now our challenge is to use the related but more complex dual of the Barton-Etheridge model to carry through the analysis.  It is understood here that we are not taking ``long-range" limits (e.g. as was done for the contact process in \cite{DP}) which will weaken the interaction and make the analysis considerably easier. In our setting this means not letting the impact factor (described below) approach zero in the rescaling.
  
  We start by recalling the definition of the fixed
  radius SLFV process
  given in 
\cite{CDE}.  Let $r>0$ be the 
``interaction radius'', let $\rho\in[0,1]$ be the ``impact factor,''
and let $\Pi$ be a Poisson point process 
on $\Rd\otimes (0,\infty)$ with intensity $dx\otimes dt$. We suppose the distribution of types in the population changes
over time according to ``reproduction events'' determined by $\Pi$.
Given $\mu_{t-}$, if $(x,t)\in\Pi$,
choose an independent point $z$ uniformly
at random from the Euclidean ball $$B_r(x)=\{y:|y-x|\le
r\},$$ and (independently) a type $\alpha$ according to 
the distribution $\mu_{t-}(z)$, and then set
\[
\mu_t(y) = (1-\rho)\mu_{t-}(y) + \rho\delta_{\alpha} \quad
\forall y\in B_r(x).
\]
We keep $\mu_t(y)=\mu_{t-}(y)$ for $y\notin B_r(x)$. 
Writing $\mu_t(x)$ in the form
$w_t(x)\delta_1+(1-w_t(x))\delta_0$, we can reformulate the
above dynamics more conveniently in terms of $w_t$ as
follows. Starting from a
Borel $w_0:\Rd\to [0,1]$ with compact support, for
$(x,t)\in\Pi$, 
choose an independent parental location $z$ uniformly at random
from $B_r(x)$, independent of everything,  and then: 
\begin{equation}\label{e:dynamics}
\begin{aligned}
\text{(i) }&\text{ with probability $w_{t-}(z)$ put
  $w_{t}(y)=(1-\rho)w_{t-}(y) + \rho$ for all  $y\in B_r(x)$,}\\
\text{(ii) }&\text{ with probability $1-w_{t-}(z)$ put
  $w_{t}(y)=(1-\rho)w_{t-}(y)$ for all $y\in B_r(x)$,}\\
\text{(iii) }& \text{ for all $y\notin B_r(x)$ keep $w_t(y)=w_{t-}(y)$.}
\end{aligned} 
\end{equation}
As noted in Section~3 of \cite{CDE}, this description gives
a well-defined $w_t:\Rd\to[0,1]$ which has compact support
at all times. (See \cite{VW} for more details on the
  construction.)  It will be useful to regard $w_t$ as the
measure $w_t(x)dx$, and for bounded Borel
$\phi:\Rd\to[0,\infty)$, write
\begin{equation}\label{wphidef}
w_t(\phi) = \int_{\Rd}\phi(x)\,w_t(x)dx .
  \end{equation}

  Closely associated with the process $w_t$ is a dual
    process of coalescing ``lineages''. If we sample a
    finite number of spatial locations $\{x_i\}$ at time $T$, it
    is easy to see that the values $w_T(x_i)$ can be
    determined from $w_0$ by using $\Pi$ to trace the
    lineages backward in time. Since $\Pi$ run backwards is still a Poisson
    process, we may define a version of the
    lineages process starting at backwards time 0 from a finite number
    of locations $\{x_i\}$ as follows.  If $(x,t)\in \Pi$, mark
    each lineage in $B_r(x)$ independently with probability
    $\rho$, and choose a point $z$ uniformly at random from
    $B_r(x)$. If at least one of the lineages in $B_r(x)$ is
    marked, all marked lineages in $B_r(x)$ coalesce and the
    resulting lineage is moved to $z$. If no lineage is
    marked, no lineage moves. Lineages outside of $B_r(x)$
    are not affected. In this paper it will suffice to
    consider only the one and two-lineage systems, so we
    will ignore the higher lineage systems which are more
    complex to analyze. We now give a more precise
    description of these Markov jump
    processes, using the language of ``particles''
    instead of lineages.

Let $|\Gamma|$ be the 
Lebesgue measure of  
$\Gamma\subset\Rd$.
Let $U,U^1, U^1$ be independent random variables
uniformly distributed on $B_r=B_r(0)$, and let $\bar U$ 
have the law of $U^1+U^2$, i.e., $\bar U$ has density
\begin{equation}\label{e:barUdens}
P(\bar U\in dz) = \frac{|B_r(0)\cap B_r(z)|}{|B_r(0)|^2} dz:=h_{\bar U}(z)dz.
\end{equation}
We let $\bar\sigma^21_{d\times d}$ denote the covariance matrix of $\bar U$, so that if $x=(x_1,\dots,x_d)$, then
\begin{equation}\label{barsigdef}
\bar\sigma^2=\frac{2}{|B_r|}\int_{B_r} (x_1)^2\,dx.
\end{equation}
We will use this notation throughout, along with $\eta_t$ for
the single particle dual and 
$\xi_t=(\xi^1_t,\xi^2_t)$ for the two particle dual. 

(a) {\it The single-particle dual $\eta_t$.} If we start
with a single particle at $x$, it is easy to see that  
$\eta_t$ is the random walk on $\Rd$ starting at $x$ which
makes jumps at rate $\rho|B_r|$ with jump distribution given
in \eqref{e:barUdens}. We write $P_{\{x\}}$ for the underlying law of $\eta$.

(b) {\it The two-particle dual $(\xi^{1}_t,\xi^{2}_t)$.}
If we start with two particles, one at $x_1$ and
the other at $x_2\ne x_1$, 
$(\xi^{1}_t,\xi^{2}_t)$ is the
Markov jump process starting  at $(x_1,x_2)$,  and with law $P_{\{x_1,x_2\}}$, which makes  
transitions
\begin{equation}\label{e:d2rates}
(y_1,y_2) \to
\begin{cases} 
(y+ \bar U, y+\bar U) &\text{ at rate }\rho|B_r| \text{ if }y_1=y_2=y\\
(y_1+\bar U,y_2)&\text{ at rate }
\rho(|B_r|-\rho|B_r(y_1)\cap B_r(y_2)|) \text{ if }y_1\ne y_2\\
(y_1,y_2+\bar U)&\text{ at rate }
\rho(|B_r|-\rho|B_r(y_1)\cap B_r(y_2)|) \text{ if }y_1\ne y_2\\
(U+U_{y_1,y_2},U+U_{y_1,y_2}) &\text{ at rate }
\rho^2|B_r(y_1)\cap B_r(y_2)| \text{ if }y_1\ne y_2,
\end{cases}
\end{equation}
where $U_{y_1,y_2}$ is an independent random variable, 
uniformly distributed over $B_r(y_1)\cap B_r(y_2)$. 
For $y_1\ne y_2$, the total jump rate
at $(y_1,y_2)$, $y_1\ne y_2$, is $
2\rho|B_r|-\rho^2|B_r(y_1)\cap B_r(y_2)|$.
To see the above rates consider, for example, the second transition from $(y_1,y_2)$ to $(y_1+\bar U,y_2)$ for $y_1\neq y_2$ where $(y_1,y_2)$ is the current site of our two-particle dual. 
The next jump in the first coordinate can only occur at a point $(x,t)\in\Pi$ with $x\in B_r(y_1)$ so let $(x,t)$ be the next such point.  
At $(x,t)$ such a jump (affecting the first coordinate but not the second) can occur in one of two ways:  if $x$ lands in $B_r(y_1)\setminus (B_r(y_1)\cap B_r(y_2))$ and the particle $\xi^1$ at $y_1$ is marked, or if $x$ lands in $B_r(y_1)\cap B_r(y_2)$ and the particle at $y_1$ is marked and the particle at $y_2$ is not.  The total rate in $t$ is obtained by integrating out $x$ and so is
\[\rho(|B_r(y_1)|-|B_r(y_1)\cap B_r(y_2)|)+\rho(1-\rho)|B_r(y_1)\cap B_r(y_2)|=\rho|B_r|-\rho^2|B_r(y_1)\cap B_r(y_2)|.\]
In either of the above scenarios the particle at $y_1$ will jump to $z$, a uniformly selected site in $B_r(x)$.  Given $y_1$, $x$ will be uniformly distributed on $B_r(y_1)$ and so $x-y_1$ will be uniform on $B_r$.
Clearly given $(y_1,x)$, $z-x$ is uniformly distributed over $B_r$ and so $(x-y_1,z-x)$ is a pair of independent uniforms on $B_r$.  Therefore the jump in $\xi^1$ at time $t$ is $z-y_1=(z-x)+(x-y_1)$ and so has law $\bar U$ as claimed. The other transitions are similar to analyze.

The coalescence time  for the two-particle dual starting at
$(x_1,x_2)$ is
\begin{equation}\label{e:tauc}
\tau = 
\inf\{t\ge 0:\xi^{1}_t=\xi^{2}_t\}.
\end{equation}
Although $(\xi^{1}_t,\xi^{2}_t)$ is 
Markov, the individual coordinates $\xi^{1}_t,
\xi^{2}_t$ are not.
However, when 
$B_r(\xi^{1}_t)\cap B_r(\xi^{2}_t)  =\emptyset$, both
coordinates move independently according to the single particle dynamics,
while for $t> \tau$, the coalesced coordinates move together
according to the single particle dynamics. It
is also clear from \eqref{e:d2rates} that  
the two-particle dual is translation
invariant, that is,
\begin{equation}\label{TI}P_{\{x_1+x,x_2+x\}}((\xi^1,\xi^2)\in\cdot)
=P_{\{x_1,x_2\}}((x+\xi^1,x+\xi^2)\in\cdot)\quad\forall x,x_1,x_2\in\R^d.
\end{equation}

The two special cases of the general duality
equation in Proposition~2.5 of \cite{CDE} that we need are the following.
For all $t\ge0$, $\psi_1\in C(\Rd)\cap 
L^1(\Rd)$ and $\psi_2\in C(\Rd\times\Rd)\cap
L^1(\Rd\times\Rd)$, 
\begin{align}\label{e:dual1}
&E_{w_0} [w_t(\psi_1)]  = \int_{\R^d} \psi_1(x) E_{\{x\}}[
w_0(\eta_t)]dx,  \text{ and }\\\notag
&E_{w_0}\Big[\int_{ \R^d\times \R^d}
\psi_2(x_1,x_2)w_t(x_1)w_t(x_2)\, dx_1dx_2\Big]\\
&\qquad\qquad = \int_{ \R^d\times \R^d} \psi_2(x_1,x_2)E_{\{x_1,x_2\}}\big[
w_0(\xi^{1}_t)1_{\{\tau\le t\}} + 
w_0(\xi^{1}_t)w_0(\xi^{2}_t)1_{\{\tau>t\}}\big]\,
dx_1dx_2 .
\label{e:dual2}
\end{align}
By standard approximation arguments, these equations then
  hold for all Borel $\psi_1,\psi_2$ which are either
  nonnegative or integrable (on one side or the other). In particular,  letting $\1$ denote the constant function 1 on
$\Rd$, we have
\begin{equation}\label{e:mass}
E_{w_0}[w_t(\1)] =  \int_{\Rd}E_{\{x\}}[w_0(\eta_t)]dx = w_0(\1).
\end{equation}

Before stating Theorem 2.6 in \cite{CDE} we introduce
super-Brownian motion using the martingale problem
formulation.  If $(X_t)_{t\ge 0}$ is a stochastic process,
$(\cF^X_t)_{t\ge 0}$ will denote the right-continuous filtration
generated by $X$.  Let $\MF(\Rd)$ denote the space of finite
Borel measures on $\Rd$ endowed with the topology of weak
convergence, and for $\mu\in\MF(\Rd)$ let
$\mu(\phi)=\int_{\Rd}\phi d\mu$. The space of bounded continuous
functions on $\R^d$ is denoted by $C_b(\R^d)$, and
$C_0^3(\R^d)$ is the space of continuous functions on $\R^d$
which vanish at infinity and have bounded continuous
partials of order $3$ and less.  Then (see, e.g.,
Theorem~A.1 of \cite{CDP} for uniqueness, and Theorem~II.5.1 and Remark~II.5.5 of \cite{P02} for existence) Super-Brownian motion with
diffusion coefficient $\sigma^2$ and branching rate $b$,
denoted
SBM($X_0,\sigma^2,b$), 
is the
unique $\MF(\Rd)$-valued Markov
process $(X_t)_{t\ge 0}$ with continuous paths and 
initial state $X_0$,   such that 
such that for
every $\phi\in C^3_0(\Rd)$, 
\begin{equation}
M_t(\phi) = X_t(\phi) -X_0(\phi) - \int_0^t X_s \Big(\frac{\sigma^2}2 
\Delta \phi\Big) ds
\end{equation}
is a local $(\cF^X_t)$-martingale
  with predictable quadratic variation process
\begin{equation}\label{SBMsqfn}
\langle M(\phi)\rangle_t = b \int_0^t X_s(\phi^2) ds. 
\end{equation}

Theorem~2.6 in \cite{CDE} considers a
sequence $X^N_t=K w^N_t$ of scaled versions 
of $w_t$, defined with sequences $K=K_N$, $M=M_N$, and
$J=J_N$. Namely, given $w^N_0$, $w^N_t$ is constructed in the same way as
$w_t$, but with the following modifications. If
$(x,t)\in\Pi$ then there is a reproduction
event at $(\frac xM,\frac tN)$, with 
impact factor $\frac \rho J$ and reproduction region
$B_{\frac rM}(\frac xM)$. Thus, for $w^N_t$, time is sped up by 
$N$, space is shrunk by $M_N$, and the impact factor is
reduced by $J_N\ge 1$. Denote the rescaled Poisson point process with intensity
$M^dN\,dx\otimes dt$ by
\[\Pi^N=\Big\{\Big(\frac{x}{M},\frac{t}{N}\Big):(x,t)\in\Pi\Big\}.\]
If $n_t^N(A)=\#\{(s,x)\in \Pi^N\cap([0,t]\times A)\}$, let $(\cF^N_t)$ be the right-continuous
filtration generated by $\{n_s^N(A):s\le t, A\text{ a Borel
  set in }\R^d\}$.
  
 Let
$D([0,\infty),\cM_F(\R^d))$ denote the space of cadlag 
$\cM_F(\R^d)$-valued paths equipped with the Skorokhod (J1) topology.
\setcounter{theorem}{-1}
\begin{theorem}[Theorem~2.6 in \cite{CDE}]\label{t:CDE}
Suppose that for a compact set $D_0\subset\Rd$,
$\text{supp}(w^N_0)\subset D_0$ for all $N$, 
and as elements of $\MF(\Rd)$, 
$X^N_0\to X_0\in\cM_F(\R^d)$ as $N\to\infty$.
In addition, suppose there are constants
$C_1,C_2\in(0,\infty)$ such that, as $N\to\infty$,
\begin{enumerate}[label=(\arabic*)]
\item $M\to\infty$,
\item $\frac{N}{JM^2} \to C_1$,
\item $\frac{KN}{J^2M^d}\to C_2$,
\item
$\ds 
\begin{cases}
\frac MJ \to 0 &\text{if }d=1,\\
\frac{\log M}J \to 0 &\text{if }d=2,\\
\frac{1}{J} \to 0 &\text{if }d\ge 3.
\end{cases}$
\end{enumerate}
Then the sequence $(X^N)_{N\ge 1} $ converges weakly in $D([0,\infty),\cM_F(\R^d))$ to SBM($X_0,\sigma^2,b$)
with
\[
\sigma^2=2C_1\rho \int_{B_r}(x_1)^2 dx
\text{ and }
b =  C_2\rho^2|B_r|^2.
\]
\end{theorem}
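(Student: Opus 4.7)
The plan is to identify any weak subsequential limit of $(X^N)$ as SBM$(X_0,\sigma^2,b)$ via the martingale problem, and separately to establish tightness of $(X^N)$ in $D([0,\infty),\cM_F(\R^d))$; uniqueness of the martingale problem cited in the paper then upgrades subsequential convergence to weak convergence. For $\phi\in C^3_0(\R^d)$, since $w^N$ is a pure-jump process driven by $\Pi^N$, the process
\[
M^N_t(\phi) := X^N_t(\phi) - X^N_0(\phi) - \int_0^t A^N_s\phi\,ds
\]
is an $(\cF^N_t)$-local martingale, where $A^N_s\phi$ is the predictable compensator of the jumps. A direct bookkeeping from \eqref{e:dynamics}, using that the affected and parental points $(y,z)$ are independent uniforms on $B_{r/M}(x)^2$ and that $\Pi^N$ has intensity $M^dN\,dx\,dt$, collapses the drift to $(N\rho|B_r|/J)\,X^N_s(D_M\phi)$, where $D_M\phi(x):=\E[\phi(x+\bar U/M)]-\phi(x)$. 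Taylor-expanding with $\E[\bar U]=0$ and $\mathrm{Cov}(\bar U)=\bar\sigma^2 I$ yields $D_M\phi(x)=(\bar\sigma^2/(2M^2))\Delta\phi(x)+O(M^{-3})$, so condition (2) combined with the identity $\sigma^2=C_1\rho|B_r|\bar\sigma^2$, immediate from \eqref{barsigdef}, gives $A^N_s\phi\to\tfrac{\sigma^2}{2}X_s(\Delta\phi)$.

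Next I would compute the predictable quadratic variation $\langle M^N(\phi)\rangle_t$, namely the compensator of $\sum_{s\le t}(\Delta M^N_s(\phi))^2$. Writing $\bar w^N(x):=|B_{r/M}|^{-1}\int_{B_{r/M}(x)}w^N(y)\,dy$ and approximating $\phi(y)\approx\phi(x)$ on $B_{r/M}(x)$, the expected squared jump at a reproduction event at $x$ is, to leading order, $(K\rho/J)^2|B_{r/M}|^2\phi(x)^2\,\bar w^N(x)(1-\bar w^N(x))$. Integrating against the intensity $M^dN\,dx$ and using condition (3) gives
\[
\langle M^N(\phi)\rangle_t\;\approx\;C_2\rho^2|B_r|^2\int_0^t\int_{\R^d}\phi(x)^2\,K\bar w^N_s(x)\bigl(1-\bar w^N_s(x)\bigr)\,dx\,ds.
\]
The factor $K\bar w^N$ is a mollification of $X^N$ at scale $r/M\to 0$ and so converges to $X$; the unwanted $(1-\bar w^N)$ is close to $1$ precisely when $\|w^N\|_\infty$ is small, in which case the integral gives the target $b\int_0^t X_s(\phi^2)\,ds$ with $b=C_2\rho^2|B_r|^2$. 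So the delicate analytic input is the $L^\infty$ decay $\sup_{t\le T}\|w^N_t\|_\infty\to 0$ in probability, and this is where condition (4) is indispensable: I would prove it via a uniform bound on $\E[w^N_t(y)^2]$ obtained from the two-particle duality \eqref{e:dual2}, reducing the question to the non-coalescence probability of two lineages starting within $O(1/M)$ of one another.

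For tightness, \eqref{e:mass} controls $X^N_t(\1)$ uniformly in $(N,t)$; the martingale decomposition above controls oscillations of $X^N(\phi)$ for $\phi$ in a dense subset of $C^3_0(\R^d)$ via an Aldous--Rebolledo criterion; and a compact containment estimate follows from the single-particle dual $\eta^N$, whose jump rate is $N\rho|B_r|/J=\Theta(M^2)$ by condition (2) and whose jumps are $O(1/M)$, so Donsker's theorem keeps it in a large ball with high probability. Individual jumps of $X^N(\phi)$ have magnitude at most $K\rho|B_{r/M}|\|\phi\|_\infty/J=O(KJ^{-1}M^{-d})=O(M^{-2})$, using $K=\Theta(JM^{d-2})$ obtained from (2)-(3), so any subsequential limit has continuous paths and, by the drift and quadratic variation limits above, solves the martingale problem of SBM$(X_0,\sigma^2,b)$. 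The principal obstacle is the critical case $d=2$, where the two-particle dual is neighborhood-recurrent and coalescences accumulate only logarithmically, so only the borderline condition $\log M/J\to 0$ is strong enough to suppress the $K(\bar w^N)^2$ nuisance term in the quadratic variation; this is precisely the mechanism that forces $\rho/J\to 0$ in the present theorem, and is the restriction the current paper aims to remove.
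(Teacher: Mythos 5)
This statement is Theorem~2.6 of \cite{CDE}, which the present paper cites rather than proves, so there is no internal proof to compare against. Your outline nonetheless reproduces the standard template — semimartingale decomposition, drift matched to condition~(2) via a Taylor expansion, square function matched to condition~(3), and tightness via compact containment plus an Aldous--Rebolledo/Jakubowski criterion — which is both what \cite{CDE} does and what the present paper adapts (with $J\equiv 1$) to prove Theorem~\ref{t:main}. Your arithmetic for $\sigma^2$ and $b$, the form of the drift, and the leading-order squared-jump computation $K^2(\rho/J)^2|B_{r/M}|^2\phi(x)^2\,\bar w^N(x)(1-\bar w^N(x))$ all check out.

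There is, however, a genuine gap in how you dispose of the $(1-\bar w^N)$ correction in the square function. The asserted input $\sup_{t\le T}\|w^N_t\|_\infty\to 0$ in probability is false under the stated hypotheses: only the averaged measure $X^N_0=Kw^N_0\,dx$ and the support are constrained, and nothing prevents $w^N_0$ from being the indicator of a ball of radius $K^{-1/d}$, so that $\|w^N_0\|_\infty=1$ for every $N$ and the supremum over $t\le T$ (which includes $t=0$) cannot decay. Even restricting to $t\ge\eps>0$, where single-particle duality does give $\|w^N_t\|_\infty=O(t^{-d/2}/K)\to 0$, leaves a contribution from $[0,\eps]$ whose integrated bound carries the $K'$ (i.e.\ $\log N$ when $d=2$) factor and does not tend to zero for fixed $\eps$. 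What is actually needed, and what Lemma~4.3 of \cite{CDE} provides, is that the \emph{time-integrated expectation} of the nuisance term vanishes; this tolerates a non-small integrand near $s=0$ because the singularity there is integrable (compare the $(1+s^{-\alpha})$ bound in Proposition~\ref{p:ms-bnd}(b) of the present paper). Concretely, one should bound $\int_0^T E\bigl[\int\phi^2(x)\,K\bar w^N_s(x)\bar w^N_s(x)\,dx\bigr]ds$ directly from the two-particle duality \eqref{e:dual2}, splitting on $\{\tau^N\le s\}$ and using condition~(4) to suppress the non-coalesced contribution. Your final sentence gestures at this, but it should \emph{replace} the $L^\infty$ claim rather than serve as a route to it; and note that what condition~(4) controls is the non-coalescence probability of two dual lineages started at unscaled distance $O(1)$ (i.e.\ $O(1/M)$ after rescaling), not a pointwise second moment $E[w^N_t(y)^2]$, which only sees the diagonal and is not the object appearing in $\langle M^N(\phi)\rangle$.
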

(The constant $C(d)$ in Definition~4.1 in \cite{CDE} should
be $C(d) = \int_{B_1}(x_1)^2dx$.)  As noted in \cite{CDE},
this result is similar in spirit to Theorem 1.1 in
\cite{CDP}, which proves convergence to SBM for certain
sparse ``long range'' kernel voter models. Due to conditions (1) and (4)
above, $J_N\to\infty$ and hence the impact factors
$\rho/J_N\to 0$. It is this fact  which makes these SLFV
processes analogous to the ``long range'' voter models in
\cite{CDP}. Keeping $J_N$ bounded, so that the impact
factors $\rho=\rho/J_N$ do not vanish in the limit, would
correspond to the ``fixed'' kernel voter models in
Theorem~1.2 in \cite{CDP}.  In biological terms this
corresponds to keeping the ``neighbourhood size'' finite
in the scaling limit, while letting $J_N\to\infty$
effectively allows this parameter to become infinite; see
the discussion in Section~2 of \cite{AFPS} and especially
Definition 2.2 there. In that work they showed in this fixed
neighbourhood size setting (Theorem 2.7 of \cite{AFPS})
that, with an appropriate selection term, the dual particle
process converges to a branching Brownian motion in the
scaling limit.  The purpose of this paper is to prove that
in this setting, with no selection, there is also a
forwards limit theorem giving convergence to SBM.

For $d=1$, in \cite{AVU18} the Wright-Fisher SPDE was obtained as an appropriate scaling limit of SLFV but
with the impact factors scaled to approach zero like $N^{-1/3}$ (see \cite{MT} for the 
corresponding scaling limit for the voter model). In this
setting the strong recurrence of one-dimensional random walk
leads to heavy clustering which means scaling limits with
bounded neighbourhood size lead to segregation of types; the
corresponding scaling limit for the voter model is the
Arratia flow \cite{A81}.

{\bf Throughout this work we will  assume $d\ge 2$ and 
\begin{equation}\label{Nrange}
N\in[3,\infty).
\end{equation}}
For our scaled SLFV processes $w^N_t$ we then make the
choices
\begin{equation}\label{e:JMK}
J =1, \quad
M=\sqrt N, \quad
K = \begin{cases} 
N^{\frac d2 -1}&\text{for }d\ge 3,\\
\log N &\text{for }d=2.
\end{cases}
\end{equation}
If we set $J\equiv 1$, and take $C_1=C_2=1$ for
simplicity, the conditions (1)-(3) in  
Theorem~\ref{t:CDE} suggest the choices for $M$ and $K$ above
except for the logarithmic correction to $K$ for 
$d=2$. Without this
correction, one can show that the limiting process  in Theorem~{\ref{t:main}} would
be nonrandom heat flow acting on $X_0$, as is the case for  the voter model \cite{PS83}.

In order to state our limit theorem for scaled SLFV
processes assuming \eqref{e:JMK}, we must first identify
certain constants $\gamma_e^{(d)}$ that appear in the
limiting SBM branching rate. These constants are determined
by the asymptotic tail behavior of the coalescence times
$\tau$ for the two-particle dual process.
Introduce 
\begin{equation*} \ds 
\gamma_e(t)=\gamma_e^{(d)}(t)=
\frac{1}{|B_r|^2} \int_{B_r}
\int_{B_r}    P_{\{x_1,x_2\}}(\tau>t) dx_1dx_2.
\end{equation*}

\begin{proposition}\label{t:noncoal}
There are constants
  $\gamma_e= \gamma_e^{(d)}>0$ such that as $t\to\infty$, 
  \begin{equation}\label{e:noncoal3d}
\lim_{t\to\infty}\gamma_e^{(d)}(t)=\gamma_e \text{ if }d\ge 3
  \end{equation}
  and
  \begin{equation}\label{e:noncoal2d}
\lim_{t\to\infty}(\log t)\,\gamma_e^{(d)}(t)=\gamma_e \text{ if }d=2.
  \end{equation}
\end{proposition}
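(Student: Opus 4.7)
By the translation invariance \eqref{TI}, the coalescence time $\tau$ depends on the starting configuration $(x_1,x_2)$ only through the difference $z = x_1 - x_2$. Since $x_1 - x_2 \eqd \bar U$ when $x_1, x_2$ are independent uniforms on $B_r$, the change of variable $z = x_1 - x_2$ gives
\[
\gamma_e^{(d)}(t) = \int_{\R^d} h_{\bar U}(z)\, P^z(\tau > t)\, dz,
\]
where $P^z$ denotes the law of the difference process $D_t := \xi^1_t - \xi^2_t$ started from $D_0 = z$. Reading the rates in \eqref{e:d2rates}, while $D_t \ne 0$ the process jumps by (a symmetrization of) $\bar U$ at rate $2\rho|B_r| - 2\rho^2 V(D_t)$ and is sent to $0$ (coalescence) at rate $\rho^2 V(D_t)$, where $V(z) := |B_r\cap B_r(z)|$ is a bounded function supported in $\overline{B_{2r}}$. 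In particular, outside $B_{2r}$ the process $D$ is a symmetric, mean-zero, finite-variance random walk with step law $\bar U$.

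When $d \ge 3$ this random walk is transient, so for every $z$ the event that $D$ ultimately never re-enters $B_{2r}$ has positive probability, giving $P^z(\tau = \infty) > 0$. Since $P^z(\tau > t) \downarrow P^z(\tau = \infty)$, bounded convergence yields \eqref{e:noncoal3d} with $\gamma_e = \int h_{\bar U}(z)\, P^z(\tau = \infty)\, dz > 0$.

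The case $d = 2$ is the main obstacle: here $D$ is null-recurrent so $P^z(\tau > t) \to 0$, and the rate must be determined. The plan is an excursion decomposition from $B_{2r}$: let $S_0 < T_1 < S_1 < T_2 < \cdots$ be successive exits from and re-entries to $B_{2r}$, so that coalescence occurs in some sojourn $[T_k, S_k)$. Because $V$ is bounded above by $|B_r|$ and bounded below by a positive constant on $\{|z|\le r\}$, each sojourn has coalescence probability in some $[q_*, 1-q^*] \subset (0,1)$ uniformly in the entry distribution, so the number of sojourns before coalescence is geometric-type with uniformly bounded mean. Between sojourns $D$ is a symmetric finite-variance 2D random walk, whose resolvent satisfies $G_\lambda(0,0) \sim c \log(1/\lambda)$ as $\lambda \downarrow 0$ (Local CLT); Karamata's Tauberian theorem converts this into the slowly-varying excursion-time tail
\[
P(E_k > t) \sim \frac{c_0}{\log t}, \qquad t \to \infty.
\]
Since the $E_k$ are approximately i.i.d.\ and the tail is slowly varying, $E_1+\cdots+E_n$ has tail $\sim n c_0/\log t$; combining with the geometric number of excursions produces $\log t \cdot P^z(\tau > t) \to g(z)$ uniformly for $z$ in the support of $h_{\bar U}$, and integration against $h_{\bar U}$ then gives \eqref{e:noncoal2d}. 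The delicate points are (i) controlling the entry distribution of the jump chain $\{D_{T_k}\}$ on $\partial B_{2r}$ via a harmonic-measure/coupling argument, so that successive excursions decouple and the geometric and excursion-time factors cleanly separate, and (ii) transferring the resolvent-based single-excursion asymptotic uniformly to the tail of the full sum, which is the source of the $\log t$ prefactor in \eqref{e:noncoal2d}.
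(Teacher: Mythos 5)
Your $d\ge3$ argument is the same as the paper's (transience plus monotone convergence). For $d=2$, however, you take a genuinely different route from the paper. Your excursion decomposition from $B_{2r}$, with a geometric count of sojourns and a Tauberian estimate for the excursion-time tail, is essentially the strategy of Lemma~4.10 in \cite{AFPS}, which the paper cites explicitly as one way to obtain \eqref{e:noncoal2d}. The paper deliberately avoids this: via Lemmas~\ref{lem:difftimechange} and \ref{lem:killtime} it realizes the dual difference $\tilde\xi$ as a position-dependent time change of the rate $2\rho|B_r|$ walk $Y$, killed at rate $k(Y_s)$, so that $P_x(\tau>t)=E_x[\exp(-\int_0^{I^{-1}(t)}k(Y_s)\,ds)]$. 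Proposition~\ref{p:reduction} then reduces the question to showing $\log(A^2)\,\Phi(x,A)\to c_\phi(x)>0$, where $\Phi(x,A)=E_x[\exp(-\int_0^{T_A}\phi(Y_s)\,ds)]$, and Theorem~\ref{t:Philim} proves this by a reflection-coupling argument that yields monotonicity of $\Phi(\cdot,A)$ in $|x|$ (Lemmas~\ref{l:monoup}--\ref{l:monoup2}) and a Lipschitz-type increment bound (Lemma~\ref{l:incbound}), so that $\log(A^2)\Phi(x,A)$ is Cauchy. No Tauberian theorem is needed, and the time-change representation is reused elsewhere in the paper.

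Beyond the difference in strategy, as it stands your sketch does not prove the proposition: the two points you label ``delicate'' are precisely where the mathematical content lies, and merely flagging them does not resolve them. First, the sojourn-count-is-geometric claim with coalescence probability uniformly in $[q_*,1-q^*]$ needs an actual argument: the entry distribution on (near) $\partial B_{2r}$ is determined by the preceding excursion, so successive excursion lengths $E_k$ and the per-sojourn killing probability are \emph{not} independent; a Markov-renewal argument or a coupling to a stationary entry law is required. Second, passing from a Green's function asymptotic $G_\lambda(0,0)\sim c\log(1/\lambda)$ to $P(E>t)\sim c_0/\log t$ requires the Laplace transform of the excursion-length distribution (a last-exit/renewal identity), not just of $G$; and then deducing the tail of $E_1+\cdots+E_N$ with geometric $N$ and dependent $E_k$ is where AFPS do real work. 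Your plan would give the $1/\log t$ order of $\gamma_e^{(2)}(t)$ but, as written, leaves both the existence and the strict positivity of the limiting constant unestablished.
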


\bigskip Recall that when outside $B_{2r}$,
$\xi_t^1-\xi_t^2$ behaves like a rate $2\rho|B_r|$ random
walk with jump distribution given in \eqref{e:barUdens}, and
$\tau = \inf\{t\ge0: \xi_t^1-\xi_t^2=0\}$.
Therefore, if $d\ge 3$, the difference will escape to
infinity with positive probability by transience, and so the
limit in \eqref{e:noncoal3d}, which exists by monotonicity,
will have a non-zero limit.  For $d=2$ the situation is more
delicate.  One can
predict the $1/\log t$ behaviour of $\gamma_e(t)$ from the
corresponding non-return probabilities for irreducible
symmetric random walk on $\Z^2$ with diagonal covariance matrix (see, e.g., Lemma A.3(ii)
of \cite{CDP}), but the slowing rates when the difference 
$\xi^1-\xi^2$ is in $B_{2r}$ complicates things. The limit
\eqref{e:noncoal2d} can be derived from
Lemma~4.10 in \cite{AFPS}. The analysis there is based on a
construction using successive ``inner'' and ``outer''
excursions of $\xi^1-\xi^2$ from certain balls
before coalescence occurs. Our argument represents 
the difference process as a time change of a
rate $2\rho |B_r|$ random walk with step distribution
$h_{\bar U}$, and makes use of a reflection coupling. We feel the proof is
of independent interest and so 
have included it in an 
Appendix.  One advantage of the excursion approach in \cite{AFPS} is that it should also allow inclusion of 
a random ``interaction radius", that is 
\begin{align}\label{varradcase}&\text{the driving Poisson point process $\Pi$ is now on $\R^d\times [0,\infty)\times [0,r_{\text{max}}]$ with intensity}\\
\nonumber&\text{ $dx\otimes dt\otimes \mu(dr)$ for some finite measure $\mu$ on the compact interval $[0,r_{\text{max}}]$.}
\end{align}
  However, as is discussed below, our time-change representation of the dual difference process in the fixed radius case will also play an important role in the analysis of the martingale square function which is the key ingredient in the proof of our main convergence result, Theorem~\ref{t:main} below.

  With the choice of renormalization constants in
  \eqref{e:JMK} we now give a different description of the
  rescaled SLFV processes $X^N$, which will clarify the
    comparison with Theorem~\ref{t:main} below with the
    fixed kernel result in \cite{CDP}. Assume
  $X_0\in \cM_F(\R^d)$ and the compactly supported initial
  conditions $w_0^{(N)}:\R^d\to[0,1]$ satisfy
\begin{equation}\label{ICscaling}
K_Nw^{(N)}_0(\sqrt N x)dx\to X_0\text{ in }\cM_F(\R^d).
\end{equation}
For each $N$, let $w^{(N)}$ be the (original, unscaled)
SLFV process with initial condition $w^{(N)}_0$ and fixed
interaction radius $r$ and impact factor $\rho$, and define
the rescaled SLFV process by
\begin{equation}\label{wNdef}
w^N_t(x)=w^{(N)}_{Nt}(\sqrt N x).
\end{equation}
  This process has the same law as
  $w^N$ defined right before Theorem~\ref{t:CDE}. (For example,
  if $(x,s)$ is the first point in $\Pi$ affecting
  $w^{(N)}$, then $w^{(N)}_s\ne w^{(N)}_{s-}$ is only
  possible inside $B_r(x)$. Letting $Nt=s$, this means that
  $w^N_t(y)\ne w^N_{t-}(y)$ is possible only for
  $y\sqrt N\in B_r(x)$, or $y\in B_{r/\sqrt N}(x)$. Thus the
  interaction radius for $w^N$ is $r/\sqrt N$.) Finally our
  approximating empirical measures are given by
\begin{equation}\label{XNdef}
X^N_t(dx)=K_Nw^N_t(x)dx:=X^N_t(x)dx\in\cM_F(\R^d),
\end{equation}
so that \eqref{ICscaling} just asserts that $X_0^N\to X_0$.
A simple change of variables shows that in terms of our original SLFV processes, $w^{(N)}$, we have for any bounded Borel $\phi$ on $\R^d$,
\begin{equation}\label{XNdef2}
X^N_t(\phi)=\begin{cases}\frac{1}{N}\int_{\Rd}
w_{Nt}^{(N)}(y)\phi(yN^{-1/2})\,dy&\text{ if }d\ge 3,\\ 
\frac{\log N}{N}\int_{\Rd} w_{Nt}^{(N)}(y)\phi(yN^{-1/2})\,dy&\text{ if }d=2.\end{cases}
\end{equation}

Here is our main result for the scaled SLFV process.  For a
measure or function $H$, we let $\text{supp}(H)$ denote its
closed support.  Recall the definition of 
$\bar\sigma^2$ from \eqref{barsigdef}.
\begin{theorem}\label{t:main} Suppose that $d\ge2$, 
$\text{supp}(w^N_0)$ is compact for each $N$,  and
$\lim_{N\to\infty}X^N_0=X_0$ in $\cM_F(\R^d)$.
As $N\to\infty$, $X^N$ converges
  weakly in $D([0,\infty),\cM_F(\R^d))$ to SBM($X_0,\sigma^2,b$), where
\begin{equation}
\sigma^2=\rho |B_r|\bar\sigma^2
\text{ and }
b = \rho^2|B_r|^2\gamma_e.
\end{equation}
\end{theorem}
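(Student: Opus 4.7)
The plan is to prove tightness of $\{X^N\}_{N\ge 1}$ in $D([0,\infty),\cM_F(\R^d))$ and identify each subsequential weak limit as a solution to the SBM$(X_0,\sigma^2,b)$ martingale problem; well-posedness of the latter then yields the full convergence. For $\phi\in C^3_0(\R^d)$, the SLFV dynamics \eqref{e:dynamics} give a semimartingale decomposition
\[
X^N_t(\phi)=X^N_0(\phi)+A^N_t(\phi)+M^N_t(\phi),
\]
where $M^N(\phi)$ is the compensated purely discontinuous martingale driven by the reproduction Poisson process and $A^N(\phi)$ is its predictable compensator. Each jump of $X^N(\phi)$ is bounded by $\rho K_N|B_{r/\sqrt N}|\,\|\phi\|_\infty=O(K_N N^{-d/2})=o(1)$, so every weak limit has continuous paths. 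Tightness in $D([0,\infty),\cM_F(\R^d))$ reduces by Jakubowski's criterion to tightness of the real projections $\{X^N_\cdot(\phi)\}$ for $\phi$ in a countable dense subset of $C^3_0(\R^d)$ together with a compact containment estimate; the former follows from Aldous's criterion combined with moment bounds on $A^N$ and $\langle M^N\rangle$, and the latter from the mass identity \eqref{e:mass} plus random-walk tail estimates for the scaled single-particle dual.

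For the drift, at an event with scaled centre $y$ and parent $z\in B_{r/\sqrt N}(y)$ the conditional mean jump of $w^N_t(x)$ for $x\in B_{r/\sqrt N}(y)$ is $\rho(w^N_{t-}(z)-w^N_{t-}(x))$. Multiplying by $K_N\phi(x)$, integrating $x,z$ over $B_{r/\sqrt N}(y)$ and $y$ over $\R^d$ against the Poisson intensity $N^{1+d/2}\,dy\,dt$, and Taylor-expanding $\phi$ to second order around $y$, the symmetry of $\bar U$ eliminates first-order contributions and the covariance \eqref{barsigdef} identifies the second-order term as $\tfrac{\sigma^2}{2}\Delta\phi$ with $\sigma^2=\rho|B_r|\bar\sigma^2$. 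This produces $A^N_t(\phi)=\int_0^t X^N_s(\cL^N\phi)\,ds$ with $\cL^N\phi\to\tfrac{\sigma^2}{2}\Delta\phi$ uniformly on compacts, and combined with \eqref{e:mass} yields $A^N_t(\phi)\to\int_0^t X_s(\tfrac{\sigma^2}{2}\Delta\phi)\,ds$ in probability, uniformly on bounded time intervals.

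The main task is the square function. Since at a single event the jumps over $x\in B_{r/\sqrt N}(y)$ are perfectly correlated with conditional variance $\rho^2 w^N_{s-}(z)(1-w^N_{s-}(z))$ given the parent $z$, a direct calculation yields
\[
\langle M^N(\phi)\rangle_t = \rho^2 K_N^2 \int_0^t\!\!\int_{\R^d} N^{1+d/2}\,\overline{w^N_s(1-w^N_s)}(y)\Bigl(\int_{B_{r/\sqrt N}(y)}\phi(x)\,dx\Bigr)^2 dy\,ds,
\]
where the overline denotes the average over $B_{r/\sqrt N}(y)$. Replacing $\int\phi$ by $\phi(y)|B_{r/\sqrt N}|$ and using $|B_{r/\sqrt N}|^2 N^{1+d/2}=|B_r|^2 N^{1-d/2}$, the claim reduces to
\[
K_N^2 N^{1-d/2}\int_0^t\!\!\int_{\R^d} \phi(y)^2\, E\bigl[\overline{w^N_s(1-w^N_s)}(y)\bigr]\,dy\,ds \;\longrightarrow\; \gamma_e\int_0^t E[X_s(\phi^2)]\,ds.
\]
Splitting $\overline{w^N(1-w^N)}=\overline{w^N}-\overline{(w^N)^2}$, the one-particle duality \eqref{e:dual1} handles the first summand, while the two-particle duality \eqref{e:dual2}, applied with $\psi_2$ built from normalised indicators of $B_{r/\sqrt N}(y)\times B_{r/\sqrt N}(y)$ integrated against $\phi(y)^2$, handles the second: the coalesced contribution cancels the $\overline{w^N}$ term (by the Markov property after coalescence, which collapses the two-particle dual onto the one-particle one), while the non-coalesced contribution, by translation invariance \eqref{TI}, averages exactly to $\gamma_e^{(d)}(Ns)$ times a product of single-particle dual outputs, which produces a spatial smoothing of $X^N_s(\phi^2)/K_N$. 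The convergence then reduces to $K_N N^{1-d/2}\gamma_e^{(d)}(Ns)\to\gamma_e$, which is supplied by Proposition~\ref{t:noncoal}: for $d\ge 3$, $K_N N^{1-d/2}=1$ and $\gamma_e^{(d)}(Ns)\to\gamma_e$; for $d=2$, $K_N N^{1-d/2}=\log N\sim\log(Ns)$ and $(\log Ns)\gamma_e^{(d)}(Ns)\to\gamma_e$. The main obstacle is the two-dimensional case, in which the $(\log N)^2$ prefactor is balanced exactly against a non-coalescence probability of order $1/\log N$ with no logarithmic slack for error terms; sharp quantitative bounds on $\gamma_e^{(d)}(t)$ (beyond the mere limit), careful treatment of the short-time regime $Ns\lesssim 1$, and a second-moment control on $\langle M^N(\phi)\rangle_t$ about its mean will be required, for all of which the time-change representation of the dual difference process and the reflection coupling described in the introduction play a central role.
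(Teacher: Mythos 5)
Your overall plan is the same as the paper's: extract the semimartingale decomposition of $X^N(\phi)$, prove $C$-tightness (small jumps, square function and drift bounds, compact containment), and identify subsequential limits via the SBM martingale problem, with the square function being the crux and two-particle duality the main tool. That much is correct. However there are a few things that would break or mislead if you actually carried them out.

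First, your formula for $\langle M^N(\phi)\rangle_t$ is wrong at a level that matters for the duality argument. At an event centred at $y$, the jump of $X^N(\phi)$ is $K\rho\int_{B^N_r(y)}\phi(x)(1-w^N_{s-}(x))\,dx$ with probability $w^N_{s-}(z)$, and $-K\rho\int_{B^N_r(y)}\phi(x)w^N_{s-}(x)\,dx$ with probability $1-w^N_{s-}(z)$; the predictable quadratic variation accumulates the \emph{second moment} of this jump, not the conditional variance $\rho^2 w(z)(1-w(z))$. Averaging the parent $z$ over the ball and replacing $\phi$ by $\phi(y)$ you should find the integrand $\phi(y)^2\int_{B^N_r(y)}(1-w^N_s(z_1))\,dz_1\int_{B^N_r(y)}w^N_s(z_2)\,dz_2$, i.e.\ the \emph{product of separate averages} $\overline{w^N}\cdot\overline{1-w^N}=\overline{w^N}-(\overline{w^N})^2$, rather than the \emph{average of the product} $\overline{w^N(1-w^N)}=\overline{w^N}-\overline{(w^N)^2}$ that you wrote. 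The distinction is essential: $\overline{(w^N)^2}$ corresponds to a two-particle dual started at \emph{coincident} points, which coalesces instantly and returns the one-particle answer, so your split $\overline{w^N}-\overline{(w^N)^2}$ would collapse to zero and give no branching. Your later sentence, specifying $\psi_2$ as a product of normalised ball indicators, actually produces $(\overline{w^N})^2$, so you implicitly fix the error — but the two formulas are inconsistent and the wrong one would derail the calculation. Compare the paper's $\bar m^N_s(\phi)$ in \eqref{e:mbarNs}.

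Second, the description of where the limiting rate $\gamma_e$ comes from is garbled. The paper's decomposition is $\bar m^N_s=\bar m^{N,1}_s-\bar m^{N,2}_s$ with $\bar m^{N,2}_s=J^{N,1}_s+J^{N,2}_s$ (coalesced plus non-coalesced). The one-particle term gives $\bar m^{N,1}_s\approx\rho^2|B_r|^2K'X^N_0(\phi^2)$, and the \emph{coalesced} part gives $J^{N,1}_s\approx\rho^2|B_r|^2 K'(1-\gamma_e(Ns))X^N_0(\phi^2)$; the $K'$-size leading parts cancel, leaving $\rho^2|B_r|^2 K'\gamma_e(Ns) X^N_0(\phi^2)\to\rho^2|B_r|^2\gamma_e X^N_0(\phi^2)$. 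The \emph{non-coalesced} part $J^{N,2}_s$ is a pure error term and showing it is negligible is precisely the hard part of the proof (Lemmas~\ref{l:JNtwo}–\ref{l:key2da}), especially for $d=2$ where a crude bound $K'\gamma_e(Ns)\cdot X^N_0(\1)^2$ does not decay and one must exploit that $X^N_0$ puts little mass near any point. Your text attributes the $\gamma_e(Ns)$ factor to the non-coalesced contribution as if it were the main term, which has the roles reversed.

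Third, even granting the structural corrections, the proposal stops at the outline level exactly where the paper starts working: one must (a) upgrade the "in expectation at $s=0$" identity into a statement with $X^N_{[s]_N}$ via the Markov property, which requires the second-moment/martingale argument of Lemma~\ref{l:7prime}; (b) control the clumping double integral $\int\!\!\int X^N_{[s]_N}(y_1)X^N_{[s]_N}(y_2)1\{|y_1-y_2|\le\sqrt{\delta_N}\}$ uniformly over $s$, which is Lemma~\ref{l:key2da} and uses the two-particle duality and hitting-probability estimates in $d=2$; and (c) identify the square function of the limit via a stopping argument handling the lack of path convergence of $T_J^{N_k}$ (the Dubins–Schwarz/flat-spots argument in Section~\ref{sec:mainresult}). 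You correctly flag that the time-change representation of $\tilde\xi$ and the reflection coupling are needed and that $d=2$ has no logarithmic slack, so you have spotted the right obstacles; but these items are where the substantive work lies and your sketch does not indicate how to carry them out.
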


As is usual for SBM limit theorems, the scaling condition on
the initial conditions in \eqref{ICscaling} corresponds to a
regime where type 1's are scarce. For example,
  if, say, $X_0$ assigns no mass to the boundary of $[-1,1]^d$, then taking
$\phi=1(\Vert x\Vert_\infty\le 1)$ we have $X^N_0(\phi)\to
X_0([-1,1]^d)$ which implies 
\[\frac{\int_{[-\sqrt N,\sqrt N] ^d}w_0^{(N)}(x)dx}{(2\sqrt N)^{d}}\sim \frac{X_0([-1,1]^d)}{2^{d/2}K_N}\to 0\text{ as }N\to\infty.\]
It is important to note that in our scaling regime with
$J=1$ the original $w^{(N)}$ we are working with is an
ordinary SLVF process with fixed interaction range $r$ and
impact factor $\rho$, but with an initial condition in which
type $1$'s are scarce.

\eqref{XNdef2} should be compared to the corresponding rescaled empirical measures  in \cite{CDP} associated with a sequence of voter models $\xi^{(N)}_t(x), x\in\Z^d$ whose rescaled initial states again converge to a limiting $X_0\in\cM_F(\R^d)$: if $N'=N$ for $d\ge 3$ and $N'=N/\log N$ if $d=2$, let
\begin{equation*}
\hat X^N_t=\frac{1}{N'}\sum_{x\in \Z^d}\xi^{(N)}_{Nt}(x)\delta_{xN^{-1/2}}.
\end{equation*}
In that reference it is shown that $\hat X^{N}$ converges
weakly in $D([0,\infty),\cM_F(\R^d))$ to an appropriate SBM,
whose branching rate  is determined by the  asymptotics of the
escape probability (from $0$) for a continuous time random
walk starting at a uniformly chosen neighbour of $0$ in the
integer lattice  through a two-particle dual calculation.
This suggests the same should hold (as it does) for the SLFV but now with
the asymptotics of the non-coalescing probability of our two
particle dual playing the role of the random walk escape
probability. 

The proof follows a familiar outline, based in part on methods in \cite{CDP}.
For appropriate test functions
$\phi$ the semimartingale
decomposition from \cite{CDE} states that
\[
X^N_t(\phi) = X^N_0(\phi) + 
D^N_t(\phi) + M^N_t(\phi),
\]
where $M^N_t(\phi)$ is a local martingale, and both the drift
$D^N_t(\phi)$ and predictable 
quadratic variation process $\langle M^N(\phi)\rangle_t$ are
given explicitly. Tightness of $\{X^N\}$ is then established in Section~\ref{sec:mainresult}, where
Theorem~\ref{t:main} is also proved by showing that any weak limit satisfies the martingale problem
for SBM($X_0,\sigma^2,b$). The term $D^N(\phi)$ is easy to
handle; it is the asymptotic behavior of the quadratic variation process $\langle
M^N(\phi)\rangle$ which requires some work. Proposition~\ref{p:2did} is the key result here and is proved in Section~\ref{sec:sqfunction}.  Its proof uses Proposition~\ref{t:noncoal} but the issues go well beyond this result.  

The behavior of the quadratic variation
process is the main difference in the proofs of Theorem~\ref{t:main} and  its counterpart in \cite{CDE}, Theorem~\ref{t:CDE}. 
Lemma~4.3 in \cite{CDE} shows that
a key term in the variation process is negligible in the limit
$N\to\infty$. This fact is a consequence of the assumption
$J\to\infty$. In our case, with $J\equiv 1$, this term is non-negligible, and 
in fact determines the limiting
SBM branching rate. Its analysis is the main objective of Section~\ref{sec:sqfunction}. The analysis for $d\ge 3$ is straightforward; it is the $2$-dimensional case (the most relevant from a biological perspective) that is the most interesting.  In this setting the proof requires an extension of the arguments in \cite{CDP} and \cite{CP08} used to analyze the voter model and stochastic Lotka-Volterra models, respectively.    In \cite{CDP} four-particle duals were used to analyze the voter model, while in \cite{CP08} refinements allowed one to use only three-particle duals to analyze the more general stochastic Lotka-Volterra models considered there. Here, because of the non-Markovian property of individual
coordinates in the dual, similar calculations seem out of
reach and we must establish Proposition~\ref{p:2did} using only one- and two-particle duals. The fact that Proposition~\ref{p:2did} controls the square functions uniformly in time means it also allows one to establish tightness without any higher moments. The required properties of the two-particle dual are established in Section~\ref{sec:2dual}. Lemma~\ref{lem:difftimechange} represents the difference of the coordinates of the dual as the time change of a continuous time random walk and this result is then used to obtain several probability estimates on the two-particle dual. These results (notably Lemmas~\ref{l:Ibnd} to \ref{lem:xibound}) then play a central role in Section~\ref{sec:sqfunction}. The time-change is particularly useful when controlling
the two-particle dual when the particles are close together and the dual motions slow down.

 It would be interesting to see if it is possible to extend Theorem\ref{t:main} to the variable but bounded radius case
 discussed above.
\bigskip
\paragraph{Constants.} In proofs, $C$ will denote a positive constant
whose value may change from line to line. We will use $C_T$
and $C_\phi$ for constants depending on $T>0$ or functions $\phi$
in a similar way.
In some cases constants will be numbered and dependence on 
various quantities indicated explicitly. Finally, most constants
will have an implicit dependence on the impact radius $r$,
this dependence will be pointed out in some cases for
clarity.

\section{Martingale characterization}
Let $B^N_r(x)=B_{r/\sqrt N}(x)$, $\BMr =\BMr(0)$.  For $d\ge 2$
and $\phi\in C_b(\Rd)$,
\begin{align}\label{e:dNs}
d^N_s(\phi) &= 
\frac{\rho N^{1+d}}{|B_r|}\int_{\Rd} \int_{\BMrx} \Big\{
X^N_s(z)\int_{\BMrx}\phi(y)dy -
\int_{\BMrx}\phi(y)X^N_s(y)dy\Big\} dz dx,\\
\notag
m^N_s(\phi) & = 
\frac{\rho^2 N^{1+d}}{|B_r|} \int_{\Rd}
\int_{\BMrx}
\left\{\Big(1-\frac{X^N_s(z)}{K}\Big)
\Big(\int_{\BMrx}\phi(y)X^N_s(y)dy\Big)^2 
\right. \\
&\quad \left. +\frac{X^N_s(z)}{K} \Big( K
  \int_{\BMrx}\phi(y)dy-\int_{\BMrx}\phi(y)X^N_s(y)dy
\Big)^2\right\}dz dx.\label{e:mNs}
\end{align}

\medskip
\begin{lemma}\label{increments} Let $\phi\in C_b(\R^d)$ and $\Delta
X^N_s(\phi) = |X^N_s(\phi) - 
X^N_{s-}(\phi)|$. Then with probability one, 
\begin{equation}\label{e:incs}
|\Delta X^N_s(\phi)| \le \Vert\phi\Vert_\infty\begin{cases}
\rho|B_r|/N &\text{if }d\ge 3\\
\rho|B_r|\log N / N&\text{if }d=2
\end{cases} \quad \text{for all }s\ge 0.
\end{equation}
\end{lemma}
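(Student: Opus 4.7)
The plan is to trace a single jump of the rescaled process back to a single reproduction event of the underlying SLFV $w^{(N)}$, quantify the pointwise change in $w^N_s$, and then multiply by the area of the support of that change and by the scaling constant $K_N$.

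First I would note that jumps of $X^N$ are driven by the Poisson point process $\Pi$. Since $\Pi$ is simple in its time coordinate with probability one, at each jump time $s > 0$ exactly one reproduction event is responsible: for some $(x,Ns) \in \Pi$, only $y \in B_r(x)$ can be affected in the unscaled $w^{(N)}_{Ns}$, equivalently only $y$ with $y\sqrt N \in B_r(x)$ can be affected in $w^N_s$, and the set of such $y$ has Lebesgue measure
\[
|B^N_r(x/\sqrt N)| = |B_{r/\sqrt N}(0)| = |B_r|\,N^{-d/2}.
\]
Using the dynamics \eqref{e:dynamics}(i)-(ii) translated to $w^N_s$, on this ball the pointwise change is either $w^N_s(y) - w^N_{s-}(y) = \rho(1-w^N_{s-}(y))$ or $-\rho w^N_{s-}(y)$, so in either case
\[
|w^N_s(y)-w^N_{s-}(y)|\le \rho \quad\text{for every }y,
\]
with the change vanishing outside the ball of measure $|B_r|N^{-d/2}$.

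Next, using $X^N_s(\phi) = K_N\int \phi(y) w^N_s(y)\,dy$, I would estimate
\[
|\Delta X^N_s(\phi)|
\le K_N \int |\phi(y)|\,|w^N_s(y)-w^N_{s-}(y)|\,dy
\le K_N\,\|\phi\|_\infty\,\rho\,|B_r|\,N^{-d/2}.
\]
Substituting $K_N = N^{d/2-1}$ for $d\ge 3$ yields the bound $\|\phi\|_\infty\rho|B_r|/N$, while $K_N = \log N$ for $d=2$ yields $\|\phi\|_\infty\rho|B_r|\log N/N$, giving \eqref{e:incs}. The bound is required to hold for all $s\ge 0$ almost surely; this holds since the estimate is deterministic conditional on being at a jump time, and $X^N$ is constant between jumps.

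There is no real obstacle here — the only mild care is in tracking the factors of $N$ from three sources (the ball radius $r/\sqrt N$, the dimension $d$, and $K_N$) and in noting that simultaneous jump times in $\Pi$ can be excluded with probability one, so a single event at a time suffices.
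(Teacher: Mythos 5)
Your proof is correct and follows essentially the same route as the paper's: identify the single reproduction event responsible for a jump, bound the pointwise change of $w^N$ by $\rho$ on a ball of measure $|B_r|N^{-d/2}$, and multiply by $K_N\|\phi\|_\infty$. The only cosmetic difference is that you work with the unscaled $\Pi$ and translate via \eqref{wNdef}, while the paper works directly with $\Pi^N$; these are equivalent bookkeeping choices.
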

\begin{proof} By the dynamics \eqref{e:dynamics}, 
for $(x,s)\in \Pi^N$ (we may assume there is at most one such $x$), $w^N_s(y)=w^N_{s-}(y)$ for all
$y\notin \BMr(x)$, and for $y\in \BMr(x)$,
\[
w^N_s(y)-w^N_{s-}(y) =\begin{cases}
-\rho w^N_{s-}(y) + \rho, &\text{or }\\
-\rho w^N_{s-}(y).
\end{cases}
\]
Thus, $|w^N_s(y)-w^N_{s-}(y)|\le \rho1_{\BMr(x)}(y)$, and so,
\[
\int_{\Rd} |w^N_s(y) - w^{N}_{s-}(y)|dy \le 
\sup_{x\in\R^d} \int_{\BMr(x)}\rho \, dy
\le \rho |\BMr|.
\]
Finally,
\[
|\Delta   X^N_s(\phi) |\le 
K\int_{\Rd}|\phi(y)| |w^N_s(y) -
w^N_{s-}(y)| dy
\le \Vert\phi\Vert_\infty\rho|B_r| K N^{-d/2},
\]
which is \eqref{e:incs}. 
\end{proof}

The martingale characterization below is provided by
Lemma~3.1 of \cite{CDE}. The filtration below is implicit in their argument.  Although $\phi=1$ is not included in that result it is easy to handle it by a localization argument using the stopping times $T_n=\inf\{t\ge0:\int_{\{|x|\ge n\}}w_t(x)\,dx>0\}$.   Recall $\cF^N_t$ is defined prior to Theorem~\ref{t:CDE}.
\begin{proposition}\label{p:sm-decomp}
 Let $\phi\in C^3_0(\Rd)$ or $\phi=\1$.  Then $X^N_t(\phi)$ has the
semimartingale decomposition:
\begin{equation}\label{e:semimart}
X^N_t(\phi) = X^N_0(\phi) + 
D^N_t(\phi) + M^N_t(\phi),
\end{equation}
where 
\begin{equation}\label{e:DNs}
D^N_s(\phi) = \int_0^t d^N_s(\phi)ds
\end{equation}
and $M^N_t(\phi)$ is a local $(\cF^N_t)$-martingale
 with predictable
quadratic variation
\begin{equation}\label{e:MNs}
\langle M^N(\phi)\rangle_t = 
\int_0^t  m^N_s(\phi)ds .
\end{equation}
\end{proposition}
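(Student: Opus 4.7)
The plan is to obtain the case $\phi\in C^3_0(\R^d)$ as an immediate application of Lemma~3.1 in \cite{CDE}, and then extend to $\phi=\1$ by the localization scheme already suggested in the statement. For $\phi\in C^3_0(\R^d)$, Lemma~3.1 of \cite{CDE}, applied to the unscaled SLFV process $w^{(N)}$ and then pushed through the space--time--mass rescaling \eqref{wNdef}--\eqref{XNdef2}, produces exactly the decomposition \eqref{e:semimart} with drift \eqref{e:DNs} and predictable square function \eqref{e:MNs}; the filtration $(\cF^N_t)$ is just the right-continuous filtration generated by the driving Poisson process $\Pi^N$, which is the one that is implicit in the CDE proof.

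To handle $\phi=\1$, which is not in $C^3_0$, I would pick $\phi_n\in C^3_0(\R^d)$ with $0\le\phi_n\le 1$, $\phi_n\equiv 1$ on $B_{2n}(0)$ and $\mathrm{supp}(\phi_n)\subset B_{3n}(0)$, and use the stopping times $T_n=\inf\{t\ge 0:\mathrm{supp}(w^N_t)\not\subset B_n(0)\}$. For $s<T_n$ and $n\ge 2r$, any ball $B^N_r(x)$ contributing a nonzero integrand in \eqref{e:dNs} or \eqref{e:mNs} must intersect $\mathrm{supp}(X^N_s)\subset B_n(0)$, hence is contained in $B_{n+2r/\sqrt N}(0)\subset B_{2n}(0)$, on which $\phi_n\equiv 1$. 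Consequently
\[
X^N_s(\phi_n)=X^N_s(\1),\qquad d^N_s(\phi_n)=d^N_s(\1),\qquad m^N_s(\phi_n)=m^N_s(\1),
\]
and interchanging the order of the $z$- and $y$-integrations inside the bracket in \eqref{e:dNs} when $\phi=\1$ gives $d^N_s(\1)\equiv 0$. Stopping the (already-established) decomposition for $\phi_n$ at $T_n$ therefore reduces to $X^N_{t\wedge T_n}(\1)-X^N_0(\1)=M^N_{t\wedge T_n}(\phi_n)$, a local martingale with predictable square function $\int_0^{t\wedge T_n} m^N_s(\1)\,ds$; sending $n\to\infty$ finishes the proof, provided one knows $T_n\uparrow\infty$ a.s.

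The only non-routine point, precisely the one the paper flags with its hint, is this a.s.\ compact-support statement $T_n\uparrow\infty$. It follows from an inductive argument: $\Pi^N$ has a.s.\ only finitely many points in any bounded space--time region, and a reproduction event whose ball $B^N_r(x)$ is disjoint from the current support of $w^N$ leaves $w^N$ unchanged, so starting from a compactly supported $w^{(N)}_0$ the support of $w^N$ on $[0,t]$ remains in a random but a.s.\ bounded neighbourhood of $\mathrm{supp}(w^{(N)}_0)$.
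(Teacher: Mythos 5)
Your proof takes essentially the same route as the paper --- citing Lemma~3.1 of \cite{CDE} for $\phi\in C_0^3(\R^d)$ and then localizing with the stopping times $T_n$ to extend to $\phi=\1$ --- and the bookkeeping you supply (choosing $\phi_n\in C^3_0$ with $\phi_n\equiv 1$ on $B_{2n}$, noting the integrands in $d^N_s$ and $m^N_s$ vanish for balls $B^N_r(x)$ missing $\mathrm{supp}(X^N_s)$, and checking $d^N_s(\1)\equiv 0$) is correct. The one soft spot is your informal ``inductive'' derivation of $T_n\uparrow\infty$ a.s.: as sketched it does not by itself rule out accumulation of reproduction events as the support grows; it is cleaner, and is what the paper intends, to invoke directly the a.s.\ compact-support property of the SLFV already recorded in the introduction and established in Section~3 of \cite{CDE} and in \cite{VW}.
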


Implicit in the above is the fact that the local martingale $M^N_t(\phi)$ is locally square integrable, but
this is already clear from the fact that it has bounded jumps. The latter follows from Lemma~\ref{increments} and \eqref{e:semimart} which imply
\begin{equation}\label{Mjumps} |\Delta M_s^N(\phi)|\le \Vert\phi\Vert_\infty\begin{cases}
\rho|B_r|/N &\text{if }d\ge 3\\
\rho|B_r|\log N / N&\text{if }d=2
\end{cases} \quad \text{for all }s\ge 0.
\end{equation}

For the drift term $D^N_s(\phi)$ we will need only the
following facts.

\begin{lemma}
\label{l:dNs} \

(a) $d^N_s(\1) \equiv 0$.

(b) For $\phi\in C^3_0(\Rd)$ there is a constant
$C_{\ref{e:dNs2}}=C_{\ref{e:dNs2}}(\phi)>0$ such 
that
\begin{equation}\label{e:dNs2}
\begin{aligned}
d^N_s(\phi) &= \rho|B_r|\bar\sigma^2 X^N_s\Big(\frac12
\Delta\phi\Big) + \cale^N_{\ref{e:dNs2}}(s), \text{ where}\\
|\cale^N_{\ref{e:dNs2}}(s)| &\le C_{\ref{e:dNs2}}
\frac{X^N_s(\1)}{\sqrt N} .
\end{aligned}
\end{equation}
\end{lemma}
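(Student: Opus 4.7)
For part (a), observe that when $\phi=\1$ the braces in \eqref{e:dNs} become $X^N_s(z)|\BMrx| - \int_{\BMrx}X^N_s(y)\,dy$, and after also integrating out the dummy variable $z\in\BMrx$ we get $|\BMrx|\int_{\BMrx}X^N_s(z)\,dz - |\BMrx|\int_{\BMrx}X^N_s(y)\,dy = 0$ for every $x$. So $d^N_s(\1)\equiv 0$.

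For part (b), the plan is to rewrite $d^N_s(\phi)$ as $X^N_s$ integrated against a smoothing of $\phi$, and then Taylor expand. First pull $\phi(y)$ outside and write the braces as
\[
\int_{\BMrx}\phi(y)\,dy\,[X^N_s(z)-X^N_s(z)] \;+\; \int_{\BMrx}\phi(y)\big[X^N_s(z)-X^N_s(y)\big]\,dy,
\]
i.e.\ after absorbing the identically zero first piece,
\[
d^N_s(\phi)=\frac{\rho N^{1+d}}{|B_r|}\int dx\int_{\BMrx}dz\int_{\BMrx}dy\,\phi(y)\,[X^N_s(z)-X^N_s(y)].
\]
Next, swap the order of integration of $x$ and $y$ using $y\in \BMrx\Leftrightarrow x\in \BMr(y)$, and substitute $u=x-y$, $w=z-x$, so both $u,w$ are uniform on $\BMr(0)$ and $u+w\stackrel{d}{=}\bar U/\sqrt N$. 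The Jacobian and the prefactor combine, via $N^{1+d}|\BMr|^2/|B_r|=N|B_r|$, to give
\[
d^N_s(\phi)=\rho|B_r|\,N\int dy\,\phi(y)\,E\big[X^N_s\big(y+\bar U/\sqrt N\big)-X^N_s(y)\big].
\]
Finally, change variables $y\mapsto y-\bar U/\sqrt N$ in the first term and use the symmetry $\bar U\stackrel{d}{=}-\bar U$ to move the increment onto $\phi$:
\[
d^N_s(\phi)=\rho|B_r|\,N\int X^N_s(y)\,\big\{E[\phi(y+\bar U/\sqrt N)]-\phi(y)\big\}\,dy.
\]

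With $\phi\in C^3_0(\R^d)$, a third-order Taylor expansion gives
\[
\phi(y+\bar U/\sqrt N)-\phi(y)=\tfrac{1}{\sqrt N}\nabla\phi(y)\cdot \bar U+\tfrac{1}{2N}\sum_{i,j}\partial_{ij}\phi(y)\,\bar U_i\bar U_j+R(y,\bar U/\sqrt N),
\]
with $|R|\le C_\phi\|\bar U\|^3/N^{3/2}$. Since $\bar U$ takes values in $B_{2r}$ it is bounded, so after taking expectations, $E[\bar U]=0$ and $E[\bar U_i\bar U_j]=\bar\sigma^2\delta_{ij}$ (from \eqref{barsigdef}) yield
\[
E[\phi(y+\bar U/\sqrt N)]-\phi(y)=\frac{\bar\sigma^2}{2N}\Delta\phi(y)+r_N(y),
\qquad |r_N(y)|\le C_\phi N^{-3/2}.
\]
Substituting back and using the prefactor $\rho|B_r|N$ gives the main term $\rho|B_r|\bar\sigma^2X^N_s(\tfrac{1}{2}\Delta\phi)$ and an error bounded by $\rho|B_r|N\cdot C_\phi N^{-3/2}\cdot X^N_s(\1)=C_{\ref{e:dNs2}} X^N_s(\1)/\sqrt N$, which is exactly \eqref{e:dNs2}.

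The only mildly delicate step is the Fubini/change of variables that turns the triple integral in \eqref{e:dNs} into a self-adjoint smoothing of $X^N_s$; once that is in hand the Taylor expansion is routine because $\bar U$ is compactly supported and isotropic.
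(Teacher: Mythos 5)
Your proof is correct. The paper itself gives no argument for part (b), merely deferring to Lemma~4.2 of \cite{CDE} (for the general scaling $J,M,K$) specialized to $J=1$, $M=\sqrt N$; your computation fills in the details and is self-contained. The route you take --- rewriting the braces as $\int_{\BMrx}\phi(y)[X^N_s(z)-X^N_s(y)]\,dy$, using Fubini and the substitutions $u=x-y$, $w=z-x$ with the identity $N^{1+d}|\BMr|^2/|B_r|=N|B_r|$ to arrive at
\[
d^N_s(\phi)=\rho|B_r|\,N\int X^N_s(y)\,E\big[\phi(y+\bar U/\sqrt N)-\phi(y)\big]\,dy,
\]
and then Taylor expanding to third order using $E[\bar U]=0$, $E[\bar U_i\bar U_j]=\bar\sigma^2\delta_{ij}$ and the compact support of $\bar U$ --- is exactly the natural one and presumably what the proof of \cite{CDE}'s Lemma~4.2 does. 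One small remark: the appeal to the symmetry $\bar U\stackrel{d}{=}-\bar U$ when replacing $y-\bar U/\sqrt N$ by $y+\bar U/\sqrt N$ is valid (since $\bar U=U_1+U_2$ with $U_i$ symmetric uniforms) but not actually needed --- expanding $\phi(y-\bar U/\sqrt N)-\phi(y)$ directly gives the same Laplacian term after the mean-zero linear term drops out, and the same $O(N^{-3/2})$ remainder.
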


Part (a) follows easily from \eqref{e:dNs}, and (b) is the
special case of Lemma~4.2 (and its proof) in  \cite{CDE}  for our choices
of $J,M,K$ in \eqref{e:JMK}. (We note that the constant
$C(d)$ in Definition~4.1 in \cite{CDE} is
$\int_{B_1}(x_1)^2dx$.) Turning next to the martingale
square function, for $\phi\in C_b(\Rd)$ define 
\begin{equation}\label{e:mbarNs}
\bar m^N_s(\phi) = \rho^2N^{1+d/2}K^2  \int_{\Rd}\phi^2(x) 
\int_{\BMrx}\int_{\BMrx}
(1-w^N_s(z_1))w^N_s(z_2)dz_2dz_1 dx\ge 0 .
\end{equation}
\begin{lemma}\label{l:mNs}\

(a) For $\phi\in C^3_0(\Rd)$, $\bar m^N_s(\phi)\vee m^N_s(\phi)\le
  \|\phi\|^2_\infty m^N_s(\1)$. 

(b) $m^N_s(\1)=\bar m^N_s(\1)$. 

(c) For $\phi\in C^3_0(\Rd)$ there is a constant
$C_{\ref{e:mNs2}}=C_{\ref{e:mNs2}}(\phi)>0$ such 
\begin{equation}\label{e:mNs2}
\begin{aligned}
m^N_s(\phi) &= \bar m^N_s(\phi) +
\cale^N_{\ref{e:mNs2}}(s), \text{ where}\\ 
|\cale^N_{\ref{e:mNs2}}(s)| &\le 
C_{\ref{e:mNs2}}
\begin{cases}
\frac{1}{\sqrt N}\,  X^N_s(\1)
&\text{if $d\ge 3$}\\
\frac{\log N}{\sqrt N}\,X^N_s(\1)  
&\text{if $d=2$.}
\end{cases}
\end{aligned}
\end{equation}
\end{lemma}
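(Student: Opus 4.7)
For part (a), pull the sup out of the squared integrals. Since $0\le X^N_s(y)\le K$, the identity $K\int_{\BMrx}\phi\,dy - \int_{\BMrx}\phi X^N_s\,dy = \int_{\BMrx}\phi(y)(K - X^N_s(y))\,dy$ gives
\[
\Bigl|K\int_{\BMrx}\phi\,dy - \int_{\BMrx}\phi X^N_s\,dy\Bigr|^2 \le \|\phi\|^2_\infty\Bigl(K|\BMr| - \int_{\BMrx} X^N_s\Bigr)^2,
\]
and similarly $(\int_{\BMrx}\phi X^N_s)^2 \le \|\phi\|^2_\infty(\int_{\BMrx} X^N_s)^2$; this yields $m^N_s(\phi)\le \|\phi\|^2_\infty m^N_s(\1)$. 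The bound $\bar m^N_s(\phi)\le\|\phi\|^2_\infty \bar m^N_s(\1)=\|\phi\|^2_\infty m^N_s(\1)$ is then immediate from (b).

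For part (b), write $A_x=\int_{\BMrx} w^N_s(y)\,dy$ and $B_x=|\BMr|-A_x=\int_{\BMrx}(1-w^N_s(y))\,dy$. With $\phi\equiv \1$ the inner $dz$ integration in \eqref{e:mNs} yields $K^2[B_xA_x^2+A_xB_x^2]=K^2A_xB_x|\BMr|$. Using $|\BMr|/|B_r|=N^{-d/2}$, the prefactor $\rho^2N^{1+d}/|B_r|$ becomes $\rho^2 N^{1+d/2}K^2$ after multiplying by $|\BMr|$, which matches $\bar m^N_s(\1)$ since $\int_{\BMrx}\int_{\BMrx}(1-w^N_s(z_1))w^N_s(z_2)\,dz_1dz_2 = A_xB_x$.

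For part (c), let $\tilde I_x = \int_{\BMrx}\phi(y)w^N_s(y)\,dy$ and $P_x=\int_{\BMrx}\phi(y)\,dy$, so $\int\phi X^N_s = K\tilde I_x$ and $K\int\phi-\int\phi X^N_s = K(P_x-\tilde I_x)$. Doing the $dz$ integration in \eqref{e:mNs} gives
\[
m^N_s(\phi) = \frac{\rho^2 N^{1+d}}{|B_r|}\int_{\Rd} K^2\bigl[|\BMr|\tilde I_x^2 - 2A_xP_x\tilde I_x + A_xP_x^2\bigr]\,dx.
\]
Taylor-expand $\phi(y)$ about $\phi(x)$ for $y\in\BMrx$. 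Because $\BMrx$ is centred at $x$, the first-order term vanishes in $P_x$ by symmetry, giving $P_x = |\BMr|\phi(x) + E_2$ with $|E_2|\le C_\phi|\BMr|/N$; on the other hand $\tilde I_x = \phi(x)A_x + E_1$ with $|E_1|\le C_\phi A_x/\sqrt N$ (the linear term no longer vanishes since $w^N_s$ is not symmetric). Substituting these in the bracket, the $\phi(x)^2$-terms collapse algebraically to $|\BMr|\phi(x)^2A_xB_x$ and the $E_1$ linear terms cancel, leaving
\[
|\BMr|\tilde I_x^2-2A_xP_x\tilde I_x+A_xP_x^2 = |\BMr|\phi(x)^2A_xB_x + 2A_x\phi(x)B_xE_2 + |\BMr|E_1^2 + A_xE_2^2 - 2A_xE_1E_2.
\]
The first term, after restoring prefactors, is exactly $\bar m^N_s(\phi)$.

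To bound the remaining terms (which constitute $\cale^N_{\ref{e:mNs2}}(s)$), use $A_x,B_x\le|\BMr|$ together with the Fubini identity
\[
\int_{\Rd} A_x\,dx = |\BMr|\int_{\Rd} w^N_s(y)\,dy = |\BMr|X^N_s(\1)/K,\qquad\int_{\Rd}A_x^2\,dx\le |\BMr|^2 X^N_s(\1)/K.
\]
The dominant errors come from the $2A_x\phi(x)B_xE_2$ and $|\BMr|E_1^2$ contributions, each of order
\[
\frac{\rho^2N^{1+d}}{|B_r|}K^2\cdot C_\phi\frac{|\BMr|^3}{N}\cdot\frac{X^N_s(\1)}{K} = C_\phi\rho^2|B_r|^2\,\frac{K}{N^{d/2}}\,X^N_s(\1),
\]
with the $E_2^2$ and $E_1E_2$ terms even smaller. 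Substituting $K=N^{d/2-1}$ for $d\ge 3$ and $K=\log N$ for $d=2$ gives error bounds of order $X^N_s(\1)/N$ and $X^N_s(\1)\log N/N$ respectively, which are stronger than the stated bounds since $1/N\le 1/\sqrt N$. The main (very mild) obstacle is keeping track of the cancellations in the $\phi(x)^2$-terms; once that algebra is laid out, the error estimates are a routine application of the Fubini identity above.
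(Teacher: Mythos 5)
Your proof is correct, and it takes a slightly different route from the paper, so let me compare. The paper's argument rewrites $m_s^N(\phi)$ using the kernel $\cali(z_1,z_2,z_3)=(1-w^N_s(z_1))w^N_s(z_2)w^N_s(z_3)+w^N_s(z_1)(1-w^N_s(z_2))(1-w^N_s(z_3))$, peels off the error via $\Delta_\phi(x,z_2,z_3)=\phi(z_2)\phi(z_3)-\phi^2(x)$ bounded by Lipschitz continuity (first-order), and then uses a one-line integral identity (integrate $z_3$ out of $\cali$) to produce $\bar m^N_s(\phi)$. You instead integrate $z$ out first, reduce everything to the scalar quantities $A_x,B_x,P_x,\tilde I_x$, do the algebra explicitly to isolate $|\BMr|\phi(x)^2 A_xB_x$ (which I've checked: your $p^2$-, $pE_1$-, $pE_2$-, $E_1^2$-, $E_2^2$- and $E_1E_2$-terms are all accounted for and the linear $E_1$ term cancels), and use a \emph{second-order} Taylor expansion with the linear term of $P_x$ killed by the symmetry of $\BMrx$. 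This buys you a sharper error: your $\cale^N(s)$ is $O(K N^{-d/2}X^N_s(\1))=O(X^N_s(\1)/N)$ (resp.\ $O(\log N \cdot X^N_s(\1)/N)$ for $d=2$), compared to the paper's stated $O(X^N_s(\1)/\sqrt N)$ (resp.\ $O(\log N \cdot X^N_s(\1)/\sqrt N)$), which comes from the first-order Lipschitz bound on $\Delta_\phi$. Both are adequate for the lemma as stated, and your part (a) argument (nonnegativity of the coefficients $1-X^N_s(z)/K$ and $X^N_s(z)/K$, then $\phi\le\|\phi\|_\infty$) is in substance the same as the paper's observation that $\cali\ge 0$. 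The paper's route is a bit less bookkeeping-heavy; yours is more elementary and yields a strictly stronger estimate.
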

\begin{proof}
Define
\[
\cali(z_1,z_2,z_3) = (1-w^N_s(z_1))
w^N_s(z_2)w^N_s(z_3) + w^N_s(z_1)
(1-w^N_s(z_2))(1-w^N_s(z_3)).
\]
Then replacing $X^N_s$ with $Kw^N_s$ in \eqref{e:mNs}, 
after expanding and rearranging, we find that
\begin{align*}
m^N_s(\phi) & = 
\frac{\rho^2N^{1+d}}{|B_r|}K^2 \int_{\Rd}
\int_{\BMrx}\int_{\BMrx}\int_{\BMrx}
\phi(z_2)\phi(z_3)
\cali(z_1,z_2,z_3)\, dz_3dz_2dz_1dx
\end{align*}
On account of $0\le w^N_s\le 1$, $\cali$ is nonnegative, 
hence (a) follows for $m^N$ from the above expression, and is
immediate for $\bar m^n$ from \eqref{e:mbarNs} (integrate out $z_3$ in the first line on the right-hand side). 
If we define $\Delta_\phi(x,z_2,z_3) = \phi(z_2)\phi(z_3) -
\phi^2(x)$, then
\begin{equation}\label{e:mNs3}
m^N_s(\phi)  = 
\frac{\rho^2N^{1+d}}{|B_r|}K^2 \int_{\Rd}
\int_{\BMrx}\int_{\BMrx}\int_{\BMrx}
\phi^2(x) \cali(z_1,z_2,z_3)\,  dz_3dz_2dz_1dx
+ \cale^N_{\ref{e:mNs2}}(s),  
\end{equation}
where
\[
\cale^N_{\ref{e:mNs2}}(s)  = 
\rho^2\frac{N^{1+d}}{|B_r|}K^2 
\int_{\Rd}
\int_{\BMrx}\int_{\BMrx}\int_{\BMrx}
\Delta_\phi(x,z_2,z_3) \cali(z_1,z_2,z_3) \, dz_3dz_2dz_1dx .
\]

Consider the integrals over $\BMr(x)$ in \eqref{e:mNs3}. 
By a change of variables and order of integration, 
\begin{align*}
\int_{\BMr(x)}\int_{\BMr(x)}&\int_{\BMr(x)} \cali(z_1,z_2,z_3)\ dz_3dz_2dz_1\\
&= \int_{\BMr(x)}w^N_s(z_3) \int_{\BMr(x)}\int_{\BMr(x)} 
(1-w^N_s(z_1))w^N_s(z_2) dz_2dz_1 dz_3\\
&\qquad+ \int_{\BMr(x)}(1-w^N_s(z_3)) \int_{\BMr(x)}\int_{\BMr(x)} 
(1-w^N_s(z_1))w^N_s(z_2) dz_2dz_1 dz_3\\
&= |\BMr| \int_{\BMr(x)}\int_{\BMr(x)} 
(1-w^N_s(z_1))w^N_s(z_2) dz_2dz_1 .
\end{align*}
Plugging this into \eqref{e:mNs3}, and using the definition
of $\bar m^N_s(\phi)$, we now have
$m^N_s(\phi) = \bar m^N_s(\phi) +
\cale^N_{\ref{e:mNs2}}(s)$. 

In the case that $\phi=\1$, $\Delta_{\1}\equiv 0$ so that 
$\cale^N_{\ref{e:mNs2}}(s)\equiv 0$, proving (b).  
More generally, 
$$|\Delta_\phi(x,z_2,z_3)|\le 
2(r/\sqrt N)\|\phi\|_\infty \|\phi\|_{\text{Lip}} 
\text{ for $z_2,z_3\in \BMrx$}.$$ Using  the fact that 
$|\cali(z_1,z_2,z_3)|\le w^N_s(z_2) +w^N_s(z_1)$, we have
\begin{align*}
|\cale^N_{\ref{e:mNs2}}(s)| &\le  \rho^2\frac{N^{1+d}}{|B_r|}K^2
2(r/\sqrt N)\|\phi\|_\infty
\|\phi\|_{\text{Lip}} \int_{\R^d}\int_{\BMr(x)} \int_{\BMr(x)}\int_{\BMr(x)} 
(w^N_s(z_1)+w^N_s(z_2))dz_2dz_1dz_3dx\\
& =
4\rho^2\frac{r}{N^{1/2}}\|\phi\|_\infty \|\phi\|_{\text{Lip}} 
\frac{N^{1+d}}{|B_r|}|\BMr|^2K^2 \int_{\Rd}
\int_{\BMrx}
w^N_s(z_1)dz_1dx  \\
& =
4r\rho^2|B_r|^2\phi\|_\infty \|\phi\|_{\text{Lip}} 
\begin{cases}
\frac{1}{\sqrt N} X^N_s(\1) &\text{if }d\ge 3\\
\frac{\log N}{\sqrt N} X^N_s(\1) &\text{if }d=2.
\end{cases}
\end{align*}
This proves (c). 
\end{proof}

\section{Basic moment bounds}

We start with the dual particle systems for the rescaled SLFV process $w^N_t$ in \eqref{wNdef}.  If $\eta$ and $(\xi^{1},\xi^{2})$ are as in \eqref{e:dual1}, \eqref{e:dual2}, introduce the rescaled duals, 
\[P^N_{\{x\}}(\eta^{N}_t\in\cdot)=P_{\{\sqrt Nx\}}\Bigl(\frac{\eta_{Nt}}{\sqrt N}\in\cdot\Bigr),\quad x\in\R^d,\]
\[P^N_{\{x_1,x_2\}}((\xi_{t}^{N,1},\xi_t^{N,2})\in\cdot)=P_{\{\sqrt Nx_1,\sqrt Nx_2\}}\Bigl(\Bigl(\frac{\xi_{Nt}^{1}}{\sqrt N},\frac{\xi_{Nt}^{2}}{\sqrt N}\Bigr)\in\cdot\Bigr)\quad x_i\in\R^d,\]
and
\begin{equation}\label{taundef}\tau^N=\inf\{t:\xi^{N,1}_t=\xi^{N,2}_t\}.
\end{equation}
Then \eqref{e:dual1}  and
\eqref{e:dual2} imply for Borel $\psi_1$ on $\R^d$,   and Borel
$\psi_2$ on $(\Rd )^2$,  and $t\ge 0$ (recall \eqref{wphidef}),
\begin{align}\label{e:dualN1}
&E_{w^N_0} [w^N_t(\psi_1)]  = \int_{\R^d} \psi_1(x) E^N_{\{x\}}(
w^N_0(\eta^{N}_t))dx,\\\notag
&E_{w^N_0}\Big[\int_{ \R^d\times \R^d}
\psi_2(x_1,x_2)w^N_t(x_1)w^N_t(x_2)\, dx_1dx_2\Big]\\
&\quad = \int_{ \R^d\times \R^d} \psi_2(x_1,x_2)E^N_{\{x_1,x_2\}}\big[
w^N_0(\xi^{N,1}_t)1_{\{\tau^N\le t\}} + 
w^N_0(\xi^{N,1}_t)w^N_0(\xi^{N,2}_t)1_{\{\tau^N>t\}}\big]\,
dx_1dx_2. 
\label{e:dualN2}
\end{align}
As before, either $\psi_i\ge 0$, or one side is integrable for the above to hold.
A simple change of variables shows that \eqref{e:dualN1}
implies (for $\psi_1$ as above)
\begin{equation}\label{e:dualXN}
E(X_t^N(\psi_1))=E_{X_0^N}(\psi_1(\eta^N_t)):=\int_{\Rd}
E^N_{\{x\}}(\psi_1(\eta^{N}_t))X_0^N(dx).
\end{equation}

\begin{proposition}\label{p:ms-bnd}\ 

(a) There exists $C_{\ref{e:L2bnda}}>0$ such that for $s\ge 0$,
\begin{equation}\label{e:L2bnda}
{\bar m}^N_s(\1)=m^N_s(\1) \le C_{\ref{e:L2bnda}} 
\begin{cases} X^N_s(\1)&\text{if }d\ge 3\\
(\log N) X^N_s(\1)&\text{if }d=2
\end{cases}
\end{equation}

(b) Assume $d=2$. If  $0<\alpha<1$ then there exists
$C_{\ref{e:L2bndb}}=C_{\ref{e:L2bndb}} 
(\alpha)>0$ such that for $s\ge 0$, 
\begin{equation}\label{e:L2bndb}
E(m^N_s(\1)) \le
C_{\ref{e:L2bndb}}\,(1+s^{-\alpha}) X^N_0(\1).  
\end{equation}
\end{proposition}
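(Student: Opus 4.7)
Lemma~\ref{l:mNs}(b) gives $m^N_s(\1) = \bar m^N_s(\1)$, so I would work directly with the explicit formula \eqref{e:mbarNs} for $\bar m^N_s(\1)$.

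For part (a) the plan is to use the crude pointwise bound $(1-w^N_s(z_1))w^N_s(z_2) \le w^N_s(z_2)$. Integrating over $z_1\in\BMrx$ produces $|\BMr| = |B_r|/N^{d/2}$, and a Fubini swap of $x$ and $z_2$ produces another $|\BMr|$, so the triple integral collapses to $|\BMr|^2 \int_{\R^d} w^N_s(z_2)\,dz_2 = |\BMr|^2 X^N_s(\1)/K$. Combining with the prefactor $\rho^2 N^{1+d/2}K^2$ yields $\rho^2|B_r|^2 KN^{1-d/2} X^N_s(\1)$, which is the claimed bound in both cases (since $KN^{1-d/2}=1$ for $d\ge 3$ and $KN^{1-d/2}=\log N$ for $d=2$).

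For part (b) the missing factor $1/\log N$ must come from the two-particle dual. The key identity is
\[
E\big[(1-w^N_s(z_1))w^N_s(z_2)\big] = E^N_{\{z_1,z_2\}}\Big[(1-w^N_0(\xi^{N,1}_s))w^N_0(\xi^{N,2}_s)\one{\tau^N>s}\Big]\le E^N_{\{z_1,z_2\}}\big[w^N_0(\xi^{N,2}_s)\one{\tau^N>s}\big],
\]
which follows from the graphical/sampling interpretation of SLFV: two independent type samples at $z_1,z_2$ at time $s$ descend from the two-particle dual and can exhibit different types only when their ancestral lines have not coalesced. Substituting the upper bound into \eqref{e:mbarNs}, changing variables $u_i=z_i-x$, and using translation invariance \eqref{TI} together with Fubini (moving the $x$-integral inside the expectation) converts $\int_{\R^d} w^N_0(x+\xi^{N,2}_s)\,dx$ into $w^N_0(\1)$, leaving
\[
E\bar m^N_s(\1)\le \rho^2 N^{1+d/2}K^2\, w^N_0(\1)\int_{\BMr(0)}\!\int_{\BMr(0)} P^N_{\{u_1,u_2\}}(\tau^N>s)\,du_1\,du_2.
\]
Rescaling $v_i=u_i\sqrt N$ and $\tau^N=\tau/N$ identifies the last double integral as $N^{-d}|B_r|^2\gamma_e^{(d)}(Ns)$. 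Using $X^N_0(\1)=Kw^N_0(\1)$ and specialising to $d=2$ (where $KN^{1-d/2}=\log N$) gives
\[
E\bar m^N_s(\1)\le \rho^2|B_r|^2(\log N)\gamma_e^{(2)}(Ns)\, X^N_0(\1).
\]
The remaining task is analytic. Proposition~\ref{t:noncoal} combined with the trivial bound $\gamma_e^{(2)}\le 1$ yields $\gamma_e^{(2)}(t)\le C/\log(2+t)$ uniformly in $t\ge 0$. A short case split on whether $Ns\ge N^{1-\alpha}$ then suffices to conclude $(\log N)\gamma_e^{(2)}(Ns)\le C(\alpha)(1+s^{-\alpha})$: in the former case $\log N/\log(2+Ns)\le 1/(1-\alpha)$, while in the latter $s^{-\alpha}>N^{\alpha^2}$ dominates $\log N$ for $N\ge 3$.

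The main obstacle is justifying the first identity in the display for part (b). The stated dualities \eqref{e:dualN1}, \eqref{e:dualN2} give only the pure moments $E[w^N_s(z_2)]$ and $E[w^N_s(z_1)w^N_s(z_2)]$ and not the mixed second moment with the $(1-w)$ factor; deriving it rigorously requires either invoking the graphical/sampling construction by which \eqref{e:dualN2} itself is established in \cite{VW,CDE}, or equivalently (after subtracting the two known duality formulas) proving the auxiliary identity $E^N_{\{z_1,z_2\}}[w^N_0(\xi^{N,2}_s)] = E^N_{\{z_2\}}[w^N_0(\eta^N_s)]$ by coupling the single- and two-particle duals on a common driving Poisson point process so that $\xi^{N,2}_t=\eta^N_t$ pathwise.
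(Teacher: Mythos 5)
Your part (a) is exactly the paper's argument: drop the factor $1-w^N_s(z_1)\le 1$ and use Fubini; nothing to add there.

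For part (b) you arrive at the same quantitative estimate, $E[\bar m^N_s(\1)]\le \rho^2|B_r|^2(\log N)\gamma_e(Ns)X^N_0(\1)$, but by a genuinely different route. You apply the mixed dual identity
\[
E\big[(1-w^N_s(z_1))w^N_s(z_2)\big] = E^N_{\{z_1,z_2\}}\big[(1-w^N_0(\xi^{N,1}_s))w^N_0(\xi^{N,2}_s)\,\one{\tau^N>s}\big],
\]
which, as you correctly flag, does not follow directly from the two displayed duality equations \eqref{e:dualN1} and \eqref{e:dualN2}: after subtracting them, one is left needing the consistency identity $E^N_{\{z_1,z_2\}}[w^N_0(\xi^{N,2}_s)]=E^N_{\{z_2\}}[w^N_0(\eta^{N}_s)]$, i.e.\ that a single coordinate of the two-particle dual is marginally a one-particle dual. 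That is a true (and standard, via the common Poisson driving noise) fact, but it is an extra input. The paper's own route sidesteps it entirely: it reuses the decomposition $\bar m^N_s(\1)=m^{N,1}_s-m^{N,2}_s$ already set up in the proof of (a), computes $E[m^{N,1}_s]=\rho^2|B_r|^2(\log N)X^N_0(\1)$ exactly from \eqref{e:dualN1} and the martingale property of total mass, and lower bounds $E[m^{N,2}_s]$ by applying \eqref{e:dualN2} directly to $E[w^N_s(z_1)w^N_s(z_2)]$ and simply discarding the nonnegative $\{\tau^N>s\}$ contribution; translation invariance and a change of variables then give the factor $(1-\gamma_e(Ns))$. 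Upon subtraction the two routes yield identical bounds, but the paper's needs only the two stated dualities. Your version has a cleaner probabilistic interpretation; the paper's is self-contained given what is on the page. So: correct, and worth knowing both.

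One small slip in your analytic tail: you assert ``$s^{-\alpha}>N^{\alpha^2}$ dominates $\log N$ for $N\ge 3$,'' but for small $\alpha$ and small $N$ this fails (e.g.\ $\alpha^2=0.01$, $N=3$: $3^{0.01}<\log 3$). You do still get the bound, because $N^{\alpha^2}/\log N\to\infty$ and the finitely many $N$ with $3\le N<N_0(\alpha)$ can be absorbed into $C(\alpha)$ using the crude bound $(\log N)\gamma_e(Ns)\le\log N$; but as written the sentence overstates. (Incidentally your threshold $Ns\ge N^{1-\alpha}$ is a cleaner choice than the paper's $s\le(\log N)^{-\alpha}$, which as printed gives $\log N\le s^{-1/\alpha}$ with exponent $1/\alpha>1$ rather than $\alpha$; the paper's threshold should read $(\log N)^{-1/\alpha}$.)
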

\begin{proof}
(a) 
The case  
  $d\ge 3$ with $K=N^{d/2-1}$ is straightforward. 
 Using the definition of $\bar m^N_s$, Lemma~\ref{l:mNs}(b)
 and  the fact that $0\le w^N_s\le 1$,  
\begin{align*}
m^N_s(\1)=\bar m^N_s(\1) & \le \rho^2 N^d K
           |\BMr|\int_{\Rd}\int_{\BMrx} w^N_s(z_2)dz_2dx
= \rho^2N^d|\BMr|^2 X^N_s(\1) 
 = \rho^2 |B_r|^2 X^N_s(\1) .
\end{align*}
Now suppose $d=2$, with $K=\log N$.  Write
$(1-w^N_s(z_1))w^N_s(z_2) = w^N_s(z_2) - 
w^N_s(z_1)w^N_s(z_2)$, and so conclude that 
\begin{equation}\label{e:mNII}
 m^N_s(\1)= \bar m^N_s(\1)= m^{N,1}_s - m^{N,2}_s,
\end{equation}
where
\begin{align*}
m^{N,1}_s & = \rho^2N^2 (\log N)^2
  \int_{\Rtwo}\int_{\BMrx}
\int_{\BMrx} w^N_s(z_2)\,dz_1dz_2dx\\
m^{N,2}_s &= \rho^2 N^2 (\log N)^2 \int_{\Rtwo} \int_{\BMrx}
  \int_{\BMrx} w^N_s(z_1) w^N_s(z_2) 
\,dz_1dz_2dx .
\end{align*}
Thus, $m^N_s(\1)\le m^{N,1}_s$, and 
\begin{align}\label{mN1id}
\nonumber m^{N,1}_s & = \rho^2 N^2 |\BMr| (\log N)\int_{\Rtwo}\int_{\BMrx}
  X^N_s(z_2)\, dz_2dx\\
\nonumber & = \rho^2 N^2 |\BMr|^2 (\log N) X^N_s(\1) \\
&= \rho^2|B_r|^2 (\log N) X^N_s(\1) .
\end{align}
This completes the proof of \eqref{e:L2bnda}.

(b) For $d=2$, using the two particle duality
equation \eqref{e:dualN2} with 
  $\psi_2\equiv 1$, we have
\begin{align*}
E[m^{N,2}_s] &= \rho^2 N^2 (\log N)^2 \int_{\Rtwo}  \int_{\BMrx}
  \int_{\BMrx} E\big[w^N_s(z_1) w^N_s(z_2)\big] dz_1dz_2dx\\
&=  \rho^2 N^2 (\log N)^2 \int_{\Rtwo} \int_{\BMrx}
  \int_{\BMrx} 
  \Big(E^N_{\{z_1,z_2\}}\big[w^N_0(\xi_s^{N,1})1\{\tau^N\le s\big\}\big] \\
&\qquad +
  E^N_{\{z_1,z_2\}}\big[w^N_0(\xi_s^{N,1})w^N_0(\xi_s^{N,2})1\{\tau^N>
  s\big\}\big]\Big) dz_1dz_2dx\\
&\ge  \rho^2 N^2 (\log N) \int_{\Rtwo}   \int_{\BMrx}
  \int_{\BMrx} 
  E^N_{\{z_1,z_2\}}\big[X^N_0(\xi_s^{N,1})1\{\tau^N\le s\}\big] dz_1dz_2dx\\
&=  \rho^2 N^2 (\log N) \int_{\Rtwo}  \int_{\BMr}
  \int_{\BMr} 
  E^N_{\{x+z_1',x+z_2'\}}\big[X^N_0(\xi_s^{N,1})1\{\tau^N\le s\}\big] 
dz'_1dz'_2dx\\
& = \rho^2 N^2 (\log N) \int_{\Rtwo} \int_{\BMr}
  \int_{\BMr} 
  E^N_{\{z_1',z_2'\}}\big[X^N_0(x+\xi_s^{N,1})1\{\tau^N\le s\}\big]
dz'_1dz'_2dx,
\end{align*}
the last two equalities by a change of
variables and translation
invariance \eqref{TI}. Let \break
$P_{\{U_1,U_2\}}(\cdot)=\int_{B_r}\int_{B_r}P_{\{z_1',z_2'\}}
(\cdot)dz'_1dz'_2/|B_r|^2$ so that
$P_{\{U_1,U_2\}}(\tau>t)=\gamma_e(t)$. Integrating $x$ out (inside the expectation),  
 we obtain 
\begin{align*}
E[m^{N,2}_s] &\ge 
\rho^2 N^2 (\log N) X^N_0(\1) \int_{\BMr}
  \int_{\BMr} 
  P^N_{\{z'_1,z'_2\}}(\tau^N\le s)dz'_1dz'_2\\
&= \rho^2 N^2 |\BMr|^2 (\log N)P_{\{U_1,U_2\}}(\tau\le Ns) X^N_0(\1)\\
&= \rho^2 |B_r|^2 (\log N) (1-\gamma_e(Ns))
  X^N_0(\1) .
\end{align*}
If we plug this bound and \eqref{mN1id} 
into \eqref{e:mNII}, we get
\begin{equation}\label{e:L2bndb1}
E[m^N_s(\1)] \le \rho^2 |B_r|^2 (\log N)
\gamma_e(Ns)X^N_0(\1).
\end{equation}

If $s\le (\log N)^{-\alpha}$, then
$(\log N)\gamma_e(Ns) \le \log N \le s^{-1/\alpha}$.
By Proposition~\ref{t:noncoal}, if $s\ge (\log
N)^{-\alpha}$,
\[
(\log N)\gamma_e(Ns) \le (\log N) \gamma_e\Big(\frac{N}{(\log
  N)^{\alpha}}\Big) \to \gamma_e
\]
as $N\to\infty$. This implies that there is a $C>0$, depending on
$\alpha$, such that for all $s\ge (\log N)^{-\alpha}$, 
$(\log N)\gamma_e(Ns) \le C$. Plugging these bounds into 
\eqref{e:L2bndb1} we obtain
\eqref{e:L2bndb}. 
\end{proof}

\begin{corollary}\label{c:L2bnd}
 For $d\ge 2$ and $T>0$ there exists
 $C_{\ref{e:L2bnd}}(T)>0$ such that 
\begin{equation}\label{e:L2bnd}
E\Big[\sup_{t\le T} X^N_t(\1)^2\Big] \le 2 X^N_0(\1)^2 +
C_{\ref{e:L2bnd}}(T) X^N_0(\1) .
\end{equation}
Moreover $t\rightarrow X^N_t(\1)$ is a non-negative
square-integrable $(\cF^N_t)$-martingale, and for any $\phi\in C^3_0(\R^d)$, $t\rightarrow M^N_t(\phi)$ is also a square-integrable $(\cF^N_t)$-martingale. 
\end{corollary}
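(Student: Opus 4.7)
The plan is to build everything from the semimartingale decomposition \eqref{e:semimart} for $\phi=\1$. By Lemma~\ref{l:dNs}(a) the drift vanishes, so $X^N_t(\1) = X^N_0(\1) + M^N_t(\1)$ with $M^N_t(\1)$ a local $(\cF^N_t)$-martingale of predictable quadratic variation $\int_0^t m^N_s(\1)\,ds$. Let $\{T_n\}$ be a localizing sequence for $M^N(\1)$ so that each $M^N_{\cdot \wedge T_n}(\1)$ is a bounded $L^2$-martingale. Since $X^N_{t\wedge T_n}(\1)\ge 0$, $E[X^N_{t\wedge T_n}(\1)]=X^N_0(\1)$; passing $n\to\infty$ via Fatou gives the supermartingale bound $E[X^N_s(\1)]\le X^N_0(\1)$ for every $s\ge 0$.

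Next I would bound $E[\langle M^N(\1)\rangle_{T\wedge T_n}]$ in terms of $X^N_0(\1)$, with constants that are $N$-independent. For $d\ge 3$ this is direct from Proposition~\ref{p:ms-bnd}(a): by Tonelli,
\[
E[\langle M^N(\1)\rangle_{T\wedge T_n}]\le \int_0^T E[m^N_s(\1)]\,ds\le C_{\ref{e:L2bnda}}\int_0^T E[X^N_s(\1)]\,ds\le C_{\ref{e:L2bnda}}\,T\,X^N_0(\1).
\]
The main obstacle is $d=2$, where the uniform bound in Proposition~\ref{p:ms-bnd}(a) carries an $N$-dependent factor $\log N$ that cannot be afforded; this is precisely where Proposition~\ref{p:ms-bnd}(b) comes in, converting the $\log N$ into an integrable time singularity. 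Taking for instance $\alpha=1/2$,
\[
E[\langle M^N(\1)\rangle_{T\wedge T_n}]\le \int_0^T E[m^N_s(\1)]\,ds\le C_{\ref{e:L2bndb}}(1/2)\int_0^T(1+s^{-1/2})\,ds\,X^N_0(\1)=:\tilde C(T)X^N_0(\1).
\]
Thus in either dimension $E[\langle M^N(\1)\rangle_{T\wedge T_n}]\le C(T)X^N_0(\1)$ uniformly in $n$.

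Doob's $L^2$-inequality applied to the bounded martingale $M^N_{\cdot\wedge T_n}(\1)$ now gives
\[
E\Bigl[\sup_{t\le T}(M^N_{t\wedge T_n}(\1))^2\Bigr]\le 4E[(M^N_{T\wedge T_n}(\1))^2]=4E[\langle M^N(\1)\rangle_{T\wedge T_n}]\le 4C(T)X^N_0(\1),
\]
and inserting this into $(X^N_{t\wedge T_n}(\1))^2\le 2X^N_0(\1)^2+2(M^N_{t\wedge T_n}(\1))^2$ followed by Fatou as $n\to\infty$ yields \eqref{e:L2bnd} with $C_{\ref{e:L2bnd}}(T)=8C(T)$. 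The uniform $L^2$ bound makes $\{X^N_{t\wedge T_n}(\1)\}_n$ uniformly integrable for each $t$, so $X^N_{t\wedge T_n}(\1)\to X^N_t(\1)$ in $L^1$ and the local martingale $X^N_t(\1)$ is in fact a true martingale; square-integrability follows from \eqref{e:L2bnd}. Finally, for $\phi\in C_0^3(\R^d)$ Lemma~\ref{l:mNs}(a) gives $m^N_s(\phi)\le\|\phi\|_\infty^2 m^N_s(\1)$, hence $E[\langle M^N(\phi)\rangle_T]\le\|\phi\|_\infty^2 C(T)X^N_0(\1)<\infty$, and since $M^N(\phi)$ is a local martingale with bounded jumps (see \eqref{Mjumps}) and integrable quadratic variation, it is a square-integrable $(\cF^N_t)$-martingale.
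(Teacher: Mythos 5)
Your argument is correct and follows essentially the same route as the paper's: write $X^N_t(\1)=X^N_0(\1)+M^N_t(\1)$, localize, apply Doob's $L^2$-inequality together with Proposition~\ref{p:ms-bnd}(a) for $d\ge3$ and (b) (with $\alpha=1/2$) for $d=2$, pass to the limit by Fatou/monotone convergence, and handle $M^N(\phi)$ via Lemma~\ref{l:mNs}(a). The only cosmetic difference is that the paper invokes \eqref{e:mass} to get $E[X^N_t(\1)]=X^N_0(\1)$ directly and concludes the martingale property of $X^N(\1)$ up front, whereas you derive only the supermartingale bound by Fatou and recover the true martingale property afterward from the uniform $L^2$ bound and uniform integrability; both routes are valid and rest on the same estimates.
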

\begin{proof}

By Proposition~\ref{p:sm-decomp}
  and Lemma~\ref{l:dNs}(a),
$X^N_t(\1)= X^N_0(\1) + M^N_t(\1)$, and so 
$$X^N_t(1)^2
\le 2X^N_0(\1)^2 + 2M^N_t(\1)^2.$$ 
Now $X_t^N(\1)$ is a non-negative local martingale which by
\eqref{e:mass} satisfies $E[X^N_t(\1)]=X_0^N(\1)$, and so
is a non-negative martingale.  
By
Doob's $L^2$ submartingale inequality, 
\begin{align*}
E\big[\sup_{t\le T}M^N_t(\1)^2\big] \le 
4E\big[\langle M^N(\1)\rangle_T\big] = 
4 \int_0^TE\big[ m^N_s(\1)\big] ds .
\end{align*}
As we don't know the square integrability yet, the first inequality holds by considering a sequence of localizing stopping times and applying monotone convergence. 
By Proposition~\ref{p:ms-bnd}(a), for $d\ge 3$,
\[
\int_0^T E[m^N_s(\1)]ds \le 
C_{\ref{e:L2bnda}}\int_0^T E[X^N_s(\1)]ds
= C_{\ref{e:L2bnda}} T X^N_0(\1).
\]
By Proposition~\ref{p:ms-bnd}(b), for $d=2$ and taking
$\alpha=1/2$, 
\[
\int_0^T  E[m^N_s(\1)]ds 
\le C_{\ref{e:L2bndb}}(\tfrac12) X^N_0(\1) \int_0^T (1+s^{-1/2})ds 
=  C_{\ref{e:L2bndb}}(\tfrac12)(T + 2T^{1/2})X^N_0(\1) .
\]
Combining the above bounds we obtain 
\eqref{e:L2bnd} and hence the next to last statement as well. 

It is easy to repeat the above reasoning using Lemma~\ref{l:mNs}(a) and see
that 
\begin{equation}\label{maxMbound}E\big[\sup_{t\le T}M^N_t(\phi)^2\big]\le C_{\phi}[T+\sqrt T]X_0^N(\1).
\end{equation}
This in turn shows that the local martingale $M^N(\phi)$ is in fact a square integrable martingale.
\end{proof}

\section{Random walk preliminaries} \label{sec:rwprel}Recall that 
$(\xi^{1}_t,\xi^{2}_t)$ is the two particle dual with
initial state $(x_1,x_2)$. We
will need to work with the difference process,
\[
\txi_t = \xi^{1}_t - \xi^{2}_t, 
\]
which is a Markov process starting at 
$x_1-x_2$.  
When $|\txi_t|>2r$, it makes the same transitions as a rate
$2\rho|B_r|$ random walk with jump distribution that of $\bar
U$, with density $h_{\bar U}(z)$ given in \eqref{e:barUdens}. We
will need basic information about this random walk, as well
as a way to compare $\txi_t$ to it. 

Throughout the paper, $Y_t=Y^x_t$ will denote a  rate $2\rho|B_r|$
random walk with jump 
distribution that of $\bar U$ starting at $x$ under $P_x$. That is,
$Y^x_t$ will be the pure-jump Markov process on $\R^d$ with generator
\begin{equation}\label{e:Ygen}
\cala^{Y}f(x) = 2\rho|B_r| \int_{B_{2r}(0)} (f(x+z)-f(x)) h_{\bar U}(z)dz 
\end{equation}
defined for suitable $f$. 
We will often make use of the Poisson process construction
\[
Y^0_t = S_{N_t}, \quad t\ge0,
\]
where $N_t$ is a rate $\lambda=2\rho|B_r|$ Poisson process on
$[0,\infty)$ which is independent of the iid random
variables $\bar U_1,\bar U_2,\dots$ which have the same
law as $\bar U$, and
$S_n=\bar U_1+\cdots \bar U_n$, $n\ge 1$ ($S_0=0$). We will
often write $Y_t$ for $Y^0_t$ and $Y^x_t$ for $x+Y^0_t$, where $x$ may be random.
Recall from \eqref{barsigdef} that $= E[|\bar U|^2]=d\bar\sigma^2$.

\begin{lemma}\label{l:rwfacts} (a) There is a constant
  $C_{\ref{e:Ydensbnd}}>0$  such that for all $t\ge 0$ and
  Borel $B\subset\Rd$,
\begin{equation}\label{e:Ydensbnd}
\begin{aligned}
P(Y^{\bar U}_t\in B) 
&\le \frac{C_{\ref{e:Ydensbnd}}|B|}{1+t^{d/2}}, \ \text{ and}\\
P_x(Y_t\in B) 
&\le e^{-2\rho|B_r|t}1_B(x) + \frac{C_{\ref{e:Ydensbnd}}|B|}{1+t^{d/2}}\quad\text{for all }x\in\R^d.
\end{aligned}
\end{equation}
In particular, for all $x\in\Rd$, $t>0$, and nonnegative Borel $f$,
\begin{equation}\label{e:Ydensbnd2}
E_x\Big[f\Big( \frac{Y_t}{\sqrt t}\Big)\Big] \le e^{-2\rho|B_r|
  t}\|f\|_\infty + C_{\ref{e:Ydensbnd}}\int_{\Rd}f(z)dz
\end{equation}
(b) For all $t\ge 0$, $E_0[|Y_t|^2]= 2\rho|B_r|d\bar\sigma^2t$. For $k\in\N$
there is a constant  
  $C_{\ref{e:Ymmt}}= C_{\ref{e:Ymmt}}(k)>0$ 
such that
\begin{equation}\label{e:Ymmt}
E_0[\sup_{s\le t}|Y_s|^{2k}] \le C_{\ref{e:Ymmt}}
t^k \text{ for all }t\ge 1. 
\end{equation}
(c) For $k\in\N$ there is a constant  
  $C_{\ref{e:Ysupbnd}}= C_{\ref{e:Ysupbnd}}(k)>0$
  \begin{equation}\label{e:Ysupbnd}
P_0(\sup_{s\le t}|Y_s|\ge a) \le C_{\ref{e:Ysupbnd}} 
\frac{t^{k}}{a^{2k}} \text{ for all } a>0, t\ge 1. 
\end{equation}
\end{lemma}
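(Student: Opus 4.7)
The plan is to deduce all three parts from a single pointwise density estimate for $Y^0_t = S_{N_t}$, where $N_t$ is a rate $\lambda:=2\rho|B_r|$ Poisson process and $(S_n)$ the associated random walk with iid steps distributed as $\bar U$.

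First I would establish that the absolutely continuous part $q_t(z)$ of the law of $Y^0_t$ (i.e.\ the density on $\{N_t\ge 1\}$) satisfies
\[
q_t(z) \le \frac{C}{1+t^{d/2}} \quad\text{for all }z\in\R^d,\ t>0.
\]
Conditional on $N_t=n\ge 1$, the walk has density $h_{\bar U}^{*n}$. The key input is the local CLT bound $\|h_{\bar U}^{*n}\|_\infty \le C n^{-d/2}$ for $n\ge 1$, valid because $h_{\bar U}$ is a bounded, compactly supported, mean-zero, non-degenerate density (standard; see e.g.\ Bhattacharya--Ranga Rao, or alternatively a direct Fourier proof using $\hat h_{\bar U}=|\hat h_U|^2\ge 0$ and the $|u|^{-(d+1)}$-decay of the Bessel-type function $\hat h_U$). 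Averaging against $N_t$ with Chernoff concentration $P(N_t<\lambda t/2)\le e^{-ct}$ upgrades this to $q_t\le C/t^{d/2}$ for $t\ge 1$; for $t\le 1$ the trivial bound $h_{\bar U}^{*n}\le \|h_{\bar U}\|_\infty$ suffices.

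Part (a) then follows by integration. The law of $Y^{\bar U}_t$ has, by the same argument applied to $S_{N_t+1}$, a density bounded pointwise by $C/(1+t^{d/2})$, giving the first line of \eqref{e:Ydensbnd}. The second line decomposes $P_x(Y_t\in B)$ into the atom $e^{-\lambda t}1_B(x)$ (from $\{N_t=0\}$) and $\int_B q_t(z-x)\,dz$. For \eqref{e:Ydensbnd2} I would substitute $w=(x+z)/\sqrt t$ in $E_x[f(Y_t/\sqrt t)]$:
\[
E_x[f(Y_t/\sqrt t)] = e^{-\lambda t}f(x/\sqrt t) + \int f(w)\,t^{d/2}q_t(\sqrt t w - x)\,dw,
\]
and note that $t^{d/2}q_t(\cdot)$ is uniformly bounded in $t>0$: for $t\ge 1$ this is the density bound, while for $t\le 1$ one uses $q_t(z)\le (1-e^{-\lambda t})\|h_{\bar U}\|_\infty\le Ct$, so $t^{d/2}q_t\le Ct^{d/2+1}\le C$ since $d\ge 2$.

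For (b), the identity $E_0[|Y_t|^2]=E[N_t]E[|\bar U|^2]=\lambda t\cdot d\bar\sigma^2$ is just Wald (using $E|\bar U|^2 = d\bar\sigma^2$ from \eqref{barsigdef}). For the sup bound, Burkholder's inequality on $(S_n)$ gives $E|S_n|^{2k}\le C_k n^k E|\bar U|^{2k}$, and boundedness of $\bar U$ together with $E[N_t^k]\le C_k t^k$ for $t\ge 1$ yields $E|Y_t|^{2k}\le C_k t^k$. Since $(Y_t)$ is a mean-zero $(\cF_t)$-martingale, $|Y_t|^{2k}$ is a non-negative submartingale, and Doob's $L^1$ submartingale inequality gives $E[\sup_{s\le t}|Y_s|^{2k}]\le 2 E|Y_t|^{2k}\le C_k t^k$, proving \eqref{e:Ymmt}. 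Part (c), \eqref{e:Ysupbnd}, is then immediate from Markov's inequality applied to $\sup_{s\le t}|Y_s|^{2k}$.

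The only delicate point is the local CLT density bound $\|h_{\bar U}^{*n}\|_\infty\le Cn^{-d/2}$; everything else is either change of variables, Poisson concentration, or a textbook martingale moment inequality, so I expect the density estimate to be the main technical input and the rest to follow mechanically.
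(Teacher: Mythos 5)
Your argument follows essentially the same route as the paper: a local CLT density bound $\|h_{\bar U}^{*n}\|_\infty\le Cn^{-d/2}$ (equivalently the Bhattacharya--Ranga Rao bound $P(S_n/\sqrt n\in B)\le C|B|$ the paper cites), combined with Poisson large-deviation concentration of $N_t$, yields (a); a Burkholder-type moment bound on $S_n$ plus Poisson moments yields (b); and (c) is Markov. The change-of-variables derivation of \eqref{e:Ydensbnd2}, including the separate handling of $t\le 1$ using $d\ge 2$, is correct.

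There is, however, one genuine flaw in part (b). You invoke a ``Doob $L^1$ submartingale inequality'' to pass from $E[|Y_t|^{2k}]\le C_kt^k$ to $E[\sup_{s\le t}|Y_s|^{2k}]\le 2E[|Y_t|^{2k}]$. No such inequality exists: Doob's $L^p$ maximal inequality requires $p>1$, and the constant $\frac{p}{p-1}$ blows up as $p\downarrow 1$; the weak-type $(1,1)$ bound and the $L\log L$ bound are the best one has at $p=1$, and neither gives $E[\sup]\le 2E[X_t]$. The fix is easy and uses only what you already have: apply Doob's $L^{2k}$ maximal inequality to the nonnegative submartingale $|Y_s|$ (or coordinatewise to the real-valued martingales $Y^{(i)}_s$), giving
\[
E\Big[\sup_{s\le t}|Y_s|^{2k}\Big]\le\Big(\tfrac{2k}{2k-1}\Big)^{2k}E\big[|Y_t|^{2k}\big]\le C_k\,t^k,
\]
with a constant depending on $k$ (as the lemma allows). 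By contrast, the paper avoids the continuous-time Doob step entirely by bounding $E[\max_{m\le n}|S_m^{(1)}|^{2k}]\le C_kn^k$ directly in discrete time via Burkholder and then conditioning on $N_t$, which packages the maximal inequality into the discrete-time estimate; either route works once your Doob step is corrected.
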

\begin{proof} (a) According to Theorem~19.1 of \cite{BR}, there is a 
uniform bound on the densities of
$S_n/\sqrt n$, $n=1,2,\dots$, so that  
\begin{equation}\label{e:BRbnd}
P\Big( \frac{S_n}{\sqrt n}\in B \Big) \le C|B|
\quad\forall\ n\ge 1, \text{ Borel }B\subset \R^d. 
\end{equation}
By a standard large deviations estimate, for $0<\alpha<1$, 
\[
P(N_t\le \alpha\lambda t) \le 
\exp(-c_\alpha\lambda t) ,
\]
where $c_\alpha=1-\alpha+\alpha\log \alpha>0$. 
Using $Y^x_t=x+S_{N_t}$ and the density bound \eqref{e:BRbnd}, 
\begin{align*}
P\Big(Y^x_t\in B\Big) &= e^{-\lambda t}1_B(x) +
\sum_{n=1}^\infty e^{-\lambda t}\frac{(\lambda t)^n}{n!}
P\Big(\frac{S_n}{\sqrt n}\in \frac{B-x}{\sqrt n} \Big)\\
&\le e^{-\lambda t}1_B(x) +
C\Big\{\sum_{1\le n< \lambda t/2}+ \sum_{n\ge1\vee(\lambda t/2)}\Big\}
e^{-\lambda t}\frac{(\lambda t)^n}{n!}
\frac{|B|}{n^{d/2}}\\
& \le e^{-\lambda t}1_B(x) +
C|B| \Bigl(P(N_t< \lambda t/2)+
E\big[1\{N_t\ge 1\vee(\lambda t/2)\}N_t^{-d/2}\big]\Bigr) \\
& \le e^{-\lambda t}1_B(x) +
C|B|\Bigl(e^{-c_{1/2}\lambda t}+
(1\vee(\lambda t/2))^{-d/2}\Bigr)\\
&\le e^{-\lambda t}1_B(x) +C|B|(1+t)^{-d/2}, 
\end{align*}
where we have used the large deviation bound with
$\alpha=1/2$. This proves (a) 
for $Y$ starting at $x$. The result for
$Y^{\bar U}_t$ follows from the observation that $Y^{\bar
  U}_t$ has the same law as $S_{N(t)+1}$ and a slight alteration in the above calculation.
  
(b) It is easy to see from the representation $Y^0_t=S_{N_t}$
that $E_0[|Y_t|^2] = 2\rho|B_r| t E[|\bar U|^2]$. Now consider
$k\in\N$. There is a constant $C_k>0$ such that
$E(\max_{m\le n}|S_m|^{2k}) \le C_kn^k$. To see this
we switch to
component notation, and write $\bar
U_j= (\bar U_j^{(1)},\dots,\bar U_j^{(d)})$ and 
$S_n=(S_n^{(1)},\dots,S^{(d)}_n)$, where
$S^{(j)}_n=\sum_{i=1}^n \bar U^{(j)}_i$. Then
\[
E[\max_{m\le n}|S_m|^{2k}] = E\Big[\max_{m\le n} \big( \sum_{j=1}^d|S^{(j)}_m|^2 \big)^k\Big]
\le d^{k-1}E\Big[\sum_{j=1}^d \max_{m\le n}|S^{(j)}_n|^{2k}\Big] =d^{k}
E\big[
\max_{m\le n}|S_m^{(1)}|^{2k} \big].
\]
Now $S^{(1)}_n$ is a sum of bounded, mean zero independent random
variables, so a martingale square function argument (e.g. see Theorem 21.1 of \cite{Burk73}) shows that
for each $k\ge 1$ there
  is a constant
  $C_k=C_k(r)>0$
  such that $E[\max_{m\le n}|S^{(1)}_m|^{2k}]  \le C_k
  n^{k}$ for all $n\in\nn$. This implies 
\begin{align*}
E[|Y_t|^{2k}] &= \sum_{n=1}^\infty e^{-\lambda t}
\frac{(\lambda t)^n}{n!} E[|S_n|^{2k}] 
\le Cd^{k+1}\Big(\sum_{n=1}^{k-1}e^{-\lambda t}
\frac{(\lambda t)^n}{n!} n^k
+ \sum_{n={k}}^\infty e^{-\lambda t}
\frac{(\lambda t)^n}{n!} n^k\Big) .
\end{align*}
The first sum is bounded by $(k-1)^k$.
The second sum is bounded by
\[
(\lambda t)^k\sum_{n=k}^{\infty}e^{-\lambda t}
 \frac{(\lambda t)^{n-k}}{(n-k)!}\prod_{j=1}^{k}\frac{n}{n-j+1}
\le  (\lambda t)^kk^k.
\]
This proves \eqref{e:Ymmt} for
$t\ge 1$. 

(c) This is immediate from (b) and Markov's inequality.
\end{proof}

For $a,A>0$, define the hitting times 
\begin{equation}\label{hittingdefn}
t_a = \inf\{s\ge 0:|Y_s|\le a\}\text{ and }
T_A = \inf\{s\ge 0:|Y_s|\ge A\}.
\end{equation}

\begin{proposition} Assume $d\ge 3$ and $2r<a<|x|$.
Then 
\begin{equation}\label{e:tafinite}
P_x(t_a <\infty) \le \Big(\frac{a}{|x|}\Big)^{d-2}
\end{equation}
\end{proposition}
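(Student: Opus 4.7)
The plan is to exhibit a function harmonic for the generator $\cala^Y$ on $\{|y|>2r\}$ and then apply optional stopping, mimicking the classical argument for Brownian motion. The natural candidate is $h(y)=|y|^{-(d-2)}$, which is harmonic on $\R^d\setminus\{0\}$ for the Laplacian.

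To verify that $\cala^Y h(y)=0$ for $|y|>2r$, I would exploit the convolution structure $\bar U=U^1+U^2$ of the jump distribution. Conditioning on $U^2=u$ with $|u|\le r$: if $|y+u|>r$ then $B_r(y+u)$ avoids the origin, so the spherical mean value property for harmonic functions gives $E[h((y+u)+U^1)]=h(y+u)$, since $(y+u)+U^1$ is uniform on $B_r(y+u)$. Integrating in $u$ and applying the mean value property a second time (now requiring $|y|>r$) yields $E[h(y+\bar U)]=h(y)$. Both conditions hold exactly when $|y|>2r$.

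For $A>|x|$, let $T_A=\inf\{s:|Y_s|\ge A\}$ and $\sigma_A=t_a\wedge T_A$. On $\{s<\sigma_A\}$ one has $|Y_s|>a>2r$, so by Dynkin's formula $h(Y_{\cdot\wedge\sigma_A})$ is a local martingale. The jump bound $|\bar U|\le 2r$ gives $|Y_{t_a}|\ge a-2r>0$ on $\{t_a\le T_A\}$, bounding $h$ along the stopped path by $(a-2r)^{-(d-2)}$ and thereby promoting the local martingale to a true martingale. Since the cadlag paths of $Y$ are bounded on bounded intervals, $T_A\to\infty$ almost surely, so $\sigma_A<\infty$. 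Optional stopping then gives
\[
|x|^{-(d-2)}=E_x[h(Y_{\sigma_A})]\ge a^{-(d-2)}P_x(t_a<T_A),
\]
using $h\ge 0$ to discard the contribution on $\{T_A\le t_a\}$ and $h(Y_{t_a})\ge a^{-(d-2)}$. Letting $A\to\infty$, $\{t_a<T_A\}\uparrow\{t_a<\infty\}$, and monotone convergence yields the claim.

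The main subtlety I anticipate is establishing exact harmonicity of $h$ under $\cala^Y$: a direct Taylor expansion gives zero at leading order (as $\Delta h=0$ and $\bar U$ is isotropic) but leaves a fourth-order remainder of indefinite sign. The two-stage mean value argument sidesteps this by using that $\bar U$ is the convolution of two compactly supported radial densities, so the expectation collapses to a point evaluation.
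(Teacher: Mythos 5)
Your proof is correct and takes essentially the same route as the paper: both identify $|y|^{2-d}$ as harmonic for $Y$ on $\{|y|>2r\}$, form the bounded martingale $|Y_{\cdot\wedge t_a\wedge T_A}|^{2-d}$, apply optional stopping, and send $A\to\infty$; your two-stage mean-value verification via the convolution $\bar U=U^1+U^2$ makes explicit the harmonicity the paper asserts only ``by radial symmetry.'' One small slip in the justification: $\sigma_A<\infty$ a.s.\ follows from $T_A<\infty$ a.s.\ (the non-degenerate walk exits every bounded region), not from $T_A\to\infty$ as $A\to\infty$ --- though this does not affect the inequality, which holds anyway after discarding nonnegative terms.
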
 
\begin{proof} Let $A>|x|$. By radial symmetry and \eqref{e:Ygen},
$f(x)= |x|^{2-d}$ is a harmonic function for $Y$. If we
let $\sigma=t_a\wedge T_A$,
then $|Y_{s\wedge \sigma}|^{2-d}$ is a bounded
martingale (recall $a>2r$), and so
\begin{equation}\label{e:xaA-1}
|x|^{2-d} = E_x[|Y_{t_a}|^{2-d} 1(t_a<T_A)]
+ E_x[|Y_{T_A}|^{2-d} 1(T_A<t_a)] .
\end{equation}
Clearly $|Y_{T_A}|^{2-d}\le A^{2-d}$ if $T_A<t_a$, and
$|Y_{t_a}|^{2-d}\ge a^{2-d}$ if ${t_a<T_A}$. This means that
if we let 
$A\to\infty$ in $\eqref{e:xaA-1}$, then 
\[
|x|^{2-d} \ge E_x[a^{2-d}1( t_a<\infty)]
= a^{2-d}P_x( t_a<\infty),
\]
proving \eqref{e:tafinite}. 
\end{proof}

\begin{lemma}\label{l:logprob} Assume $d=2$. If 
$A>2$ and $2r<a<|x|<A$,  then 
\begin{gather}\label{e:Aa}
P_x(T_A<t_a) \le\frac{\log|x|-\log
    (a-2r)}{\log A-\log (a-2r)},\\
\label{e:aA}
P_x(t_a<T_A)\le\frac{\log (A+2r) - \log|x|}
{\log (A+2r) -\log a},\\ \intertext{and}
\label{e:Aalimit}
\lim_{A\to\infty}(\log A)\; P_x(T_A<t_a)  
= \log|x| - E_x[\log|Y_{t_a}|] .
\end{gather}
\end{lemma}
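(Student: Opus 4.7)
The plan is to base the proof on the observation that $f(y)=\log|y|$ is \emph{exactly} $\cala^Y$-harmonic on $\{|y|>2r\}$, after which all three bounds follow by routine optional stopping. To verify the harmonicity, write $\bar U=U^1+U^2$ with $U^1,U^2$ independent and uniform on $B_r$. For $|y|>2r$ one has $|y+U^1|>r$ almost surely, so the spherical mean-value property of the harmonic function $\log|\cdot|$ on $\R^2\setminus\{0\}$ gives
\[E[\log|(y+U^1)+U^2|\mid U^1]=\log|y+U^1|\quad\text{a.s.}\]
Averaging over $U^1$ and applying the mean-value property once more (valid since $|y|>r$) yields $E[\log|y+\bar U|]=\log|y|$; that is, $\cala^Y\log|y|=0$ on $\{|y|>2r\}$.

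With this harmonicity, set $\sigma=T_A\wedge t_a$. For $s<\sigma$ we have $a<|Y_s|<A$, so $Y_s\in\{|y|>2r\}$, and since jumps of $Y$ have norm at most $2r$, $|Y_{t\wedge\sigma}|$ always lies in $(a-2r,A+2r)$. Thus $\log|Y_{t\wedge\sigma}|$ is bounded, and Dynkin's formula combined with the harmonicity shows it is a martingale. Recurrence of the $2$-d embedded random walk $S_n$ (mean zero, finite second moment) gives $t_a<\infty$ a.s., hence $\sigma<\infty$ a.s., and optional stopping yields the master identity
\[\log|x|=E_x[\log|Y_\sigma|].\]

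From here the three parts are immediate. On $\{T_A<t_a\}$, $|Y_\sigma|\in[A,A+2r)$; on $\{t_a<T_A\}$, $|Y_\sigma|\in(a-2r,a]$. Lower-bounding $\log|Y_\sigma|$ by $\log A$ and $\log(a-2r)$ respectively in the master identity and rearranging gives \eqref{e:Aa}; the symmetric upper bounds $\log(A+2r)$ and $\log a$ give \eqref{e:aA}. For \eqref{e:Aalimit}, recurrence forces $T_A\to\infty$ a.s. as $A\to\infty$, so $\one{t_a<T_A}\to 1$; bounded convergence (using $\log|Y_{t_a}|\in(\log(a-2r),\log a]$) gives $E_x[\log|Y_{t_a}|\one{t_a<T_A}]\to E_x[\log|Y_{t_a}|]$, and the sandwich
\[(\log A)P_x(T_A<t_a)\le E_x[\log|Y_{T_A}|\one{T_A<t_a}]\le(\log A+\log(1+2r/A))P_x(T_A<t_a)\]
controls the remaining term as $A\to\infty$. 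The only substantive point is the exact harmonicity via the double mean-value identity; everything else is routine optional stopping.
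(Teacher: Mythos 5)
Your proof is correct and takes essentially the same route as the paper: harmonicity of $\log|\cdot|$ away from the origin, optional stopping for the bounded martingale $\log|Y_{t\wedge\sigma}|$ (bounded because the jump size is at most $2r$), the overshoot bounds $|Y_{T_A}|\in[A,A+2r)$ and $|Y_{t_a}|\in(a-2r,a]$, and recurrence plus bounded convergence for the limit. The only difference is that you spell out the harmonicity on $\{|y|>2r\}$ via the two-step mean-value argument through $U^1,U^2$, whereas the paper simply asserts it from radial symmetry and the form of the generator.
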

\begin{proof}
By radial symmetry and \eqref{e:Ygen},
$\log |x|$ is a harmonic function for $Y$. If 
$\sigma=t_a\wedge T_A$ as before then  $\log|Y_{s\wedge \sigma}|$ is a bounded
martingale, and 
\begin{equation}\label{e:xaA-3}
\log |x| = E_x[\log |Y_{t_a}|; t_a<T_A]
+ E_x[\log |Y_{T_A}|; T_A<t_a]. 
\end{equation}
Using  $|Y_{T_A}|\ge A$
and $|Y_{t_a}|> a-2r$ in the above gives
\begin{align*}
\log |x| &\ge  (\log (a-2r))(1-P_x(T_A<t_a)) +
(\log A) P_x(T_A<t_a).
\end{align*}
Rearranging gives \eqref{e:Aa}. A similar argument yields 
\eqref{e:aA}.

For \eqref{e:Aalimit}, rearranging \eqref{e:xaA-3} gives
\begin{align*}
\Big|\log |x|& - E_x[\log |Y_{t_a}|;
t_a<T_A)]-(\log A) P_x(T_A<t_a)\Big| \\
& = E_x[(\log|Y_{T_A}|-\log A)1(T_A<t_a)] 
\le \log\Big( \frac{A+2r}{A}\Big) \to 0
\end{align*}
as $A\to\infty$. 
Also, by recurrence and bounded convergence, 
$E_x[\log |Y_{t_a}|;t_a<T_A] \to E_x[\log
|Y_{t_a}|]$ as $A\to\infty$, which means that
\eqref{e:Aalimit} must hold.  
\end{proof}

\begin{lemma}\label{l:AlogA} Assume $d=2$. 
 There is a constant 
$C_{\ref{e:AlogA}}>0$ such that  for $A>2$ and $|x|<A/2$,
\begin{equation}\label{e:AlogA}
P_x\left( T_A\notin \Big[\frac{A^2}{\log A},A^2\log A\Big]\right) 
\le C_{\ref{e:AlogA}}/(\log A)^{2}. 
\end{equation}
\end{lemma}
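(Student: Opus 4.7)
The plan is to prove the two tail bounds $P_x(T_A<A^2/\log A)\le C/(\log A)^2$ and $P_x(T_A>A^2\log A)\le C/(\log A)^2$ separately and combine them. Both pieces rest on moment information for $Y$ already established in Lemma~\ref{l:rwfacts}, together with the basic fact that $|Y_s|^2-\lambda d\bar\sigma^2 s$ (with $\lambda=2\rho|B_r|$) is a martingale, since the jumps $\bar U$ are symmetric with $E[|\bar U|^2]=d\bar\sigma^2$.

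For the lower tail, the hypothesis $|x|<A/2$ gives the set inclusion
\[
\{T_A<t\}\subseteq\Bigl\{\sup_{s\le t}|Y_s-x|\ge A/2\Bigr\},
\]
and translation invariance reduces the right-hand side to a sup estimate starting from $0$. The moment bound \eqref{e:Ysupbnd} applied with $k=2$ then produces
\[
P_x(T_A<t)\le P_0\Bigl(\sup_{s\le t}|Y_s|\ge A/2\Bigr)\le \frac{C\,t^2}{A^4}.
\]
Choosing $t=A^2/\log A$ gives exactly the desired $O(1/(\log A)^2)$ bound.

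For the upper tail, I would first show a uniform single-scale estimate: there exists $C_0>0$ such that $\sup_{|y|<A}P_y(T_A>C_0 A^2)\le 1/2$ for every $A>2$. Optional stopping applied to the martingale $|Y_s|^2-\lambda d\bar\sigma^2 s$ at $T_A\wedge t$, together with the bounded overshoot $|Y_{T_A}|\le A+2r\le (1+r)A$ (valid because $A>2$), yields
\[
E_y[T_A\wedge t]\le\frac{(A+2r)^2-|y|^2}{\lambda d\bar\sigma^2},
\]
and letting $t\to\infty$ gives $E_y[T_A]\le C_1A^2$ uniformly in $y\in B_A$. Markov's inequality and the choice $C_0=2C_1$ furnish the half-probability bound.

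I would then iterate using the Markov property at the deterministic times $kC_0A^2$: on the event $\{T_A>kC_0A^2\}$ the walker sits in the open ball $B_A$, so conditioning and applying the single-scale bound repeatedly gives
\[
P_x(T_A>nC_0A^2)\le 2^{-n},\qquad n\in\N.
\]
Taking $n=\lfloor\log A/C_0\rfloor$ arranges $nC_0A^2\le A^2\log A$, so $P_x(T_A>A^2\log A)\le 2 A^{-(\log 2)/C_0}$, which is $o(1/(\log A)^2)$ for large $A$, while for $A$ in the bounded range $(2,e^{C_0}]$ the bound $P\le 1\le C_0^2/(\log A)^2$ is trivial. The main obstacle is really just bookkeeping: one must ensure the single-scale half-probability estimate is genuinely uniform over all starting points in $B_A$, which is what makes the iteration legitimate; this is handled cleanly by the deterministic overshoot control coming from the compactly supported jumps of $Y$.
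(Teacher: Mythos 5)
Your proof is correct, and it splits cleanly along the same two tails as the paper. The lower-tail bound $P_x(T_A<A^2/\log A)\le C/(\log A)^2$ is identical to the paper's: translation to $0$, the inclusion $\{T_A<t\}\subseteq\{\sup_{s\le t}|Y_s|\ge A/2\}$, and \eqref{e:Ysupbnd} with $k=2$. For the upper tail, however, you take a genuinely different route. The paper verifies that the fourth-order polynomial $u_4(t,y)=|y|^4-4\lambda\sigma_2|y|^2t+2(\lambda\sigma_2)^2t^2-\lambda\sigma_4t$ is a space-time harmonic for $Y$, uses optional stopping to get $E_0[T_A^2]\le C(A+2r)^4$, and then applies Chebyshev with the second moment of $T_{3A/2}$. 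You instead use only the second-order martingale $u_2$ (so only $E_y[T_A]\le C_1A^2$, uniformly for $|y|<A$, using the $\le 2r$ overshoot), then iterate the Markov property at deterministic times $kC_0A^2$ to get the geometric tail $P_x(T_A>nC_0A^2)\le 2^{-n}$, which with $n=\lfloor\log A/C_0\rfloor$ gives polynomial decay in $A$ — strictly stronger than the $(\log A)^{-2}$ the lemma asserts. The trade-off: you avoid checking that $u_4$ is harmonic (a small but nontrivial computation), at the cost of slightly more bookkeeping (establishing uniformity over $B_A$, the restart argument, and the trivial treatment of $A$ near $2$). Both arguments are sound; yours is more elementary in the sense that it uses only first-moment information about $T_A$, and it exposes that the true tail of $T_A$ is exponential in $T_A/A^2$, not merely quadratic.
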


\begin{proof} By \eqref{e:Ysupbnd} with $k=2$, for all $x,A$ as in the Lemma,
\begin{align}\notag
P_x(T_A\le  \frac{A^2}{\log A}) &=
P_0\Big(\sup_{s\le A^2/\log A}|x+Y_s| \ge A\Big)
\le P_0\Big(\sup_{s\le A^2/\log A}|Y_s| >A/2\Big)\\
&\le C_{\ref{e:Ysupbnd}}\frac{(A^2/\log A)^2}{(A/2)^4}
=16C_{\ref{e:Ysupbnd}}/(\log A)^2.\label{e:AlogA1}
\end{align}

To handle $P_x(T_A\ge A^2\log A)$ we must first estimate
$E_x(T^2_A)$. 
Let $\sigma_2=E(|\bar U|^2)$, $\sigma_4=E(|\bar U|^4)$ and
$\lambda=2\rho|B_r|$, and
define the functions
\begin{align*}
u_2(t,y) &= |y|^2-\lambda \sigma_2t, \\
u_4(t,y) &= |y|^4 -4\lambda \sigma_2|y|^2t +2(\lambda \sigma_2)^2 t^2
-\lambda \sigma_4t.
\end{align*}
It is a straightforward calculation to check that both
$u=u_2$ and $u=u_4$ satisfy 
\[
\frac{\partial u}{\partial t}  + \cala^Y u \equiv 0.
\]
This and the fact that for $p=2,4$, $|y|^p$,  and $\cala^Y(
|y|^p$) are bounded on $\{|y|\le A+2r\}$,   
 so that $u_p(t\wedge T_A,Y_{t\wedge T_A})$ is uniformly
 bounded for  $t\le t_0$, easily imply 
 that both $u_2(t\wedge T_A,Y_{t\wedge T_A})$ and $u_4(t\wedge T_A,Y_{t\wedge T_A})$ are
martingales.   
Therefore $E_0[u_2(t\wedge T_{A},Y_{t\wedge T_{A}})] =0$, 
and since $|Y_{T_{A}}|\le A+2r$, 
\begin{equation}\label{e:AlogA2}
\lambda \sigma_2 E_0 [T_{A} ]=\lim_{t\to\infty}\lambda \sigma_2 E_0 [T_{A} \wedge t] = \lim_{t\to\infty}E_0\big[(Y_{T_{A}\wedge t})^2\big] 
\le (A+2r)^2. 
\end{equation}
Now, since $E_0[u_4(T_{A}\wedge t,Y_{T_{A}\wedge t})] =0$,
\begin{align*}
2(\lambda \sigma_2)^2 E_0\big[(T_{A}\wedge t)^2\big] 
&= - E_0\big[ |Y_{T_{A}\wedge t}|^4\big] 
+ 4\lambda \sigma_2 E_0\big[(T_{A}\wedge t)|Y_{T_{A}\wedge t}|^2\big]
+ \lambda \sigma_4E_0\big[T_{A}\wedge t\big]  \\
&\le 4\lambda \sigma_2(A+2r)^2 E_0[T_{A}]  +\lambda
  \sigma_4E_0[T_{A}]
\\
&\le 4(A+2r)^4+\frac{\sigma_4}{\sigma_2}(A+2r)^2,
\end{align*}
where we have used \eqref{e:AlogA2}. Let $t\to\infty$ on the
left-hand side of the above to conclude that 
\begin{equation}\label{e:AlogA3}
E_0[T_{A}^2] \le C(A+2r)^4\quad\text{for all }A>1. 
\end{equation}
On account of this bound and Markov's inequality, we have
for $|x|<A/2$ and $A>2$,
\[
P_x(T_A>A^2\log A) \le P_0(T_{3A/2}> A^2\log A) \le 
E_0[T_{3A/2}^2] / (A^2\log A)^2\le C/(\log A)^2.
\]
Together with \eqref{e:AlogA1} this proves \eqref{e:AlogA}. 
\end{proof}
 
The following  technical result will play a key role in
the proof of Lemma~\ref{l:GE} below.

\begin{lemma}\label{l:lR} Assume $d=2$. For
  $\alpha,\beta>0$ and $t>0$  
there is a constant $C_{\ref{e:lR}}=
C_{\ref{e:lR}}(\alpha,\beta, t)\ge 1$ such
  that if $(\log N)^{-\beta}\le s\le t$ and $w\in\R^2$
  satisfies $0< |w|\le (\log
  N)^{-\alpha}$,
  then 
\begin{equation}\label{e:lR}
P_{w\sqrt N}(t_{3r}< Ns) 
\le C_{\ref{e:lR}} \frac{\log(1/|w|)}{\log N}\, .
\end{equation}
\end{lemma}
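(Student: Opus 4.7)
The strategy is to exploit that $\log|y|$ is harmonic for the random-walk generator $\cala^Y$ on $\{|y|>2r\}$ (by the mean-value property together with the radial symmetry of $\bar U$, as in the proof of Lemma~\ref{l:logprob}), and apply optional stopping to the bounded martingale $\log|Y_{\cdot \wedge t_{3r}\wedge T_A}|$ for a carefully chosen outer radius $A$. The bound on $T_A$ from Lemma~\ref{l:AlogA} allows us to truncate at the required time horizon $Ns$.

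The case $|w|\sqrt N\le 5r$ is trivial: then $\log(1/|w|)\ge\tfrac12\log N-\log(5r)$, so $\log(1/|w|)/\log N$ is bounded below by a positive constant for $N$ large, and the conclusion follows from $P\le 1$ by choosing $C_{\ref{e:lR}}$ large (the finitely many $N\in[3,N_0]$ are absorbed by further enlarging the constant). So assume $|w|\sqrt N>5r$.

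Choose $A=A_N:=\sqrt{Nt\log N}$. The hypothesis $|w|\le(\log N)^{-\alpha}$ ensures $|w|\sqrt N<A/2$ for $N$ large, and a direct computation yields $A^2/\log A\ge Nt\ge Ns$, so Lemma~\ref{l:AlogA} gives $P_{w\sqrt N}(T_A\le Ns)\le C/(\log N)^2$. The process $M_s:=\log|Y_{s\wedge t_{3r}\wedge T_A}|$ is a bounded martingale (in $[\log r,\log(A+2r)]$, using that jumps of $Y$ have magnitude $\le 2r$). Optional stopping at the bounded stopping time $\sigma=t_{3r}\wedge T_A\wedge Ns$ yields $\log(|w|\sqrt N)=E_{w\sqrt N}[\log|Y_\sigma|]$. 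Decomposing onto the three disjoint events $\{t_{3r}=\sigma\},\{T_A=\sigma\},\{Ns=\sigma\}$ with probabilities $p_1,p_2,p_3$ (summing to $1$) and using the upper bounds $\log|Y_{t_{3r}}|\le\log 5r$, $\log|Y_{T_A}|\le\log(A+2r)$, $\log|Y_{Ns}|\le\log A$ on the respective events, and eliminating $p_3=1-p_1-p_2$, gives
\[
p_1 \le \frac{\log A-\log(|w|\sqrt N)+\log(1+2r/A)}{\log A-\log 5r}
= \frac{\log(1/|w|)+\tfrac12\log(t\log N)+O(1/A)}{\tfrac12\log N+O(\log\log N)}.
\]
Since $\log(1/|w|)\ge\alpha\log\log N$, the $\tfrac12\log(t\log N)$ term in the numerator is bounded by a constant multiple of $\log(1/|w|)$, and the denominator is at least $\tfrac14\log N$ for $N$ large. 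Hence $p_1\le C_{\alpha,t}\log(1/|w|)/\log N$.

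Combining, $P_{w\sqrt N}(t_{3r}<Ns)\le p_1+P_{w\sqrt N}(T_A\le Ns)\le C\log(1/|w|)/\log N+C/(\log N)^2$, and the second term is absorbed into the first because $\log(1/|w|)\ge\alpha\log\log N\gg 1/\log N$. The main difficulty is choosing $A$ to balance two competing demands: $A$ must be large enough that $T_A>Ns$ with probability $1-o(1)$, but $\log A$ must stay close to $\tfrac12\log N$ so that the harmonic-function inequality yields the sharp $1/\log N$ rate rather than the constant $\approx\tfrac12$ that one gets by taking $A$ polynomially larger than $\sqrt N$; the choice $A=\sqrt{Nt\log N}$ achieves this balance up to a harmless logarithmic factor.
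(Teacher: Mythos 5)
Your proof is correct and follows essentially the same route as the paper: decompose $\{t_{3r}<Ns\}$ according to whether $T_A$ is exceeded by time $Ns$, bound the non-escape piece via optional stopping of the martingale $\log|Y_{\cdot\wedge t_{3r}\wedge T_A}|$ (which is the content of Lemma~\ref{l:logprob}\eqref{e:aA}, re-derived here with a three-event decomposition), and show that $T_A>Ns$ with probability $1-O((\log N)^{-2})$. The one noteworthy difference is the choice of $A$: the paper sets $A=|w|\sqrt N+\sqrt{Ns}(\log N)^\alpha$, a radius tied to $s$ and $w$, and then needs the hypothesis $s\ge(\log N)^{-\beta}$ to keep the denominator $\log((A+2r)/3r)$ of order $\log N$; you instead take the uniform $A=\sqrt{Nt\log N}$, invoke Lemma~\ref{l:AlogA} (which packages the same moment bound \eqref{e:Ysupbnd}) to control $P(T_A\le Ns)\le P(T_A\le A^2/\log A)$, and as a result never use $s\ge(\log N)^{-\beta}$ at all --- a genuine (if modest) simplification, which is possible precisely because $P_{w\sqrt N}(t_{3r}<Ns)$ is monotone in $s$ so the lower bound on $s$ was never essential.
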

\begin{proof} We may suppose $|w|> 3r/\sqrt N$, because otherwise 
  $C_{\ref{e:lR}}$ can be chosen large enough so that the
  right side of \eqref{e:lR} is at least one. 
Now for any $A>|w|\sqrt N$, 
\begin{equation}\label{e:lR0}
P_{w\sqrt N}(t_{3r} < Ns) \le
P_{w\sqrt N}(t_{3r}< T_{A}) +
P_{w\sqrt N} ( T_A \le Ns) .
\end{equation}
To handle the first term, we apply 
Lemma~\ref{l:logprob} with $x=w\sqrt N$ and $a=3r$, 
\begin{equation}\label{hittingbounds}
P_{w\sqrt N}(t_{3r}< T_{A}) 
\le \frac{\log\Big(\frac{A+2r}{w\sqrt{N}}\Big)}
{\log\Big(\frac{A+2r}{3r}\Big)}.
\end{equation}
Now set $A=|w|\sqrt N + \sqrt{Ns}(\log
N)^\alpha$.  Using $s\le t$ and $|w|\le (\log N)^{-\alpha}$, and taking
$N\ge N_0(t)$, we see that for some $C(t)>0$,
\[
\log\Big( 
\frac{A+2r}{|w|\sqrt N}\Big) \le
 \log\Big(1 +\frac{2\sqrt t(\log N)^\alpha}{|w|}
\Big) \le \log\Big(1+\frac{2\sqrt t}
{|w|^2} 
\Big) \le C(t)\log(1/|w|).
\]  
Using $s\ge (\log N)^{-\beta}$ gives for $N\ge N_1(\alpha,\beta)$, 
\begin{align*}
\log \Big(\frac{A+2r}{3r}\Big) \ge \log\Big(\frac{\sqrt{Ns}(\log N)^\alpha}{3r}\Big)&\ge
\log\Big(\frac{\sqrt{N}(\log N)^{\alpha-(\beta/2)}}{3r}\Big)\\
&=\frac12\log N + (\alpha-\frac{\beta}{2})\log\log N-\log(3r)
\ge \frac14\log N.
\end{align*}
Plug the above bounds in \eqref{hittingbounds} to see that for $N\ge N_0(t)\vee N_1(\alpha,\beta)$,
\begin{equation}\label{e:lR1}
P_{w\sqrt N}(t_{3r}< T_{A}) 
\le {4C(t)}\frac{\log(1/|w|)}{\log N}. 
\end{equation}

For the second term in \eqref{e:lR0}, take $k\ge
1/\alpha$ and use \eqref{e:Ysupbnd} to get
\begin{align*}
P_{w\sqrt N} ( T_A \le Ns) &= P_{0}\Big(\sup_{u\le
Ns}|w\sqrt N+ Y_u| \ge A \Big)
\le P_{0}\Big(\sup_{u\le
Ns}|Y_u| \ge A-|w|\sqrt N \Big)\\
&= P_{0}\Big(\sup_{u\le
Ns}|Y_u| \ge \sqrt{Ns}{(\log N)^\alpha} \Big)
\le C_{\ref{e:Ysupbnd}}(k)\frac{(Ns)^k}
{\big(\sqrt{Ns}(\log N)^{\alpha}\big)^{2k}}\\
&\le C \big(\log N\big)^{-2k\alpha} 
\le C (\log N)^{-2} 
\end{align*}
for $Ns\ge N(\log N)^{-\beta}\ge 1$. 
Plugging this bound and \eqref{e:lR1} into \eqref{e:lR0}
we get \eqref{e:lR} for $N\ge N_2$ depending only on
$\alpha,\beta,t$. By the upper bound on $|w|$ we may increase $C_{\ref{e:lR}}$ to get
\eqref{e:lR} for all $N\ge 3$ (recall \eqref{Nrange}). 
\end{proof}

\section{The two particle dual}\label{sec:2dual}

In this section we collect some properties of the two-particle dual which will be needed
in our analysis of the martingale square functions. Our main focus will be on the difference
of the two particles.  
Define
\[
\psi_r(a) = \begin{cases} \rho^2|B_r\cap B_r(a)|&\text{ if }a\neq0\\
\rho|B_r|&\text{ if }a=0,
\end{cases}
\]
and observe that $\psi_r(a)$ is 
decreasing in
$|a|$ and $0\le \psi_r(a)/\rho|B_r| \le 1$. 
Consider the two-particle dual
$(\xi^{1}_t,\xi^{2}_t)$ starting at $(x_1,x_2)$, $x=x_1-x_2$, the difference process $\tilde \xi_t=\tilde\xi^x_t
= \xi^{1}_t-\xi^{2}_t$,  and the 
coalescence time $\tau$ defined in \eqref{e:tauc}.
By the dynamics defining the two-particle dual (recall \eqref{e:d2rates}), the
fact that 
$|B_r(a)\cap B_r(b)| = |B_r(0)\cap B_r(a-b)|$ shows that for $y\neq 0$, 
$\txi$ makes transitions 
\begin{equation}\label{tildexirates}
y \to \begin{cases}
y + \bar U &\text{at rate }2\rho|B_r| - 2\psi_r(y)\\
0 &\text{at rate } \psi_r(y),
\end{cases}
\end{equation}
while $0$ is a trap for $\tilde \xi$. 
Let
\begin{equation}\label{taudef1}\tilde \tau=\tilde \tau(x)=\inf\{t\ge 0:
\txi^x_t=0\}(=\tau)
\end{equation}
be 
the time at which $\txi^x_t$ jumps to 0. 
By standard results (see Sections 4.2 and 4.4 of \cite{EK:MP})
$\tilde \xi^x$ is a pure jump Feller process which is the unique
in law solution of the martingale problem for 
its 
generator $\tilde A$ on the space $B(\R^d)$ of bounded Borel measurable
functions. $\tilde A$ is given by 
\begin{equation}\label{txigen}
\tilde Af(x)=(2\rho|B_r|-2\psi_r(x))\int_{\Rd}(f(x+u)-f(x))
h_{\bar U}(u)\,du+\psi_r(x)(f(0)-f(x)).
\end{equation}

Recall from Section~\ref{sec:rwprel} that $Y^x_t$ is the  rate $2\rho|B_r|$ random
walk starting at $x\in \R^d$ under $P_x$, and with jump distribution that
of  $\bar U$ and generator
$A^Y$ given in \eqref{e:Ygen} for $f\in B(\R^d)$. For a
random variable $V$ we let $Y^V$ denote the same random walk
with initial law that of $V$, and will use this notation
with other Markov processes below.

We  will construct a version of $\tilde \xi^x_t$
by absorbing a random time change of $Y^x$ at $0$.  
Define $\beta(y)= 1-\frac{\psi_r(y)}{\rho|B_r|}$ and 
\begin{equation}\label{e:Idef}
I(t) = \int_0^t \frac1{\beta(Y^x_s)}
ds. 
\end{equation}
Note that for $x\ne 0$,
\[
\sup_{s\le t}\frac{\psi_r(Y^x_s)}{\rho |B_r|}
= \sup_{s\le t}\frac{\rho|B_r(Y^x_s)\cap B_r|}{|B_r|}
< 1 \ a.s.
\]
and thus $\inf_{s\le t}\beta(Y^x_s)>0$ a.s.
This implies that 
$I(t)$ is finite and strictly increasing a.s.\ for all $t$.
Evidently $I(t)=\infty$ for
all $t>0$ if $x=0$.  We 
will allow $x=0$ later, but until otherwise indicated we will take our
initial point $x\neq 0$. 
From the definition of $I$ we see that for $0<s<t$,
\begin{equation}\label{e:Idiff}
t-s \le I(t)-I(s).
\end{equation}
Therefore $I^{-1}(t)$ exists for all $t$ a.s., and 
\begin{equation}\label{e:Iinv}
\int_0^{I^{-1}(t)} \frac1{\beta(Y^x_s)} 
ds = t .
\end{equation}
If we define 
$\tilde Y^x_t=Y^x_{I^{-1}(t)}$,
then it follows from \eqref{e:Iinv} that for all but countably many $t$,
 \[
(I^{-1})'(t)= \beta(Y^x_{I^{-1}(t)})
\]
and therefore that
\begin{equation}\label{e:Iinvint}
I^{-1}(t)= \int_0^t \beta(\tilde Y^x_s)ds .
\end{equation}
Clearly, $I^{-1}(t)\le t$. For $x=0$ it is natural to define
$I^{-1}(t)=0$ for all $t\ge 0$, which means that
$\tilde Y^0_t:=Y^0_{I^{-1}(t)}=0$ for all $t\ge 0$. Thus \eqref{e:Iinvint}
holds for all $x$ and
\begin{equation}\label{e:tYdef}
\tilde Y^x_t = Y^x_{I^{-1}(t)} = 
Y^x\Big(\int_0^t \beta (\tilde Y^x_s)ds\Big)
\quad \forall\ x\in{\R}^d. 
\end{equation}
We may apply Theorems 1.1 and 1.3 of Sec. 6.1 of
\cite{EK:MP}
to see that
$\tilde Y^x$ 
is the unique solution of the martingale problem for
\[A^{\tilde Y}f(x)=\beta(x)A^Yf(x)=(2\rho|B_r|-2\psi_r(x))
\int_{\Rd} (f(x+u)-f(x))h_{\bar U}(u)du, \quad f\in B(\R^d).\]
Here we note that the continuity of $f$ is not needed for Theorem~1.3 of \cite{EK:MP} in our jump process setting as the proof there shows.
Uniqueness of the martingale problem is classical for such bounded jump generators, e.g., see Theorem~4.1 in Chapter~4 of \cite{EK:MP}), and so $\tilde Y^x$ is the unique Feller process with generator $A^{\tilde Y}$, and  in particular is strong Markov.
Finally we send  $\tilde Y^x$ to its absorbing state, $0$
according to the continuous additive functional 
\begin{equation*}C_t=C_t^x=\int_0^t\psi_r(\tilde
Y^x_s)\,ds. 
\end{equation*}
For an independent mean one exponential random
variable, ${\bf e}$, define the absorbing time  
\begin{equation}\label{kappadefn}\kappa =\kappa_x= \inf\{t\ge 0: C^x_t> {\bf e}\},
\end{equation} 
and the absorbed process
\[
\txi^{'x}_t= \begin{cases}
\tilde Y^x(t) &\text{if }t<\kappa\\
0 &\text{if }t\ge \kappa.
\end{cases}
\]
Then $\txi^{'x}$ is a Feller jump process  
and an elementary calculation shows that it solves the martingale
problem for 
$A^{\tilde Y}f(x)+\psi_r(x)(f(0)-f(x))=\tilde A f(x)$,
$f\in B(\R^d)$ (from \eqref{txigen}). From \eqref{tildexirates} we see
that the two-particle dual difference, $\tilde \xi$, is the Feller jump process satisfying the same well-posed martingale problem, and  so, as the notation
suggests, $\txi^{'x}$ has the same law as 
 $\tilde\xi^x$. 

We have proved:
\begin{lemma}\label{lem:difftimechange}
If $(x_1,x_2)\in\R^d\times\R^d$, and $x=x_1-x_2$ define $I^{-1}(t)$ by \eqref{e:Iinv} and set $\tilde Y^x(t)=Y^x(I^{-1}(t))$. If
\begin{equation*}\tilde \xi_t^x=\begin{cases}\tilde Y^x(t) &\text{ if }t<\kappa,\\
0&\text{ if }t\ge \kappa,
\end{cases}\end{equation*}
where  $\kappa=\kappa_x$ is as in \eqref{kappadefn}, then 
\[I^{-1}(t)\int_0^t\beta(\tilde Y^x_s)ds
\] 
 and $\tilde\xi^x$ is a version of the dual difference $\xi^{1}_t-\xi^{2}_t$ under $P_{\{x_1,x_2\}}$.  Moreover
\begin{equation}\label{e:taukap}
\tilde \tau(x)=\kappa_x\text{ for all }x\neq 0.
\end{equation}
\end{lemma}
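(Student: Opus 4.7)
The plan is to collect the pieces of the construction that precedes the statement and verify three things: the implicit formula $I^{-1}(t)=\int_0^t\beta(\tilde Y^x_s)\,ds$, the identification of $\tilde\xi^x$ with the dual difference via a martingale problem argument, and the coincidence of the hitting time of $0$ with the absorption time $\kappa_x$.

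For the time-change identity, I would first argue that for $x\neq 0$ the functional $I(t)=\int_0^t 1/\beta(Y^x_s)\,ds$ is almost surely finite and strictly increasing: on each compact time interval $Y^x$ takes only finitely many values, each different from $0$, so $\inf_{s\le t}\beta(Y^x_s)>0$ a.s. Strict monotonicity of $I$ gives the existence of $I^{-1}$, and differentiating the defining identity $\int_0^{I^{-1}(t)}1/\beta(Y^x_s)\,ds=t$ yields $(I^{-1})'(t)=\beta(Y^x(I^{-1}(t)))=\beta(\tilde Y^x_t)$ at continuity points, from which integration recovers $I^{-1}(t)=\int_0^t\beta(\tilde Y^x_s)\,ds$. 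When $x=0$ the convention $I^{-1}\equiv 0$ makes the identity trivially true since $\beta(0)=0$.

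For the identification, I would invoke Theorems~1.1 and~1.3 of Section~6.1 of \cite{EK:MP} to conclude that the time-changed process $\tilde Y^x$ solves the martingale problem for $A^{\tilde Y}f=\beta A^Y f$, and then use the standard killing-by-additive-functional construction: conditioning on the exponential clock ${\bf e}$ and applying Dynkin's formula shows that $\tilde\xi'^{x}$ solves the martingale problem for $A^{\tilde Y}f(y)+\psi_r(y)(f(0)-f(y))=\tilde A f(y)$. Since $\tilde A$ is a bounded jump-type generator, Theorem~4.1 of Chapter~4 of \cite{EK:MP} gives uniqueness, and the same martingale problem characterizes $\tilde\xi^x$ via the transition rates in \eqref{tildexirates}, so $\tilde\xi'^{x}$ and $\tilde\xi^x$ are equal in law. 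Finally, for the identity $\tilde\tau(x)=\kappa_x$ when $x\neq 0$, note that the jump distribution $h_{\bar U}$ is absolutely continuous by \eqref{e:barUdens}, so each jump of $Y^x$ lands in $\{0\}$ with probability zero; since a.s.\ only finitely many jumps occur on each bounded interval, $Y^x$ (and hence $\tilde Y^x$) never hits $0$ when $x\neq 0$. Therefore the only way $\tilde\xi^x_t$ can equal $0$ is through absorption, forcing $\tilde\tau(x)=\kappa_x$. The main delicate point is the martingale-problem identification after the time change and killing; once that is in hand, the hitting-versus-absorption comparison is essentially trivial thanks to the absolute continuity of $h_{\bar U}$.
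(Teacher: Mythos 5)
Your proposal is correct and follows essentially the same route as the paper's own construction: establish finiteness and strict monotonicity of $I$ for $x\neq 0$, use Theorems~1.1 and~1.3 of Chapter~6 of \cite{EK:MP} to identify the generator of the time-changed process $\tilde Y^x$, kill via the continuous additive functional and an independent exponential clock to match the generator $\tilde A$ in \eqref{txigen}, and close with well-posedness of the bounded-jump martingale problem. Your observation that the absolute continuity of $h_{\bar U}$ prevents $Y^x$ (and hence $\tilde Y^x$) from ever reaching $0$ when $x\neq 0$ is the right justification for $\tilde\tau(x)=\kappa_x$, and it fills in a step the paper leaves implicit.
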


We often denote the starting point $x$ of $\tilde\xi$ in the
underlying probability as $P_x$. The tail behaviour of the 
coalescing time $\kappa_x$ will be important for us.
Introduce
\begin{equation}\label{kdef}
k(a)=\rho|B_r|\frac{\psi_r(a)}{\rho|B_r|
- \psi_r(a)},\quad a\in\R^d.
\end{equation}

\begin{lemma}\label{lem:killtime} If
$x\in\R^d\setminus\{0\}$, then 
\begin{equation}\label{e:killtime}
P_x(\kappa>t) = E_x\left[ \exp\Big(-\int_0^{I^{-1}(t)}
k(Y_s) ds\Big)\right].
\end{equation} 
\end{lemma}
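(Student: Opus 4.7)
The plan is to combine the defining construction $\kappa=\inf\{t\ge 0: C^x_t>\mathbf{e}\}$ with the time-change representation $\tilde Y^x_t=Y^x(I^{-1}(t))$ from Lemma~\ref{lem:difftimechange}, then reduce the resulting integral against $\tilde Y^x$ to an integral against the untimed walk $Y^x$ via a change of variables.

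First I would condition on the path of $\tilde Y^x$ (equivalently, on the $\sigma$-field generated by $Y^x$, since $I^{-1}$ is a measurable functional of $Y^x$) and use the independence of the mean-one exponential $\mathbf{e}$ to write
\begin{equation*}
P_x(\kappa>t)=E_x\bigl[P(\mathbf{e}\ge C^x_t\mid \tilde Y^x)\bigr]=E_x\!\left[\exp\!\left(-\int_0^t\psi_r(\tilde Y^x_s)\,ds\right)\right].
\end{equation*}
This is the standard ``Feynman--Kac with exponential clock'' observation and uses only the definition of $\kappa$ and $C^x$.

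Next I would perform the change of variables $u=I^{-1}(s)$ in the exponent. Since $I$ is strictly increasing and absolutely continuous with $I'(u)=1/\beta(Y^x_u)$ (by the definition \eqref{e:Idef}), we have $ds=du/\beta(Y^x_u)$, and when $s=t$ the new upper limit is $I^{-1}(t)$. Substituting $\tilde Y^x_s=Y^x_{I^{-1}(s)}$ gives
\begin{equation*}
\int_0^t\psi_r(\tilde Y^x_s)\,ds=\int_0^{I^{-1}(t)}\frac{\psi_r(Y^x_u)}{\beta(Y^x_u)}\,du.
\end{equation*}
Finally, the algebraic identity
\begin{equation*}
\frac{\psi_r(y)}{\beta(y)}=\frac{\psi_r(y)}{1-\psi_r(y)/(\rho|B_r|)}=\frac{\rho|B_r|\,\psi_r(y)}{\rho|B_r|-\psi_r(y)}=k(y)
\end{equation*}
(from the definitions of $\beta$ and $k$) converts the integrand to $k(Y^x_u)$, yielding \eqref{e:killtime}.

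The only point needing a small amount of care is the change of variables: on a set of full measure, $I$ is strictly increasing and $C^1$ off a countable set (the jumps of $Y^x$ produce at worst piecewise-constant corrections to $1/\beta(Y^x_s)$, which is bounded on bounded time intervals because $x\ne 0$ ensures $\inf_{s\le t}\beta(Y^x_s)>0$ a.s.). Thus $I$ is a genuine absolutely continuous bijection on $[0,t]$, the substitution is justified pathwise, and Fubini/bounded convergence then gives the displayed identity after taking expectations.
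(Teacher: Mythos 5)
Your proof is correct and follows essentially the same route as the paper: first use the definition of $\kappa$ and the independent exponential clock to write $P_x(\kappa>t)=E_x[\exp(-\int_0^t\psi_r(\tilde Y^x_s)\,ds)]$, then change variables $s=I(u)$ using $I'(u)=1/\beta(Y^x_u)$ and simplify $\psi_r/\beta=k$. The only difference is that you spell out the justification for the change of variables (absolute continuity, strict monotonicity, boundedness of $1/\beta$ off the origin), which the paper leaves implicit.
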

\begin{proof}
By definition of $\kappa$, 
\[
P_x(\kappa>t) = E_x[ e^{-C_t}] =
E_x\Big[ \exp\Big(-\int_0^t \psi_r(\tilde Y_s) ds
\Big)\Big]=
E_x\Big[ \exp\Big(-\int_0^t \psi_r(Y_{I^{-1}(s)}) ds
\Big)\Big].
\]
Now change variables with $I(u)=s$ and use $I'(u)=
1/\beta(Y_u)$ to get the required expression.
\end{proof}

The following result shows that $I^{-1}(t)$ is close to $t$, and so $Y^x_t$ is a good approximation to $\tilde Y^x_t$.
\begin{lemma}\label{l:Ibnd} There is a constant 
$C_{\ref{l:Ibnd}}>0$
such that for all $0<\alpha<1$ and $t>1$, and 
$Y_0=x, |x|>2r$, or $Y_0=\bar U$,
\begin{equation}\label{e:Ibnd}
P_{Y_0}\big( I^{-1}(t)\notin [t- t^\alpha,t]
\big) \le
C_{\ref{l:Ibnd}}
\begin{cases}
 \log(1+t)t^{-\alpha}&\text{if }d=2,\\
 t^{-\alpha}&\text{if }d\ge3 .
\end{cases}
\end{equation}
\end{lemma}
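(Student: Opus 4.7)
The plan is to observe first that $\beta \le 1$ forces $I(t) \ge t$ and hence $I^{-1}(t) \le t$ for every $t$, so the upper endpoint of the interval in \eqref{e:Ibnd} is never violated, and it suffices to bound the one-sided event $\{I^{-1}(t) < t - t^\alpha\}$. By monotonicity of $I$ this event equals $\{I(t-t^\alpha) > t\}$; since $1/\beta(y) - 1 = k(y)/(\rho|B_r|)$ by \eqref{kdef}, Markov's inequality then gives
\[P_{Y_0}(I^{-1}(t)<t-t^\alpha) \le \frac{1}{\rho|B_r|\,t^\alpha}\int_0^{t-t^\alpha} E_{Y_0}[k(Y_s)]\,ds,\]
so everything reduces to estimating $E_{Y_0}[k(Y_s)]$.

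Since $\psi_r$, and hence $k$, vanishes on $\{|y|\ge 2r\}$, only the mass of $Y_s$ in $B_{2r}$ matters. The hypothesis $|x|>2r$ or $Y_0 = \bar U$ ensures that the atomic term $e^{-\lambda s}1_B(x)$ in \eqref{e:Ydensbnd} does not contribute to the integral against $k$ (for $Y_0 = \bar U$ one simply uses the atom-free first inequality of \eqref{e:Ydensbnd} directly). Therefore Lemma \ref{l:rwfacts}(a) yields
\[E_{Y_0}[k(Y_s)] \le \frac{C_{\ref{e:Ydensbnd}}}{1+s^{d/2}}\int_{B_{2r}} k(y)\,dy.\]

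To finish I would check that $\int_{B_{2r}} k(y)\,dy < \infty$ and then integrate in $s$. When $\rho<1$, $\beta\ge 1-\rho$, so $k$ is bounded on $B_{2r}$. When $\rho=1$ one invokes the elementary ``lune'' estimate $|B_r| - |B_r \cap B_r(y)| \ge c_d(r) |y|$ for $|y|\le 2r$ (the volume of the symmetric difference is comparable to the translation distance), which gives $k(y) \le C/|y|$, integrable on $B_{2r}$ in any dimension $d\ge 2$. The remaining integral $\int_0^t (1+s^{d/2})^{-1}\,ds$ is $O(\log(1+t))$ for $d=2$ and $O(1)$ for $d\ge 3$, yielding \eqref{e:Ibnd}. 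The only delicate point is the boundary case $\rho=1$, where $k$ is unbounded near $0$; this is the sole place where the assumption $d\ge 2$ actually enters the argument, via integrability of $1/|y|$ on $B_{2r}$.
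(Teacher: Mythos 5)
Your proof is correct and follows essentially the same route as the paper's: Markov's inequality applied to $I(t)-t$, the geometric lune bound $k(y)\le C/|y|$ on $B_{2r}$ (which is exactly the paper's inequality \eqref{e:geom}), and the density estimate of Lemma~\ref{l:rwfacts}(a) integrated in time. The only cosmetic difference is that you bound $E_{Y_0}[k(Y_s)]$ directly by $\tfrac{C}{1+s^{d/2}}\int_{B_{2r}}k$, whereas the paper evaluates $E_x\big[\,|Y_s|^{-1}1\{|Y_s|\le 2r\}\,\big]$ via a layer-cake computation; the two give the same $s$-dependence.
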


\begin{proof} Let $Y_0=x$, $|x|>2r$. 
By \eqref{e:Idef} and $Y_s\neq 0$ for all $s$, 
\begin{equation}\label{e:Iminust}
0\le I(t)-t  = \int_0^t \frac
{\rho|B_r|}{\rho|B_r|-\psi_r(Y_s)}-1\,
ds = \int_0^t \frac{\rho|B_r\cap B_r(Y_s)|}{|B_r|-\rho|B_r\cap B_r(Y_s)|}
ds .
\end{equation}
By an elementary argument, there is a 
constant $C_{\ref{e:geom}}= C_{\ref{e:geom}}(d,r)>0$ such that 
\begin{equation}\label{e:geom}
\frac{|B_r\cap B_r(a)|}{|B_r|-|B_r\cap B_r(a)|}
\le \frac{|B_r|}{|B_r|-|B_r\cap B_r(a)|} 1(|a|\le 2r)\le C_{\ref{e:geom}} 
\frac1{|a|} 1\{|a|\le 2r\}, \quad a\in\R^d.
\end{equation}
We are assuming $|x|>2r$, so using the density bound \eqref{e:Ydensbnd}, we see that
\begin{align*}
E_x\Big[\frac1{|Y_s|} 1\{|Y_s|
\le 2r\}\Big]& = \int_0^\infty \frac1{u^2}
P_x(|Y_s|\le u\wedge 2r) du\\
& = \int_0^{2r} \frac1{u^2}
P_x(Y_s\in B_{u})du
+\int_{2r}^\infty \frac1{u^2}
P_x(Y_s\in B_{2r})du\\
&\le \frac{C_{\ref{e:Ydensbnd}}}{s^{d/2}+1}
\Big[ \int_0^{2r} \frac{1}{u^2}|B_u|du 
+\int_{2r}^\infty \frac{1}{u^2}|B_{2r}| du\Big]\\
&= \frac{C(r)}{s^{d/2}+1}.
\end{align*}
On account of \eqref{e:geom}, plugging
this bound into \eqref{e:Iminust} gives
\begin{align*}
E_x\big(I(t)-t\big)  &\le C_{\ref{e:geom}}\int_0^t
\frac{C(r)}{s^{d/2}+1} ds
\le C \begin{cases}
\log(1+t) &\text{if }d=2\\
1&\text{if }d\ge 3.
\end{cases}
\end{align*}
Applying Markov's inequality we obtain
\begin{equation}\label{e:Ibnd2}
P_x\big( I(t)-t\ge t^\alpha
\big) \le
C_{\ref{l:Ibnd}}
\begin{cases}
 \log(1+t)t^{-\alpha}&\text{if }d=2,\\
 t^{-\alpha}&\text{if }d\ge3.
\end{cases}
\end{equation}
This proves \eqref{e:Ibnd}, because by \eqref{e:Idiff}, $P_x\big( t-I^{-1}(t)\ge
t^\alpha\big) \le P_x\big(I(t)-t\ge t^\alpha\big)$, and we also have $I^{-1}(t)\le t$ by $I(t)\ge t$. The proof
for $Y^{\bar U}$ is essentially the same.
\end{proof}

\begin{lemma}\label{l:betabnd}
For $\beta\in
(\frac14,\frac 12)$ there exists a constant 
$C_{\ref{l:betabnd}}=C_{\ref{l:betabnd}}(\beta)>0$ such that 
\[
P(|\tilde Y^{\bar U}_t| \le r_0)
\le C_{\ref{l:betabnd}}t^{2\beta -1}\quad \text{for all }t>0,
r_0\le t^\beta. 
\]
\end{lemma}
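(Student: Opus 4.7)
The strategy is to exploit the time-change representation $\tilde Y^{\bar U}_t = Y^{\bar U}(\sigma)$ with $\sigma := I^{-1}(t)$. The crucial observation is that $\sigma$ is a stopping time for the filtration generated by $Y$, since $\{\sigma\le s\}=\{I(s)\ge t\}$ is determined by $(Y_u)_{u\le s}$; this is what will let us apply the strong Markov property of $Y$. For $t$ bounded the lemma is trivial after enlarging the constant, so I assume $t\ge 1$, and fix some $\alpha\in(1-2\beta,\,2\beta)$, an interval which is nonempty precisely because $\beta>1/4$.

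I split
\begin{equation*}
P(|\tilde Y^{\bar U}_t|\le r_0) \le P(\sigma<t-t^\alpha) + P(|Y^{\bar U}_\sigma|\le r_0,\,\sigma\ge t-t^\alpha).
\end{equation*}
Lemma~\ref{l:Ibnd} controls the first term by $C\log(1+t)\,t^{-\alpha}$, and since $\alpha+2\beta-1>0$ this is $O(t^{2\beta-1})$.

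For the second term I invoke the strong Markov property of $Y$ at $\sigma$: the process $Z_u := Y^{\bar U}_{\sigma+u}-Y^{\bar U}_\sigma$ is independent of $\cF^Y_\sigma$ and distributed as the rate-$2\rho|B_r|$ random walk started at $0$. Using the moment bound in Lemma~\ref{l:rwfacts}(b) and Chebyshev's inequality, I choose $M_0=C_0 t^{\alpha/2}$ so that $P(\sup_{u\le t^\alpha}|Z_u|\le M_0)\ge 1/2$. On the event under consideration $t-\sigma\in[0,t^\alpha]$, so $|Y^{\bar U}_t|\le r_0+M_0$ whenever additionally $\sup_{u\le t^\alpha}|Z_u|\le M_0$; the independence of this last event from $\cF^Y_\sigma$ gives
\begin{equation*}
P(|Y^{\bar U}_\sigma|\le r_0,\,\sigma\ge t-t^\alpha) \le 2\,P\big(|Y^{\bar U}_t|\le r_0+M_0\big).
\end{equation*}
The density bound \eqref{e:Ydensbnd} turns this into $C(r_0+M_0)^d/t^{d/2}$. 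Since $r_0\le t^\beta$ and $\alpha/2\le \beta$, both $r_0$ and $M_0$ are $\le C t^\beta$, and the estimate becomes $\le C t^{d\beta-d/2}\le C t^{2\beta-1}$ for $d\ge 2$ (using $\beta<1/2$), which completes the bound.

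The main obstacle is the second term: because $\sigma$ is a nontrivial stopping time, the density bound of Lemma~\ref{l:rwfacts}(a) cannot be applied directly at time $\sigma$. The strong Markov shift — trading the random time $\sigma$ for the deterministic time $t$ at the additive price of $M_0$ in the radius — is what resolves this, and the hypothesis $\beta>1/4$ is precisely what allows the two error terms $\log(1+t)\,t^{-\alpha}$ and $t^{d\beta-d/2}$ to fit under the common upper bound $t^{2\beta-1}$.
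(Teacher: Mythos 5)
Your argument is correct, and it takes a genuinely different route from the paper's for the main (second) term. Both proofs start the same way, choosing $\alpha\in(1-2\beta,2\beta)$ and using Lemma~\ref{l:Ibnd} to absorb the event $\{I^{-1}(t)<t-t^\alpha\}$. They diverge in how they handle $\{|Y^{\bar U}_{I^{-1}(t)}|\le r_0,\; I^{-1}(t)\in[t-t^\alpha,t]\}$. The paper applies the (ordinary) Markov property at the \emph{deterministic} time $t-t^\alpha$ and then splits: either $Y^{\bar U}_{t-t^\alpha}$ is already within $r_0+t^\beta$ of the origin (density bound \eqref{e:Ydensbnd}), or it is not, in which case $Y$ must travel a distance $\ge t^\beta$ in a window of length $t^\alpha$ — this requires the arbitrary-order maximal moment bound \eqref{e:Ysupbnd} with $k$ chosen large so that $t^{k(\alpha-2\beta)}\le t^{2\beta-1}$. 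You instead observe that $\sigma=I^{-1}(t)$ is an $(\cF^Y_s)$-stopping time (a fact the paper itself records and uses later, in the proof of Lemma~\ref{l:JNtwo2d}), apply the \emph{strong} Markov property at $\sigma$, and push forward to the deterministic time $t$: on the relevant event $t-\sigma\in[0,t^\alpha]$, and with probability $\ge 1/2$ (fixed by a Chebyshev estimate using only $k=1$ of Lemma~\ref{l:rwfacts}(b)) the increment $Y_t-Y_\sigma$ stays within $M_0\asymp t^{\alpha/2}$, so $|Y^{\bar U}_t|\le r_0+M_0$ and the density bound applies directly at time $t$. This replaces the paper's case split and high-$k$ maximal inequality with a single cheap ``widen the radius and pay a factor $2$'' step, so it is arguably cleaner; what it requires in exchange is the stopping-time observation and the strong Markov property, whereas the paper's version uses only the Markov property at a deterministic time. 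The exponent bookkeeping in your version ($d(\beta-1/2)\le 2\beta-1$ for $d\ge 2$, and $\alpha+2\beta-1>0$) is also correct.
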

\begin{proof} The bounds on $\beta$ imply that 
$0<1-2\beta< 2\beta<1$. This means we can choose
$\alpha\in(0,1)$ such that $1-2\beta<\alpha<2\beta$. For
this $\alpha$ choose $t$ large enough so that $\log(1+t)\ge 1$ and 
$t-t^\alpha>t/2$. By \eqref{e:Ibnd},
\begin{align}\notag
P(|\tilde Y^{\bar U}_{t}| \le r_0)
&= P(|Y^{\bar U}_{I^{-1}(t)}| \le r_0)\\ \notag
&\le P\big(I^{-1}(t)\notin [t-t^\alpha,t]\big) + 
P\big(\inf_{s\in[t-t^\alpha,t]}|Y^{\bar U}_{s}|\le r_0 
\big)\\ \notag
&\le C_{\ref{l:Ibnd}}\log(1+t)t^{-\alpha} +
P(|Y^{\bar U}_{t-t^\alpha}|\le r_0+t^\beta) + 
\\ \label{e:beta1}
&\qquad + P\big(|Y^{\bar U}_{t-t^\alpha}|> 
r_0+t^\beta, \inf_{s\in[t-t^\alpha,t]}|Y^{\bar U}_s|\le
  r_0\big) .  
\end{align}
By $r_0\le t^\beta$, \eqref{e:Ydensbnd}, and the above choice of $t$,
\begin{align*}
P\big(|Y^{\bar U}_{t-t^\alpha}|\le 
r_0+t^\beta\big) \le 
C\frac{(r_0+t^{\beta})^d}
{(t-t^{\alpha})^{d/2}}
\le 
C\frac{(2t^{\beta})^d}{(t/2)^{d/2}}
\le Ct^{\frac d2 (2\beta-1)}
\le Ct^{2\beta -1}.
\end{align*}
For the last term in \eqref{e:beta1}, 
the Markov property and \eqref{e:Ysupbnd} imply that (recall
$t> 1$) for all $k\ge 1$,
\begin{align*}
P\big(|Y^{\bar U}_{t-t^\alpha}|> 
r_0+t^\beta, \inf_{s\in[t-t^\alpha,t]} |Y^{\bar U}_s|\le r_0\big)  &\le 
P\big(
\sup_{s\le t^\alpha}
|Y^0_s|\ge t^\beta\big) 
\le C_{\ref{e:Ysupbnd}}(k) t^{k(\alpha-2\beta)}.
\end{align*}
Plugging these bounds into \eqref{e:beta1} gives
\begin{align*}
P(|\tilde Y^{\bar U}_t|\le r_0) \le 
C(t^{-\alpha} \log(1+t) +t^{2\beta -1}+
t^{k(\alpha-2\beta)}). 
\end{align*}
Recalling that $\alpha>1-2\beta$ and $\alpha-2\beta<0$, and
choosing $k$ large such that $k(\alpha-2\beta)<2\beta-1$, it
follows that for large $t$,
$P(|\tilde Y^{\bar U}_t|\le r_0) \le
Ct^{2\beta-1}$. Increasing $C$ appropriately to handle small $t$ completes the proof.
\end{proof}

For $0<a<b$ define
\begin{equation}\label{Gdefn}
G(a,b)  = \Big\{ |Y_{I^{-1}(u)}| >2r \ \forall \ 
u\in[a,b]\Big\},
\end{equation}
and for $q\ge 1$ introduce
\begin{equation}\label{sNdef}
s_N=(\log N)^{-q}.
\end{equation}

\begin{lemma}\label{l:GE} There is a constant
$C_{\ref{e:GNbnd}}(q)>0$ such that 
\begin{equation}\label{e:GNbnd}
P_x\Big(G\Bigl(\frac{Ns_N}4,2Ns_N\Bigr)^c\Big) 
\le C_{\ref{e:GNbnd}}\frac{\log\log N}{\log N}
\quad\text{ for all }|x|>2r.
\end{equation}
\end{lemma}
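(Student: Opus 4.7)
The plan is to use Lemma~\ref{l:Ibnd} to replace the time-changed event by an event for the underlying walk $Y$, and then exploit the fact that the interval $[a,b] = [Ns_N/4,\,2Ns_N]$ starts well away from time $0$. I restrict to $d=2$ (the interesting regime; the prerequisite Lemma~\ref{l:lR} is stated there, and the bound $\log\log N/\log N$ matches this case).

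First I would apply Lemma~\ref{l:Ibnd} with $\alpha=1/2$ to get $P_x(I^{-1}(a)<a/2)\le C\log(1+a)/\sqrt{a}=O((\log N)^{1+q/2}/\sqrt N)$, which is $o(\log\log N/\log N)$. On the complementary event, monotonicity of $I^{-1}$ and $I^{-1}(u)\le u$ give $I^{-1}([a,b])\subseteq[a/2,b]$, so
\[
G(a,b)^c\cap\{I^{-1}(a)\ge a/2\}\subseteq\{\exists s\in[a/2,b]:\,Y_s\in B_{2r}\}.
\]
By the strong Markov property at time $a/2$, this last event has probability at most $E_x[P_{Y_{a/2}}(t_{3r}\le b)]$.

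Next set $R:=\sqrt N/(\log N)^{q/2}$ (so that $R^2$, $a$, and $b$ are all of order $N/(\log N)^q$) and split according to whether $|Y_{a/2}|\le R$ or not. On $\{|Y_{a/2}|\le R\}$, Lemma~\ref{l:lR} (with $\alpha=q/2$, $\beta=q$, $t=2$, and $|w|=|Y_{a/2}|/\sqrt N$) yields
\[
P_{Y_{a/2}}(t_{3r}\le b)\le C(q)\,\frac{\log(\sqrt N/|Y_{a/2}|)}{\log N}.
\]
Using the density bound of Lemma~\ref{l:rwfacts}(a) on the law of $Y_{a/2}$, a direct computation in $d=2$ gives $\int_{|y|\le R}\log(\sqrt N/|y|)\,dy=O(R^2\log\log N)$, and since $R^2/(a/2)=O(1)$ the absolutely continuous part contributes $O(\log\log N/\log N)$; the atom at $x$ contributes a super-polynomially small amount via the factor $e^{-\lambda a/2}$.

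For $\{|Y_{a/2}|>R\}$ I would show $P_y(t_{3r}\le b)=O(\log\log N/\log N)$ uniformly for $|y|>R$. Set $A=R\log N$; then $b\le A^2/\log A$ for large $N$. For $R<|y|<A$, Lemma~\ref{l:logprob} \eqref{e:aA} bounds $P_y(t_{3r}<T_A)$ by $(\log(A+2r)-\log|y|)/(\log(A+2r)-\log 3r)$, which is $O(\log\log N/\log N)$ because $\log(A/R)=\log\log N$ and $\log A\asymp\tfrac12\log N$; and Lemma~\ref{l:AlogA} gives $P_y(T_A\le b)=O(1/(\log N)^2)$. For $|y|\ge A$, Lemma~\ref{l:rwfacts}(c) with any fixed $k$ gives $P_y(t_{3r}\le b)\le P_y(\sup_{s\le b}|Y_s-y|\ge |y|/2)=O((b/|y|^2)^k)=O((\log N)^{-2k})$. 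Summing the two cases against $P_x(Y_{a/2}\in dy)$ gives the required $O(\log\log N/\log N)$ bound.

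The principal technical obstacle is the simultaneous calibration of $R$ and $A$: the small-$|Y_{a/2}|$ case demands $R^2/a=O(1)$ so that the $\log$-integral does not blow up, while the large-$|Y_{a/2}|$ case requires $A$ to be simultaneously (i) large enough that $A^2/\log A\ge b$ (so Lemma~\ref{l:AlogA} applies and gives $O((\log N)^{-2})$) and (ii) close enough to $R$ that $\log(A/R)=O(\log\log N)$ (so Lemma~\ref{l:logprob} gives $O(\log\log N/\log N)$). The choices $R=\sqrt N/(\log N)^{q/2}$ and $A=R\log N$ are dictated precisely by the scale $s_N=(\log N)^{-q}$ and make all of these constraints consistent.
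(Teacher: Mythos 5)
Your overall scaffold — Lemma~\ref{l:Ibnd} to remove the time change, then the Markov property at a time inside the interval, then a radius split at that time — matches the paper's, but the radius threshold and the subsequent case analysis are genuinely different. The paper splits at the much smaller radius $s_N\sqrt N$, bounds the small-radius piece crudely via the density estimate (giving $O(s_N)$), and handles the large-radius piece by a \emph{second} strong Markov reduction to the first entry into $B_{s_N\sqrt N}$: at that entry $|w|\approx s_N$ is essentially fixed, so $\log(1/|w|)\approx q\log\log N$ and Lemma~\ref{l:lR} gives the bound in one stroke. You instead split at $R=\sqrt{Ns_N}$, apply Lemma~\ref{l:lR} pointwise over the whole range $|y|\le R$, and integrate $\log(\sqrt N/|y|)$ against the walk's density; the computation $\int_{|y|\le R}\log(\sqrt N/|y|)\,dy = O(R^2\log\log N)$ together with $R^2/(a/2)=O(1)$ yields exactly the right order. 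This is a legitimate alternative that avoids the second strong Markov reduction, at the cost of an extra case analysis for $|Y_{a/2}|>R$.

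Two small gaps should be repaired. First, you invoke Lemma~\ref{l:AlogA} for all $R<|y|<A$, but that lemma requires $|y|<A/2$. For $A/2\le |y|<A$ you should use the same super-polynomial tail bound you already use for $|y|\ge A$: $P_y(t_{3r}\le b)\le P_0\bigl(\sup_{s\le b}|Y_s|\ge |y|/2\bigr)\le C(k)\bigl(4b/A^2\bigr)^k = O\bigl((\log N)^{-2k}\bigr)$; in other words, the upper split point should be $A/2$ rather than $A$. Second, the lemma is stated for all $d\ge 2$; your restriction to $d=2$ should be accompanied by the observation that for $d\ge 3$ one simply replaces Lemma~\ref{l:lR} by the transience estimate \eqref{e:tafinite}, which gives a strictly smaller bound (this is exactly what the paper does). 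Both repairs are routine; the core argument is sound.
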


\begin{proof} Recall from \eqref{Nrange} that $N\ge 3$. 
Let $u_N=Ns_N/4$ and $u'_N=u_N-u_N^{1/2}$.
Then since $I^{-1}(u)\le u$, 
\begin{equation}\label{e:GNbnd1}
P_x\Big(G\big(u_N,{2Ns_N}\big)^c\Big) 
\le P_x\Big(  I^{-1}(u_N ) \le u'_N
\Big) + P_x( |Y_u|\le 2r \text{ for some }u \in 
[u'_N,{2Ns_N}]). 
\end{equation}
By \eqref{e:Ibnd} with $\alpha=1/2$ and $N\ge N_0(q)$ (recall $|x|>2r$), 
\begin{equation}\label{e:GNbnd2}
P_x\Big(  I^{-1}(u_N) \le u'_N \Big)
\le C\frac{\log (1+u_N)}{\sqrt{u_N}}
\le \frac{C}{ N^{1/4}}. 
\end{equation}
Next, using the Markov property at time $u'_N$, we have for $N\ge N_0(q)$,
\begin{align*}
P_x&( |Y_u|\le 2r \text{ for some }u \in 
[u'_N,{2Ns_N}])\\ &\le 
P_x( |Y_{u'_N}|\le s_N\sqrt N) +
\sup_{|y|\ge s_N}P_{y\sqrt N}(t_{2r} \le {2Ns_N}-u'_N)\\
&\le 
\exp( -
  2\rho|B_r|u'_N)+ C\frac {(s_N \sqrt N)^d} {(u'_N)^{d/2}} 
+ C \frac{\log(1/s_N) }{\log N}\quad\text{(by \eqref{e:Ydensbnd}, \eqref{e:lR} (if $d=2$) and \eqref{e:tafinite} (if $d=3$))}\\
&\le 
\frac{C}{\log N} +
C \frac{\log\log N }{\log N}.
\end{align*}
In the next to last line we have used the $d=2$ bound; if $d\ge 3$, \eqref{e:tafinite} gives a much smaller bound. Here we have also used the strong Markov property  and applied \eqref{e:lR} (if $d=2$) with $w\sqrt N$ equal to the location of the first jump into $B_{s_N\sqrt N}$.
Use this bound and \eqref{e:GNbnd2} in
\eqref{e:GNbnd1} derive \eqref{e:GNbnd} for $N\ge N_0(q)$. Now adjust $C_{\ref{e:GNbnd}}$ to handle
the remaining values of $N$.
\end{proof}

We will also need a bound on the two-particle dual $\xi_t=(\xi^{1},\xi^{2}_t)$ after the coalescing time $\kappa$ 
for any $d\ge 2$.  In this setting assume
\begin{align}\label{Ws} &W^{1,x_1}, W^{2,x_2}\text{ and }W^{3,0}\text{ are independent rate $\rho|B_r|$
random walks in $\R^d$ with step}\\
\nonumber&\text{distribution $\bar U$ (now in $\R^d$) and starting at points $x_1,x_2,0\in\R^d$, respectively.}
\end{align}
  Define $W_t=(W^{1,x_1}_t,W^{2,x_2}_t)$,
  and 
\[\psi_W(y_1,y_2)=k(|y_1-y_2|)1(y_1\neq y_2)={{\rho|B_r|\psi_r(y_1-y_2)}\over{\rho|B_r|-\psi_r(y_1-y_2)}}1(y_1\neq y_2),\]
and $D(t)=\int_0^t\psi_W(W_s)ds$. Although $\psi_W(y_1-y_2)$ becomes unbounded (if $\rho=1$) as $y_1-y_2\to0$, as for $I(t)$, $D(t)<\infty$ for all $t>0$ a.s. 
Let ${\bf e}$ be an independent exponential mean $1$ random
variable, and introduce $\bar\kappa=\inf\{t\ge
0:D(t)>{\bf e}\}\le\infty$ (it will be a.s. finite if $d=2$).
Assume also that conditional on
$(W,W^{3,0},{\bf e})$, $U_{W(\bar\kappa)}$ has a uniform
distribution on $B_r(W^{1,x_1}_{\bar\kappa-})\cap
B_r(W^{2,x_2}_{\bar\kappa-})$ (the intersection
is non-empty a.s. by the definition of $\bar\kappa$ because
$\psi_W(y_1,y_2)=0$ if $|y_1-y_2|>2r$), and given
$(W,W^{3,0},{\bf e},U_{W(\bar\kappa)})$, $U$ is an
independent r.v. uniformly distributed on $B_r$.  We can use
the above to define a version of our two-particle dual but
now ``run at a constant rate" by
\begin{equation}\label{barWdefn}
\overline{W}_t=\begin{cases} (W^{1,x_1}_t,W^{1,x_1}_t)&\text{ if }x_1=x_2,\\
W_t&\text{ if }t< \bar\kappa\text{ and }x_1\neq x_2,\\
(U+U_{W_{\bar\kappa}}+W^{3,0}_{t-\bar\kappa},U+U_{W_{\bar\kappa}}+W^{3,0}_{t-\bar\kappa})&\text{ if }t\ge \bar\kappa\text{ and }x_1\neq x_2.
\end{cases}
\end{equation}
Note that when $\bar W$ is at $(y_1,y_2)$, $y_1\ne
  y_2$, $\bar W$ jumps to the diagonal in
  $\Rd\times\Rd$ at rate $\psi_W(y_1,y_2)$.
  
We now extend our earlier time change and define
\begin{align}
\label{betadef1}\beta(y_1,y_2)&=\Bigl[1-\frac{\psi_r(y_1-y_2)}{\rho|B_r|}\Bigr]1(y_1\neq y_2)+1(y_1=y_2)\le 1,\quad \forall y\in\R^d,\\
\label{barIdefn}\bar I(t)&=\int_0^t\beta(\overline{W_s})^{-1}\,ds<\infty \quad\forall  t>0,\\
\label{barxidef}\overline{\xi _t}&=\overline W({\bar I}^{-1}(t)),
\end{align}
where ${\bar I}^{-1}$ is the inverse of the strictly increasing continuous function $\bar I$. 
As in \eqref{e:Iinvint}, one sees that 
\begin{equation}\label{Iinvbeta}
  \bar I^{-1}(t)=\int_0^t\beta(\bar \xi_s)\,ds\quad\forall
  t\ge 0.
\end{equation}
The following result shows that $\bar\xi_t$ is a
version of the two particle dual. 
\begin{lemma}\label{lem:twopartdualconst} If $(x_1,x_2)\in \R^d\times\R^d$ define $\overline
  W$ by \eqref{barWdefn}, $\beta$ by \eqref{betadef1}, $\bar
  I$ by \eqref{barIdefn}, and $\bar\xi$ by \eqref{barxidef}.  
Then
$$\bar I^{-1}(t)=\int_0^t\beta(\bar\xi_s)ds,$$
and $\bar\xi$ is a version of the two-particle dual described in \eqref{e:d2rates} under $P_{\{x_1,x_2\}}$.
\end{lemma}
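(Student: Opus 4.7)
The plan is to mirror the strategy used for the difference process in Lemma~\ref{lem:difftimechange}: identify $\bar\xi$ as the unique-in-law solution of a well-posed martingale problem whose generator matches that of the two-particle dual specified by \eqref{e:d2rates}. The first identity $\bar I^{-1}(t)=\int_0^t\beta(\bar\xi_s)\,ds$ is formal: differentiate $\bar I(\bar I^{-1}(t))=t$ to get $(\bar I^{-1})'(t)=\beta(\bar W(\bar I^{-1}(t)))=\beta(\bar\xi_t)$ and integrate. As a preliminary I would check $\bar I(t)<\infty$ a.s.\ (so that $\bar\xi$ is well defined); this is automatic on $[\bar\kappa,\infty)$ where $\beta\equiv 1$, and on $[0,\bar\kappa)$ it follows from the same occupation estimate used for $I(t)$ in the proof of Lemma~\ref{l:Ibnd}, since off the diagonal the difference $W^{1,x_1}-W^{2,x_2}$ is a rate $2\rho|B_r|$ random walk and $\beta^{-1}(y_1,y_2)-1\le C_{\ref{e:geom}}/|y_1-y_2|$ near the diagonal.

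Next I would compute the generator $A^{\bar W}$ of the pre-time-change process on $B(\R^d\times\R^d)$. Reading off from \eqref{barWdefn}, $\bar W$ is strong Markov with
\begin{align*}
A^{\bar W}f(y,y) &= \rho|B_r|\int[f(y+u,y+u)-f(y,y)]\,h_{\bar U}(u)\,du,\\
A^{\bar W}f(y_1,y_2) &= \rho|B_r|\int\bigl[f(y_1+u,y_2)+f(y_1,y_2+u)-2f(y_1,y_2)\bigr] h_{\bar U}(u)\,du\\
&\quad+\psi_W(y_1,y_2)\bigl[\bar Qf(y_1,y_2)-f(y_1,y_2)\bigr],\qquad y_1\neq y_2,
\end{align*}
where $\bar Qf(y_1,y_2)=E[f(U+U_{y_1,y_2},U+U_{y_1,y_2})]$ with $U,U_{y_1,y_2}$ as in \eqref{barWdefn}. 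Let $\tilde A_2$ denote the generator corresponding to the transition rates in \eqref{e:d2rates} (same form but with $\rho|B_r|$ replaced by $\rho|B_r|-\psi_r(y_1-y_2)$ in the two independent-jump terms and with $\psi_r(y_1-y_2)$ replacing $\psi_W$ in the coalescence term). A direct computation using $k(a)=\rho|B_r|\psi_r(a)/(\rho|B_r|-\psi_r(a))$ gives
\[
\beta(y_1,y_2)\cdot\rho|B_r|=\rho|B_r|-\psi_r(y_1-y_2),\qquad \beta(y_1,y_2)\psi_W(y_1,y_2)=\psi_r(y_1-y_2),
\]
so that $\beta\, A^{\bar W}=\tilde A_2$.

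Finally, I would invoke Theorems~1.1 and~1.3 of Section~6.1 of \cite{EK:MP} (exactly as in the proof of Lemma~\ref{lem:difftimechange}; continuity of test functions is not needed in our pure-jump setting) to conclude that the time-changed process $\bar\xi_t=\bar W(\bar I^{-1}(t))$ solves the martingale problem for $\beta A^{\bar W}=\tilde A_2$. Since $\tilde A_2$ is a bounded jump generator, uniqueness is classical (Theorem~4.1 of Chapter~4 of \cite{EK:MP}), so $\bar\xi$ is the unique-in-law Feller jump process with transitions \eqref{e:d2rates}, i.e.\ a version of the two-particle dual under $P_{\{x_1,x_2\}}$. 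The main obstacle is the bookkeeping around the coalescence jump: one must check that the combined effect of $\bar W$'s own jump to the diagonal (at rate $\psi_W$, with post-jump law $\bar Q$) and the time change produces exactly the rate $\rho^2|B_r(y_1)\cap B_r(y_2)|$ and the distribution $(U+U_{y_1,y_2},U+U_{y_1,y_2})$ called for in the fourth line of \eqref{e:d2rates}; the identity $\beta\psi_W=\psi_r$ is the crucial point, and the post-jump law is built into $\bar W$ so it is preserved under time change.
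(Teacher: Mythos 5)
Your overall strategy -- compute $A^{\bar W}$, check the algebraic identity $\beta\,A^{\bar W}=\tilde A_2$ (with $\beta\psi_W=\psi_r$ as the key cancellation), and invoke the Ethier--Kurtz time-change theorems together with well-posedness of the bounded generator $\tilde A_2$ -- is the same strategy the paper uses, and all the algebra you write down is correct. However, there is a gap that the paper specifically flags and your proposal does not: when $\rho=1$, the killing rate $\psi_W(y_1,y_2)=k(y_1-y_2)=\rho|B_r|\psi_r(y_1-y_2)/(\rho|B_r|-\psi_r(y_1-y_2))$ blows up as $y_1\to y_2$, so $A^{\bar W}$ is \emph{not} a bounded jump generator, $\beta^{-1}$ is not bounded, and you cannot directly assert that $\bar W$ solves a well-posed martingale problem on $B(\R^d\times\R^d)$ or apply the time-change theorems wholesale. (It is only the post-time-change generator $\tilde A_2=\beta A^{\bar W}$ that is bounded.) Your preliminary check that $\bar I(t)<\infty$ a.s.\ addresses well-definedness of the time change but not this well-posedness issue, and the ``main obstacle'' you name at the end is the easy algebraic identity rather than the analytic one.

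The paper resolves this by localizing: it defines $R_n=\{(y_1,y_2):0<|y_1-y_2|<n^{-1}\}$, stops $\bar W$ on entry to $R_n$ to obtain a process $\bar W_n$ with genuinely bounded jump rates and bounded generator $\bar A_n$, verifies the time-change identity $T^W_n=\bar I^{-1}(\bar T_n)$ relating the pre- and post-change stopping times, applies Theorems~1.1 and~1.3 of Chapter~6 of \cite{EK:MP} to the \emph{stopped} processes to see that $\bar\xi^{\bar T_n}$ solves the martingale problem for $G_n=\beta\bar A_n$, matches $G_n$ with the generator of the stopped two-particle dual $\xi^{T_n}$, and only then lets $n\to\infty$, using that $\bar T_n, T_n\uparrow\infty$ a.s.\ since $R_n\downarrow\emptyset$. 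You should add this localization layer; without it the argument is incomplete in the case $\rho=1$ (for $\rho<1$ both $\beta^{-1}$ and $\psi_W$ are bounded and your direct argument does work).
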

\begin{proof}
  The jump rate of
$\overline W$ to the diagonal becomes unbounded as it
approaches the diagonal (for $\rho=1$), 
so we proceed more carefully than in the proof of
Lemma~\ref{lem:difftimechange}, making use of 
optional stopping.  Let  
$$R_n=\{(y_1,y_2)\in \R^d\times\R^d:0<|y_1-y_2|< n^{-1}\}$$
and
\[T^W_n=\inf\{t\ge 0:\, \overline W_t\in R_n\}\le \infty.\]
Then $\overline{W_n}(t)=\overline W(t\wedge T^W_n)$ is a pure jump process on $\R^d\times \R^d$ with {\it bounded} jump rates and generator
\begin{align*}{\bar A_nf(y)=&\rho|B_r|[E(f(y_1+\bar U,y_2)+f(y_1,y_2+\bar U)-2f(y))]1(|y_1- y_2|\ge 1/n)\cr
&+ \psi_W(y)[E(f(U+U_y,U+U_y)-f(y))]1(|y_1- y_2|\ge 1/n)\cr
&+\rho|B_r|[E(f(y+(\bar U,\bar U))-f(y)])1(y_1=y_2).}\end{align*}
Here $U_y$ is uniformly distributed on $B_r(y_1)\cap B_r(y_2)$ and is independent of the uniform (on $B_r$) r.v. $U$. 
It is easy to
check that $\overline{W_n}$ solves the martingale problem
for $\bar A_n$ on the domain $B(\R^d\times\R^d)$ of bounded Borel functions on $\R^d\times\R^d$.

Let $\bar T_n=\inf\{t\ge 0:\bar \xi_t\in R_n\}\le
\infty$. Using the properties of $\bar I^{-1}$ and
\eqref{Iinvbeta}, it is easy to check that
\[
  T^W_n=\bar I^{-1}(\bar T_n) = \int_0^{\bar T_n}\beta(\bar
  \xi_s)ds.
  \]
  It follows that
  \[
{I^{-1}(t\wedge \bar T_n)} = I^{-1}(t)\wedge T^W_n =
\int_0^t\beta(\bar\xi_s)ds\wedge T^W_n =
\int_0^t\beta(\bar\xi(s\wedge \bar T_n))ds \wedge T^W_n  .
    \]
If we define $\bar \xi^{\bar T_n}_t = \bar\xi(t\wedge\bar
T_n)$, the above implies $\ds  \bar\xi^{\bar T_n}_t = \bar
W_n\Big(\int_0^t{\beta(\bar\xi^{\bar T_n}_s)}\Big)ds$, and thus
we may apply Theorems~1.1 and 1.3 in Chapter 6 of \cite{EK:MP} to
conclude that $\bar \xi^{\bar T_n}_t$  solves the martingale
problem for 
\[G_nf(y)=\beta(y)\bar A_nf(y),\quad f\in B(\R^d\times\R^d).\]
Here we recall again that the continuity of $f$ assumed in Ch. 6 Theorem 1.3 of \cite{EK:MP} is not needed in our jump process setting.  A bit of arithmetic shows 
\begin{align*}
G_nf(y)=(&\rho|B_r|-\psi_r(y_1-y_2))[E(f(y_1+\bar U,y_2)+f(y_1,y_2+\bar U)-2f(y))]1(|y_1-y_2\ge 1/n)\\
&\ +\psi_r(y_1-y_2)[E(f(U+U_y,U+U_y)-f(y))]1(|y_1-y_2\ge 1/n)\\
&\ +\rho|B_r|\,[E(f(y+(\bar U,\bar U))-f(y))]1(y_1=y_2).
\end{align*}
If $\xi=(\xi^1,\xi^2)$ is the two-particle dual process, as described in \eqref{e:d2rates}, the above is the generator of the Feller pure jump process $\xi(t\wedge T_n)$, where
$T_n=\inf\{t\ge 0:\xi_t\in R_n\}$ and so $\xi^{T_n}(t)=\xi(t\wedge T_n)$ also solves the martingale problem for $G_n$ ($f\in B(\R^d\times\R^d)$).  By well-posedness of this martingale problem ((Section 2 and Thm. 4.1 of Chapter 4 of \cite{EK:MP}) we concluded that $\xi^{T_n}$ and $\bar \xi^{\bar T_n}$ are identical in law for all $n\in\N$. Since $R_n\downarrow\emptyset$ and $\xi(T_n),\bar \xi(\bar T_n)\in R_n$ when
these times are finite, it follows that $T_n,\bar T_n\uparrow \infty$ a.s. as $n\to\infty$ (in fact for large
$n$ they will be infinite a.s.), and therefore $\xi$ and $\bar\xi$ are identical in law.    
\end{proof}

The following result is now an easy consequence of \eqref{barWdefn}, Lemma~\ref{lem:twopartdualconst}  and the bound $\bar I^{-1}(t)\le t$ for all $t\ge 0$.

\begin{lemma}\label{lem:xibound}
Assume $\xi^x$ is the two-particle dual in $\R^d\times \R^d$, starting at $x=(x_1,x_2)$. Then
we may assume there are random walks $W^{i,x_i}$ ($i=0,1,2, x_0=0$) as in \eqref{Ws} such that
\begin{equation}\label{nicexibnd}
\sup_{s\le t}|\xi_s^x|\le \Bigl[\sum_{i=0}^3\sup_{s\le t}|W^{i,x_i}_s|\Bigr]+2\sqrt 2 r.
\end{equation}
\end{lemma}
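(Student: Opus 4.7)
The plan is to use the explicit construction of the two-particle dual via the time change in Lemma~\ref{lem:twopartdualconst} and then reduce the bound to an elementary triangle inequality applied to the process $\bar W$.

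First I would replace $\xi^x$ by the version $\bar\xi$ produced in Lemma~\ref{lem:twopartdualconst}, so that $\bar\xi_s = \bar W(\bar I^{-1}(s))$ where $\bar W$ is given in \eqref{barWdefn}. Since $\beta \le 1$ (see \eqref{betadef1}), we have $\bar I(t)\ge t$ from \eqref{barIdefn}, and consequently $\bar I^{-1}(s)\le s$ for every $s$. Therefore $\{\bar I^{-1}(s): 0\le s\le t\}\subseteq[0,t]$, which gives the crucial contraction
\[
\sup_{s\le t}|\xi^x_s| \;=\; \sup_{s\le t}\bigl|\bar W(\bar I^{-1}(s))\bigr| \;\le\; \sup_{u\le t}|\bar W_u|.
\]
This reduces the problem to bounding the sup of $\bar W$ itself, which runs at a constant rate and has an explicit piecewise description.

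Second, I would bound $\sup_{u\le t}|\bar W_u|$ by splitting at the coalescence time $\bar\kappa$ and handling each piece via the triangle inequality. The case $x_1=x_2$ is trivial since both coordinates of $\bar W_u$ equal $W^{1,x_1}_u$. For $x_1\neq x_2$:
\begin{itemize}
\item On $\{u<\bar\kappa\}$, $\bar W_u=(W^{1,x_1}_u,W^{2,x_2}_u)$, so using $\sqrt{a^2+b^2}\le a+b$ for $a,b\ge 0$,
\[
\sup_{u\le t\wedge\bar\kappa}|\bar W_u| \;\le\; \sup_{s\le t}|W^{1,x_1}_s| + \sup_{s\le t}|W^{2,x_2}_s|.
\]
\item On $\{u\ge\bar\kappa\}$, both coordinates of $\bar W_u$ equal $U+U_{W_{\bar\kappa}}+W^{3,0}_{u-\bar\kappa}$. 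Here $|U|\le r$ directly from its uniform law on $B_r$, and since $U_{W_{\bar\kappa}}\in B_r(W^{1,x_1}_{\bar\kappa-})$ we have $|U_{W_{\bar\kappa}}|\le r+|W^{1,x_1}_{\bar\kappa-}|\le r+\sup_{s\le t}|W^{1,x_1}_s|$. Combined with $|(a,a)|=\sqrt 2\,|a|$ and $|W^{3,0}_{u-\bar\kappa}|\le\sup_{s\le t}|W^{3,0}_s|$, this yields
\[
\sup_{\bar\kappa\le u\le t}|\bar W_u| \;\le\; \sqrt 2\bigl(2r+\sup_{s\le t}|W^{1,x_1}_s|+\sup_{s\le t}|W^{3,0}_s|\bigr).
\]
\end{itemize}

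Adding the two displays (noting that $\sup_{u\le t}|\bar W_u|\le \sup_{u\le t\wedge\bar\kappa}|\bar W_u|+\sup_{\bar\kappa\le u\le t}|\bar W_u|$, interpreted as $0$ for the second piece if $\bar\kappa>t$) produces the constant $2\sqrt 2\,r$ together with a sum of four running suprema of random walks of the type described in \eqref{Ws} (the walk $W^{1,x_1}$ naturally appears twice, once directly on the pre-$\bar\kappa$ piece and once through the location $W^{1,x_1}_{\bar\kappa-}$ that bounds $|U_{W_{\bar\kappa}}|$, which accounts for the index running from $0$ to $3$). This yields \eqref{nicexibnd}.

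There is no real obstacle here: the only place where any care is required is the bound on $|U_{W_{\bar\kappa}}|$, which must be controlled in terms of $W^{1,x_1}$ up to time $t$ rather than just at $\bar\kappa$; once this is expressed as $\sup_{s\le t}|W^{1,x_1}_s|$ the rest is routine triangle-inequality bookkeeping after the time-change contraction $\bar I^{-1}(s)\le s$ has done the real work.
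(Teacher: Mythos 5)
Your overall strategy is the one the paper intends: use the time-change contraction $\bar I^{-1}(s)\le s$ from Lemma~\ref{lem:twopartdualconst} to dominate $\bar\xi_s=\bar W(\bar I^{-1}(s))$ by $\sup_{u\le t}|\bar W_u|$, and then read the bound off the piecewise description \eqref{barWdefn} by splitting at $\bar\kappa$. Those are exactly the ``$\bar I^{-1}(t)\le t$'' and ``\eqref{barWdefn}'' ingredients the paper cites. So the skeleton is right.

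However, the final accounting does not actually produce \eqref{nicexibnd}. Two problems. First, you combine the two time regimes by \emph{adding} the partial suprema. The regimes $[0,\bar\kappa]$ and $[\bar\kappa,t]$ are disjoint, so $\sup_{u\le t}|\bar W_u|$ is the \emph{maximum} of the two partial suprema, not their sum; adding is valid but introduces a redundant $\sup|W^{1,x_1}|$ term that the statement does not have. Second, and more substantively, the post-$\bar\kappa$ piece carries a multiplicative $\sqrt 2$ (from $|(a,a)|_{\R^d\times\R^d}=\sqrt2\,|a|_{\R^d}$) \emph{on the whole parenthesis}, i.e.\ on $\sup_{s\le t}|W^{1,x_1}_s|$ and $\sup_{s\le t}|W^{3,0}_s|$, not just on the constant $2r$. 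When you ``add the two displays'' you therefore obtain
$(1+\sqrt2)\sup|W^{1,x_1}|+\sup|W^{2,x_2}|+\sqrt2\,\sup|W^{3,0}|+2\sqrt 2\,r$,
which is not ``a sum of four running suprema'' with unit coefficients. Your parenthetical remark about $W^{1,x_1}$ ``appearing twice'' accounting for the index range does not resolve this: the $\sqrt 2$ factors remain, and the statement \eqref{nicexibnd} has none. (Note also that \eqref{Ws} defines only three walks $W^{1,x_1},W^{2,x_2},W^{3,0}$; the $\sum_{i=0}^3$ and the display of indices in the statement are a typo for $\sum_{i=1}^3$ with $x_3=0$, as can be read off the usage in Lemma~\ref{l:JNone}.)

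The clean route is to bound the coordinates separately, which is what the application actually requires. On $[\bar\kappa,t]$, write $V_u=U+U_{W_{\bar\kappa}}+W^{3,0}_{u-\bar\kappa}$ and observe that $|U|\le r$ while $U_{W_{\bar\kappa}}\in B_r(W^{j,x_j}_{\bar\kappa-})$ for \emph{either} $j=1$ or $j=2$, so $|V_u|\le 2r+\sup_{s\le t}|W^{j,x_j}_s|+\sup_{s\le t}|W^{3,0}_s|$. Taking the max over the pre- and post-$\bar\kappa$ regimes then gives, coordinatewise, $\sup_{s\le t}|\xi^j_s|\le 2r+\sum_{i=1}^3\sup_{s\le t}|W^{i,x_i}_s|$, which is the bound actually used in \eqref{epsexp}, and is certainly bounded by the right-hand side of \eqref{nicexibnd}. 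If you insist on treating $|\xi_s|$ as the Euclidean norm on $\R^{2d}$, the $\sqrt 2$ factor propagates to the suprema and the stated form \eqref{nicexibnd} no longer follows verbatim; the constant $2\sqrt 2\,r$ in the statement is simply a generous enclosure for the $2r$ that arises coordinatewise.
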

\section{Proof of Main Result}\label{sec:mainresult}
The proof of Theorem~\ref{t:main} proceeds by taking limits as $N\to\infty$ in Proposition~\ref{p:sm-decomp} to derive the martingale problem for the limiting super-Brownian motion. The main issue is the identification of the square function of the limiting martingale part and the key here is the following result:
\begin{proposition}\label{p:2did} For all
  $A,T>0$, and $\phi\in 
C^3_0$, 
\begin{equation}\label{e:2did}
\sup_{X^N_0(\1)\le A}E\left[ \sup_{0\le t\le T}\Big| \langle M^N(\phi)\rangle_t -
\int_0^t \rho^2|B_r|^2\gamma_e X^N_s(\phi^2)ds\Big|\right] \to 0 \text{
as }N\to\infty.
\end{equation}
\end{proposition}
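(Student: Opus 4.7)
The idea is to convert the $\sup_t$-type $L^1$ bound in \eqref{e:2did} into an integrated pointwise-in-$s$ bound, and then to derive the latter from the two-particle duality \eqref{e:dualN2} together with the dual estimates of Section~\ref{sec:2dual}.

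First, by Lemma~\ref{l:mNs}(c), $m^N_s(\phi)=\bar m^N_s(\phi)+\cale^N_{\ref{e:mNs2}}(s)$ with $E[|\cale^N_{\ref{e:mNs2}}(s)|]\le C_\phi(\log N/\sqrt N)X^N_0(\1)$, using the martingale property $E[X^N_s(\1)]=X^N_0(\1)$ from Corollary~\ref{c:L2bnd}; this contributes an $L^1$ term that vanishes uniformly for $X^N_0(\1)\le A$ after integration on $[0,T]$. Since $t\mapsto \int_0^t \bar m^N_s(\phi)ds$ and $t\mapsto \int_0^t\rho^2|B_r|^2\gamma_e X^N_s(\phi^2)ds$ are monotone in $t$, the supremum in \eqref{e:2did} is bounded by $\int_0^T|\bar m^N_s(\phi)-\rho^2|B_r|^2\gamma_e X^N_s(\phi^2)|\,ds$, which reduces the task to showing
\[ \int_0^T E\bigl[\bigl|\bar m^N_s(\phi)-\rho^2|B_r|^2\gamma_e X^N_s(\phi^2)\bigr|\bigr]\,ds\to 0. \]
Proposition~\ref{p:ms-bnd}(b) (for $d=2$; part~(a) suffices for $d\ge 3$) combined with $X^N_s(\phi^2)\le \|\phi\|_\infty^2 X^N_s(\1)$ provides the integrable dominating function $C_{\phi,A}(1+s^{-1/2})$ on $[0,T]$, so by dominated convergence it is enough to prove the pointwise statement
\[ E\bigl[\bigl|\bar m^N_s(\phi)-\rho^2|B_r|^2\gamma_e X^N_s(\phi^2)\bigr|\bigr]\to 0 \quad \text{for each fixed } s>0. \]

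For the mean, I would expand $(1-w^N_s(z_1))w^N_s(z_2)=w^N_s(z_2)-w^N_s(z_1)w^N_s(z_2)$ and apply \eqref{e:dualN1}, \eqref{e:dualN2}. After the change of variable $u_i=\sqrt N(z_i-x)\in B_r$ and use of translation invariance \eqref{TI}, the double integral over $B^N_r(x)^2$ produces the averaged non-coalescence probability $|B_r|^2\gamma_e(Ns)/N^d$, and the renormalization $KN^{1-d/2}\gamma_e(Ns)\to\gamma_e$ (from Proposition~\ref{t:noncoal}, recalling $K=N^{d/2-1}$ for $d\ge 3$ and $K=\log N$ for $d=2$) yields $E[\bar m^N_s(\phi)]\to\rho^2|B_r|^2\gamma_e\lim_N E[X^N_s(\phi^2)]$; the localization of $z_1,z_2$ near the center $x$ uses the smoothing of $w^N_s$ at scale $1/\sqrt N$ provided by the single-particle dual and the regularity of $\phi\in C^3_0$.

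The main obstacle is lifting this mean estimate to $L^1$ control of the random variable. In the critical dimension $d=2$, the two pieces $m^{N,1}_s$ and $m^{N,2}_s$ are each individually of order $\log N\cdot X^N_s(\phi^2)$, while their difference $\bar m^N_s(\phi)$ is only $O(X^N_s(\phi^2))$; pathwise upper bounds are therefore too crude, and the standard route via $E[\bar m^N_s(\phi)^2]$ would require three- or four-particle duals, which (as emphasized before the statement) is infeasible here because the individual coordinates of the two-particle dual are non-Markovian. My plan is to exploit the time-change representation of Lemma~\ref{lem:difftimechange}, combined with the two-particle estimates of Section~\ref{sec:2dual} (notably Lemmas~\ref{l:Ibnd}, \ref{l:betabnd}, \ref{l:GE}, and the pathwise bound \eqref{nicexibnd} of Lemma~\ref{lem:xibound}), to establish a pathwise approximation of $\bar m^N_s(\phi)$ by a linear functional of $X^N_s$. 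The time-change is essential precisely when the two dual particles are close together and their motions slow down—the regime that controls $\gamma_e(Ns)$—so the Section~\ref{sec:2dual} analysis is applied uniformly in $s\ge s_N=(\log N)^{-q}$, with the initial layer $[0,s_N]$ absorbed into the dominating bound above.
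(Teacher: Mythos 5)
The reduction you make---from the $\sup_t$ $L^1$ bound to the integrated statement $\int_0^T E|\bar m^N_s(\phi)-\rho^2|B_r|^2\gamma_e X^N_s(\phi^2)|\,ds\to 0$, and then via dominated convergence to the \emph{pointwise-in-$s$} statement $E|\bar m^N_s(\phi)-\rho^2|B_r|^2\gamma_e X^N_s(\phi^2)|\to 0$---is a dead end, and this is precisely where the paper's argument takes a fundamentally different turn. The two-particle dual only gives you control of $E[\bar m^N_s(\phi)]$ (a first-moment statement): Lemmas~\ref{l:mNone}--\ref{l:JNtwo2d} and Corollary~\ref{c:progress} establish that the \emph{mean} of $\bar m^N_s(\phi)$ is close to $\rho^2|B_r|^2\tgamma^N(s)X^N_0(\phi^2)$, but this says nothing about $L^1$-concentration of the random variable $\bar m^N_s(\phi)$ around $\rho^2|B_r|^2\gamma_e X^N_s(\phi^2)$ at a fixed time. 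Estimating the $L^1$ distance pointwise in $s$ would require second-moment (or higher) control of $\bar m^N_s(\phi)$, which is exactly what you observed is out of reach with only one- and two-particle duals. Your proposed resolution---``a pathwise approximation of $\bar m^N_s(\phi)$ by a linear functional of $X^N_s$''---is not something the paper establishes, and there is no indication that such a pathwise/$L^1$ pointwise approximation is even true.

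What the paper actually does (see Lemma~\ref{l:7prime} and the proof of Proposition~\ref{p:2did}) is to exploit cancellation \emph{in time} rather than concentration at a fixed time. It writes $\bar m^N_s(\phi) = E(\bar m^N_s(\phi)\mid\calf^N_{[s]_N}) + \big(\bar m^N_s(\phi)-E(\bar m^N_s(\phi)\mid\calf^N_{[s]_N})\big)$ on a fine time mesh of width $s_N=(\log N)^{-q}$. The conditional expectation piece \emph{is} controlled by the two-particle dual estimates (Lemmas~\ref{l:key2db} and \ref{l:key2da}), applied starting from $X^N_{[s]_N}$ via the Markov property. The fluctuation piece, summed over the mesh, is a discrete martingale (separately over odd and even indices), so Doob's $L^2$ inequality converts the quadratic variation---a sum of $O(\delta_N^{-1})$ terms each of size $O(s_N^2(\log N)^2)$---into an $L^2$ bound of order $(\log N)^{2-q}$, using only first moments of each increment (via \eqref{e:L2bnda} and Corollary~\ref{c:L2bnd}). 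This orthogonality-of-increments cancellation is the central idea you are missing: it lets the integral $\int(\bar m^N_s(\phi)-E(\cdot\mid\calf^N_{[s]_N}))\,ds$ be small even though the individual integrand need not be small in $L^1$. The time-change representation and the lemmas in Section~\ref{sec:2dual} that you correctly flag as important are used inside Lemma~\ref{l:key2db} (for the conditional-mean estimate) and Lemma~\ref{l:key2da} (for the close-pair second-moment term with $X^N_{[s]_N}$ in place of $X^N_0$)---not for a pathwise linearization of $\bar m^N_s(\phi)$.
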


This will be proved in Section~\ref{sec:sqfunction}.
In this section we will establish Theorem~\ref{t:main}, assuming this result. If $S$ is a metric space, recall that a sequence of laws on $D(\R_+,S)$ is $C$-tight iff it is tight and all limit laws are continuous. $C$-tightness on $D(\R_+,S)\times C(\R_+,S)$ is then defined in the obvious manner. 
The first step is to prove: 
\begin{lemma} \label{l:projt} If  $\phi\in C_0^3(\R^d)$, then $\{
 (X^N(\phi), \langle M^N(\phi)\rangle):N\ge 3\}$ is $C$-tight in $D(\R_+,\R)\times C(\R_+,\R)$.
  \end{lemma}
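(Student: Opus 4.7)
The plan is to establish marginal $C$-tightness of each component, from which joint $C$-tightness in the product space $D(\R_+,\R) \times C(\R_+,\R)$ follows by tightness in products of Polish spaces.

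First I would tackle $\{\langle M^N(\phi)\rangle\}$. By Proposition~\ref{p:2did} we may write
\begin{equation*}
\langle M^N(\phi)\rangle_t = A^N_t + R^N_t, \qquad A^N_t := \rho^2|B_r|^2 \gamma_e \int_0^t X^N_s(\phi^2)\,ds,
\end{equation*}
where $E[\sup_{t\le T}|R^N_t|] \to 0$ for each $T>0$. The approximating process $A^N$ is Lipschitz in $t$ with random constant $\rho^2|B_r|^2 \gamma_e \|\phi\|_\infty^2 \sup_{u\le T} X^N_u(\1)$, and by Corollary~\ref{c:L2bnd} this supremum is $L^2$-bounded uniformly in $N$. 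A standard Arzel\`a--Ascoli-type argument then gives $C$-tightness of $\{A^N\}$ in $C(\R_+,\R)$, and since $\sup_{t\le T}|R^N_t|\to 0$ in $L^1$ (hence in probability), the sum $\{\langle M^N(\phi)\rangle\}$ is $C$-tight.

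Next, for $\{X^N(\phi)\}$ I use the semimartingale decomposition $X^N_t(\phi) = X^N_0(\phi) + D^N_t(\phi) + M^N_t(\phi)$ from Proposition~\ref{p:sm-decomp}. The initial values converge to $X_0(\phi)$ by hypothesis. By Lemma~\ref{l:dNs} the drift satisfies $|d^N_s(\phi)| \le C_\phi X^N_s(\1)$, so $D^N$ is almost surely Lipschitz in $t$ with $L^2$-bounded Lipschitz constant, yielding $C$-tightness of $\{D^N\}$. For the martingale part, \eqref{Mjumps} shows $\sup_{s\ge 0}|\Delta M^N_s(\phi)| = O(\log N/N) \to 0$ deterministically; combined with the $C$-tightness of $\langle M^N(\phi)\rangle$ from the previous step, Rebolledo's martingale tightness criterion then gives $C$-tightness of $\{M^N(\phi)\}$ in $D(\R_+,\R)$. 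A sum of $C$-tight families is $C$-tight, so $\{X^N(\phi)\}$ is $C$-tight, and joint $C$-tightness follows.

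The main obstacle is the $C$-tightness of $\langle M^N(\phi)\rangle$: in dimension two the naive pointwise bound $m^N_s(\1) \le C(\log N)\,X^N_s(\1)$ of Proposition~\ref{p:ms-bnd} carries a diverging $\log N$ factor, so one cannot control the modulus of continuity of $\langle M^N(\phi)\rangle$ by such estimates alone. Proposition~\ref{p:2did}, which absorbs the $\log N$ factor into the asymptotic constant $\gamma_e$ via $(\log N)\gamma_e(Ns)\to \gamma_e$, is the essential input; once it is in hand everything else reduces to standard semimartingale machinery.
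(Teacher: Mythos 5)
Your proposal is correct and follows essentially the same route as the paper: decompose $\langle M^N(\phi)\rangle$ via Proposition~\ref{p:2did}, handle $A^N$ and the drift $D^N(\phi)$ by $L^2$/Lipschitz estimates from Corollary~\ref{c:L2bnd} and Lemma~\ref{l:dNs}, and then deduce $C$-tightness of $M^N(\phi)$ from vanishing jumps plus $C$-tightness of its bracket (the paper cites Theorem~VI.4.13 and Proposition~IV.3.26 of \cite{JS:LT}, which is the same content as the Rebolledo criterion you invoke). The only difference is cosmetic.
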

\begin{proof}   Let $\phi\in C_0^3(\R^d)$ and $A^N(t)=\int_0^t \rho^2|B_r|^2\gamma_eX^N_s(\phi^2)\,ds$.  
Then for $0\le s<t\le T$,
\[E(|A^N_t-A^N_s|^2)\le CE(\sup_{u\le T}X^N_u(\1)^2)(t-s)^2\le C(T)(t-s)^2,\]
by Corollary~\ref{c:L2bnd} and \eqref{ICscaling}. 
Therefore, the collection of continuous increasing processes
$\{A^N:N\ge 3\}$ is tight, and hence relatively compact, in
$C(\R_+,\R)$ by Prohorov's theorem. It then follows from
Proposition~\ref{p:2did} that the sequence of continuous
(recall \eqref{e:MNs}) increasing processes $\{\langle
M^N(\phi)\rangle_\cdot:N\ge 3\}$ is relatively compact in
$C(\R_+,\R)$, and so also tight by Prohorov's theorem again. 

Next, recall $D^N(\phi)$ from Proposition~\ref{p:sm-decomp}. Lemma~\ref{l:dNs} implies that $D^N_t(\phi)=\int_0^t d_s^N(\phi)ds$, where \[|d_s^N(\phi)|\le C_\phi X_s^N(\1)\quad\forall s\ge 0.\] 
Therefore if $0\le s<t\le T$, then by the above and Corollary~\ref{c:L2bnd},
\begin{align*}
E((D^N_t(\phi)-D^N_s(\phi)^2)\le C_\phi^2\int_s^t\int_s^tE(X_u^N(\1)X_v^N(\1))dvdu
\le C(\phi,T)(t-s)^2.
\end{align*}
This implies $\{D^N(\phi):N\ge 3\}$ is tight in $C(\R_+,\R)$.  Lemma~\ref{increments} and Proposition~\ref{p:sm-decomp} imply
 \begin{equation}\label{incrbound}
\sup_{s\le T}|\Delta M_s^N(\phi)|=\sup_{s\le T}|\Delta X^N_s(\phi)|\to 0\text{ a.s. as }N\to\infty.
\end{equation}
Using \eqref{incrbound} and the $C$-tightness of $\{\langle M^N(\phi)\rangle_\cdot:N\ge 3\}$, 
established above,  
in Theorem VI.4.13 and Proposition IV.3.26 of
\cite{JS:LT}, we see that $\{M^N(\phi):N\ge 3\}$ is $C$-tight in $D(\R_+,\R)$. $C$-tightness of $\{X^N(\phi)\}$ now follows from the above, our assumption on the initial conditions $\{X^N_0\}$ (i.e., \eqref{ICscaling}), and the semimartingale decomposition in Proposition~\ref{p:sm-decomp}.
Having obtained $C$-tightness of each component, the result is now immediate.
\end{proof}

\begin{prop}\label{p:tight} For $d\ge 2$,
under the assumptions of Theorem~\ref{t:main}, the family \hfil\break $\{P(X^N\in \cdot): N\ge 3\}$ is $C$-tight in  
$D(\R_+, \cM_F(\R^d))$.
\end{prop}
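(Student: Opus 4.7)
The plan is to apply Jakubowski's tightness criterion for $\cM_F(\R^d)$-valued cadlag processes. It suffices to verify (i) tightness of $\{X^N(\phi):N\ge 3\}$ in $D(\R_+,\R)$ for $\phi$ in a countable subset of $C_0(\R^d)$ separating points of $\cM_F(\R^d)$, and (ii) the compact containment condition: for all $T,\eta>0$ there is a compact $K\subset\cM_F(\R^d)$ with $P(X^N_t\in K\text{ for all }t\le T)\ge 1-\eta$ uniformly in $N$. Condition (i) is provided by Lemma~\ref{l:projt} applied to a countable dense subset of $C_0^3(\R^d)$. That every limit path is continuous (giving $C$-tightness and not merely tightness) follows from Lemma~\ref{increments}, since $\sup_{s\le T}|\Delta X^N_s(\phi)|\to 0$ as $N\to\infty$ for each such test function.

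Compactness in $\cM_F(\R^d)$ (weak topology) amounts to bounded total mass together with uniform tightness at infinity. Bounded total mass is immediate: Corollary~\ref{c:L2bnd} gives a uniform $L^2$ bound on $\sup_{t\le T}X^N_t(\1)$. For uniform tightness at infinity I must show that for every $\epsilon>0$,
\[
\lim_{R\to\infty}\sup_N P\bigl(\sup_{t\le T}X^N_t(\1_{B_R^c})>\epsilon\bigr)=0.
\]
Fix a smooth cutoff $\phi_R(x)=f(|x|/R)\in C_0^3(\R^d)$ with $\1_{B_R^c}\le\phi_R\le\1_{B_{R/2}^c}$, $\|\Delta\phi_R\|_\infty\le C/R^2$, and derivatives of $\phi_R$ up to order three bounded uniformly in $R$. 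Since $X^N_t(\1_{B_R^c})\le X^N_t(\phi_R)$, Proposition~\ref{p:sm-decomp} reduces matters to bounding $X^N_0(\phi_R)$, $\sup_{t\le T}|D^N_t(\phi_R)|$, and $\sup_{t\le T}|M^N_t(\phi_R)|$. Weak convergence of $X^N_0\to X_0$ gives $X^N_0(\phi_R)\to X_0(\phi_R)\le X_0(B_{R/2}^c)\to 0$ as $R\to\infty$. Lemma~\ref{l:dNs}(b) combined with $\|\Delta\phi_R\|_\infty\le C/R^2$ and $E[X^N_s(\1)]=X^N_0(\1)$ yields $E[\sup_{t\le T}|D^N_t(\phi_R)|]\le (C/R^2+C/\sqrt N)T X^N_0(\1)$, which vanishes as $R,N\to\infty$.

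The main obstacle is to bound $E[\langle M^N(\phi_R)\rangle_T]$ uniformly in $N$ with smallness in $R$; Doob's $L^2$ inequality then controls the martingale contribution via Markov. A direct Fubini computation on \eqref{e:mbarNs} together with Lemma~\ref{l:mNs}(c) gives $\bar m^N_s(\phi_R)\le C KN^{1-d/2} X^N_s(\widetilde\phi_R)$, where $\widetilde\phi_R\le\1_{B_{R/4}^c}$ for $R$ large. For $d\ge 3$ one has $KN^{1-d/2}=1$, and one-particle duality \eqref{e:dualXN} combined with the random walk estimate \eqref{e:Ydensbnd2} produces $E[X^N_s(\widetilde\phi_R)]\le X^N_0(B_{R/8}^c)+(Cs/R^2) X^N_0(\1)$, which is small in $R$ uniformly in $N$. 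In $d=2$ an additional factor of $\log N$ arises; this is the delicate case and must be absorbed by carrying out, for the localized $\phi_R$, the two-particle duality cancellation from the proof of Proposition~\ref{p:ms-bnd}(b). Applying \eqref{e:dualN2} to $E[w^N_s(z_1)w^N_s(z_2)]$ inside $\bar m^N_s(\phi_R)$ and invoking Proposition~\ref{t:noncoal} to cancel the $\log N$ via $(\log N)\gamma_e(Ns)=O(1)$ on the relevant time scales produces an upper bound on $E[\int_0^T m^N_s(\phi_R)ds]$ of the form $C(T,\alpha)\bigl[X^N_0(B_{R/8}^c)+X^N_0(\1)/R^2\bigr]$, again small in $R$ uniformly in $N$. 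Markov's inequality now closes the argument.
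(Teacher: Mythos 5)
You have the right high-level skeleton (Jakubowski criterion; tightness of coordinate projections via Lemma~\ref{l:projt}; compact containment via a localized test function $\phi_R$), but the way you propose to close the compact containment step contains a real gap in $d=2$, and it is in any case the wrong tool.

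Your route controls $\sup_{t\le T}|M^N_t(\phi_R)|$ by Doob's $L^2$ inequality, which forces you to prove an $R$-small, $N$-uniform bound on $E[\langle M^N(\phi_R)\rangle_T]$. For $d=2$ you propose to cancel the $\log N$ in $\bar m^N_s(\phi_R)$ by the same two-particle-dual argument as in Proposition~\ref{p:ms-bnd}(b). That cancellation, however, is exact only because the test function $\1$ is constant: there $E[m^{N,1}_s]=\rho^2|B_r|^2(\log N)X^N_0(\1)$ exactly (martingale property of the total mass), and the two-particle dual bound on $E[m^{N,2}_s]$ produces precisely the same quantity $X^N_0(\1)$, so only the factor $\gamma_e(Ns)$ survives. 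With a non-constant $\phi_R$, $m^{N,1}_s(\phi_R)$ feeds through the one-particle dual while $m^{N,2}_s(\phi_R)$ feeds through the non-Markovian coordinates of the two-particle dual, and the residual spatial displacement mismatch costs an error of the form $K'(\sqrt s + N^{-1/2})X^N_0(\1)$. This is exactly the error quantified in Lemmas~\ref{l:mNone} and~\ref{l:JNone}, and for $d=2$ it carries a factor $\log N$ that does not vanish for fixed $s>0$. Your claimed bound $C(T,\alpha)[X^N_0(B_{R/8}^c)+X^N_0(\1)/R^2]$ on $E[\int_0^T m^N_s(\phi_R)\,ds]$ therefore does not follow from the sketched computation; to remove the $\log N$ one would need the full conditioning and time-shifting apparatus of Section~\ref{sec:sqfunction} (conditioning on $\calf^N_{[s]_N}$ with $s_N\to 0$), which is vastly more than a tightness estimate should require.

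The paper instead follows \cite{CDP}, Lemma~3.3, which is a pure first-moment argument and sidesteps the square function altogether. Because $X^N_t(\phi_R)\ge 0$ and $M^N(\phi_R)$ is a genuine square-integrable martingale (Corollary~\ref{c:L2bnd}), optional stopping at the bounded time $\sigma\wedge T$, where $\sigma=\inf\{t:X^N_t(\phi_R)\ge\epsilon\}$, gives
\[
\epsilon\, P(\sigma\le T)\le E\bigl[X^N_{\sigma\wedge T}(\phi_R)\bigr]= X^N_0(\phi_R)+E\Bigl[\int_0^{\sigma\wedge T} d^N_u(\phi_R)\,du\Bigr]\le X^N_0(\phi_R)+C\bigl(R^{-2}+N^{-1/2}\bigr)T\,X^N_0(\1),
\]
using Lemma~\ref{l:dNs}(b), $\|\Delta\phi_R\|_\infty\le C/R^2$, and the first-moment identity $E[X^N_u(\1)]=X^N_0(\1)$ from \eqref{e:dualXN}. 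Taking $\limsup_N$ and then $R\to\infty$ gives \eqref{e:tight1}. Your drift and initial-condition estimates are correct and feed into exactly this bound; it is only the martingale term that should be handled by optional stopping (first moments) rather than Doob's $L^2$ inequality (second moments). For $d\ge 3$ your $L^2$ route does in fact go through, since $K'=1$ and the crude bound $\bar m^N_s(\phi_R)\le C X^N_s(\1_{B_{R/4}^c})$ suffices, but the uniform argument that also handles $d=2$ is the first-moment one.
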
  
\begin{proof}
By the Kurtz-Jakubowski theorem (e.g. see Proposition~3.1 in \cite{CDP}) it suffices to show:
\begin{enumerate}
\item For each $T,\vep>0$ there is a compact set
  $K_{T,\vep}\subset \Rd$ such that
\begin{equation}\label{e:tight1}
\limsup_{N\to\infty} P\Big( \sup_{t\le T} X^N_t(K^c_{T,\vep})
>\vep\Big)<\vep.
\end{equation}
\item For each $T>0$,
\begin{equation}\label{e:tight2}
\lim_{H\to\infty}\limsup_{N\to\infty} 
P\big( \sup_{t\le T} X^N_t(\1)\ge H\big)= 0. 
\end{equation}
\item For each $\phi\in C^{\infty}_0(\Rd)$, 
\begin{equation}\label{e:tight3}
\{X^{N}_\cdot(\phi), N\ge 3\} \text{ is $C$-tight in $D(\R_+,\R)$.}
\end{equation}
\end{enumerate}
The last \eqref{e:tight3} holds by Lemma~\ref{l:projt}, and \eqref{e:tight2} is immediate from Corollary~\ref{c:L2bnd}. The compact containment \eqref{e:tight1} is proved exactly as for the voter model in Lemma~3.3 of \cite{CDP}.  The argument there will now use the semimartingale decomposition in Proposition~\ref{p:sm-decomp}, the convergence of the initial states from \eqref{ICscaling}, {Lemma~\ref{l:dNs}}, and first moment bounds which are immediate from \eqref{e:dualXN}. This completes the proof.
\end{proof}

\medskip
We are ready to turn to the main result.\\
 
\noindent{\it Proof of Theorem~\ref{t:main}.} By Proposition~\ref{p:tight} it suffices to show that every weak subsequential limit is the super-Brownian motion described in the Theorem.  Fix $\phi\in C_0^3(\R^d)$. 
By Lemma~\ref{l:projt} and Skorokhod's theorem, and then taking a further subsequence, we may assume that we are on a probability space where
\begin{equation}\label{asconv} (X^{N_k},\langle M^{N_k}(\phi)\rangle)\rightarrow (X,A^\phi)\text{ a.s. in }D(\R_+,\MF(\R^d))\times C(\R_+,\R).
\end{equation}
Since the limit is continuous a.s. one has in fact a.s. uniform convergence on compact time intervals. It also follows from the above and Corollary~\ref{c:L2bnd} that 
\begin{align} \label{totalmassconvergence}\nonumber \sup_{t\le T} [|X_t^{N_k}(\1)- X_t(\1)|+|X_t^{N_k}(\phi)- X_t(\phi)|+&|X_t^{N_k}(\phi^2)- X_t(\phi^2)|+|X_t^{N_k}(\Delta\phi)- X_t(\Delta\phi)|]\\&\rightarrow 0
\text{ a.s. and in $L^1$ as }k\to\infty \text{ for all }T>0.
\end{align}
This and Proposition~\ref{p:2did} show that 
\begin{equation}\label{Aphi}
A^\phi_t=\int_0^t \rho^2|B_r|^2\gamma_e X_s(\phi^2)ds\text{ for all }t\ge 0.
\end{equation}
It follows from \eqref{asconv}, \eqref{totalmassconvergence}, and Proposition~\ref{p:2did} that 
\begin{equation}\label{sqfnconv}
\sup_{t\le T}|\langle M^{N_k}(\phi)\rangle_t-A^\phi_t|\rightarrow 0\text{ a.s. and in $L^1$ as }k\to\infty\text{ for all }T>0,
\end{equation}
Lemma~\ref{l:dNs}, Corollary~\ref{c:L2bnd}, and \eqref{totalmassconvergence} imply
\begin{equation}\label{Dconvergence} \sup_{t\le T}\Big|D_t^{N_k}(\phi)-\int_0^t\rho|B_r|\bar\sigma^2X_s(\Delta\phi/2)ds\Big|\rightarrow 0\text{ as }k\to\infty\text{ a.s. and in $L^1$ as }k\to\infty.
\end{equation}
Define an a.s. continuous process by $M_t(\phi)=X_t(\phi)-X_0(\phi)-\int_0^t\rho|B_r|\bar\sigma^2X_s(\Delta\phi/2)ds$. Then the above, the convergence of the initial conditions in \eqref{ICscaling}, and the semimartingale decomposition for 
$X^N(\phi)$ in Proposition~\ref{p:sm-decomp} show that
\begin{equation}\label{Mconvergence}
\sup_{t\le T}|M^{N_k}_t(\phi)-M_t(\phi)|\rightarrow 0\text{ in $L^1$ and a.s. as }k\to\infty\text{ for all }T>0.
\end{equation}
Since $M^{N_k}(\phi)$ is an $(\cF^{X^{N_k}}_t)$-martingale by Corollary~\ref{c:L2bnd}, 
it follows from the above that $M(\phi)$ is a continuous martingale and a standard argument (e.g. see the proof of Theorem~3.5 in \cite{CDP}) shows it is in fact an $(\cF^X_t)$-martingale. 

Recalling \eqref{SBMsqfn}, \eqref{Aphi}, and the value of $b$ in Theorem~\ref{t:main}, it remains to identify the square function of $M(\phi)$ as $A^\phi$ by showing
\begin{equation}\label{sqfnid} 
M_t(\phi)^2-A_t(\phi)\text{ is a local martingale.}
\end{equation}
For $d\ge 3$ this is fairly easy, but we give a stopping argument to include the more delicate $2$-dimensional case.
For $J\in \N$ define
\[T^N_J=\inf\{t:|M^N_t(\phi)|\ge J\},\quad T_J=\inf\{t:|M_t(\phi)|\ge J\}.\]
The convergence in \eqref{Mconvergence} readily shows that 
\begin{equation}\label{lscTJ}\liminf_{k\to\infty}T_J^{N_k}\ge T_J\quad\forall J\in\N\ \text{a.s.}
\end{equation}
We claim that 
\begin{equation}\label{Mstopconv}
\lim_{k\to\infty}\sup_{t\le T}\Big[|M^{N_k}_{t\wedge T_J^{N_k}}(\phi)-M_{t\wedge T_J}(\phi)|+|\langle M^{N_k}(\phi)\rangle_{t\wedge T^{N_k}_J}-A^\phi_{t\wedge T_J}|\Big]=0\text{ for all }T>0\ \text{a.s.}
\end{equation}
The reason there is an issue here is that we do not know whether or not $\lim_kT_J^{N_k}=T_J$ a.s. 
It follows from \eqref{lscTJ} that for $t\le T_J$ we have $\lim_kT_J^{N_k}\wedge t=t=T_J\wedge t$  (the convergence is uniform for $t\le T_J\wedge T$ for any fixed $T$) and therefore by \eqref{Mconvergence} and \eqref{sqfnconv},
\begin{equation}\label{tleTJ}
\lim_{k\to\infty}\sup_{t\le T_J\wedge T}\Big[|M^{N_k}_{t\wedge T_J^{N_k}}(\phi)-M_{t\wedge T_J}(\phi)|+|\langle M^{N_k}(\phi)\rangle_{t\wedge T^{N_k}_J}-A^\phi_{t\wedge T_J}|\Big]=0\text{ for all }T>0\ \text{a.s.}
\end{equation}
A simple calculation using \eqref{lscTJ} shows that ($\sup\emptyset:=0$) with probability one for any $T>0$,
\begin{align}\label{bigt}
\limsup_{k\to\infty}&\sup_{T_J^{N_k}\le t\le T}\Big[|M^{N_k}_{t\wedge T_J^{N_k}}(\phi)-M_{t\wedge T_J}(\phi)|+|\langle M^{N_k}(\phi)\rangle_{t\wedge T^{N_k}_J}-A^\phi_{t\wedge T_J}|\Big]\\
\nonumber&=\limsup_{k\to\infty}1(T_J^{N_k}\le T)\Big[|M^{N_k}_{T_J^{N_k}}(\phi)-M_{T_J}(\phi)|+|\langle M^{N_k}(\phi)\rangle_{T_J^{N_k}}-A^\phi_{T_J}|\Big].
\end{align}
In view of the above and \eqref{tleTJ}, to prove \eqref{Mstopconv} it suffices to show that for $T>0$ fixed,
\begin{equation}\label{smallintconv}
\limsup_{k\to\infty} \sup_{T_J< t\le T_J^{N_k}\wedge T}\Big[|M_t^{N_k}(\phi)-M_{T_J}(\phi)|+|\langle M^{N_k}(\phi)\rangle_t-A^\phi_{T_J}|\Big]=0\text{ a.s.}
\end{equation}
By \eqref{sqfnconv} and \eqref{Mconvergence} this would follow from
\begin{equation}\label{smallintMconv}
\limsup_{k\to\infty} \sup_{T_J< t\le T_J^{N_k}\wedge T}\Big[|M_t(\phi)-M_{T_J}(\phi)|+|A^\phi_t-A^\phi_{T_J}|\Big]=0\text{ a.s.}
\end{equation}

For this we will use the following lemma, whose proof is deferred to the end of this section.
\begin{lemma}\label{l:flatspots} With probability one, for all $0\le s<t$, $M_u(\phi)=M_s(\phi)$ for all $u\in[s,t]$ implies that $A^\phi_t=A^\phi_s$. 
\end{lemma}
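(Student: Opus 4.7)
The plan is to reduce the uncountable quantification to rationals via continuity, and then prove the rational case by transferring a standard martingale bound from the prelimit. By the continuity of $M(\phi)$ and $A^\phi$ it suffices to show the conclusion for all rational $0 \le s < t$; for general $s<t$ one sandwiches rationals $s < s' < t' < t$, applies the rational case to get $A^\phi_{t'} = A^\phi_{s'}$, and takes $s' \downarrow s$, $t' \uparrow t$. Fix then rationals $s < t$ and $\vep > 0$. Since $\{M_u(\phi) = M_s(\phi)\ \forall u \in [s,t]\}$ is the decreasing intersection over $\delta>0$ of $\{\sup_{u\in[s,t]}|M_u(\phi) - M_s(\phi)| < \delta\}$, it is enough to show that
\[
P\bigl(\sup_{u\in[s,t]}|M_u(\phi) - M_s(\phi)| < \delta,\ A^\phi_t - A^\phi_s > \vep\bigr) = O(\delta^2/\vep)
\]
as $\delta \downarrow 0$, with the implicit constant not depending on $\delta$.

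For this estimate I work in the prelimit. For each $k$ introduce the stopping time
\[
\tau^\delta_k = \inf\{u \ge s : |M^{N_k}_u(\phi) - M^{N_k}_s(\phi)| > \delta\}.
\]
By \eqref{Mjumps}, jumps of $M^{N_k}(\phi)$ are bounded by a deterministic $C_{N_k}$ with $C_{N_k} \to 0$, so the stopped increment $M^{N_k}_{u\wedge\tau^\delta_k}(\phi) - M^{N_k}_s(\phi)$ is uniformly bounded by $\delta + C_{N_k}$. Since $M^{N_k}(\phi)$ is a square-integrable $(\cF^{N_k}_t)$-martingale by Corollary~\ref{c:L2bnd}, $M^{N_k}(\phi)^2 - \langle M^{N_k}(\phi)\rangle$ is a martingale, and optional stopping at the bounded stopping times $s$ and $t\wedge\tau^\delta_k$ produces
\[
E\bigl[\langle M^{N_k}(\phi)\rangle_{t\wedge\tau^\delta_k} - \langle M^{N_k}(\phi)\rangle_s\bigr] = E\bigl[(M^{N_k}_{t\wedge\tau^\delta_k}(\phi) - M^{N_k}_s(\phi))^2\bigr] \le (\delta + C_{N_k})^2.
\]
On the event $\{\sup_{u\in[s,t]}|M^{N_k}_u(\phi) - M^{N_k}_s(\phi)| \le \delta\}$ one has $\tau^\delta_k > t$, so $t\wedge\tau^\delta_k = t$, and Markov's inequality yields
\[
P\bigl(\sup_{u\in[s,t]}|M^{N_k}_u(\phi) - M^{N_k}_s(\phi)| \le \delta,\ \langle M^{N_k}(\phi)\rangle_t - \langle M^{N_k}(\phi)\rangle_s \ge \vep/2\bigr) \le \frac{2(\delta + C_{N_k})^2}{\vep}.
\]

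To pass to the limit I use the a.s.\ uniform convergences $M^{N_k}(\phi) \to M(\phi)$ and $\langle M^{N_k}(\phi)\rangle \to A^\phi$ on compact intervals, which hold on our subsequence by Skorokhod. These convergences show that the limit event $\{\sup_{u\in[s,t]}|M_u(\phi) - M_s(\phi)| < \delta/2\} \cap \{A^\phi_t - A^\phi_s > \vep\}$ is eventually contained in the prelimit event just bounded, so Fatou's lemma furnishes the bound $2\delta^2/\vep$ on its probability. Letting $\delta \downarrow 0$ and using the continuity of probability then zeros out the probability of $\{M_u(\phi) = M_s(\phi)\ \forall u\in[s,t]\} \cap \{A^\phi_t - A^\phi_s > \vep\}$. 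Taking a countable union over rational $s<t$ and $\vep > 0$ completes the rational case, which by the reduction above is the lemma.

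The argument is mostly bookkeeping; the substantive step is the classical observation that stopping a square-integrable martingale the first time it moves by more than $\delta$ produces a square-function increment of order at most $\delta^2$. The only care needed is absorbing the vanishing jump correction $C_{N_k}$ and using the a.s.\ uniform convergence from Skorokhod to move cleanly between the prelimit and the limit; I do not anticipate a serious technical obstacle.
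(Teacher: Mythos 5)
Your proof is correct, and it rests on the same key idea as the paper's: the classical observation that stopping a square-integrable martingale when it first moves by $\delta$ forces an expected square-function increment of order $\delta^2$, then Fatou via the a.s. uniform convergence from Skorokhod. The execution, however, is somewhat simpler than the paper's. The paper introduces the extra truncation $T^{N_k}_J=\inf\{u:|M^{N_k}_u(\phi)|\ge J\}$ and then stops $(M^{N_k}(\phi))^2$ when it changes by $1/n$ from its value at $s$; the truncation is needed there because the \emph{squared} process has jumps bounded only in terms of $\sup|M^{N_k}(\phi)|$, and the argument then takes a double limit $J\to\infty$, $n\to\infty$. You avoid this by stopping $M^{N_k}(\phi)$ itself at $\tau^\delta_k$, where the jump bound \eqref{Mjumps} directly gives $|M^{N_k}_{t\wedge\tau^\delta_k}(\phi)-M^{N_k}_s(\phi)|\le\delta+C_{N_k}$, and then invoking $E[(M_{T_2}-M_{T_1})^2]=E[\langle M\rangle_{T_2}-\langle M\rangle_{T_1}]$; no auxiliary truncation or second parameter is needed. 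The one point you implicitly use and should be aware of (as does the paper) is that $M^{N_k}(\phi)^2-\langle M^{N_k}(\phi)\rangle$ is a true martingale rather than just a local one; this follows from the bound $E[\sup_{t\le T}M^N_t(\phi)^2]<\infty$ in \eqref{maxMbound}. With that, the argument is complete and somewhat cleaner than the published version.
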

By the Dubins-Schwarz theorem we may assume
$M_t(\phi)=B(\langle M(\phi)\rangle_t)$ for some
  Brownian motion, $B$, on our probability space. We let
  $T^B_J$ and $T^B_{J+}$ denote the exit times of $B$ from
  $(-J,J)$ and $[-J,J]$, respectively.  
On $\{\limsup_k T_J^{N_k}\wedge T\le T_J\}$, \eqref{smallintMconv} follows from the a.s. continuity of $M(\phi)$ and $A^\phi$. So assume $\omega$ is in $\{\limsup_kT^{N_k}_J\wedge T>T_J\}$ and also outside of a null set so that:\\
(i) \eqref{sqfnconv} and \eqref{Mconvergence} hold;\\
(ii) for all $s<t$, $\langle M(\phi)\rangle_t=\langle M(\phi)\rangle_s$ implies $M_u(\phi)=M_s(\phi)$ for all $u\in[s,t]$; \\
(iii) the conclusion of Lemmas~\ref{increments} and
\ref{l:flatspots} hold;\\
 (iv) $T^B_J=T^B_{J+}$. \\ 
Use the conclusion of Lemma~\ref{increments} and our choice of $\omega$ to see that 
\begin{equation*} \forall t\in[0,\limsup_k T^{N_k}_J\wedge T),\quad |B(\langle M(\phi)\rangle_t)|=|M_t(\phi)|=\lim_k|M^{N_k}_t(\phi)|\le \limsup_kJ+C\frac{\log N_k}{N_k}=J.
\end{equation*}
We easily see that $T^B_J=\langle(M(\phi)\rangle_{T_J}$, and so the above shows that $|B(u)|\le  J$ on \break
$[T^B_J, \langle M(\phi)\rangle(\limsup_k T^{N_k}_J\wedge T))$, so the fact that $T^B_{J+}=T^B_J$ (by our choice of $\omega$) implies this interval must be empty.  We conclude that $\langle M(\phi)\rangle(\limsup_k T^{N_k}_J\wedge T)= \langle(M(\phi)\rangle(T_J)$, which by our choice of $\omega$ implies that $M_u(\phi)=M_{T_J}(\phi)$ for all $u\in[T_J,\limsup_kT_J^{N_k}\wedge T]$.  Lemma~\ref{l:flatspots} (and again our choice of $\omega$) shows that this gives $A^\phi_u=A^\phi_{T_J}$ for all $u\in[T_J,\limsup_kT_J^{N_k}\wedge T]$.  We have proved \eqref{smallintMconv}, and hence completed the derivation of \eqref{Mstopconv}.

Turning at last to \eqref{sqfnid}, we see from Lemma~\ref{increments} and Proposition~\ref{p:sm-decomp} that 
\begin{equation}\label{Munifbound}
\sup_t|M^{N_k}(\phi)_{T^{N_k}_J\wedge t}|\le J+C_\phi\frac{\log N_k}{N_k}\le C(J,\phi)\quad\text{for all }k.
\end{equation}
This, together with the $L^1$ convergence of $\langle M^{N_k}(\phi)\rangle_t$ to $A^\phi_t$ from \eqref{sqfnconv}, implies that \break
$\{\sup_{t\le T}|[M^{N_k}(\phi)_{T^{N_k}_J\wedge t}]^2-\langle M^{N_k}(\phi)\rangle_{T^{N_k}_J\wedge t}|:k\in\N\}$ is uniformly integrable.  From \eqref{Mstopconv} and the above we can conclude that for each $t\ge 0$, 
\begin{align*}N^k_{T^{N_k}_J\wedge t}&:=(M^{N_k}(\phi)_{T^{N_k}_J\wedge t})^2-\langle M^{N_k}(\phi)\rangle_{T^{N_k}_J\wedge t}\\
&\rightarrow M(\phi)_{T_J\wedge t}^2-A^\phi_{T_J\wedge t} \text{ in $L^1$ and a.s.}
\end{align*}
As $N^k_{T_J^{N_k}\wedge t}$ is a martingale (Corollary~\ref{c:L2bnd}), this implies that $N_t=M(\phi)_{T_J\wedge t}^2-A^\phi_{T_J\wedge t} $ is a martingale for all $J$ which establishes \eqref{sqfnid} and so completes the proof.\qed

\medskip

\noindent{\it Proof of Lemma~\ref{l:flatspots}.} By continuity it suffices to prove the result for a fixed pair of times $0\le s<t$. For $J,k,n\in\N$, define
\[U_n^{J,k}=\inf\{u\ge s: |(M^{N_k}(\phi)(T^{N_k}_J\wedge u))^2-(M^{N_k}(\phi)(T^{N_k}_J\wedge s))^2|\ge n^{-1}\}\quad(\inf\emptyset=\infty).\]
It follows from our jump bounds in \eqref{Mjumps} that 
\[|\Delta(M^{N_k}_s(\phi)^2)|\le 2J\Vert\phi\Vert_\infty C\frac{\log N_k}{N_k}\quad\text { for all }s\ge 0.\]
Recalling that $M^N(\phi)$ is a square integrable martingale (from Corollary~\ref{c:L2bnd}), we have by optional stopping,
\begin{align} \label{sqfnincrbound}
E\Big(&\langle M^{N_k}(\phi)\rangle_{U_n^{k,J}\wedge T_J^{N_k}\wedge t}-\langle M^{N_k}(\phi)\rangle_{T_J^{N_k}\wedge s}\Big)\\
\nonumber&=E\Big(M^{N_k}(\phi)(U^{k,j}_n\wedge T_J^{N_k}\wedge t)^2-M^{N_k}(\phi)(U^{k,j}_n\wedge T_J^{N_k}\wedge s)^2\Big)\\
\nonumber&\le \frac{1}{n}+2J\Vert\phi\Vert_\infty C\frac{\log N_k}{N_k}.
\end{align}
Next use \eqref{lscTJ}, and the convergence in \eqref{sqfnconv} and \eqref{Mconvergence}, together with Fatou's lemma, to see that 
\begin{align*}
E\Big(&(A_t^\phi-A_s^\phi)1(T_J>t)1\Big(\sup_{s\le u\le t}|M_u(\phi)^2-M_s(\phi)^2|<\frac{1}{2n}\Big)\Big)\\
&\le E\Big(\liminf_{k\to\infty}(\langle M^{N_k}(\phi)\rangle_t-\langle M^{N_k}(\phi)\rangle_s)1(T_J^{N_k}>t)1(\sup_{s\le u\le t}|M_u^{N_k}(\phi)^2-M_s^{N_k}(\phi)^2|<\frac{1}{n})\Big)\\
&\le \liminf_{k\to\infty} E\Big((\langle M^{N_k}(\phi)\rangle_{t\wedge T_J^{N_k}\wedge U_n^{k,J}}-\langle M^{N_k}(\phi)\rangle_{s\wedge T_J^{N_k}})1(T_J^{N_k}>t)1(\sup_{s\le u \le t}|M_u^{N_k}(\phi)^2-M_s^{N_k}(\phi)^2|<\frac{1}{n})\Big)\\
&\le \frac{1}{n},
\end{align*}
where the last is by \eqref{sqfnincrbound}. Let $J\to\infty$ and then $n\to\infty$ to prove the result for $s<t$ fixed, as required.\qed

\section{Analysis of the square function: Proof of Proposition~\ref{p:2did}}\label{sec:sqfunction}
In this section we analyze the martingale square function $\langle
M^N(\phi)\rangle_t$ for $\phi\in C_0^3(\R^d)$, and in particular give the Proof of Proposition~\ref{p:2did}.  We recall from
Proposition~\ref{p:sm-decomp} and 
Lemma~\ref{l:mNs} that 
\[
\langle M^N(\phi)\rangle_t = \int_0^t m^N_s(\phi) ds
= \int_0^t \big (\bar m^N_s(\phi) +
\cale^N_{\ref{e:mNs2}}(\phi,s)
\big) ds,
\]
and that the integral of $\cale^N_{\ref{e:mNs2}}(\phi,s)$ is negligible 
for all $d\ge 2$ on account of \eqref{e:mNs2}. We can thus
focus on $\bar m^N_s(\phi)$ defined in ~\eqref{e:mbarNs},
which we write in the form
\begin{equation}\label{e:barmdecomp}
\bar m^N_s(\phi) = \bar m^{N,1}_s(\phi) - \bar m^{N,2}_s(\phi),
\end{equation}
where 
\begin{equation}\label{e:barm12}
\begin{aligned}
\bar m^{N,1}_s(\phi) &=  \rho^2N^{1+d/2}|\BMr|K^2 
\int_{\Rd}\phi^2(x) dx
\int_{\BMrx}w^N_s(z) dz, \\ 
\bar m^{N,2}_s(\phi) &= \rho^2N^{1+d/2}K^2 \int_{\Rd}
\phi^2(x)dx
\int_{\BMrx} \int_{\BMrx}w^N_s(z_1)w^N_s(z_2)dz_1dz_2. 
\end{aligned}
\end{equation}

Recall that $K=K'N^{d/2-1}$ where $K'=1$ if $d\ge 3$ and
$K'=\log N$ if $d=2$, and define
\begin{equation}\label{e:gbarN}
\bar\gamma^N(s) = K'\gamma_e(Ns).
\end{equation}
\begin{lemma}\label{l:mNone} 
There is a constant 
$C_{\ref{e:mNone}}=C_{\ref{e:mNone}}(\phi)>0$ such that for $s\ge0$,
\begin{equation}\label{e:mNone}
E(\bar m^{N,1}_s(\phi)) = \rho^2|B_r|^2K' X^N_0(\phi^2) + \cale_{\ref{e:mNone}}(\phi,s)
\end{equation}
where
\[
|\cale_{\ref{e:mNone}}(\phi,s)| \le C_{\ref{e:mNone}}K'\Big(\frac{1}{\sqrt
  N} + \sqrt s\Big) X^N_0(\1) .
\]
\end{lemma}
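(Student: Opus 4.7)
The plan is to apply the single-particle duality \eqref{e:dualN1} (or the form \eqref{e:dualXN}) to rewrite the innermost expectation $E\bigl[\int_{\BMrx} w^N_s(z)\,dz\bigr]$ via the rescaled dual random walk $\eta^N$. By Fubini and duality,
\begin{equation*}
E\Big[\int_{\BMrx} w^N_s(z)\,dz\Big] = \int_{\BMrx} E^N_{\{z\}}\bigl[w^N_0(\eta^N_s)\bigr]\,dz.
\end{equation*}
By translation invariance of the rate $\rho|B_r|N$ random walk $\eta^N$, this equals $\int_{\BMrx} E\bigl[w^N_0(z + Y^N_s)\bigr]\,dz$, where $Y^N_s$ is the same walk started at $0$, whose law does not depend on $z$ and whose variance $E|Y^N_s|^2 = \rho|B_r|d\bar\sigma^2 s$ is easy to compute directly.

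Substituting this back into \eqref{e:barm12} and changing variable $z = x + u$ with $u\in\BMr$ yields
\begin{equation*}
E[\bar m^{N,1}_s(\phi)] = \rho^2 N^{1+d/2} |\BMr| K^2 \, E\Big[\int_{\R^d}\int_{\BMr}\phi^2(x)\, w^N_0(x+u+Y^N_s)\,du\,dx\Big].
\end{equation*}
A further change of variable $y = x + u + Y^N_s$ replaces the integrand by $\phi^2(y - u - Y^N_s) w^N_0(y)$, and I split it as $\phi^2(y) + [\phi^2(y-u-Y^N_s) - \phi^2(y)]$. The first piece produces the main term: substituting $|\BMr| = |B_r|/N^{d/2}$, $K^2 = (K')^2 N^{d-2}$, and $X^N_0(\phi^2) = K\int\phi^2\,w^N_0$, the prefactors collapse to $\rho^2|B_r|^2 K' X^N_0(\phi^2)$ exactly.

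For the error term I use that $\phi\in C^3_0$ implies $\phi^2$ is Lipschitz, so
\begin{equation*}
\bigl|\phi^2(y-u-Y^N_s) - \phi^2(y)\bigr| \le C_\phi \bigl(|u| + |Y^N_s|\bigr),
\end{equation*}
bounded above by $C_\phi(r/\sqrt N + |Y^N_s|)$ since $u\in\BMr$. Taking expectation, $E|Y^N_s| \le (E|Y^N_s|^2)^{1/2} = C\sqrt{s}$, and the integral over $y$ against $w^N_0$ gives $\int w^N_0 = X^N_0(\1)/K = X^N_0(\1)/(K' N^{d/2-1})$. Collecting the same prefactor $\rho^2 N^{1+d/2}|\BMr|^2 K^2 = \rho^2 |B_r|^2 (K')^2 N^{d/2-1}$ (the extra factor of $|\BMr|$ comes from integrating $du$), the error is at most
\begin{equation*}
\rho^2|B_r|^2 K' C_\phi\Bigl(\tfrac{r}{\sqrt N} + C\sqrt s\Bigr) X^N_0(\1),
\end{equation*}
which matches the stated bound. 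The argument involves no real obstacles; the only delicate point is keeping track of the powers of $N$, $K$, and $|\BMr|$ so that they collapse correctly into the clean form $\rho^2|B_r|^2 K'$.
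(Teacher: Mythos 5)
Your proof is correct and follows essentially the same route as the paper: single-particle duality plus a Lipschitz bound on $\phi^2$ to control both the $O(r/\sqrt N)$ spatial smearing from the ball $\BMr$ and the $O(\sqrt s)$ displacement of the dual walk. The only difference is bookkeeping — you apply duality up front and absorb both error sources into a single Lipschitz estimate, whereas the paper first bounds the smearing error pathwise to reduce to $E(X^N_s(\phi^2))$, then applies duality to compare that with $X^N_0(\phi^2)$; the prefactor cancellations you track are correct.
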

\begin{proof}
By a change of variables, 
\begin{align*}
\int_{\Rd} \phi^2(x) 
\int_{\BMrx} w^N_s(z)\ dzdx
& = \int_{\Rd} w^N_s(z)
\int_{\BMrz} \phi^2(x)\ dx dz\\
& = \int_{\Rd} w^N_s(z)
\int_{\BMrz} [\phi^2(z) +(\phi^2(x)-\phi^2(z))]\ dxdz \\
& = |\BMr|
\int_{\Rd} \phi^2(z) w^N_s(z)\ dz
+ \vep^N_s,
\end{align*}
where we have set $\vep^N_s = \int_{\Rd} 
\int_{\BMrx} w^N_s(z) [\phi^2(x)-\phi^2(z)]dxdz$. For $z\in
\BMr(x)$ and $C_\phi=2\|\phi\|_\infty
\|\phi\|_{\text{Lip}}$, $|\phi^2(x)-\phi^2(z)|\le C_\phi
2r/\sqrt N$.  
Thus
\begin{align*}
|\vep^N_s| &\le C_\phi\frac{2r}{\sqrt N}\int_{\Rd}
\int_{\BMrx} w^N_s(z)dx dz
= C_\phi\frac{2r}{\sqrt N}|\BMr| w^N_s(\1).
\end{align*}
Returning to the definition of $\bar m^{N,1}_s(\phi)$ 
we see that
\begin{equation}\label{e:mNonea}
E(\bar m^{N,1}_s(\phi))  =
\rho^2N^{1+d/2}|\BMr|K^2 \Big(
|\BMr| E\big(w^N_s(\phi^2)\big) + E(\vep^N_s)\Big)
= \rho^2|B_r|^2 K' E(X^N_s(\phi^2)) + \cale,
\end{equation}
with 
\begin{equation}\label{e:mNoneb}
|\cale| = \rho^2N^{1+d/2}|\BMr|K^2 E(|\vep^N_s|)
 \le C_\phi\rho^2|B_r|^2
\frac{2rK'}{\sqrt N} E(X^N_s(\1))
= \frac{CK'}{\sqrt N}
X^N_0(\1)
\end{equation}
by the martingale property of $X^N_s(\1)$ (Corollary~\ref{c:L2bnd}).

Next, we bound the difference
$|E(X^N_s(\phi^2))-X^N_0(\phi^2)|$. 
By the single particle duality
equation \eqref{e:dualN1} and a change of variables,
\begin{align*}
E\big(X^N_s(\phi^2)\big) &=
\int_{\R^d}\phi^2(x) E^N_{\{x\}}(X^N_0(\eta^N_s)) dx 
=\int_{\R^d}\phi^2(x) E^N_{\{0\}} (X^N_0(x+\eta^{N}_s)) dx\\
&= \int_{\R^d}E^N_{\{0\}}(\phi^2(x'-\eta^{N}_s)) X^N_0(x') dx'
= X^N_0(\phi^2) + \int_{\R^d}E^N_{\{0\}}\Big[\phi^2(x'-\eta^{N}_s)
-\phi^2(x')\Big] X^N_0(x') dx' .
\end{align*}
Using the smoothness of $\phi$ and scaling, we see that 
Lemma~\ref{l:rwfacts}(b) (it applies to the rate $\rho|B_r|$ walk $\eta$ as well) implies
\begin{align*}
\int_{\R^d}E^N_{\{0\}}\Big[\large|\phi^2(x'-\eta^{N}_s)-\phi^2(x')\large|\Big] 
X^N_0(x') dx'
 &\le C_\phi E_{\{0\}}\Big(\frac{\large|\eta_{Ns}\large|}{\sqrt N}\Big)X^N_0(\1)
\le C_\phi C\sqrt{s} X^N_0(\1).
\end{align*}
Combining this bound with
\eqref{e:mNonea} and \eqref{e:mNoneb} gives \eqref{e:mNone}.
\end{proof}

To handle $\bar m^{N,2}_s(\phi)$ we apply
the two-particle duality equation \eqref{e:dualN2} and then split
the resulting expression into two pieces, 
obtaining  
\begin{equation}\label{m2decompn}
E(\bar m^{N,2}_s(\phi)) = J^{N,1}_s(\phi)+ J^{N,2}_s(\phi) ,
\end{equation}
with 
\begin{align}\notag
J^{N,1}_s&(\phi)\\ \label{e:JN1}
&= \rho^2N^{1+d/2}K^2 \int_{\Rd}\,\phi^2(x)
\int_{\BMrx} \int_{\BMrx} E^N_{\{z_1,z_2\}}\big[w^N_0(\xi^{N,1}_s)
1\{\tau^N\le s\}\big]dz_1dz_2dx\\\notag
J^{N,2}_s&(\phi)\\ \label{e:JN2}
& = 
\rho^2N^{1+d/2}K^2 \int_{\Rd}\phi^2(x)
\int_{\BMrx}  \int_{\BMrx} E^N_{\{z_1,z_2\}}\big[(w^N_0(\xi^{N,1}_s))
w^N_0(\xi^{N,2}_s)
1\{\tau^N> s\}\big]dz_1dz_2dx .
\end{align}

\begin{lemma}\label{l:JNone} 
There is a constant 
$C_{\ref{e:JNone}}=C_{\ref{e:JNone}}(\phi)>0$ such that for
$s\ge 0$,
\begin{equation}\label{e:JNone}
\begin{aligned}
J^{N,1}_s(\phi) &= \rho^2|B_r|^2(K'-\bar\gamma^N(s))X^N_0(\phi^2) + 
\cale_{\ref{e:JNone}}, \text{ where }\\
|\cale_{\ref{e:JNone}}| &\le 
C_{\ref{e:JNone}} K'\Big( \frac1{\sqrt{N}}+\sqrt s \Big) X^N_0(\1). 
\end{aligned}
\end{equation}
\end{lemma}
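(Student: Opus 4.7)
The plan is to reduce $J^{N,1}_s(\phi)$ to an integral over $B_r\times B_r$ via translation invariance and the scaling relation between the rescaled dual $\xi^N$ and the unscaled dual $\xi$, then to extract the main term using a change of variables that lets us bring $\phi^2$ inside the expectation. First I would substitute $z_i=x+y_i/\sqrt N$ (so $y_i\in B_r$ and $dz_i=N^{-d/2}dy_i$) in the inner double integral, apply the translation invariance~\eqref{TI} of the two-particle dual to shift the starting points to $\{y_i/\sqrt N\}$ at the cost of shifting the argument of $w^N_0$ by $x$, and use the scaling relation defining $P^N$ to rewrite the expectation in terms of the unscaled $\xi$ at time $Ns$ started at $(y_1,y_2)$. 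Collecting the prefactors yields $\rho^2 N^{1+d/2}K^2 N^{-d}=\rho^2 K'K$ (using $K=K'N^{d/2-1}$), so
\[
J^{N,1}_s(\phi)=\rho^2K'K\int_{\R^d}\phi^2(x)\int_{B_r}\int_{B_r}E_{\{y_1,y_2\}}\bigl[w^N_0(x+\xi^1_{Ns}/\sqrt N)\,1\{\tau\le Ns\}\bigr]\,dy_1\,dy_2\,dx.
\]

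Next I would use Fubini and, inside the expectation, the translation $x'=x+\xi^1_{Ns}/\sqrt N$ (an $\omega$-wise shift of the Lebesgue integral in $x$) to flip the roles of $\phi^2$ and $w^N_0$, producing
\[
J^{N,1}_s(\phi)=\rho^2K'K\int_{\R^d}w^N_0(x')\int_{B_r}\int_{B_r}E_{\{y_1,y_2\}}\bigl[\phi^2(x'-\xi^1_{Ns}/\sqrt N)\,1\{\tau\le Ns\}\bigr]\,dy_1\,dy_2\,dx'.
\]
Decomposing $\phi^2(x'-\xi^1_{Ns}/\sqrt N)=\phi^2(x')+[\phi^2(x'-\xi^1_{Ns}/\sqrt N)-\phi^2(x')]$, the $\phi^2(x')$ piece gives
\[
\rho^2K'K\Bigl(\int w^N_0(x')\phi^2(x')\,dx'\Bigr)\cdot\int_{B_r}\int_{B_r}P_{\{y_1,y_2\}}(\tau\le Ns)\,dy_1\,dy_2=\rho^2|B_r|^2\bigl(K'-\bar\gamma^N(s)\bigr)X^N_0(\phi^2),
\]
since $K\int w^N_0\phi^2=X^N_0(\phi^2)$ and the double integral equals $|B_r|^2 P_{\{U_1,U_2\}}(\tau\le Ns)=|B_r|^2(1-\gamma_e(Ns))$, combining with the outer $K'$ to produce the advertised $(K'-\bar\gamma^N(s))$.

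It remains to bound $\cale_{\ref{e:JNone}}$. Using the global Lipschitz property of $\phi^2$ (valid since $\phi\in C^3_0(\R^d)$), the difference in the bracket is at most $C_\phi|\xi^1_{Ns}|/\sqrt N$. Dropping the indicator and integrating $w^N_0$ against $K$ to produce $X^N_0(\1)$, I get
\[
|\cale_{\ref{e:JNone}}|\le C_\phi\rho^2K'|B_r|^2 X^N_0(\1)\sup_{y_1,y_2\in B_r}E_{\{y_1,y_2\}}\bigl[|\xi^1_{Ns}|/\sqrt N\bigr].
\]
The only step that uses any nontrivial input is this moment bound: via Lemma~\ref{lem:xibound} I dominate $|\xi^1_t|$ by $|y_1|$ plus the suprema of independent rate $\rho|B_r|$ random walks $W^{i,0}$ (plus $2\sqrt 2 r$), and the random walk estimate in Lemma~\ref{l:rwfacts}(b) (applied to the rate $\rho|B_r|$ walks, with the same scaling) gives $E[\sup_{s\le t}|W^{i,0}_s|]\le C(1+\sqrt t)$. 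This yields $E_{\{y_1,y_2\}}[|\xi^1_{Ns}|]\le C_r(1+\sqrt{Ns})$ uniformly for $y_1,y_2\in B_r$, so that $E_{\{y_1,y_2\}}[|\xi^1_{Ns}|/\sqrt N]\le C(N^{-1/2}+\sqrt s)$, completing the bound. No single step here is difficult; the only thing requiring care is that the inner substitution $x'=x+\xi^1_{Ns}/\sqrt N$ be applied pointwise in $\omega$ after invoking Fubini, which is justified because $w^N_0$ and $\phi^2$ are bounded with $w^N_0$ compactly supported.
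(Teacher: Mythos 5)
Your proof is correct and follows essentially the same route as the paper: translation invariance plus the rescaling of the dual to reduce to an integral over $B_r\times B_r$ with the unscaled $\xi$ at time $Ns$, the inner $\omega$-wise change of variables $x'\mapsto x+\xi^1_{Ns}/\sqrt N$ followed by adding and subtracting $\phi^2(x')$, extraction of the main term via the definition of $\gamma_e(Ns)$, and the error bound from the Lipschitz property of $\phi^2$ together with Lemma~\ref{lem:xibound} and Lemma~\ref{l:rwfacts}(b). The only cosmetic difference is that you unscale to the original dual $\xi$ at the outset, whereas the paper works with $\xi^N$ throughout and unscales only when identifying $\gamma_e$; the underlying computation is identical.
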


\begin{proof}
By translation invariance, changing of variables and 
order of integration, we see
\begin{align*}
\int_{\Rd}
&\int_{\BMrx}\int_{\BMrx}\phi^2(x)E^N_{\{z_1,z_2\}}\Big[w^N_0(\xi^{N,1}_s)
1\{\tau^N\le s\}\Big]\ dz_1dz_2dx \\
&=
\int_{\Rd}
\int_{\BMr}\int_{\BMr}\phi^2(x) 
E^N_{\{x+z'_1,x+z'_2\}}\Big[w^N_0(\xi^{N,1}_s)
1\{\tau^N\le s\}\Big]\ dz'_1dz'_2dx\\
& =  
\int_{\BMr}\int_{\BMr} E^N_{\{z'_1,z'_2\}}\Big[\int_{\Rd}\phi^2(x) 
w^N_0(x+\xi^{N,1}_s)
1\{\tau^N \le s\} dx\Big]dz'_1dz'_2.
\end{align*}
Changing variables again with 
$x'=x+\xi^{N,1}_s$ and adding and subtracting $\phi^2(x')$,
the right-side above equals
\begin{align*}
&
\int_{\BMr}\int_{\BMr}E^N_{\{z'_1,z'_2\}}\Big[\int_{\Rd}
\phi^2(x'-\xi^{N,1}_s)w^N_0(x')
1\{\tau^N\le s\}dx'\Big]dz'_1dz'_2\\
&= 
\int_{\BMr}\int_{\BMr}\int_{\Rd}
\phi^2(x')w^N_0(x')
P^N_{\{z'_1,z'_2\}}(\tau^N\le s\}\
dx'dz'_1dz'_2 \\
&\qquad +
\int_{\BMr}\int_{\BMr}E^N_{\{z'_1,z'_2\}}\Big[\int_{\Rd}
\Big(\phi^2(x'-\xi^{N,1}_s) -\phi^2(x')\Big)
1\{\tau^N\le s\}w^N_0(x') dx'\Big]dz'_1dz'_2\\
&= w^N_0(\phi^2) \int_{\BNr}\int_{\BNr}
P^N_{\{z'_1,z'_2\}}(\tau^N\le s\}\
dz'_1dz_2' + \vep^{N}_s\\
&= w^N_0(\phi^2) N^{-d}\int_{B_r}\int_{B_r}
P_{\{z_1,z_2\}}(\tau\le Ns\}\
dz_1dz_2 + \vep^{N}_s\\
&= |B_r|^2N^{-d}w^N_0(\phi^2)(1-\gamma_e(Ns)) + \vep^{N}_s,
\end{align*}
where 
\begin{align}
\nonumber|\vep^{N}_s| &=
\Big|
\int_{\BMr}\int_{\BMr}\int_{\Rd}
E^N_{\{z'_1,z'_2\}}\Big[\Big(\phi^2(x'-\xi^{N,1}_s) -\phi^2(x')\Big)
1\{\tau^N\le s\}\Big]w^N_0(x')\ dx'dz'_1dz'_2\Big|\\
\nonumber&\le C_\phi
\int_{\BMr}\int_{\BMr}\int_{\Rd}
E^N_{\{z'_1,z'_2\}}\Big[|\xi^{N,1}_s|1\{\tau^N\le s\}\Big]w^N_0(x')
dx' dz'_1dz'_2\\
\nonumber& = C_\phi w^N_0(\1)
\int_{\BMr}\int_{\BMr}
E^N_{\{z'_1,z'_2\}}(|\xi^{N,1}_s|1\{\tau^N\le s\})
\ dz'_1dz'_2\\
\label{epsexp}& = C_\phi w^N_0(\1) N^{-d}\frac1{\sqrt N}
\int_{B_r}\int_{B_r}
E_{\{z_1,z_2\}}(|\xi^{1}_{Ns}|1\{\tau\le Ns\})
\ dz_1dz_2.
\end{align}
For fixed $z_1,z_2\in B_r$, letting $z_3=0$,
Lemma~\ref{lem:xibound}  implies that  
\[
E_{\{z_1,z_2\}}(|\xi^{1}_{Ns}|)
\le 2\sqrt 2r + \sum_{i=1}^3E\Big[\sup_{t\le Ns}
|W^{i,z_i}_{t}|\Big]
\le 2\sqrt 2r + 3\Big(r+E(\sup_{t\le Ns}
|Y^0_t|)\Big)
\le C_r + C\sqrt{Ns}
\]
using Lemma~\ref{l:rwfacts}(b) for the last
inequality. Plugging this bound into \eqref{epsexp}, we obtain
\[
|\vep^N_s|\le C\Big(\frac 1{{\sqrt N}} +\sqrt{s}\Big)N^{-d}w^N_0(\1) .
\]
Returning to $J^{N,1}_s$, 
we now have
\begin{align*}
J^{N,1}_s(\phi) &= \rho^2N^{1+\frac d2 }K^2\Big(
|B_r|^2N^{-d}w^N_0(\phi^2)(1-\gamma_e(Ns)) +
\vep^N_s\Big)\\
&= \rho^2|B_r|^2K'X^N_0(\phi^2)(1-\gamma_e(Ns)) + \cale\\
&=\rho^2|B_r|^2X_0^N(\phi^2)[K'-\bar \gamma^N(s)]+\cale,
\end{align*}
where
\begin{align*}
|\cale| &\le C N^{1+\frac d2}K^2|\vep^N_s|
\le
 C N^{1+\frac d2}K
N^{-d}\Big(\frac 1{\sqrt N} + \sqrt s\Big)X^N_0(\1)\\
&= C K' \Big(\frac 1{\sqrt N} + \sqrt s\Big) X^N_0(\1),
\end{align*}
which proves \eqref{e:JNone}.
\end{proof}

Using Lemmas~\ref{l:mNone} and \ref{l:JNone} in \eqref{e:barmdecomp} and \eqref{m2decompn}, we arrive at the following: 
\begin{corollary}\label{c:progress}
For 
$s\ge 0$,
\begin{equation}\label{e:progress}
\Big| E\big(\bar m^N_{s}(\phi)\big) -\rho^2|B_r|^2
\bar\gamma^N(s) X^N_0(\phi^2) \Big|  \le 
J^{N,2}_s(\phi) + C_{\ref{e:progress}}K'\Big(\frac 1{\sqrt N} + 
\sqrt s\Big) X^N_0(\1).
\end{equation}
\end{corollary}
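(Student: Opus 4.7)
The plan is to combine the decompositions \eqref{e:barmdecomp} and \eqref{m2decompn} with the asymptotic identifications in Lemmas~\ref{l:mNone} and \ref{l:JNone}; no new probabilistic input is required beyond an algebraic cancellation of the leading $\rho^2|B_r|^2 K' X^N_0(\phi^2)$ terms. First I would take expectations in \eqref{e:barmdecomp} to write
\[
E(\bar m^N_s(\phi)) = E(\bar m^{N,1}_s(\phi)) - J^{N,1}_s(\phi) - J^{N,2}_s(\phi),
\]
using the fact that $E(\bar m^{N,2}_s(\phi)) = J^{N,1}_s(\phi) + J^{N,2}_s(\phi)$ by \eqref{m2decompn}.

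Next I would substitute the two main identities: Lemma~\ref{l:mNone} gives $E(\bar m^{N,1}_s(\phi)) = \rho^2|B_r|^2 K' X^N_0(\phi^2) + \cale_{\ref{e:mNone}}$, and Lemma~\ref{l:JNone} gives $J^{N,1}_s(\phi) = \rho^2|B_r|^2(K' - \bar\gamma^N(s)) X^N_0(\phi^2) + \cale_{\ref{e:JNone}}$. The $\rho^2|B_r|^2 K' X^N_0(\phi^2)$ contributions cancel exactly, leaving
\[
E(\bar m^N_s(\phi)) - \rho^2|B_r|^2 \bar\gamma^N(s) X^N_0(\phi^2) = \cale_{\ref{e:mNone}} - \cale_{\ref{e:JNone}} - J^{N,2}_s(\phi).
\]

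Finally I would take absolute values, apply the triangle inequality, and invoke the error estimates $|\cale_{\ref{e:mNone}}|, |\cale_{\ref{e:JNone}}| \le C K'(N^{-1/2} + \sqrt s)\, X^N_0(\1)$ from the two lemmas. Since the integrand defining $J^{N,2}_s(\phi)$ in \eqref{e:JN2} is manifestly nonnegative (as $\phi^2 \ge 0$, $w^N_0\ge 0$, and all indicator-weighted expectations are nonnegative), we have $J^{N,2}_s(\phi)\ge 0$, and we may move it to the right-hand side without a sign issue; combining the two $C K'(\cdot)$ bounds into a single constant $C_{\ref{e:progress}}$ yields \eqref{e:progress}. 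There is no real obstacle here—the work was done in Lemmas~\ref{l:mNone} and \ref{l:JNone}—so this is bookkeeping, with the only subtlety being to check the sign of $J^{N,2}_s(\phi)$ so it can be placed on the upper-bound side of the inequality.
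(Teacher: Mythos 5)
Your proposal is correct and is essentially the paper's own argument: the corollary is stated immediately after the remark ``Using Lemmas~\ref{l:mNone} and \ref{l:JNone} in \eqref{e:barmdecomp} and \eqref{m2decompn}, we arrive at the following,'' which is exactly your cancellation of the $\rho^2|B_r|^2K'X^N_0(\phi^2)$ terms plus the triangle inequality, with the observation $J^{N,2}_s(\phi)\ge 0$ implicit. Nothing further is needed.
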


We turn now to the analysis of 
$J^{N,2}_s(\phi)\le
\|\phi\|^2_{\infty} J^{N,2}_s(\1)$. Recall 
$Y^x_t$ and $\tilde Y^x_t= Y^x(I^{-1}(t))$ from 
Section 5, $\tilde \tau$ from \eqref{taudef1}, and the process
$\txi^x_t= \tilde Y^x_t1\{\tilde\tau>t\}$, which by 
Lemma~\ref{lem:difftimechange} has the same law as
$\xi_t^1-\xi_t^2$  under $P_{\{z_1,z_2\}}$ when 
$z_1-z_2=x$. As in Lemma~\ref{l:Ibnd} we will write $Y^{\bar U}_t$ when the
initial law of $Y_t$ is the law of $\bar U$.
\begin{lemma}\label{l:JNtwo} 
For $s\ge 0$,
\begin{equation}\label{e:JNtwo}
J^{N,2}_s(\1) 
=\rho^2|B_r|^2N^{1-\frac{d}{2}} \int_{\Rd}
X^N_0(y)
E\Big[X^N_0\Big(y-\frac{\tilde Y^{\bar U}_{Ns}}{\sqrt N}\Big)
1\{\tilde\tau> Ns\}\Big]dy.
\end{equation}
\end{lemma}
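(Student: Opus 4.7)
The plan is to convert the four-fold integral defining $J^{N,2}_s(\1)$ into the stated single integral via a sequence of changes of variables, combined with the translation invariance of the dual and the time-change representation from Lemma~\ref{lem:difftimechange}.

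First I will substitute $z_i = x + z_i'$ in \eqref{e:JN2} with $\phi=\1$ so that the inner integrations run over $\BMr \times \BMr$, and use \eqref{TI} (applied to the rescaled dual) to move the $x$-dependence out of the initial condition of the dual and into the arguments of $w^N_0$. Next I will interchange the $x$ and $z_i'$ integrals by Fubini, and inside the expectation make the change of variable $y = x + \xi^{N,1}_s$. This produces the kernel $w^N_0(y)\, w^N_0(y - \tilde\xi^N_s)$, where the two coordinates of the dual are now coupled only through the difference $\tilde \xi^N_s = \xi^{N,1}_s - \xi^{N,2}_s$.

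Second, I will rescale the dual: substituting $u_i = \sqrt N z_i'$ converts $P^N_{\{z_1',z_2'\}}$ into $P_{\{u_1,u_2\}}$ (with time $Ns$ and $\tilde\xi_{Ns}$ rescaled by $1/\sqrt N$) at the cost of a Jacobian factor $N^{-d}$. A short density computation shows that $U_1 - U_2$, for $U_1, U_2$ i.i.d.\ uniform on $B_r$, has density $h_{\bar U}$ (using the symmetry of $B_r$ and the formula for $h_{\bar U}$ in \eqref{e:barUdens}). By \eqref{TI} the inner expectation depends only on $u_1 - u_2$, so the two-dimensional integration $\int_{B_r}\int_{B_r}du_1 du_2$ collapses into $|B_r|^2$ times an expectation over the dual difference started at $\bar U$.

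Finally, on $\{\tilde\tau > Ns\}$ the dual difference $\tilde\xi_{Ns}$ coincides with $\tilde Y^{\bar U}_{Ns}$ by Lemma~\ref{lem:difftimechange} and \eqref{e:taukap}. Writing $w^N_0 = X^N_0/K$ from \eqref{XNdef} introduces a $K^{-2}$, and the overall prefactor $\rho^2 N^{1+d/2} K^2 \cdot N^{-d} \cdot |B_r|^2 \cdot K^{-2}$ collapses to $\rho^2 |B_r|^2 N^{1 - d/2}$, matching \eqref{e:JNtwo}. The argument is pure bookkeeping, and the only point requiring care is tracking the three scaling factors (from the definition of $\bar m^{N,2}_s$, the Jacobian of the dual rescaling, and the conversion of $w^N_0$ to $X^N_0$) and correctly invoking the time-change representation at the rescaled time $Ns$; I do not anticipate a genuine obstacle beyond the setup developed in Section~\ref{sec:2dual}.
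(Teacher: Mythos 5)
Your proposal is correct and follows essentially the same route as the paper's proof: translate by $x$ using \eqref{TI}, Fubini and the change of variable $y=x+\xi^{N,1}_s$ to reduce to the dual difference, rescale the dual to undo the $N$-scaling, recognize $z_1-z_2$ (iid uniform on $B_r$) as $\bar U$ by the symmetry of $B_r$, and apply Lemma~\ref{lem:difftimechange} to identify $\tilde\xi_{Ns}$ with $\tilde Y^{\bar U}_{Ns}$ on $\{\tilde\tau > Ns\}$. The scaling-factor bookkeeping checks out.
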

\begin{proof} 
By changing variables and orders of integration, and using the difference
process $\txi^N_s=\xi^{N,1}_s-\xi^{N,2}_s$, we have
\begin{align*}
J^{N,2}_s(\1) &=
\rho^2N^{1+d/2}K^2 \int_{\Rd}
\int_{\BMr}\int_{\BMr}E^N_{\{z_1,z_2\}}\Big[
w^N_0(x+\xi^{N,1}_s) w^N_0(x+\xi^{N,2}_s)
1\{\tau^N> s\}\Big]dz_1dz_2dx\\
&=\rho^2N^{1+d/2}K^2 
\int_{\BMr}\int_{\BMr}\int_{\Rd}w^N_0(y)
E^N_{\{z_1,z_2\}}\Big[w^N_0(y-\xi^{N,1}_s+\xi^{N,2}_s)
1\{\tau^N> s\}\Big]dy dz_1dz_2\\
&=\rho^2N^{1+d/2}K^2 \int_{\Rd} w^N_0(y) \int_{\BMr}\int_{\BMr} 
E^N_{\{z_1,z_2\}}\big(w^N_0(y-\txi^{N}_s)
1\{\tau^N> s\}\big)dz_1dz_2dy\\
&=\rho^2N^{1-d/2}K^2 \int_{\Rd} w^N_0(y) \int_{B_r}\int_{B_r}
E_{\{z_1,z_2\}}\Big[w^N_0\Big(y-\frac{\txi_{Ns}}{\sqrt N}\Big)
1\{\tilde\tau> Ns\}\Big]dydz_1dz_2\\
&=\rho^2N^{1-d/2}K^2 \int_{\Rd} w^N_0(y) \int_{B_r}\int_{B_r}
E\Big[w^N_0\Big(y-\frac{\tilde Y^{z_1-z_2}_{Ns}}{\sqrt N}\Big)
1\{\tilde\tau> Ns\}\Big]dydz_1dz_2
\end{align*}
which is \eqref{e:JNtwo}.
\end{proof}

Recall $s_N$ from \eqref{sNdef} and define $\delta_N$ so that
\begin{equation}\label{e:qdelta}
s_N = (\log N)^{-q}, \quad \delta_N=(\log N)^\delta s_N 
= (\log N)^{\delta -q},
\end{equation}
where $\delta\in(0,\frac12]$  and $q>4$ are fixed constants.

\begin{lemma}\label{l:JNtwo2d} There is a
  constant $C_{\ref{e:JNtwo2d}}>0$ 
such that  
\begin{multline}\label{e:JNtwo2d}
\sup_{s\in[s_N,2s_N]}J^{N,2}_s (\1) \le C_{\ref{e:JNtwo2d}} 
\Big[\frac{\log\log N}{\log N}X^N_0(\1) \\+ 
\frac{N^{\frac{1}{2}-\frac{d}{4}}}{s_N\log N} \int_{\Rd} \int_{\Rd} 
X^N_0(y_1)X^N_0(y_2)  
1\big\{ |y_1-y_2|\le \sqrt{\delta_N}\big\} dy_1 dy_2\Big].
\end{multline}
\end{lemma}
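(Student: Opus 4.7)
The plan is to start from the representation in Lemma~\ref{l:JNtwo},
\[J^{N,2}_s(\1) = \rho^2|B_r|^2 N^{1-d/2} \int_{\Rd} X^N_0(y)\, E\Bigl[X^N_0\Bigl(y - \tfrac{\tilde Y^{\bar U}_{Ns}}{\sqrt N}\Bigr) 1\{\tilde\tau > Ns\}\Bigr] dy,\]
and to split the inner expectation on the event $G = G(Ns/4, 2Ns)$ of Lemma~\ref{l:GE}. On $G$ I would use the time-change representation $\tilde Y^{\bar U}_t = Y^{\bar U}(I^{-1}(t))$ (Lemma~\ref{lem:difftimechange}) to reduce the computation to a random-walk density bound (Lemma~\ref{l:rwfacts}); on $G^c$ I would use Lemma~\ref{l:GE} itself, after an initial restart.

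For the \emph{good part} on $G$, note that $\beta(Y^{\bar U}_u) = 1$ on $[I^{-1}(Ns/4), I^{-1}(Ns)]$, so $I^{-1}(Ns) = \sigma + 3Ns/4$ where $\sigma := I^{-1}(Ns/4)$ is a stopping time for $Y^{\bar U}$, and $k(Y^{\bar U}_u) = 0$ throughout that interval. By Lemma~\ref{lem:killtime} and the strong Markov property of $Y^{\bar U}$ at $\sigma$, the $G$-contribution to the inner expectation becomes
\[E\Bigl[\exp\Bigl(-\int_0^\sigma k(Y^{\bar U}_u)\,du\Bigr) 1_G \cdot E\Bigl[X^N_0\Bigl(y - \tfrac{Y^{\bar U}(\sigma) + Y'(3Ns/4)}{\sqrt N}\Bigr) \,\Big|\, \mathcal{F}^Y_\sigma\Bigr]\Bigr],\]
with $Y'$ an independent rate-$2\rho|B_r|$ random walk started at $0$. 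Splitting further on $\{|\tilde Y^{\bar U}_{Ns}/\sqrt N| \le \sqrt{\delta_N}\}$, the density bound $p_{Y'(3Ns/4)/\sqrt N}(v)\le C/s^{d/2}$ of Lemma~\ref{l:rwfacts}(a) together with a translation change-of-variables (replacing $Y^{\bar U}(\sigma)+Y'(3Ns/4)$ by $\tilde Y^{\bar U}_{Ns}$) eliminates the $\mathcal{F}^Y_\sigma$-dependence and produces the pointwise bound $\tfrac{C}{s^{d/2}} \int_{|w|\le\sqrt{\delta_N}} X^N_0(y-w)\,dw$. Taking outer expectation, applying Lemma~\ref{lem:killtime} and Proposition~\ref{t:noncoal} to bound $E[\exp(-\int_0^\sigma k\,du)1_G]\le P_{\bar U}(\tilde\tau>Ns/4) \le \gamma_e(Ns/4)\le C/\log N$ for $d=2$, and integrating in $y$ will yield the double-integral term of \eqref{e:JNtwo2d} with the correct prefactor $1/(s_N\log N)$ (after using $s\ge s_N$). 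The complementary piece $\{|\tilde Y^{\bar U}_{Ns}/\sqrt N|>\sqrt{\delta_N}\}$ is handled by the polynomial tail bound of Lemma~\ref{l:rwfacts}(c), producing $O((\log N)^{-M})$ for any $M$ and so being absorbed into the first term of \eqref{e:JNtwo2d}.

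For the \emph{bad part} on $G^c$, bounding $X^N_0\le K$ gives
\[J^{N,2}_s(\1)\cdot 1_{G^c} \le \rho^2|B_r|^2 N^{1-d/2} K X^N_0(\1)\, P_{\bar U}(\tilde\tau > Ns, G^c),\]
and since $N^{1-d/2}K = K' = \log N$ for $d=2$, matters reduce to showing $P_{\bar U}(\tilde\tau > Ns, G^c) \le C\log\log N/(\log N)^2$. I would apply strong Markov at an intermediate time $t_\ast \in (0, Ns/4)$ (say $t_\ast = Ns/8$): Lemma~\ref{l:betabnd} then shows that on $\{\tilde\tau > t_\ast\}$ one has $|\tilde Y_{t_\ast}| > 2r$ off an event of polynomial-in-$N$ probability; on the complementary sub-event an adaptation of Lemma~\ref{l:GE} to the shifted time window gives conditional $G^c$-probability $\le C\log\log N/\log N$, and combining with $P_{\bar U}(\tilde\tau > t_\ast) \le \gamma_e(t_\ast) \le C/\log N$ from Proposition~\ref{t:noncoal} delivers the required bound.

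The hardest step will be the bad-part estimate: since $|\bar U|\le 2r$ precludes a direct application of Lemma~\ref{l:GE} from the initial point, one must introduce the intermediate restart (justified via Lemma~\ref{l:betabnd}) together with a shifted-window adaptation of Lemma~\ref{l:GE}, and track the interplay between the non-coalescence probability (Proposition~\ref{t:noncoal}) and the chance of re-entering $B_{2r}$ during the relevant time window.
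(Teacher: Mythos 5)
Your proposal is correct and follows essentially the same strategy as the paper's proof: start from the representation in Lemma~\ref{l:JNtwo}, decompose on whether the time-changed difference walk stays outside $B_{2r}$ over a bulk interval, exploit the trivial (linear) time change plus the density bound of Lemma~\ref{l:rwfacts}(a) on the good event via the strong Markov property of $Y$ at the $(\cF^Y_t)$-stopping time $\sigma=I^{-1}(Ns/4)$, and combine Lemma~\ref{l:betabnd}, Lemma~\ref{l:GE}, and the non-coalescence asymptotics of Proposition~\ref{t:noncoal} on the bad event to get the extra $1/\log N$. Two small points to tighten: the indicator $1_G$ is not $\cF^Y_\sigma$-measurable, so after using the pathwise identity $\exp(-\int_0^{I^{-1}(Ns)}k)=\exp(-\int_0^\sigma k)$ on $G$ you must drop $1_G$ as an upper bound before conditioning on $\cF^Y_\sigma$ (rather than keeping it outside the conditional expectation as written); and with your choices $G=G(Ns/4,2Ns)$ and $t_\ast=Ns/8$, the shifted window $[Ns/8,15Ns/8]$ for $s\in[s_N,2s_N]$ is not contained in the fixed window $[Ns_N/4,2Ns_N]$ of Lemma~\ref{l:GE}, so the shifted-window adaptation you flag is genuinely needed, whereas the paper uses $G_N=G(Ns/2,Ns)$ and restarts at $t_N=Ns/4$ (handling the case $|\tilde Y^{\bar U}_{t_N}|\le 2r$ via Lemma~\ref{l:betabnd} first, for both good and bad parts) so that Lemma~\ref{l:GE} applies verbatim.
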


\begin{proof} 
Let $s\in[s_N,2s_N]$, let $t_N=Ns/4$, and define
\begin{align*}
\cale_1 &=
N^{1-\frac{d}{2}}\int_{\Rd}
X^N_0(y)E\Big[X^N_0\Big(y-\frac{\tilde Y^{\bar U}_{Ns}}{\sqrt N}\Big)
1\Big\{\frac{|\tilde Y^{\bar U}_{Ns}|}{\sqrt N}>\sqrt{\delta_N}, 
 \tilde\tau> Ns\Big\}\Big]dy\\
\cale_2 &=N^{1-\frac{d}{2}}\int_{\Rd}
X^N_0(y)E\Big[X^N_0\Big(y-\frac{\tilde Y^{\bar U}_{Ns}}{\sqrt N}\Big)
1\Big\{|\tilde Y^{\bar U}_{t_N}|\le 2r, \frac{|\tilde Y^{\bar U}_{Ns}|}{\sqrt N}\le\sqrt{\delta_N}, 
  \tilde\tau> Ns\Big\}\Big]dy\\
\cale_3 &=N^{1-\frac{d}{2}}\int_{\Rd}
X^N_0(y)E\Big[X^N_0\Big(y-\frac{\tilde Y^{\bar U}_{Ns}}{\sqrt N}\Big)
1\Big\{|\tilde Y^{\bar U}_{t_N}|> 2r, \frac{|\tilde Y^{\bar U}_{Ns}|}{\sqrt N}\le\sqrt{\delta_N}, 
  \tilde\tau> Ns\Big\}\Big]dy
\end{align*}
To prove \eqref{e:JNtwo2d}, by Lemma~\ref{l:JNtwo} it suffices to show that each
$\cale_i$ is uniformly bounded in $s\in[s_N,2s_N]$ by terms in the right side of
\eqref{e:JNtwo2d}. 

Since  $I^{-1}(Ns)\le Ns\le2Ns_N$, for
$k>2/\delta$ and $N\ge
N_0({q,r})$ large enough,
\begin{align*}
P\Big(\frac{|\tilde Y^{\bar U}_{Ns}|} 
{\sqrt N}>\sqrt{\delta_N}, \tau>Ns\Big) 
&\le 
P\Big(\sup_{t \le 2Ns_N}|Y^{\bar U}_{t}|\ge \sqrt {N\delta_N}\Big)
\le P\Big(\sup_{t \le 2Ns_N}|Y^{0}_{t}|\ge \sqrt {N\delta_N}-2r\Big)\\
&\le C \frac{(2Ns_N)^{k}}{(\sqrt{N\delta_N}-2r)^{2k}}
\le C (s_N/\delta_N)^k \le C/{(\log N)^2,}
\end{align*}
where we have used \eqref{e:Ysupbnd}.
With this bound and $X^N_0(\cdot)\le K$ we obtain from the
definition of $\cale_1$ that
\[
\cale_1
\le  N^{1-\frac{d}{2}}K\frac{C}{{(\log N)^2}}\int_{\Rd}
X^N_0(y)dy
\le\frac{C}{\log N} X^N_0(\1).
\]
The above bound is then extended to all
$N\ge 3$ by increasing $C$.

Using $X^N_0(\cdot)\le K$ again, we have
\begin{align*}
\cale_2 \le N^{1-\frac{d}{2}}K\int_{\Rd}
X^N_0(y) 
P \Big(|\tilde Y^{\bar U}_{Ns/4}|\le 2r)dy
=  K' X^N_0(\1) P\Big(|\tilde Y^{\bar U}_{Ns/4}|\le 2r\Big) .
\end{align*}
It follows from Lemma~\ref{l:betabnd}, taking $\beta=1/3$,
that for $N\ge N_0(q)$, 
\[
\cale_2 \le C\frac{K'}{(Ns/4)^{1/3}}X^N_0(\1) \le
C\frac{\log N}{(Ns_N/4 )^{1/3}}X^N_0(\1) \le C\frac{(\log
  N)^{1+q/3}}{N^{1/3}} X^N_0(\1).
\]
As before, the above bound is then valid for all $N\ge 3$ by increasing $C$.

We split $\cale_3$ into two parts, letting
$G(a,b)=\big\{ |{Y^{\bar U}_{I^{-1}(u)}}|>2r \text{ for 
  all }u\in[a,b]\big\}$. If we let
$G_N=G(Ns/2,Ns)$ then we can write $\cale_3 =
\cale'_3+\cale''_3$, where
\begin{align*}
\cale'_3&=N^{1-\frac{d}{2}}\int_{\Rd}
X^N_0(y)
E\Big[X^N_0\Big(y-\frac{\tilde Y^{\bar U}_{Ns}}{\sqrt N}\Big)
1_{G_N^c}1\Big\{|\tilde Y_{t_N}^{\bar U}|> 2r, 
\frac{|\tilde Y^{\bar U}_{Ns}|}{\sqrt N}\le \sqrt{\delta_N}, 
 \tilde\tau(\bar U)> Ns\Big\}\Big]dy\\
\cale''_3 &=N^{1-\frac{d}{2}}\int_{\Rd}
X^N_0(y)
E\Big[X^N_0\Big(y-\frac{\tilde Y^{\bar U}_{Ns}}{\sqrt N}\Big)
1_{G_N}1\Big\{|\tilde Y_{t_N}^{\bar U}|> 2r, 
\frac{|\tilde Y^{\bar U}_{Ns}|}{\sqrt N}\le \sqrt{\delta_N}, 
  \tilde\tau(\bar U)> Ns\Big\}\Big]dy
\end{align*}
Applying the Markov property at
time $t_N=Ns/4$, and letting $P_{\tilde Y^{\bar U}_{t_N}(\omega)}$ denote
the law of $Y$ with $Y_0=\tilde Y^{\bar U}_{t_N}(\omega)$,
\begin{align*}
\cale'_3 &\le KN^{1-\frac{d}{2}}
           \int_{\Rtwo}X^N_0(y)P\Big(\Big\{|\tilde Y_{t_N}^{\bar
           U}|> 2r,  \tilde\tau(\bar U)> t_N \Big\}\cap G_N^c\Big)dy \\
& = K'  E\left[1\Big\{|\tilde Y_{t_N}^{\bar U}|>
2r, \tilde\tau(\bar U)>t_N\Big\} 
P_{\tilde Y^{\bar U}_{t_N}}\big( |Y(I^{-1}(u))|\le 2r \text{ for some }
u\in[Ns/4,3Ns/4]\big)
 \right]X^N_0(\1) \\
& \le K' P\Big( \tilde\tau(\bar U)>Ns_N/4\Big)
\sup_{|x|>2r}P(Y^x(I^{-1}(u))|\le 2r \text{ for some }
u\in[Ns_N/4,3Ns_N/2]\big)X^N_0(\1)\\
& \le C\frac{\log\log N}{\log N} X^N_0(\1),
\end{align*}
where we have used
Lemma~\ref{l:GE}, $s_N=(\log N)^{-q}$,
and the fact that $ K'P(\tau(\bar U)>Ns_N/4)$ is bounded in
$N$ for $d=2$ by Proposition~\ref{t:noncoal}, and at most one if $d>2$. 

Set
$t'_N=3Ns/4$ and $t''_N=t'_N-t_N=Ns/2$.
By the Markov property of $\tilde Y^{\bar U}$ at time $t_N=Ns/4$, 
\begin{align}
\label{E"3}\cale''_3 &=N^{1-\frac{d}{2}}\int_{\Rd}
X^N_0(y) E\Big[1\{|\tilde Y^{\bar U}_{t_N}|>2r,\tilde\tau(\bar U)>t_N \}
\\
\nonumber&\qquad \times  E_{\tilde Y^{\bar U}_{t_N}}
\Big(X^N_0 
\Big(y-\frac{\tilde Y_{t'_N}}{\sqrt N}\Big) 
1\Big\{\frac{|\tilde Y_{t'_N}|}{\sqrt N}\le \sqrt{\delta_N}\Big\} 
1\Big\{G(t_N,t'_N)\cap\{\tilde\tau>t'_N\Big\}\Big)\Big]dy.
\end{align}
On the event $G(t_N,t'_N) \cap \{\tilde\tau>t'_N\}$, for all $u\in[t_N,t'_N]$, 
\begin{align*}
I^{-1}(u)&=I^{-1}(t_N)+\int_{t_N}^u\Bigl[1-\frac{\psi_r(\tilde Y_s)}{\rho|B_r|}\Bigr]ds=I^{-1}(t_N)+u-t_N.
\end{align*}
Using the above and Lemma~\ref{lem:difftimechange}, we see that on the above event and for $u$ as above, under $P_{\tilde Y^{\bar U}_{t_N}}$,
\begin{equation}\label{dualdecomp}
\tilde Y(u)-\tilde Y(t_N)\\
=Y(I^{-1}(t_N)+(u-t_N))-Y(I^{-1}(t_N))\\
:=\hat Y^0(u-t_N).
\end{equation}
Set $\tilde\cF_s=\cF^Y_{I^{-1}(s)}$, where we recall that
$\cF^Y_t$ is the right-continuous filtration generated by
the random walk $Y$. Then $I^{-1}(s)$ is an
$(\cF^Y_t)$-stopping time.  By the strong Markov property of
$Y$, $\hat Y^0$ is a copy of $Y$ starting at $0$ and is
independent of $\tilde \cF_{t_N}$. Since $\tilde Y_{t_N}$
is $\tilde \cF_{t_N}$-measurable, we may conclude from
\eqref{dualdecomp} and \eqref{E"3} that
\begin{align}
\nonumber
\cale''_3 &\le N^{1-\frac{d}{2}}\int_{\Rtwo}
X^N_0(y) E\Big[1\{|\tilde Y^{\bar U}_{t_N}|>2r,\tilde\tau(\bar U)>t_N \}\\
\nonumber&\qquad\quad \times E_{\tilde Y^{\bar U}_{t_N}}\Big(X^N_0
\Big(y-\Big(\frac{\tilde Y_{t_N} +
\hat Y^0_{t''_N}}{\sqrt{t''_N}}\Big)\sqrt{\frac{t''_N}{N}}\Big)
1\Big\{\Big|\frac{\tilde Y_{t_N}+\hat Y^0_{t''_N}}
{\sqrt{t''_N}}\Big|\sqrt{\frac{t''_N}{N}} 
\le \sqrt{\delta_N}\Big\}\Big)\Big]dy\\
\nonumber&\le N^{1-\frac{d}{2}}\int_{\Rtwo}
X^N_0(y) E\Big[1\{\tilde\tau(\bar U)>t_N), |\tilde Y^{\bar U}_{t_N}|>2r\}\Big]\\
\label{e"3bound}&\qquad 
\times\sup_{{x}}E\left[X^N_0
\left(y-\frac{\hat Y^x_{t''_N}}{\sqrt{t''_N}}
\sqrt{\frac{t''_N}N}\right)
1\Big \{\Big|\frac{\hat Y^x_{t''_N}}
{\sqrt{t''_N}}\Big|\sqrt{\frac{t''_N}{N}} 
\le \sqrt{\delta_N}\Big\}\right]dy,
\end{align}
where we have set $\hat Y^x=x+\hat Y^0$. 
By \eqref{e:Ydensbnd2}, applied to 
$f(z)=X^N_0\Big(y-z\sqrt{\frac{t_N''}{N}}\Big)
1\{|z| \frac{t_N''}{N}\le \sqrt{\delta_N}\}$, we have, uniformly in $x$,
\begin{align*}
E\Big[X^N_0
&\Big(y-
\frac{\hat Y^x_{t''_N}}{\sqrt{t''_N}}\Big)\sqrt{\frac{t''_N}N}\;\Big)
1\Big \{\Big|
\frac{\hat Y^x_{t''_N}}{\sqrt{t''_N}}\Big|\sqrt{\frac{t''_N}{N}} 
\le \sqrt{\delta_N}\Big\}\Big]\\
&\le e^{-2\rho|B_r| t''_N}K + C\int_{\Rd } 
X^N_0(y-z\sqrt{\frac{t''_N}{N}}) 
1\Big\{|z|\sqrt{\frac{t''_N}{N}}\le \sqrt{\delta_N}\Big\}
dz\\
&\le e^{-\rho|B_r| Ns_N}K + C\Big(\frac{N}{t''_N}\Big)^{d/2}\int_{\Rd} 
X^N_0(y-x')1\{|x'|\le \sqrt{\delta_N}\}dx'\\
&\le \frac{C}{\log N} + \frac{C}{s_N^{d/2}}\int_{\Rd}
X^N_0(y-x')1\{|x'|\le \sqrt{\delta_N}\}dx'.
\end{align*}
Use this and the fact that $P(\tilde\tau(\bar U)>t_N)\le \frac{C}{K'}$ (from Proposition~\ref{t:noncoal} if $d=2$)
 in \eqref{e"3bound}, to see that
\begin{align*}
\cale''_3 &\le C (s_NN)^{1-\frac{d}{2}}P(\tilde\tau(\bar U)>t_N)\Bigl[\frac{X_0^N(\1)}{\log N}+ \frac{1}{s_N}\int_{\Rd}\int_{\Rd}
X^N_0(y) 
X^N_0(y-x')1\{|x'|\le \sqrt{\delta_N}\}dx'dy\Bigr]\\
&\le C\Big[\frac{X^N_0(\1)}{(\log N)} +
\frac{(s_NN)^{1-\frac{d}{2}}}{s_NK'}
\int_{\Rd} \int_{\Rd} 
X^N_0(y_1)X^N_0(y_2)1\{|y_1-y_2|\le \sqrt{\delta_N}dy_1dy_2\Bigr]\\
&\le C\Big[\frac{X^N_0(\1)}{(\log N)} + \frac{N^{\frac{1}{2}-\frac{d}{4}}}{s_N\log N}
\int_{\Rd} \int_{\Rd} 
X^N_0(y_1)X^N_0(y_2)1\{|y_1-y_2|\le \sqrt{\delta_N}dy_1dy_2\Bigr],
\end{align*}
where the last line is very crude if $d\ge 3$, and is an equality if $d=2$. 
Combining the bounds for
$\cale_1,\cale_2,\cale_3',\cale_3''$, we establish \eqref{e:JNtwo2d}.
\end{proof}

Corollary~\ref{c:progress}, Lemma~\ref{l:JNtwo2d}, and $q>4$ imply the following:
\begin{lemma}\label{l:key2db}
There is a constant $C_{\ref{e:key2db}}=   
C_{\ref{e:key2db}}(\phi)$ such that    
\begin{multline}\label{e:key2db}
\sup_{s\in[s_N,2s_N]}\Big| E\big(\bar m^N_s(\phi)\big)
-\rho^2|B_r|^2 \bar\gamma^N(s) X^N_0(\phi^2) \Big|  
\le C_{\ref{e:key2db}}\Big[ \frac{\log\log N}{\log N} X^N_0(\1) 
\\
+\frac{N^{\frac{1}{2}-\frac{d}{4}}}{s_N\log N}
\int_{\Rd}\int_{\Rd} X^N_0(y_1)X^N_0(y_2) 1\big\{|y_1-y_2|\le 
\sqrt{\delta_N}\big\} dy_1dy_2 \Big].
 \end{multline}
\end{lemma}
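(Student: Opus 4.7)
The plan is to combine Corollary~\ref{c:progress} with the uniform bound on $J_s^{N,2}(\1)$ from Lemma~\ref{l:JNtwo2d}, and then to absorb the auxiliary error coming from Corollary~\ref{c:progress} into the leading $\log\log N/\log N$ term using the standing hypothesis $q>4$. First I would apply Corollary~\ref{c:progress}, so that for each $s\ge 0$,
\begin{equation*}
\Big| E(\bar m_s^N(\phi)) - \rho^2|B_r|^2 \bar\gamma^N(s) X_0^N(\phi^2)\Big| \le J_s^{N,2}(\phi) + C_{\ref{e:progress}} K'\Big(\frac{1}{\sqrt N} + \sqrt s\Big) X_0^N(\1).
\end{equation*}
The elementary inequality $J_s^{N,2}(\phi)\le \Vert\phi\Vert_\infty^2\,J_s^{N,2}(\1)$, immediate from \eqref{e:JN2} and $0\le w_0^N\le 1$, then lets me invoke Lemma~\ref{l:JNtwo2d}, which bounds $\sup_{s\in[s_N,2s_N]}J_s^{N,2}(\1)$ by precisely the shape of the right-hand side of \eqref{e:key2db}. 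Thus, modulo the $\Vert\phi\Vert_\infty^2$ factor absorbed into $C_{\ref{e:key2db}}(\phi)$, the $J_s^{N,2}(\phi)$ term already produces both of the terms appearing on the right-hand side of \eqref{e:key2db}.

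It remains to show that the residual $K'(N^{-1/2}+\sqrt s)X_0^N(\1)$ is $o(\log\log N/\log N)\cdot X_0^N(\1)$, uniformly for $s\in[s_N,2s_N]$. Since $\sqrt s\le\sqrt{2 s_N}=\sqrt 2\,(\log N)^{-q/2}$ and $q>4$, one has $K'\sqrt s=O((\log N)^{1-q/2})=o(1/\log N)$ in dimension $d=2$ (where $K'=\log N$), while for $d\ge 3$ we have $K'=1$ and $K'\sqrt s=O((\log N)^{-q/2})$. The term $K'N^{-1/2}$ is polynomially small in $N$ in either case. Both contributions are therefore $o(\log\log N/\log N)$ and can be absorbed by enlarging $C_{\ref{e:key2db}}$.

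I do not expect any real obstacle here: both substantive inputs have already been established, and the only bookkeeping step is verifying that $q>4$ is strong enough to defeat the $K'\sqrt s$ term in $d=2$. A slightly weaker lower bound on $q$ would in fact suffice (any $q>2$), but $q>4$ is the convention maintained throughout the section, so it is used here without further comment. The $\phi$-dependence of $C_{\ref{e:key2db}}$ enters only through $\Vert\phi\Vert_\infty^2$ and through $C_{\ref{e:progress}}(\phi)$, so the stated form of the constant is consistent with the inputs.
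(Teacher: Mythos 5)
Your proof is correct and is exactly the paper's route: the paper merely notes that Corollary~\ref{c:progress}, Lemma~\ref{l:JNtwo2d}, and $q>4$ imply the statement, and you have spelled out that combination, including the pointwise bound $J^{N,2}_s(\phi)\le\Vert\phi\Vert_\infty^2 J^{N,2}_s(\1)$ and the absorption of the $K'(N^{-1/2}+\sqrt s)X_0^N(\1)$ residual. One small quibble with your aside: to absorb $K'\sqrt{s_N}=(\log N)^{1-q/2}$ (for $d=2$) into $C\frac{\log\log N}{\log N}$ one needs $(\log N)^{2-q/2}\le C\log\log N$, hence $q\ge 4$ is actually required — $q>2$ would make the residual $o(1)$ but not $O(\log\log N/\log N)$ — but since your main argument correctly invokes $q>4$ this does not affect the proof.
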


\begin{remark} To identify the square function of $M^N(\phi)$ we will need to use the above and 
the Markov property to bound
\[
\Big|E\big(\bar m^N_s(\phi)\mid\calf^N_{s-s_N}\big)
-\rho^2|B_r|^2 \bar\gamma^N({s_N}) X^N_{s-s_N}(\phi^2) \Big|. 
\]
This
means we will need to bound the expected value of the last
term in \eqref{e:key2db} with $X^N_{s-s_N}$ replacing
$X^N_0$. For $d\ge 3$ we only need to bound the resulting
double integral  on the right-hand side of \eqref{e:key2db}
by $X^N_{s-s_N}(\1)^2$, but for $d=2$ we require the
following additional result. 
\end{remark}

\begin{lemma}\label{l:key2da}
Assume $d=2$. For $T>0$ there exists $C_{\ref{e:key2da}}(T)>0$ such that
for $\delta_N\le s\le T$, 
\begin{equation}\label{e:key2da}
\begin{aligned}
E\Big[ \int_{\Rtwo}\int_{\Rtwo} 
&1\big\{|y_1-y_2| \le \sqrt{\delta_N}\big\} X^N_s(y_1)
X^N_s(y_2)dy_1dy_2\Big] \\
&\le C_{\ref{e:key2da}}(T)\Big(
\frac{\delta_N}{s}X^N_0(\1)^2 + \delta_N \log(1/\delta_N) X^N_0(\1)
\Big).  
\end{aligned}
\end{equation}
\end{lemma}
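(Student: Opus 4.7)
The plan is to apply the two-particle duality \eqref{e:dualN2} with $\psi_2(y_1,y_2)=1\{|y_1-y_2|\le\sqrt{\delta_N}\}$, multiplied by $K^2$, to obtain the decomposition
\[
E\Big[\int\!\!\int 1\{|y_1-y_2|\le\sqrt{\delta_N}\}X^N_s(y_1)X^N_s(y_2)\,dy_1dy_2\Big]=K\,\mathrm{I}_s+\mathrm{II}_s,
\]
where $\mathrm{I}_s$ is the contribution of $\{\tau^N\le s\}$ (carrying one factor of $X^N_0$) and $\mathrm{II}_s$ that of $\{\tau^N>s\}$ (carrying two). The $K\mathrm{I}_s$ piece will produce the $\delta_N\log(1/\delta_N)X^N_0(\1)$ summand in \eqref{e:key2da}, and $\mathrm{II}_s$ will produce $\delta_N s^{-1}X^N_0(\1)^2$.

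For $\mathrm{I}_s$, changing variables $y_2=y_1-x$ and using the translation invariance \eqref{TI} of the dual to integrate out $y_1$ gives $\mathrm{I}_s=X^N_0(\1)\int_{|x|\le\sqrt{\delta_N}}P^N_{\{0,-x\}}(\tau^N\le s)\,dx$. By Lemma~\ref{lem:difftimechange} and $I^{-1}(t)\le t$, coalescence of the dual difference forces the underlying walk $Y^{x\sqrt N}$ to enter $B_{2r}$ before time $Ns$, so $P^N_{\{0,-x\}}(\tau^N\le s)\le P_{x\sqrt N}(t_{3r}<Ns)$. Apply Lemma~\ref{l:lR} with $\alpha=(q-\delta)/2$ and $\beta=q-\delta$---valid since $\sqrt{\delta_N}=(\log N)^{-(q-\delta)/2}$ and $s\ge\delta_N=(\log N)^{-(q-\delta)}$---to bound this by $C(T)\log(1/|x|)/\log N$. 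Combining with the elementary identity $\int_{|x|\le\sqrt{\delta_N}}\log(1/|x|)\,dx=O(\delta_N\log(1/\delta_N))$ in $d=2$ and multiplying by $K=\log N$ yields $K\mathrm{I}_s\le C(T)\delta_N\log(1/\delta_N)X^N_0(\1)$.

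For $\mathrm{II}_s$, the same change of variables and translation invariance, followed by the further substitution $u=y_1+\xi^{N,1}_s$, convert the inner $y$-integral into the convolution $(X^N_0\ast\check X^N_0)(\tilde\xi^N_s)$, where $\tilde\xi^N_t=\xi^{N,1}_t-\xi^{N,2}_t$ and $\check X^N_0(y):=X^N_0(-y)$:
\[
\mathrm{II}_s=\int_{|x|\le\sqrt{\delta_N}}E^N_{\{0,-x\}}\big[1\{\tau^N>s\}\,(X^N_0\ast\check X^N_0)(\tilde\xi^N_s)\big]\,dx.
\]
By Lemma~\ref{lem:difftimechange}, on $\{\tau^N>s\}$ we have $\tilde\xi^N_s=Y^{x\sqrt N}(I^{-1}(Ns))/\sqrt N$. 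For $|x|\in(2r/\sqrt N,\sqrt{\delta_N}]$, Lemma~\ref{l:Ibnd} with $\alpha=1/2$ gives $P(G^c)\le C\log(Ns)/\sqrt{Ns}$ for the good event $G=\{I^{-1}(Ns)\ge Ns/2\}$; on $G$, Lemma~\ref{l:rwfacts}(a) applied at times $u\ge Ns/2$ and rescaled by $\sqrt N$ bounds the density of $\tilde\xi^N_s$ by $C/s$. Since $\int(X^N_0\ast\check X^N_0)=X^N_0(\1)^2$ and this convolution is pointwise bounded by $KX^N_0(\1)$, the $G$-contribution to the inner expectation is $\le(C/s)X^N_0(\1)^2$, while the $G^c$-contribution together with the region $|x|\le 2r/\sqrt N$ (volume $O(1/N)$) yield only $O((\log N)^{C}/\sqrt N)X^N_0(\1)$ terms, easily absorbed into $C\delta_N\log(1/\delta_N)X^N_0(\1)$ for $s\le T$ and $N$ large. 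Integrating $x$ over $|x|\le\sqrt{\delta_N}$ (volume $\pi\delta_N$) produces $\mathrm{II}_s\le C(T)\delta_N s^{-1}X^N_0(\1)^2+$ lower order.

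The main obstacle is establishing the $1/s$ density bound for $\tilde\xi^N_s$ uniformly in the initial separation $|x|$. While the underlying random walk $Y^{x\sqrt N}$ has a clean density bound from Lemma~\ref{l:rwfacts}(a), the time change $I^{-1}(Ns)$ could a priori concentrate its mass at small times where the density is larger. Lemma~\ref{l:Ibnd} is exactly what pins $I^{-1}(Ns)\in[Ns/2,Ns]$ with high probability, and the residual region where $Y$ starts inside $B_{2r}$ is dispatched by a crude volume estimate.
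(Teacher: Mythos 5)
Your bound for the coalesced piece $K\,\mathrm{I}_s$ is fine and matches the paper's treatment of $\cale_1$ essentially verbatim: change variables, translation invariance, $P(\tau\le Ns)\le P_{x\sqrt N}(t_{3r}\le Ns)$ via Lemma~\ref{lem:difftimechange} and $I^{-1}\le \text{id}$, then Lemma~\ref{l:lR} and the integral of $\log(1/|x|)$ over a disk.

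The gap is in $\mathrm{II}_s$, specifically in the claim that ``on $G=\{I^{-1}(Ns)\ge Ns/2\}$, Lemma~\ref{l:rwfacts}(a) applied at times $u\ge Ns/2$\dots bounds the density of $\tilde\xi^N_s$ by $C/s$.''  Lemma~\ref{l:rwfacts}(a) is a density bound for $Y_u$ at a \emph{deterministic} time $u$.  Here $\sigma:=I^{-1}(Ns)$ is a stopping time for $Y$ whose value is determined by the path (in fact $\sigma<Ns$ exactly when $Y$ has visited $B_{2r}$), and a density bound for $Y_\sigma$ does not follow from density bounds for $Y_u$ at deterministic $u$, even when $\sigma$ is constrained to lie in $[Ns/2,Ns]$: a stopping time constrained to a window can still concentrate the exit position on a thin set.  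In the present problem the time change runs faster precisely when $Y$ is near the origin, so the event $\{\sigma<Ns\}$ is biased toward paths that visited $B_{2r}$; that is exactly the sort of correlation the naive argument cannot rule out.  Pinning $\sigma\ge Ns/2$ (Lemma~\ref{l:Ibnd}) is necessary but not sufficient.

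What actually makes this step work is the extra event that $Y$ (equivalently the dual difference) \emph{avoids} $B_{3r}$ over a late deterministic window, because on that event the clock $I^{-1}$ advances linearly and you can apply the strong Markov property at a stopping time to peel off a genuinely fresh random walk increment whose density Lemma~\ref{l:rwfacts}(a) controls.  The paper implements this in two equivalent ways: in Lemma~\ref{l:JNtwo2d} it conditions at the deterministic time $Ns/4$ and uses the event $G(t_N,t_N')$ together with the decomposition \eqref{dualdecomp}; in Lemma~\ref{l:key2da} itself it instead uses the representation of Lemma~\ref{lem:twopartdualconst}, noting that on $G_N=\{|\xi^1_u-\xi^2_u|\ge 3r\ \forall u\le Ns\}\cap\{\tau>Ns\}$ the time change is the identity and $\xi_{Ns}=W_{Ns}$ is a pair of \emph{independent} walks, so the density bound applies directly to $W^0$, while on $G_N^c$ one uses $X^N_0\le K$ together with Lemma~\ref{l:lR}.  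To repair your argument you would need to replace the event $\{I^{-1}(Ns)\ge Ns/2\}$ by the event that $Y$ avoids $B_{3r}$ on a deterministic late window (or pass to the independent-walks representation), bound its complement by Lemma~\ref{l:lR}, and then apply Lemma~\ref{l:rwfacts}(a) to the fresh increment — i.e., essentially reproduce the paper's split.
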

\begin{proof}
By the duality equation
\eqref{e:dualN2} and then a change of
variables,
\begin{align*}
E\Big[ \int_{\Rtwo}\int_{\Rtwo}& 
1\big\{|y_1-y_2| \le \sqrt{\delta_N}\big\} X^N_s(y_1)
X^N_s(y_2)dy_1dy_2\Big] \\
&= (\log N)\int_{\Rtwo}\int_{\Rtwo} 1\big\{|y_1-y_2| \le \sqrt{\delta_N}  
\big\}E^N_{\{y_1,y_2\}}\Big[X^N_0(\xi^{N,1}_s)
1\big\{\tau^N\le  s\big\}\Big]dy_1dy_2\\
&\quad +\int_{\Rtwo}\int_{\Rtwo} 
1\big\{|y_1-y_2| \le \sqrt{\delta_N} \big\}
E^N_{\{y_1,y_2\}}\Big[X^N_0(\xi^{N,1}_s)X^N_0(\xi^{N,2}_s)
1\big\{\tau^N> s\big\}\Big]dy_1dy_2\\
&= \cale_1+\cale_2.
\end{align*}
Here
\begin{align*}
\cale_1&= (\log N)\int_{\Rtwo}\int_{\Rtwo} 1\big\{|y_1-y_2| \le \sqrt{\delta_N}  
\big\}E^N_{\{y_1,y_2\}}\Big[X^N_0(\xi^{N,1}_s)
1\big\{\tau^N\le  s\big\}\Big]dy_1dy_2\\
&=(\log N)  \int_{\Rtwo}\int_{\Rtwo} 1\big\{|y_1-y_2| \le \sqrt{\delta_N} \big\} 
E^N_{\{y_1-y_2,0\}}\Big[X^N_0(y_2+\xi^{N,1}_s)
1\big\{\tau^N\le  s\big\}\Big]dy_2dy_1\\
&=(\log N)  \int_{\Rtwo}\int_{\Rtwo} 1\big\{|y| \le \sqrt{\delta_N} \big\} 
E^N_{\{y,0\}}\big[X^N_0(y_2+\xi^{N,1}_s)
1\big\{\tau^N\le  s\big\}\Big]dy_2dy,
\end{align*}
and
\begin{align*}
\cale_2 &=   \int_{\Rtwo}\int_{\Rtwo} 
1\big\{|y| \le \sqrt{\delta_N} \big\}
E^N_{\{y,0\}}\Big[X^N_0(y_2+\xi^{N,1}_s)X^N_0(y_2+\xi^{N,2}_s)
1\big\{\tau^N> s)\big\}\Big]dy_2dy.
\end{align*}

First, integrating $y_2$  out in $\cale_1$ yields
\begin{align*}
\cale_1 &\le (\log N) X^N_0(\1) \int_{\Rtwo} 1\big\{|y|\le \sqrt{\delta_N}\Big\}
P^N_{\{y,0\}}(\tau^N\le s)dy\\ 
&= (\log N) X^N_0(\1) \int_{\Rtwo} 1\big\{|y|\le \sqrt{\delta_N}\Big\}
P_{\{y\sqrt N,0\}}(\tau\le Ns)dy .
\end{align*}
By Lemma~\ref{lem:difftimechange}, switching to
$\txi^{y\sqrt N}$, and using $I^{-1}(u)\le u$,
\begin{align*}
P_{\{y\sqrt N,0\}}(\tau\le Ns) &=
P(\tilde \xi^{y\sqrt N}_{Ns}=0) \le
P(\inf_{u\le Ns}|\tilde Y^{y\sqrt N}_{u}|\le 3r) \\
&\le P(\inf_{u\le Ns}|Y^{y\sqrt N}_{u}|\le 3r) 
= P_{y\sqrt N}(t_{3r}\le Ns) .
\end{align*}
By Lemma~\ref{l:lR}, which is applicable
because {$T\ge s\ge
\delta_N$} and we consider only $|y|\le
\sqrt{\delta_N}$, the last probability above is bounded by 
$C\frac{\log 1/|y|}{\log N}$ ($C=C(T)$). 
It follows that
\begin{align*}
\cale_1&\le C X^N_0(\1) \int_{\Rtwo} 1\big\{|y|\le 
\sqrt{\delta_N}\big\}\log (1/|y|)dy\\
&=  C X^N_0(\1) \int_0^{\sqrt{\delta_N}} u \log(1/u)du\\
&\le  C X^N_0(\1) \delta_N\,\log(1/\delta_N).
\end{align*}

By definition, 
\begin{align*}
\cale_2& =   \int_{\Rtwo}\int_{\Rtwo} 
1\big\{|y| \le \sqrt{\delta_N} \big\}
E_{\{y\sqrt N,0\}}\Big[X^N_0\Big(y_2+\frac{\xi^{1}_{{sN}}}{\sqrt
  N}\Big)X^N_0\Big(y_2+ \frac{\xi^{2}_{{sN}}}{\sqrt N}\Big)
1\big\{\tau> Ns)\big\}\Big]dy_2dy\\
&=\cale'_2+\cale''_2,
\end{align*}
where, with  $G_N =  \{ |\xi^{1}_u-\xi^{2}_u |\ge 3r \text{
  for all }u\le Ns\},$
\begin{align*}
\cale'_2& =   \int_{\Rtwo}\int_{\Rtwo} 
1\big\{|y| \le \sqrt{\delta_N} \big\}
E_{\{y\sqrt N,0\}}\Big[X^N_0\Big(y_2+\frac{\xi^{1}_{sN}}{\sqrt
  N}\Big)X^N_0\Big(y_2+ \frac{\xi^{2}_{sN}}{\sqrt N}\Big)
1_{G_N}1\big\{\tau> Ns)\big\}\Big]dy_2dy,\\
\cale''_2& =   \int_{\Rtwo}\int_{\Rtwo} 
1\big\{|y| \le \sqrt{\delta_N} \big\}
E_{\{y\sqrt N,0\}}\Big[X^N_0\Big(y_2+\frac{\xi^{1}_{sN}}{\sqrt
  N}\Big)X^N_0\Big(y_2+ \frac{\xi^{2}_{sN}}{\sqrt N}\Big)
1_{G_N^c}1\big\{\tau> Ns)\big\}\Big]dy_2dy.
\end{align*}

We use the representation of $\xi=(\xi^1,\xi^2)$ 
 in Lemma~\ref{lem:twopartdualconst} and the fact that on $G_N\cap\{\tau>Ns\}$, $\xi_{sN}=W_{sN}$. Then, dropping the
indicator of $G_N\cap\{\tau>Ns\}$, we see
that 
\begin{align*}
\cale'_2 
&\le \int_{\Rtwo}\int_{\Rtwo}1\big\{|y| \le \sqrt{\delta_N} \big\}
E\Big[X^N_0\Big(y_2+\frac{W^{y\sqrt N}_{sN}}{\sqrt
  N}\Big)X^N_0\Big(y_2+ \frac{W^0_{sN}}{\sqrt N}\Big)\Big]dy_2dy\\
&= \int_{\Rtwo}\int_{\Rtwo}1\big\{|y| \le \sqrt{\delta_N} \big\}
E\Big[X^N_0\Big(y_2+\frac{W^{y\sqrt N}_{sN}}{\sqrt
  N}\Big)\Big] E\Big[X^N_0\Big(y_2+ \frac{W^0_{sN}}{\sqrt N}\Big)\Big]dy_2dy,
\end{align*}
the last equality by independence of the walks $W^{y\sqrt
  N},W^0$.  By the density bound
\eqref{e:Ydensbnd2} applied to 
$f(z) = X^N_0(y_2+z \sqrt{s})$, and $s\ge s_N$,
for all $y_2$,
\begin{align*}
E\Big[ X^N_0\Big(y_2+\frac{W^{0}_{Ns}}{\sqrt N}\Big) \Big]
&= E\Big[ f\Big(\frac{W^0_{Ns}}{\sqrt{ Ns}}\Big) \Big]
\le e^{-2\rho|B_r|Ns}\log N + C\int_{\Rtwo} X^N_0(y_2-z
\sqrt s) dz \\
&\le
\frac{C}{\log N}+ \frac{C}{s}
\int_{\Rtwo} X^N_0(y_2-z')dz'
= \frac{C}{\log N}+ \frac{C}{s} X^N_0(\1).
\end{align*}
With this bound, integrating out $y_2$ first and then $y$, it follows that
\begin{align*}
\cale_2'&\le C\Big(\frac{1}{\log N}+ \frac{1}{s} X^N_0(\1) \Big)
\int_{\Rtwo}\int_{\Rtwo} 1\big\{|y| \le \sqrt{\delta_N}
\big\}E\Big[X^N_0(y_2+\frac{W^{y\sqrt N}_{Ns}}{\sqrt N})\Big]
dy_2dy\\
&=
C\Big(\frac{1}{\log N}+ \frac{1}{s} X^N_0(\1) \Big)X^N_0(\1)|B_{\sqrt{\delta_N}}|
\le 
C\Big[\frac{1}{\log N}+ \frac{1}{s} X^N_0(\1) \Big]X^N_0(\1)
\delta_N.
\end{align*}

Using $X^N_0(\cdot)\le K'=\log N$ and integrating
out $y_2$ gives
\begin{align*}
\cale''_2 &\le (\log N) X^N_0(\1)\int_{\Rtwo}
1\big\{|y| \le \sqrt{\delta_N} \big\}
P_{\{y\sqrt N,0\}}\Big(G_N^c\cap\big\{\tau>
Ns)\big\}\Big)dy.
\end{align*}
By the representation of $\xi=(\xi^1,\xi^2)$ 
given in Lemma~\ref{lem:twopartdualconst}, for any $y$,
\begin{align*}P_{\{y\sqrt N,0\}}\Big(G_N^c\cap\big\{\tau>
Ns)\big\}\Big)&\le P(|W^{y\sqrt N}_u-W^0_u)|\le 3r
\text{ for some }u\le Ns).
\end{align*}
Using Lemma~\ref{l:lR} again, for $\delta_N\le s\le T$, we have 
\begin{align*}
\cale''_2&\le (\log N) X^N_0(\1)\int_{\Rtwo}
1\big\{|y| \le \sqrt{\delta_N} \big\}
P\Big(|Y^{y\sqrt N}_u|\le 3r\text{ for some }u\le
Ns\Big)dy\\
&\le C{(T)} X^N_0(\1)\int_{\Rtwo}
1\big\{|y| \le \sqrt{\delta_N} \big\}\log (1/|y|)
dy\\
&\le C(T) X^N_0(\1)\delta_N\log(1/\delta_N). 
\end{align*}
Combining the bounds on $\cale_1,\cale'_2,\cale''_3$ gives
\eqref{e:key2da}. 
\end{proof}

We are almost ready for the proof of
Proposition~\ref{p:2did}. The proof is lengthy, so we 
separate out one of its key steps in the following lemma.
For $s\ge s_N$ define
\[
[s]_N = (j-1)s_N \text{ if } s\in[js_N, (j+1)s_N),
\quad j\ge 1.
\]
For $T>0$  
define $j_T=j_T(N)\in \nn$ by
\[
(j_{T}-1)s_N\le T<j_Ts_N.
\]

\begin{lemma}\label{l:7prime}
Assume $\phi\in
  C^3_0(\Rd)$ and $T>0$. There exists
  $\vep_{\ref{l:7prime}}^N =
  \vep^N_{\ref{l:7prime}}(\phi,T)\to 0$ as $N\to\infty$ such
  that   
\begin{equation}\label{e:7prime}
E\Big[\sup_{s_N\le t\le T}\Big|\int_{s   _N}^t
\big( \bar m^N_s(\phi) - E(\bar m^N_s(\phi)\mid \calf^N_{[s]_N})\big) 
ds \Big|^2\Big] \le \vep_{\ref{l:7prime}}^N
\big(X^N_0(\1)^2+1\big)
\end{equation}
\end{lemma}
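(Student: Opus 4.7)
The approach exploits the ``two-step'' martingale difference structure created by conditioning on $\calf^N_{(j-1)s_N}$ (one block earlier than the usual $\calf^N_{js_N}$), combined with the second-moment bound on $X^N_s(\1)$ from Corollary~\ref{c:L2bnd}.

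First I discretize. For $j\ge 1$ set $A_j=\int_{js_N}^{(j+1)s_N}\bar m^N_s(\phi)\,ds$ and $\tilde A_j=A_j-E[A_j\mid\calf^N_{(j-1)s_N}]$. Since $[s]_N=(j-1)s_N$ for $s\in[js_N,(j+1)s_N)$, for $t\in[ks_N,(k+1)s_N)$ with $k\ge1$ the integrand in \eqref{e:7prime} decomposes as $\sum_{j=1}^{k-1}\tilde A_j+R_k(t)$, where
\begin{equation*}
|R_k(t)|\le \bar R_k:=\int_{ks_N}^{(k+1)s_N}\bigl(\bar m^N_s(\phi)+E[\bar m^N_s(\phi)\mid\calf^N_{(k-1)s_N}]\bigr)ds.
\end{equation*}
To deal with the two-step conditioning, split $\tilde A_j=A_j^{(1)}+A_j^{(2)}$ with $A_j^{(1)}=A_j-E[A_j\mid\calf^N_{js_N}]$ and $A_j^{(2)}=E[A_j\mid\calf^N_{js_N}]-E[A_j\mid\calf^N_{(j-1)s_N}]$. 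Then $\{A_j^{(1)}\}$ is a martingale difference sequence with respect to $\{\calf^N_{(j+1)s_N}\}$ and $\{A_j^{(2)}\}$ with respect to $\{\calf^N_{js_N}\}$, so Doob's $L^2$ inequality and orthogonality give
\begin{equation*}
E\Bigl[\max_{k\le j_T}\Bigl|\sum_{j=1}^k A_j^{(i)}\Bigr|^2\Bigr]\le 4\sum_{j=1}^{j_T}E[(A_j^{(i)})^2]\le 4\sum_{j=1}^{j_T}E[A_j^2],\qquad i=1,2.
\end{equation*}

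Next, Cauchy--Schwarz gives $E[A_j^2]\le s_N\int_{js_N}^{(j+1)s_N}E[\bar m^N_s(\phi)^2]\,ds$, and combining Lemma~\ref{l:mNs}(a) with Proposition~\ref{p:ms-bnd}(a) yields $\bar m^N_s(\phi)^2\le C_\phi(K')^2X^N_s(\1)^2$, where $K'=\log N$ if $d=2$ and $K'=1$ otherwise. By Corollary~\ref{c:L2bnd}, $E[\bar m^N_s(\phi)^2]\le C_\phi(T)(K')^2(X^N_0(\1)^2+X^N_0(\1))$ for $s\le T$, so $E[A_j^2]\le C_\phi(T)s_N^2(K')^2(X^N_0(\1)^2+X^N_0(\1))$. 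Summing over $j_T\sim T/s_N$ intervals, and bounding $E[\max_k\bar R_k^2]\le\sum_k E[\bar R_k^2]$ by the identical estimate, gives an overall bound of the form $C_\phi(T)s_N(K')^2(X^N_0(\1)^2+X^N_0(\1))$. With $s_N=(\log N)^{-q}$ and $q>4$, the product $s_N(K')^2$ equals $(\log N)^{2-q}$ if $d=2$ and $(\log N)^{-q}$ if $d\ge 3$, both of which tend to zero. Since $X^N_0(\1)\le 1+X^N_0(\1)^2$, this yields \eqref{e:7prime} with $\vep^N_{\ref{l:7prime}}=C_\phi(T)s_N(K')^2\to 0$.

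I do not expect any serious obstacle; once the two-step martingale decomposition $\tilde A_j=A_j^{(1)}+A_j^{(2)}$ is in place the calculation reduces to standard single-step martingale estimates. The only mild technical point is the remainder $\bar R_k$, which is bounded only by the crude $\max\le\sum$ inequality; this is still sufficient because there are only $O(1/s_N)$ intervals while each contributes $O(s_N^2(K')^2)$ in second moment, so the logarithmic correction $(K')^2$ is comfortably absorbed by the small factor $s_N$ coming from the choice $q>4$.
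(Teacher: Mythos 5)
Your proposal is correct, and the overall bound $\vep^N_{\ref{l:7prime}}=C_\phi(T)\,s_N(K')^2=C_\phi(T)(\log N)^{2-q}\to 0$ agrees with what the paper obtains. The ingredients are the same (Cauchy--Schwarz on each block integral, the pointwise bound $\bar m^N_s(\phi)\le C_\phi K'X^N_s(\1)$ from Lemma~\ref{l:mNs}(a) and Proposition~\ref{p:ms-bnd}(a), the second-moment bound from Corollary~\ref{c:L2bnd}, and Doob's $L^2$ inequality plus martingale orthogonality), but the decomposition you use to handle the one-block lag in the conditioning is genuinely different. You split each centered block increment as $\tilde A_j=A_j^{(1)}+A_j^{(2)}$, where $A_j^{(1)}=A_j-E[A_j\mid\calf^N_{js_N}]$ is a martingale difference relative to $\{\calf^N_{(j+1)s_N}\}$ and $A_j^{(2)}=E[A_j\mid\calf^N_{js_N}]-E[A_j\mid\calf^N_{(j-1)s_N}]$ is one relative to $\{\calf^N_{js_N}\}$; each of $E[(A_j^{(i)})^2]\le E[A_j^2]$ by the Pythagorean identity, and orthogonality collapses the second moment of the running maximum. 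The paper instead groups the block increments by parity, writing $\sum_i\Delta^N_i=Q^{N,o}+Q^{N,e}$ and observing that the odd-indexed and even-indexed partial sums are each martingales with respect to the coarser filtrations $\{\calf^N_{(2j-1)s_N}\}$ and $\{\calf^N_{2js_N}\}$, because the conditioning in $\Delta^N_{i}$ lags by exactly one block. Both tricks convert the problem to a sum of unconditional second moments $\sum_j E[A_j^2]$ and cost only a bounded constant factor; neither is more precise than the other here, so the choice is one of taste. Your telescoping decomposition is perhaps the more portable one, since it would work verbatim for any fixed finite lag in the conditioning, whereas the parity split is tailored to a lag of exactly one block.

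One small point to tidy in a final write-up: for the remainder $\bar R_k$ you bound $E[\max_k\bar R_k^2]\le\sum_kE[\bar R_k^2]$, with each $E[\bar R_k^2]\le 4E[A_k^2]$ by Jensen applied to $E[\bar m^N_s(\phi)\mid\calf^N_{(k-1)s_N}]$; this is what you intend, and it produces the same $O(s_N(K')^2)$ contribution as the martingale terms, so the conclusion stands.
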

\begin{proof}
Define
\[
\Delta^N_j= \int_{(j-1)s_N}^{js_N}\big( \bar m^N_s(\phi) -
E(\bar m^N_s(\phi) \mid \calf^N_{[s]_N})\big) ds, \quad j\ge 2.
\]
Then 
\[
\int_{s_N}^{js_N} (\bar m^N_s(\phi) - E(\bar m^N_s(\phi)\mid
\calf^N_{[s]_N})ds
= \sum_{i=2}^{j} \Delta^N_i = 
Q^{N,o}_j + Q^{N,e}_j 
\]
where
\[
 Q^{N,o}_j = \sum_{i=2}^{j} \Delta^N_i1\{i\text{ is odd}\}
 \text{ and }
Q^{N,e}_j = \sum_{i=2}^{j} \Delta^N_i1\{i\text{ is even}\}.
\]
Noting that $\calf^N_{[s]_N} = \calf^N_{(2i-1)s_N}$ for all $s
\in[2is_N,(2i+1)s_N)$, 
we see that for $i\ge 1$, 
\begin{equation}\label{e:mginc}
\begin{aligned}
E(\Delta^N_{{2i+1}}\mid \calf^N_{(2i-1)s_N})
&= \int_{2is_N}^{(2i+1)s_N}E\Big[\big(\bar m^N_s(\phi) -
E(\bar m^N_s(\phi)\mid \calf^N_{[s]_N})\big) \mid
\calf^N_{{(2i-1)s_N}}\Big]
ds  \\
&= \int_{2is_N}^{(2i+1)s_N}
\Big(E(\bar m^N_s(\phi)\mid \calf^N_{(2i-1)s_N}) -
E(\bar m^N_s(\phi)\mid \calf^N_{(2i-1)s_N})\Big) ds\\
&=0,
\end{aligned}
\end{equation}
which shows that 
$(Q^{N,o}_{2j-1},j\ge 2)$ is an $(\calf^N_{(2j-1)s_N})$
martingale. Similarly, $(Q^{N,e}_{2j},j\ge 1)$ is an
$(\calf^N_{2js_N})$ martingale. Using Doob's
 $L^2$ inequality,  we have
\begin{equation}\label{e:red00}
\begin{aligned}
E\Big[ \sup_{2\le j\le j_T}& \Big| \int_{s_N}^{js_N} 
\big(\bar m^N_s(\phi) - E(\bar m^N_s(\phi)\mid \calf^N_{[s]_N})\big)ds
\Big|^2\Big]\\
&= E\Big[\sup_{2\le j\le j_T} \Big|  Q^{N,o}_j1_{\{j\text{
    odd}\}}
+Q^{N,e}_j1_{\{j\text{ even}\}} 
\Big|^2\Big]\\
&\le 2 E\Big[ \sup_{2\le j\le j_T, j\text{ odd}}(Q^{N,o}_j)^2
+ \sup_{2\le j\le J_T, j\text{ even}}(Q^{N,e}_j)^2\Big]
\le C E\Big[ (Q^{N,o}_{j_T})^2+(Q^{N,e}_{j_T})^2\Big].
\end{aligned}
\end{equation}

Now if $j_T=2k-1$, then by \eqref{e:mginc},
\begin{align}\notag
E\Big[ (Q^{N,o}_{j_T})^2\Big]  &=
\sum_{i=2}^{k}E\Big[\Big(\Delta^{N}_{2i-1}\Big)^2\Big]
\le \sum_{i=2}^{j_T}E\Big[\Big(\Delta^{N}_i\Big)^2\Big]\\
&\le 2\sum_{i=2}^{j_T}E\Big[ \Big(\int_{(i-1)s_N}^{is_N}
    \bar m^N_s(\phi)ds\Big)^2\Big] +
2\sum_{i=2}^{j_T}E\Big[ \Big(\int_{(i-1)s_N}^{is_N}
    E(\bar m^N_s(\phi)\mid \calf^N_{(i-1)s_N})ds\Big)^2\Big],
\label{e:red0} 
\end{align}
which also holds if $j_T$ is even. 
Define $\bar X^N_T(\1) = \sup_{t\le T}X^N_t(\1)$.
By Lemma~\ref{l:mNs}(a), \eqref{e:L2bnda} and \eqref{e:L2bnd} we have
\begin{equation}\label{e:redted}
\begin{aligned}
\sum_{i=2}^{j_T}E\Big[ \Big(\int_{(i-1)s_N}^{is_N}
    \bar m^N_s(\phi)ds\Big)^2\Big] &\le 
\sum_{i=2}^{j_T}E\big[ \Big(\int_{(i-1)s_N}^{is_N}
    C_\phi(\log N) X^N_s(\1)ds\Big)^2\Big]\\
&\le 
C_\phi (\log N)^2\sum_{i=2}^{j_T}s_N^2 E\big[ \bar X^N_{T+1}(\1)^2\big]\\
&\le C(\phi,T)(X^N_0(\1)^2+1) (j_Ts_N)s_N(\log N)^2\\
&\le C(\phi,T)(X^N_0(\1)^2+1)(\log N)^{2-q} .
\end{aligned}
\end{equation}
This and \eqref{e:red0} imply $E\Big[\big(Q^{N,o}_{j_T}\big)^2\Big] \le
C(\phi,T)(X^N_0(1)^2+1) (\log N)^{2-q}$, and it is clear this
bound also holds for $E\Big[\big( Q^{N,e}_{j_T}\big)^2\Big]$. 
Plugging
into \eqref{e:red00} we obtain
\begin{equation}\label{e:red3}
E\Big[ \sup_{2\le j\le j_T} \Big| \int_{s_N}^{js_N} 
\big(\bar m^N_s(\phi) - E(\bar m^N_s(\phi)\mid \calf^N_{[s]_N})\big)ds
\Big|^2\Big]
\le C(\phi,T)(X^N_0(1)^2+1) (\log N)^{2-q} .
\end{equation}
 
Now if $(i-1)s_N \le t\le is_N$, then
\begin{align*}
\Big(&\int_{s   _N}^t 
\big( \bar m^N_s(\phi) - E(\bar m^N_s(\phi)\mid \calf^N_{[s]_N})\big) 
ds \Big)^2 \\
&\le \Big(\Big|\int_{s_N}^{(i-1)s_N}
\big( \bar m^N_s(\phi) - E(\bar m^N_s(\phi)\mid \calf^N_{[s]_N})\big) 
ds \Big|+ \int_{(i-1)s   _N}^{is_N}
\big( \bar m^N_s(\phi) +E(\bar m^N_s(\phi)\mid \calf^N_{[s]_N})\big) 
ds  \Big)^2\\
&\le 2\Big(\int_{s_N}^{(i-1)s_N}
\big( \bar m^N_s(\phi) - E(\bar m^N_s(\phi)\mid \calf^N_{[s]_N})\big) 
ds \Big)^2 + 2\Big(2C_\phi(\log N) s_N \bar X^N_{T+1}(\1)
\Big)^2\\
&\le 2\sup_{2\le j\le j_T}\Big(\int_{s_N}^{j s_N}
\big( \bar m^N_s(\phi) - E(\bar m^N_s(\phi)\mid \calf^N_{[s]_N})\big) 
ds \Big)^2
+C_\phi(\log N)^{2-2q}\bar X^N_{T+1}(\1)^2,
\end{align*}
where we have used Lemma~\ref{l:mNs}(a), \eqref{e:L2bnda}, and the martingale property of $X^N_s(\1)$ (Corollary~\ref{c:L2bnd}). 
Thus from the above, \eqref{e:red3}, and Corollary~\ref{c:L2bnd}, the left side of
\eqref{e:7prime} is 
bounded above by
\begin{multline}
2E\Big[ \sup_{2\le j\le j_T} \Big( \int_{s_N}^{js_N} 
\big(\bar m^N_s(\phi) - E(\bar m^N_s(\phi)\mid \calf^N_{[s]_N})\big)ds
\Big)^2\Big] + C_\phi(\log N)^{{2-2q}}E\Big[ \bar X^N_{T+1}(\1)^2\Big]\\
\\ \le C(\phi,T)(X^N_0(1)^2+1) (\log N)^{{2-q}}.
\end{multline}
\end{proof}

\bigskip
Define $\tgamma^N(s) = \bar \gamma^N(s-[s]_N)
= K' \gamma_e(N(s-[s]_N))$. Note that  for $s\ge s_N$, $s-[s]_N\ge s_N$, and so by
Proposition~\ref{t:noncoal},
\begin{equation}\label{gammauc}
\tgamma^N(s)\to \gamma_e\text{ uniformly in $s\ge s_N$ as }N\to\infty,
\end{equation}
and there is a constant $C_\gamma>0$ such that
\begin{equation}\label{gammaub}
\sup_{N\ge 3,s\ge s_N}\tgamma^N(s)\le C_\gamma.
\end{equation} 

\bigskip\noindent
{\it Proof of Proposition~\ref{p:2did}.}
It follows from Proposition~\ref{p:sm-decomp}, 
Lemma~\ref{l:mNs}, and Corollary~\ref{c:L2bnd} that
\[
E\Big[\sup_{0\le t\le T}\Big|\langle M^N(\phi) \rangle_t  
-\int_0^t \bar m^N_s(\phi)ds\Big|\Big]
\le C\frac{\log N}{\sqrt N}\int_0^T E(X^N_s(\1))ds
= CT\frac{\log N}{\sqrt N}X^N_0(\1),
\]
and so to prove \eqref{e:2did} it suffices to show
\begin{equation}
\label{e:2did2}
\sup_{X^N_0(\1)\le A}E\left[ \sup_{0\le t\le T}\Big| 
\int_0^t\big(\bar m^N_s(\phi)- \rho^2|B_r|^2\gamma_e
X^N_s(\phi^2)\big)ds\Big|\right] \to 0 \text{ 
as }N\to\infty.
\end{equation}
Furthermore, by Lemma~\ref{l:mNs}(a), Proposition~\ref{p:ms-bnd}(a) and Corollary~\ref{c:L2bnd},
\begin{equation}\label{barmNphibound}
E\big(\bar m^N_s(\phi)+\rho^2|B_r|^2\gamma_e X^N_s(\phi^2)\Big)
\le E\big(CK'X^N_s(\1)\big) = CK'X^N_0(\1),
\end{equation}
and therefore
\begin{align*}
E\left[ \int_0^{3\delta_N}( \bar m^N_s(\phi) +\rho^2|B_r|^2\gamma_e
X^N_s(\phi^2))ds \right] \le 
CK'\delta_N X^N_0(\1) 
\le C(\log N)^{1+\delta-q}X^N_0(\1) .
\end{align*}
So by our choice of $\delta$, $q$ (recall \eqref{e:qdelta}), we need only integrate over $[3\delta_N,T]$ in
\eqref{e:2did2}. 

Let us write 
\begin{equation}\label{e:esum}
E\Big[\sup_{3\delta_N\le t\le T}\Big| \int_{3\delta_N}^t (\bar m^N_t(\phi) - \rho^2|B_r|^2
\gamma_e X^N_s(\phi^2)) ds\Big|\Big] \le \cale_1 +\cale_2 +
\cale_3 +\cale_4,
\end{equation}
where
\begin{align*}
\cale_1 &= E\Big[\sup_{3\delta_N\le t\le T}\Big|
  \int_{3\delta_N}^t (\bar m^N_s(\phi) - 
E(\bar m^N_s(\phi)\mid \calf_{[s]_N]}))ds\Big|\Big],\\
\cale_2& =E\Big[\int_{3\delta_N}^T \Big| E(\bar m^N_s(\phi)\mid
         \calf_{[s]_N}) -
         \rho^2|B_r|^2\tgamma^N(s)X^N_{[s]_N}(\phi^2)\Big|ds \Big],\\
\cale_3& =\rho^2|B_r|^2 E\Big[\int_{3\delta_N}^T \tgamma^N(s)
\big|X^N_{[s]_ N}(\phi^2)-X^N_{s}(\phi^2)\big|ds \Big],\\
\cale_4& =\rho^2|B_r|^2 E\Big[\int_{3\delta_N}^T |\tgamma^N(s)-\gamma_e|
X^N_s(\phi^2)ds \Big].
\end{align*}

By Lemma~\ref{l:7prime},
\begin{equation}\label{e:e1}
\cale_1^2 \le \vep^N_{\ref{l:7prime}}
(X^N_0(\1)^2+1).
\end{equation}
The error term 
$\cale_4$ is simple:
\begin{equation}\label{e:e4}
\cale_4 \le C \sup_{s\ge 3\delta_N}|\tgamma^N(s)-\gamma_e|
\rho^2|B_r|^2\int_{3\delta_N}^T 
E(X^N_s(1))ds \le
CT \sup_{s\ge 3\delta_N}|\tgamma^N(s)-\gamma_e| X^N_0(\1)\to 0\text{ as }N\to\infty,
\end{equation}
by \eqref{gammauc} and $s\ge 3\delta_N\ge s_N$.

We now consider $\cale_3$.
Let $j_T$ be as defined before Lemma~\ref{l:7prime}, 
and let $j_0\in \nn$ be defined by 
\[
(j_0-1) s_N \le 3\delta_N<j_0s_N.
\]
By the martingale decomposition in 
Proposition~\ref{p:sm-decomp} and then Cauchy-Schwarz,
\begin{align}
\nonumber E( |X^N_s(\phi^2)-X^N_{[s]_N}(\phi^2)| ) &\le
E(|D^N_{s}(\phi^2)) - D^N_{[s]_N}(\phi^2)|)
+ E(|M^N_{s}(\phi^2) - M^N_{[s]_N}(\phi^2)|)\\
\label{e:Xlag}&\le
\int_{[s]_N}^sE(|d^N_{u}(\phi^2)|)du + 
\Big[ E\big(\langle M^N(\phi^2)\rangle_s
 - \langle M^N(\phi^2)\rangle_{[s]_N}\big)\Big]^{1/2}.
\end{align}
By Lemma~\ref{l:dNs} and Corollary~\ref{c:L2bnd}, 
\[
\int_{[s]_N}^sE(|d^N_{u}(\phi^2)|)du \le C_\phi
\int_{[s]_N}^sX^N_0(\1)du \le C_\phi 2s_N X^N_0(\1).
\]
By Proposition~\ref{p:sm-decomp}, Lemma~\ref{l:mNs} and \eqref{barmNphibound}, 
\begin{align*}
E\big(\langle M^N(\phi^2)\rangle_s
 - \langle M^N(\phi^2)\rangle_{[s]_N}\big) &=
E\Big[ E_{X^N_{[s]_N}}\Big(\int_0^{s-[s]_N} m^N_u(\phi^2)du\Big)\Big] \\
&\le C_\phi(\log N) \int_0^{2s_N} E\big(X^N_{u}(\1)\big)du\\
&=C_\phi (\log N)s_NX^N_0(\1). 
\end{align*}
Plugging these bounds into \eqref{e:Xlag}, and using
\eqref{gammaub} and \eqref{e:qdelta}, we obtain
\begin{equation}\label{e:e3}
\cale_3\le CT \Big((\log N)^{-q}X^N_0(\1)+
( \log N)^{\frac{1-q}2}X^N_0(\1)^{\frac12}\Big)
\end{equation}

Turning to $\cale_2$,  first use the Markov property and Lemma~\ref{l:key2db},
and then Corollary~\ref{c:L2bnd}, to conclude that 
\begin{align}
\nonumber\cale_2 &\le E\Big[\int_{3\delta_N}^TC\Big[ 
\frac{\log \log N}{\log N} X^N_{[s]_N}(\1) 
\\
\nonumber&\qquad + \frac{N^{\frac{1}{2}-\frac{d}{4}}}{s_N\log N} 
\int_{\Rd} \int_{\Rd} 
1\{|y_1-y_2|\le \sqrt{\delta_N}\} X^N_{[s]_N}(y_1)
 X^N_{[s]_N}(y_2)dy_1dy_2 \Big]ds\Big]\\
\nonumber&\le CT\frac{\log\log N}{\log N}X^N_0(\1)
\\
\label{E2summationbnd}&\qquad+ \frac{CN^{\frac{1}{2}-\frac{d}{4}}}{s_N\log N} \int_{3\delta_N}^T
\int_{\Rd} \int_{\Rd} 
1\{|y_1-y_2|\le \sqrt{\delta_N}\} E\big(X^N_{[s]_N}(y_1)
 X^N_{[s]_N}(y_2) \big)dy_1dy_2ds.
\end{align}
If $d\ge 3$, use Corollary~\ref{c:L2bnd} to bound the last term by
\begin{equation}\label{e:d3quadterm}
\frac{C_TN^{\frac{1}{2}-\frac{d}{4}}}{s_N\log N}(X_0(\1)+X_0^N(\1)^2).
\end{equation}
If $d=2$, then by Lemma~\ref{l:key2da},
\begin{align*}
\int_{3\delta_N}^T
\int_{\Rd} \int_{\Rd} 
&1\{|y_1-y_2|\le \sqrt{\delta_N}\} E\big(X^N_{[s]_N}(y_1)
 X^N_{[s]_N}(y_2)\big)dy_1dy_2ds\\
&\le C_T\int_{3\delta_N}^T 
\Big[\frac{\delta_N}{[s]_N} X^N_0(\1)^2 + \delta_N\log(1/\delta_N) 
X^N_0(\1)\Big]ds\\
&\le C_T\delta_N X^N_0(\1)^2 \int_{\delta_N}^T\frac{1}{u}du
+ C_TT\delta_N\log(1/\delta_N) X^N_0(\1) \\
&= C_T \delta_N\log(T/\delta_N)(X^N_0(\1)^2+X^N_0(\1)). 
\end{align*}
Now insert the above or \eqref{e:d3quadterm} into \eqref{E2summationbnd} to arrive at
\begin{align}
\label{e:e2}
\nonumber\cale_2 &\le C_T\frac{\log\log N}{\log N} X^N_0(\1)  
+C_T \frac{\delta_N\log(1/\delta_N)+N^{\frac{1}{2}-\frac{d}{4}}}{s_N\log N}
(X^N_0(\1)^2 +1)\\
&  \le C_T\frac{\log\log N}{\log N} X^N_0(\1)  
+C_T \frac{(\log N)^\delta\log\log N}{\log N}
(X^N_0(\1)^2 +1),
\end{align}
where the last inequality is rather crude if $d\ge 3$.
The required result now follows from \eqref{e:esum}, \eqref{e:e1}, \eqref{e:e4}, \eqref{e:e3} and \eqref{e:e2}
\qed

\section{Appendix: Proof of Proposition~\ref{t:noncoal}}

From the discussion following the statement of Proposition~\ref{t:noncoal} we may
assume $d=2$ throughout.  Recall the definitions and time
change construction from Section~5 (especially
Lemmas~\ref{lem:difftimechange} and
\ref{lem:killtime}), using the rate $2\rho|B_r|$ random walk 
$Y_t$, the difference process $\txi^x_t$, and absorption
time $\tilde\tau=\kappa$. 
By Lemma~\ref{lem:killtime}, if $x=x_1-x_2$,
\begin{equation}\label{e:taukappa}
P_x(\tilde\tau>t) =
P_x(\kappa>t) =
E_x\Big[ \exp(-\int_0^{I^{-1}(t)} k(Y_s)ds\Big)\Big].
\end{equation}
Thus, to prove \eqref{e:noncoal2d} in
Proposition~\ref{t:noncoal} 
we need to show existence and positivity of
\begin{equation}\label{e:kgoal}
\lim_{t\to\infty} (\log t) E_x
\Big[\exp\Big(-\int_0^{I^{-1}(t)}k(Y_s)ds\Big)\Big], \quad
x\ne 0. 
\end{equation}
As the exact form of the killing rate $k(x)$ will not be important
in our arguments, we will replace it with a general radial function
$\phi:\Rtwo\to[0,\infty]$  satisfying
\[
\phi(x)<\infty \text{ for }
x\ne 0, \ \phi(x) \text{ is }\downarrow  \text{ in $|x|$ and }
\phi(x) = 0 \text{ for }|x|>2r,   
\]
where $\downarrow$ in $|x|$ means non-increasing in $|x|$, and similarly for $\uparrow$.
We assume throughout that $\phi$ has these properties. Recall the stopping times $t_A$ and $T_A$ from \eqref{hittingdefn}.

\begin{proposition}\label{p:reduction}
Let $Y_0=x\in \Rtwo$, $|x|>2r$, or $Y_0=\bar U$. If there is
a constant $c_\phi(Y_0)>0$ such 
that the limit
\begin{equation}\label{e:reduction1}
c_\phi(Y_0)=\lim_{A\to\infty} \log(A^2) E_{Y_0}\Big[
\exp\Big(-\int_0^{T_A}\phi(Y_s)dx\Big)\Big]
\end{equation}
exists, then
\begin{equation}\label{e:reduction2}
\lim_{t\to\infty} (\log t) E_{Y_0}
\Big[\exp\Big(-\int_0^{I^{-1}(t)}\phi(Y_s)ds\Big)\Big]
= c_\phi(Y_0).
\end{equation}
\end{proposition}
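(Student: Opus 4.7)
The strategy is to introduce an intermediate radius $A=A(t)\to\infty$ and compare $\int_0^{I^{-1}(t)}\phi(Y_s)\,ds$ to $\int_0^{T_A}\phi(Y_s)\,ds$. A convenient choice is $A=A(t)=\sqrt{t}/\log t$: then $A\to\infty$, $\log(A^2)=\log t-2\log\log t\sim\log t$, and $A^2\log A\sim t/(2\log t)=o(t)$. Since $\log(A^2)/\log t\to 1$, hypothesis \eqref{e:reduction1} yields $\log t\cdot E_{Y_0}[\exp(-\int_0^{T_A}\phi(Y_s)\,ds)]\to c_\phi(Y_0)$, so the task reduces to showing that replacing $T_A$ by $I^{-1}(t)$ perturbs this expectation by $o(1/\log t)$.

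The first step is to show $P_{Y_0}(T_A>I^{-1}(t))=o(1/\log t)$. Since $I(s)=\int_0^s\beta(Y_u)^{-1}\,du$ is an adapted additive functional, $I^{-1}(t)$ is an $(\mathcal F^Y_t)$-stopping time. For large $t$ one has $A^2\log A\le t-t^{1/2}$, so Lemmas~\ref{l:AlogA} and~\ref{l:Ibnd} (with $\alpha=1/2$) give
$$P_{Y_0}(T_A>I^{-1}(t))\le P_{Y_0}(T_A>A^2\log A)+P_{Y_0}(I^{-1}(t)<t-t^{1/2})\le \frac{C}{(\log A)^2}+\frac{C\log t}{\sqrt t}=o\Big(\frac{1}{\log t}\Big).$$
Since $\phi\ge 0$ and $I^{-1}(t)\ge T_A$ on $\{T_A\le I^{-1}(t)\}$, this immediately gives the upper direction:
$$E_{Y_0}\Big[e^{-\int_0^{I^{-1}(t)}\phi(Y_s)ds}\Big]\le E_{Y_0}\Big[e^{-\int_0^{T_A}\phi(Y_s)ds}\Big]+P_{Y_0}(T_A>I^{-1}(t)),$$
so $\limsup_{t\to\infty}\log t\cdot E_{Y_0}[e^{-\int_0^{I^{-1}(t)}\phi}]\le c_\phi(Y_0)$.

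For the matching lower bound I will write
$$E_{Y_0}\Big[e^{-\int_0^{T_A}\phi}-e^{-\int_0^{I^{-1}(t)}\phi}\Big]\le E_{Y_0}\Big[e^{-\int_0^{T_A}\phi}\bigl(1-e^{-\int_{T_A}^{I^{-1}(t)}\phi(Y_s)ds}\bigr)1_{T_A\le I^{-1}(t)}\Big]+P_{Y_0}(T_A>I^{-1}(t)),$$
and apply $1-e^{-z}\le 1_{z>0}$. Because $\phi$ vanishes outside $B_{2r}$, the indicator forces $Y_s\in B_{2r}$ for some $s\in[T_A,I^{-1}(t)]\subseteq[T_A,T_A+t]$. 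Noting that the jump distribution is supported in $B_{2r}$ (so $|Y_{T_A}|\in[A,A+2r]$), the strong Markov property at $T_A$ bounds the main term by
$$E_{Y_0}\Big[e^{-\int_0^{T_A}\phi(Y_s)ds}\Big]\cdot\sup_{|y|\in[A,A+2r]}P_y(t_{2r}\le t).$$

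The main obstacle is the sharp estimate $\sup_{|y|\in[A,A+2r]}P_y(t_{2r}\le t)=O(\log\log t/\log t)$, which must be strictly sharper than the main-term scale $1/\log t$. I will introduce a still larger radius $A'=\sqrt t(\log t)^3$ and decompose $P_y(t_{2r}\le t)\le P_y(t_{3r}<T_{A'})+P_y(T_{A'}\le t)$. Lemma~\ref{l:logprob} (with $a=3r$) yields
$$P_y(t_{3r}<T_{A'})\le\frac{\log(A'+2r)-\log A}{\log(A'+2r)-\log 3r}=\frac{(4+o(1))\log\log t}{(1/2+o(1))\log t}=O\Big(\frac{\log\log t}{\log t}\Big),$$
while Lemma~\ref{l:rwfacts}(c) with $k$ large (applied to the centered walk after the shift $y$) gives $P_y(T_{A'}\le t)\le C_k t^k/(A'/2)^{2k}=O((\log t)^{-6k})$, negligible. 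Since $E_{Y_0}[e^{-\int_0^{T_A}\phi}]\sim c_\phi(Y_0)/\log t$, the correction term is $O(\log\log t/\log^2 t)$, and multiplying by $\log t$ yields $O(\log\log t/\log t)=o(1)$. Combined with the upper bound, this gives \eqref{e:reduction2}. The delicacy of this last step reflects the recurrence of the planar walk: from scale $\sqrt t/\log t$ the walk will eventually reenter $B_{2r}$, and only the factor gained by going out to the auxiliary radius $A'$ beats the $1/\log t$ scale of the main term.
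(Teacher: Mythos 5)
Your proof is correct, and it takes a genuinely different route from the paper's. The paper proceeds in two steps: first it deduces from \eqref{e:reduction1} the intermediate statement $\lim_{t\to\infty}(\log t) E_{Y_0}[\exp(-\int_0^t\phi(Y_s)ds)] = c_\phi(Y_0)$ (with a deterministic upper limit), by bracketing $T_A$ between $A^2/\log A$ and $A^2\log A$ on the event $\Gamma_A$ from Lemma~\ref{l:AlogA} and exploiting the fact that $t\mapsto\exp(-\int_0^t\phi)$ is monotone; it then passes from $t$ to $I^{-1}(t)$ again by monotonicity ($I^{-1}(t)\le t$ gives the $\liminf$ for free, and $I^{-1}(t)\ge t-\sqrt t$ whp via Lemma~\ref{l:Ibnd} gives the $\limsup$). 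You instead compare $T_{A(t)}$ with $I^{-1}(t)$ directly, tuning $A(t)=\sqrt t/\log t$ so that $T_{A(t)}\le I^{-1}(t)$ with probability $1-o(1/\log t)$. This makes the $\limsup$ direction immediate, but leaves a window $[T_A, I^{-1}(t)]$ of length $\asymp t$ in which the walk (starting from $|y|\asymp\sqrt t/\log t$) could return to $B_{2r}$ and pick up extra killing, and your $\liminf$ direction has to rule this out by the auxiliary-radius argument ($A'=\sqrt t(\log t)^3$), which yields the return probability $O(\log\log t/\log t)=o(1)$. Both routes use the same toolkit (Lemmas~\ref{l:rwfacts}, \ref{l:logprob}, \ref{l:AlogA}, \ref{l:Ibnd}). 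The paper's two-step structure is arguably cleaner because both directions reduce to monotone comparisons; your one-shot approach trades that symmetry for the extra return-probability estimate, but it avoids stating the intermediate deterministic-time version \eqref{e:reduction3} as a separate fact. One small remark: in your $\liminf$ step, the term $P_{Y_0}(T_A>I^{-1}(t))$ that you add is actually unnecessary, since on $\{T_A>I^{-1}(t)\}$ the difference $e^{-\int_0^{T_A}\phi}-e^{-\int_0^{I^{-1}(t)}\phi}$ is already $\le 0$; including it does no harm.
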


\begin{proof}
Suppose \eqref{e:reduction1} holds. We first prove that this
implies
\begin{equation}\label{e:reduction3}
\lim_{t\to\infty} (\log t) E_{Y_0}
\Big[\exp\Big(-\int_0^{t}\phi(Y_s)ds\Big)\Big] 
= c_\phi(Y_0).
\end{equation}
If we let 
$\Gamma_A$ be the event $\{ A^2/\log A\le T_A\le
  A^2\log A\}$, then by Lemma~\ref{l:AlogA}, for $A>(2|Y_0|)\vee 2$,
$P_{Y_0}(\Gamma^c_A)\le C/(\log A)^2$. Thus
\begin{align*}
E_{Y_0}\Big[\exp\Big(-\int_0^{T_A}\phi(Y_s)ds\Big)\Big] &\le  
E_{Y_0}\Big[1_{\Gamma_A}\exp\Big(-\int_0^{T_A}\phi(Y_s)ds\Big)\Big] +
P_{Y_0}(\Gamma^c_A)\\
&\le E_{Y_0}\Big[\exp\Big(-\int_0^{A^2/\log A}\phi(Y_s)ds\Big)\Big] +
\frac{C}{(\log A)^2}.
\end{align*}
It follows that 
\begin{multline*}
\log \Big(\frac{A^2}{\log A}\Big)
E_{Y_0}\Big[\exp\Big(-\int_0^{A^2/\log A}\phi(Y_s)ds\Big)\Big]
\\\ge \log \Big(\frac{A^2}{\log A}\Big) \left(
E_{Y_0}\Big[\exp\Big(-\int_0^{T_A}\phi(Y_s)ds\Big)\Big]
-\frac{C}{(\log A)^2}\right)
\to c_\phi(Y_0) \text{ as }A\to\infty.
\end{multline*}
This proves 
\begin{equation}\label{e:liminf1}
\liminf_{t\to\infty}(\log t) E_{Y_0}\Big[
\exp\Big(-\int_0^{t}\phi(Y_s)dx\Big)\Big] \ge c_{\phi}(Y_0).
\end{equation} Similarly,
\begin{align*}
E_{Y_0}\Big[\exp\Big(-\int_0^{T_A}\phi(Y_s)ds\Big)\Big] &\ge  
E_{Y_0}\Big[1_{\Gamma_A}\exp\Big(-\int_0^{T_A}\phi(Y_s)ds\Big)\Big]\\
&\ge E_{Y_0}\Big[\exp\Big(-\int_0^{A^2\log A}\phi(Y_s)ds\Big)\Big] -
P_{Y_0}(\Gamma^c_A). 
\end{align*}
It follows that 
\begin{equation}\label{e:Atbnd}
\begin{aligned}[b]
\log \Big(A^2\log A\Big)
E_{Y_0}\Big[\exp&\Big(-\int_0^{A^2\log A}\phi(Y_s)ds\Big)\Big]\\
&\le \log \Big(A^2\log A\Big) \left( 
  E_{Y_0}\Big[\exp\Big(-\int_0^{T_A}\phi(Y_s)ds\Big)\Big]
 +\frac{C}{(\log A)^2}\right)\\  
&\to c_\phi(Y_0) \text{ as }A\to\infty.
\end{aligned}
\end{equation}
Along with \eqref{e:liminf1}, this  proves
\eqref{e:reduction3}. 
 
We can now prove \eqref{e:reduction2}. 
By \eqref{e:taukappa}, \eqref{e:reduction3} and $I^{-1}(t) \le t$,
\[
\liminf_{t\to\infty}
(\log t) E_x
\Big[\exp\Big(-\int_0^{I^{-1}(t)}\phi(Y_s)ds\Big)\Big]
\ge
\lim_{t\to\infty} (\log t) E_{Y_0}
\Big[\exp\Big(-\int_0^{t}\phi(Y_s)ds\Big)\Big]  = c_\phi(Y_0).
\]
On the other hand, by
Lemma~\ref{l:Ibnd}, taking
$\alpha=1/2$,
\begin{align*}
(\log t) &E_x
\Big[\exp\Big(-\int_0^{I^{-1}(t)}\phi(Y_s)ds\Big)\Big]
\le (\log t) E_{Y_0}
\Big[1\{I^{-1}(t)\ge t-t^{1/2}\}\exp
\Big(-\int_0^{I^{-1}(t)}\phi(Y_s)ds\Big)\Big]\\
&\qquad + P_{Y_0}(I^{-1}(t)< t-t^{1/2})\\
&\le (\log t) E_{Y_0}
\Big[\exp\Big(-\int_0^{t-\sqrt t}\phi(Y_s)ds\Big)\Big]
+
(\log t) C_{\ref{l:Ibnd}}\frac{\log(1+t)}{\sqrt t}\\
&\to c_\phi(Y_0) \text{ as }t\to\infty
\end{align*}
by \eqref{e:reduction3}. 
Along with the previous $\liminf$ bound this proves
\eqref{e:reduction2}. 
\end{proof}

To prove \eqref{e:reduction1} we first establish a
number of properties of 
\begin{equation}\label{e:PhiDef}
  \Phi(x,A) =
  E_x\Big[\exp\Big(-\int_0^{T_A}\phi(Y_s)ds\Big)\Big] .
\end{equation}
It is elementary that $0\le \Phi(x,A)\le 1$
and that by recurrence, $\Phi(x,A)\to 0$ as $A\to\infty$. 
The next two results will show that $\Phi(x,A)$ is increasing in
$|x|$. 

\begin{lemma}\label{l:monoup} Let $N\in\nn$. 
  If $0=s_0<s_1<\cdots<s_N$ and
$f:\R^{N+1}\to \R$ is bounded and $\uparrow$ in each coordinate
then 
\begin{equation}\label{e:monoup}
y \mapsto E_{y}[ f(|Y_{s_0}|,\dots,|Y_{s_N}|)]
\text{ is $\uparrow$ in $|y|$.}
\end{equation}
\end{lemma}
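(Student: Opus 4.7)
The plan is to reduce the lemma, by induction on $N$, to a single-time-point stochastic domination result for $|Y_t|$ in its starting point $y$, which I will then establish via Anderson's inequality on symmetric unimodal densities. Since $\bar U$ is rotationally invariant, the joint law of $(|Y_{s_0}|,\dots,|Y_{s_N}|)$ under $P_y$ depends on $y$ only through $|y|$, so it suffices to prove monotonicity in $|y|$.

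The base case $N=0$ is immediate from the monotonicity of $f$. For the inductive step, conditioning on $Y_{s_1}$ and using the Markov property gives
\[
E_y\bigl[f(|Y_{s_0}|,\dots,|Y_{s_N}|)\bigr] = E_y[H(Y_{s_1})],
\]
where, writing $\tilde f(u_0,\dots,u_{N-1}) := f(|y|,u_0,\dots,u_{N-1})$,
\[
H(z) = E_z\bigl[\tilde f(|Y'_0|, |Y'_{s_2-s_1}|,\dots,|Y'_{s_N-s_1}|)\bigr]
\]
for an independent walk $Y'$ started at $z$. Since $\tilde f$ is still $\uparrow$ in each argument, applying the inductive hypothesis to the $N$-tuple of times $0 < s_2-s_1 < \cdots < s_N-s_1$ yields $H(z) = h(|z|)$ for some bounded $\uparrow$ function $h$. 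This reduces the problem to the single-time statement that $y \mapsto E_y[h(|Y_t|)]$ is $\uparrow$ in $|y|$, equivalently that $P_y(|Y_t|\le a)$ is $\downarrow$ in $|y|$ for every $a > 0$.

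For this one-time monotonicity---which will be the main technical step---I plan to use the Poisson construction to write
\[
P_y(|Y_t|\le a) = e^{-\lambda t}\,1\{|y|\le a\} + \int_{B_a(-y)} h_t(u)\,du, \qquad \lambda = 2\rho|B_r|,
\]
where $h_t = \sum_{n\ge 1} e^{-\lambda t}(\lambda t)^n/n!\,h_{\bar U}^{*n}$ is the density of the continuous part of $Y_t$ starting from $0$. The density $h_{\bar U}(z) = |B_r(0)\cap B_r(z)|/|B_r|^2$ is radially symmetric and radially decreasing in $|z|$, so by Anderson's theorem each convolution $h_{\bar U}^{*n}$, and hence their convex combination $h_t$, is also radially symmetric and radially decreasing. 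Anderson's inequality then gives that $\int_{B_a(-y)} h_t(u)\,du$ is radially decreasing in $|y|$, while the atomic term $e^{-\lambda t}\,1\{|y|\le a\}$ is obviously $\downarrow$ in $|y|$. The key technical point is the preservation of symmetric unimodality under both convolution and convex combination, a classical consequence of Anderson's theorem; a direct reflection coupling looks tempting but is delicate for jump processes because the walk can cross the mirror hyperplane without landing on it, at which point the reflected marginal is destroyed.
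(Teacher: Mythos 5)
Your proof takes a genuinely different route from the paper's and is essentially correct, but the appeal to Anderson's theorem needs tightening. The paper proves the single-time monotonicity with bare hands: it first shows that $h_u(y)=P(|y+U|\le u)=|B_r\cap B_u(y)|/|B_r|$ is non-increasing in $|y|$ for a single uniform step $U$, iterates this over the $m$ jumps to obtain stochastic monotonicity of $|y+S_m|$, sums over the Poisson clock, and then inducts on $N$ by applying the Markov property at the \emph{last} time $s_{N-1}$. You instead reduce by induction to the one-time case and invoke Anderson. Beware that your stated ``key technical point''---preservation of symmetric unimodality under convolution and convex combination---is \emph{false} in $\R^d$ for $d\ge 2$ in that generality. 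What rescues your argument is the radial symmetry of $h_{\bar U}$: writing a radially non-increasing $g$ as a layer cake $g=\int_0^\infty\1_{B_u}\,\mu(du)$, one gets $(f*g)(y)=\int_0^\infty\bigl(\int_{y+B_u}f\bigr)\mu(du)$, which is non-increasing along rays by Anderson's inequality applied to each ball $B_u$, and is radially symmetric, hence radially non-increasing. Either spell this out or cite Sherman's 1955 refinement; the generic unimodality closure you invoke does not hold.

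There is also a small gap in the inductive step. Because you apply the Markov property at the first positive time $s_1$ and freeze the first coordinate of $f$ at $|y|$, the resulting $H=H_{|y|}$ depends on $|y|$ both through that frozen argument and through the starting point of the walk, so ``reducing to the single-time statement with a fixed $h$'' is not quite accurate as written. The fix is a two-step comparison: for $|y|\le|y'|$ one has $E_y[H_{|y|}(|Y_{s_1}|)]\le E_y[H_{|y'|}(|Y_{s_1}|)]$ pointwise, by monotonicity of $f$ in its first argument, and then $E_y[H_{|y'|}(|Y_{s_1}|)]\le E_{y'}[H_{|y'|}(|Y_{s_1}|)]$ by the one-time result. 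The paper avoids this altogether by conditioning at the last time, so the starting point enters only once. Finally, your caution about reflection coupling for jump walks is apt, but note the paper does use a modified reflection coupling later in the appendix (for the coupling-time estimates), declaring the walks coupled the first time one lands within $r$ of the other's mirror image, precisely to handle the crossing issue you raise.
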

\begin{proof}
Let $U,U_1,U_2,\dots$ be iid rv's uniform on $B_r$, and let
$S_m=U_1+\dots+U_m$. The first step is to prove that if $N=1$
then for $m=1,2\dots$, 
\begin{equation}\label{e:g1}  E[ f(|y|,|y+S_m|)
  ]\text{ is increasing in }|y|.
\end{equation}
Let $u\ge 0$, and define 
\begin{equation}\label{e:hu}
  h_u(y) = P(|y+U|\le u) = \frac{|B_r\cap B_u(-y)|}{|B_r|}
= \frac{|B_r\cap B_u(y)|}{|B_r|}.
\end{equation}
It is easy to see that $h_u(y)$ is decreasing in $|y|$, so that $|y+S_1|$ is
stochastically increasing in $|y|$, which proves \eqref{e:g1} for
$m=1$. Now suppose $m=2$, and consider
\begin{align*}
  P(|y+S_2|\le u ) =
  P(|y+S_1+U_2|\le u ) = E[h_u(y+S_1)].
\end{align*}
Clearly $h_u(y)$ depends only on $|y|$, and having
established it is decreasing in $|y|$, the 
$m=1$ case of \eqref{e:g1} implies that
$E[h_u(y+S_2)]$
is decreasing in $|y|$, which shows $|y+S_2|$ is
stochastically increasing in $|y|$, proving 
\eqref{e:g1} for $m=2$. The general inductive step for  
\eqref{e:g1} is similar.

Consider next, the $N=1$ case of \eqref{e:monoup}. With $\lambda
= 2\rho|B_r|$, 
\[
  E_{y_0}[f(|Y_{s_0}|,|Y_{s_1}|)] = 
  \sum_{m=0}^\infty e^{-\lambda s_1}\frac{(\lambda
    s_1)^{m}}{m!} E[f(|y_0|,|y_0+S_{2m}|)]
    \]
By \eqref{e:g1}, this shows $E_{y_0}[f(|Y_{s_0}|,|Y_{s_1}|)]$
is increasing in $|y_0|$, proving \eqref{e:monoup} for $N=1$.
Now suppose $N>1$, let $(Y'_t)$ under $P'$ be an independent copy of
$(Y_t)$, and define $\tilde f:\R^N\to\R$ by 
\begin{equation}\label{e:mono1}
  \tilde f(|y_0|,|y_1|,\dots,|y_{N-1}|) =
E'_{y_{N-1}}[f(|y_0|,\dots,|y_{N-1}|,|Y'_{s_N-s_{N-1}}|)].
\end{equation}
Then $\tilde f$ is increasing in $|y_0|,\dots,|y_{N-2}|$ by
definition, and is increasing in $|y_{N-1}|$ by the $N=1$
case of \eqref{e:monoup} just established. 
By the Markov property applied at time $s_{N-1}$, 
\begin{align*}
E_{y_0}[f(|Y_{s_0}|,\dots,|Y_{s_{N-1}}|,|Y_{s_N}|)] &=
E_{y_0}\Big[E'_{Y_{s_{N-1}}}[f(|Y_{s_0}|,\dots,|Y_{s_{N-1}}|,
|Y'_{s_N-s_{N-1}}|)]\Big]\\
& =
E_{y_0}[\tilde f(|Y_{s_0}|,\dots, |Y_{s_{N-1}}|)].
\end{align*}
This provides the inductive step to complete the
proof of \eqref{e:monoup} for general $N$.

\end{proof} 

\begin{lemma}\label{l:monoup2} Let $g:(0,\infty)\to [0,\infty)$ be
continuous and $\downarrow$. Then for all
$A,t\in[0,\infty]$, 
\begin{equation}\label{e:monoup2}
y \mapsto E_{y}\Big[\exp\Big(-\int_0^{T_A\wedge
  t}g(|Y_s|) ds\Big)\Big] \text{ is $\uparrow$ in } |y| .
\end{equation}
In particular, $x\to \Phi(x,A)$ is increasing in $|x|$. 
\end{lemma}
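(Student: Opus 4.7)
The plan is to approximate the path integral in \eqref{e:monoup2} by a Riemann sum evaluated at finitely many sample times and then invoke Lemma~\ref{l:monoup}. Once monotonicity holds at the level of approximations, a bounded convergence argument will transfer it to the actual limit. The last statement of the lemma is the special case $A<\infty$, $t=\infty$, $g=\phi$ of \eqref{e:monoup2}, so only \eqref{e:monoup2} needs to be proved.

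First, I would treat the case $t<\infty$ and $A<\infty$. For each $N\in\N$ set $s_i^N = it/N$ for $0\le i\le N$, and define, for $(y_0,\dots,y_{N-1})\in[0,\infty)^N$,
\begin{equation*}
F_N(y_0,\dots,y_{N-1}) \;=\; \frac{t}{N}\sum_{i=0}^{N-1} g(y_i)\prod_{j=0}^{i}\1\{y_j<A\}.
\end{equation*}
I would check that $F_N$ is decreasing in each argument $y_k$: the term $i=k$ contributes $g(y_k)\1\{y_k<A\}\cdot(\text{factor independent of }y_k)$, and both $g(y_k)$ (since $g$ is $\downarrow$) and the indicator are $\downarrow$ in $y_k$; for $i>k$ the contribution carries a factor $\1\{y_k<A\}$ with all other factors nonnegative and independent of $y_k$, so each such term is $\downarrow$ in $y_k$. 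Consequently $f_N(y_0,\dots,y_{N-1}):=\exp(-F_N(y_0,\dots,y_{N-1}))$ is bounded by $1$ and is $\uparrow$ in each coordinate. Lemma~\ref{l:monoup} then yields that
\begin{equation*}
y\mapsto E_y\bigl[f_N\bigl(|Y_{s_0^N}|,\dots,|Y_{s_{N-1}^N}|\bigr)\bigr]
\end{equation*}
is $\uparrow$ in $|y|$.

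Next I would establish that a.s.
\begin{equation*}
F_N\bigl(|Y_{s_0^N}|,\dots,|Y_{s_{N-1}^N}|\bigr)\;\longrightarrow\;\int_0^{T_A\wedge t} g(|Y_s|)\,ds \qquad\text{as }N\to\infty.
\end{equation*}
Fix a path for which $Y$ has finitely many jumps in $[0,t]$ (true a.s.) at times $0=\tau_0<\tau_1<\dots<\tau_M\le t$; then $|Y_\cdot|$ is piecewise constant on these intervals with value $y_k:=|Y_{\tau_k}|$ on $[\tau_k,\tau_{k+1})$. For $N$ large enough every interval $[\tau_k,\tau_{k+1})$ contains a grid point $s_j^N$, so $\max_{0\le j\le i}|Y_{s_j^N}|$ equals $\max\{y_k:\tau_k\le s_i^N\}$, which is strictly less than $A$ iff $T_A>s_i^N$. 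Thus the indicator in the definition of $F_N$ matches $\1\{T_A>s_i^N\}$ for such $N$, and standard Riemann-sum convergence of $\frac{t}{N}\sum_i g(y_{k(i)})\1\{s_i^N<T_A\}$ to $\int_0^{T_A\wedge t}g(|Y_s|)\,ds$ gives the claim. Bounded convergence (since $0\le f_N\le 1$) then yields
\begin{equation*}
E_y[f_N]\;\longrightarrow\;E_y\Bigl[\exp\Bigl(-\int_0^{T_A\wedge t}g(|Y_s|)\,ds\Bigr)\Bigr],
\end{equation*}
and the pointwise limit of functions $\uparrow$ in $|y|$ is $\uparrow$ in $|y|$. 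The cases $t=\infty$ and/or $A=\infty$ follow from the finite-$t$, finite-$A$ result by monotone/bounded convergence, using that $\int_0^{T_A\wedge t}g(|Y_s|)\,ds$ is $\uparrow$ in $t$ and $\uparrow$ in $A$; the increasing-in-$|y|$ property is preserved under these limits.

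The main (minor) obstacle is the convergence of the Riemann sum: one has to check carefully that the discrete maximum $\max_{j\le i}|Y_{s_j^N}|$ captures the continuous maximum $\max_{s\le s_i^N}|Y_s|$ so that the indicator factor inside $F_N$ correctly recognises $\{T_A>s_i^N\}$. This is where piecewise constancy of $Y$ and the a.s. finiteness of jumps on $[0,t]$ are used; all other steps (monotonicity of $F_N$, applicability of Lemma~\ref{l:monoup}, bounded convergence) are routine.
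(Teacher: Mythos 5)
Your proof is correct and follows essentially the same route as the paper's: discretize time, check that the discretized functional of $(|Y_{s_0^N}|,\dots,|Y_{s_{N-1}^N}|)$ is bounded and coordinatewise increasing, invoke Lemma~\ref{l:monoup}, and pass to the limit. The only cosmetic differences are that the paper packages the approximant as a product of factors $G^N_i$ keyed to a discrete stopping time $\tau^N$ with $\tau^N\downarrow T_A$ by right-continuity (and first reduces to bounded $g$ by monotone convergence), whereas you exponentiate a single Riemann sum with indicator products $\prod_{j\le i}\1\{y_j<A\}$ and argue convergence from the a.s.\ piecewise-constancy of the pure-jump path; both versions give the same discrete approximant and both limit arguments are valid.
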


\begin{proof} By monotone convergence, we may assume $A,t$
  are finite and $g$ is bounded. Let $g(0)=\lim_{s\downarrow
    0}g(s)$. For $N\in \nn$ let 
  $M_N\in\nn$ and $0=s^N_0<s^N_1<\dots<s^N_{M_N}=t$ satisfy
  $s^N_{i+1}-s^N_i<2^{-N}$ for $0\le i<M_N$, and define
  \[
\tau^N = \min\{s^N_i : |Y_{s^N_i}|>A\}\wedge t . 
    \]
By right-continuity of $|Y_s|$, $\tau^N\downarrow T_A$ a.s.\
as $N\to\infty$. By continuity of $g$ on $[0,\infty)$ and
dominated convergence,
\begin{align*}
E_{y}\Big[\exp\Big(-\int_0^{T_A\wedge
  t}g(|Y_s|) ds\Big)\Big]  & =
\lim_{N\to\infty} E_{y}\Big[\exp\Big(-
\sum_{i=1}^{M_N-1}1\{s^N_i<\tau^N\} g(|Y_{s^N_{i}}|)(s^N_{i+1}-s^N_{i})\Big)
\Big]\\
& =
\lim_{N\to\infty} E_{y}\Big[\prod_{i=0}^{M_N-1}
G^N_i(|Y_{s^N_0}|,\dots,|Y_{s^N_i}|)
\Big]
\end{align*}
where
\[
  G^N_i(|Y_{s^N_0}|,\dots,|Y_{s^N_i}|) =
  \exp\Big(-1\{s^N_i<\tau^N\} g(|Y_{s^N_{i}}|)(s^N_{i+1}-s^N_{i})\Big).
\]
It is easy to check that $G^N_i$ is increasing in each
of its variables, and hence applying Lemma~\ref{l:monoup}
to their product,  $E_{y}\Big[\prod_{i=0}^{M_N-1}
G^N_i(|Y_{s_0}|,\dots,|Y_{s_i}|)\Big]$ is increasing in
$|y|$. The result \eqref{e:monoup2} now follows from the above.
\end{proof}

A consequence of the strong Markov property we 
will use repeatedly is
\begin{equation}\label{e:PhiMarkov}
\Phi(x,A) = P_x(T_A<t_a) + E_x\Big[ 1\{t_a<T_A\}\Phi(
Y_{t_a},A)\Big] \text{ if }
2r<a<|x|<A .
\end{equation}

\begin{lemma}\label{l:logAbnd}
There exists $C_{\ref{e:logAbnd}}=C_{\ref{e:logAbnd}}(r)
>1$ such that for all $k\ge
2$ and $0<|x|\le k<A$, 
\begin{equation}\label{e:logAbnd}
 \Phi(x,A)\ \le C_{\ref{e:logAbnd}}\frac{\log k}{\log (A^2)}.
\end{equation}
\end{lemma}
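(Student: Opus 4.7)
The plan is to close a self-consistent inequality combining the strong Markov identity \eqref{e:PhiMarkov} at $a=3r$ (an ``outer'' step) with a Feynman--Kac bound inside $B_{3r}$ (an ``inner'' step). By Lemma~\ref{l:monoup2} (monotonicity of $\Phi$ in $|x|$) one may replace $x$ by any point of radius $k$, and, absorbing an $r$-dependent factor into the constant, it suffices to treat $k\ge 6r$ with $A$ large; the cases $k<6r$ are dominated by the bound we will derive at radius $6r$ together with $\log k\ge\log 2>0$, and small $A$ is handled by $\Phi\le 1$.

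For $|x|=k\ge 6r$ the outer step is \eqref{e:PhiMarkov} with $a=3r$: because $\phi\equiv 0$ outside $B_{2r}$ the Feynman--Kac factor on $[0,t_{3r}]$ equals $1$, and Lemma~\ref{l:monoup2} applied to $|Y_{t_{3r}}|\le 3r$ gives
\[
\Phi(x,A)\le P_x(T_A<t_{3r})+\Phi(\hat y,A)\qquad(|\hat y|=3r).
\]
Lemma~\ref{l:logprob} bounds the first term by $C\log k/\log(A^2)$. The inner step controls $\Phi(y,A)$ for $|y|\le 3r$: set $\sigma=T_{4r}$. Since jumps of $Y$ have size at most $2r$ we have $|Y_\sigma|\le 6r$, and for $A>6r$ necessarily $\sigma<T_A$, so the strong Markov property gives
\[
\Phi(y,A)=E_y\bigl[e^{-\int_0^{\sigma}\phi(Y_s)\,ds}\,\Phi(Y_\sigma,A)\bigr]\le q\,\Psi,
\]
where $\Psi:=\Phi(\bar y,A)$ for any $|\bar y|=6r$ and $q:=\sup_{|y|\le 3r}E_y\bigl[e^{-\int_0^{T_{4r}}\phi(Y_s)\,ds}\bigr]$; here one uses monotonicity to pass $\Phi(Y_\sigma,A)\le\Psi$.

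The loop closes by applying the outer step at $|x|=6r$: this yields $\Psi\le C/\log(A^2)+q\Psi$, hence $\Psi\le C/[(1-q)\log(A^2)]$ by Lemma~\ref{l:logprob}. Back-substituting produces $\Phi(\hat y,A)\le q\Psi\le C/\log(A^2)$ for $|\hat y|=3r$, and combining with the outer step delivers the claimed bound $\Phi(x,A)\le C_{\ref{e:logAbnd}}\log k/\log(A^2)$. The principal obstacle is the strict inequality $q<1$: this requires $\phi$ to be nontrivially positive inside $B_{2r}$. For the intended application, $\phi$ equals the coalescence rate $k$ of Lemma~\ref{lem:killtime}, which is strictly positive on $B_{2r}\setminus\{0\}$; since the jump distribution $\bar U$ has positive density throughout $B_{2r}$, a walker starting at $|y|=3r$ enters $B_{2r}$ before $T_{4r}$ with positive probability bounded below by an $r$-dependent constant, giving $q<1$ uniformly and the stated dependence $C_{\ref{e:logAbnd}}=C_{\ref{e:logAbnd}}(r)$.
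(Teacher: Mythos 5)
Your proof is correct and follows essentially the paper's own route: reduce by monotonicity, apply the renewal identity \eqref{e:PhiMarkov} at radius $3r$, bound the escape term by Lemma~\ref{l:logprob}, and close a self-consistent inequality using a Feynman--Kac contraction factor across $[0,T_{4r}]$. The only real difference is where the loop is closed: the paper compares $\Phi(Y_{T_{4r}},A)$ directly back to $\Phi(x_k,A)$ (using $|Y_{T_{4r}}|\le 6r\le|x_k|$ and monotonicity) and solves a single inequality in $\Phi(x_k,A)$, whereas you first pin down $\Psi=\Phi((6r,0),A)\le C/[(1-q)\log(A^2)]$ at the fixed radius $6r$ and then feed that back into the outer step for general $|x|=k$. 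Both give the same bound; yours costs one extra pass through the outer step but is perhaps a touch more modular. One point you make that the paper leaves implicit: the contraction $q=\alpha(r)=\Phi(x_{3r},4r)<1$ does not hold for arbitrary $\phi$ in the stated class (it fails for $\phi\equiv 0$), and one really needs $\phi$ to be nontrivially positive inside $B_{2r}$ so that the walk picks up a genuine Feynman--Kac penalty; for the intended $\phi=k$ this is automatic, and in fact any $\phi\not\equiv 0$ in the paper's class works since monotone decay forces positivity on a punctured ball. Flagging this is a small but worthwhile clarification.
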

\begin{proof} By the monotonicity in
  Lemma~\ref{l:monoup2}, it suffices to prove \eqref{e:logAbnd}
  for $x=x_k=(k,0)$. Assume additionally $k> 6r\vee r^{-1}$
 and $A>r^2$. 
  By \eqref{e:PhiMarkov}, $|Y_{t_{3r}}|\le 3r$,  and monotonicity, we have 
\begin{equation}\label{e:logAbnd1}
  \begin{aligned}
  \Phi(x_k,A)& = P_{x_k}(T_A<t_{3r}) + 
E_{x_k}\Big[1\{t_{3r}<T_A\} \Phi(Y_{t_{3r}},A)\Big]\\
&\le  P_{x_k}(T_A<t_{3r}) +   P_{x_k}(t_{3r}<T_A)
\Phi((3r,0),A) .
\end{aligned}
\end{equation}
Using the strong Markov property at time $T_{4r}$, and
noting $|Y_{T_{4r}}|\le 6r\le |x_k|<A$, we again have from Lemma~\ref{l:monoup2},
\[ \Phi((3r,0),A) = 
E_{(3r,0)}\left[ \exp\Big( 
-\int_0^{T_{4r}}\phi(Y_s)ds\Big) \Phi(Y_{T_{4r}},A)
\right] \le \alpha(r)\Phi(x_k,A),
\]
where we have set $\alpha(r)= \Phi(x_{3r},4r)<1$. Insert this 
into \eqref{e:logAbnd1} and rearrange to conclude
\begin{equation}\label{e:logAbnd2}
\Phi(x_k,A) \le \frac{P_{x_k}(T_A<t_{3r})}{1-\alpha(r)} .
\end{equation}
By Lemma~\ref{l:logprob}, taking $a=3r$,
\begin{equation}
  P_{x_k}(T_A<t_{3r}) \le  \frac{\log(k/r)}{\log(A/r)}\le
  \frac{\log(k^2)}{\log(\sqrt A)}
  =  \frac{8\log k}{\log(A^2)}
\end{equation}
where the second inequality uses $k>1/r$ and $A>r^2$.
In view of 
\eqref{e:logAbnd2}, letting $C=8/(1-\alpha(r))$, we now have
\begin{equation}\label{e:logAbnd3}
  \Phi(x_k,A)\le C
  \frac{\log k}{\log (A^2)} 
\end{equation}
for all $k>6r\vee r^{-1}\vee 2$ and $A>k\vee r^2$. It is easy to see that 
$C$ can be increased so that \eqref{e:logAbnd3} will hold
for all
$k\ge 2$ and $A> k$, completing the proof of \eqref{e:logAbnd}.  
\end{proof}

We will construct a  coupling of the random
walks $Y_t$ started at $x'\ne x$ in order to obtain good bounds
on the difference $\Phi(x',A)-\Phi(x,A)$. We start in 
discrete time.  Let $\{U_i\}$ be iid r.v.'s which are
uniformly distributed over $B_r$, and for
$x'\in H_r=\{(x_1,x_2):x_1>0\}$ define
\[S^{x'}_n=x'+\sum_{i=1}^n U_i.\]
Let 
$\pi$ denote the reflection mapping
$\pi(x_1,x_2)=(-x_1,x_2)$ and set $x=\pi(x')$.
We will use a reflection coupling to define
$(S^{x}_n:n\ge 0)$. 
Let $H_\ell=\{(x_1,x_2):x_1\le 0\}$,
, and define
\[N_c=N_c^{x,x'}=\min\{n\ge 1:S^{x'}_n\in B_r(\pi(S^{x'}_{n-1}))\}.\]

\begin{lemma}\label{Ncbound}
$N_c\le N_\ell:=\min\{n\ge 0:S^{x'}_n\in H_\ell\}$ a.s., and so
$S^{x'}_n\in H_r$ for all $0\le n<N_c$ a.s. 
\end{lemma}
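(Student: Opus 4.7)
The plan is to reduce the lemma to a single geometric observation about one step of the walk: if a jump takes the walker from the open right half-plane $H_r$ into the closed left half-plane $H_\ell$, then the post-jump point lies within distance $r$ of the reflection of the pre-jump point. This observation immediately gives $N_c\le N_\ell$, because by hypothesis $S^{x'}_0=x'\in H_r$ and so the first visit $N_\ell$ to $H_\ell$ is at a step $n$ with $S^{x'}_{n-1}\in H_r$ and $S^{x'}_n\in H_\ell$. Once $N_c\le N_\ell$ is established, the second assertion is automatic: for $n<N_c\le N_\ell$ we have $S^{x'}_n\notin H_\ell$, hence $S^{x'}_n\in H_r$.

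To carry out the geometric step, I would write $y=S^{x'}_{n-1}=(y_1,y_2)$ with $y_1>0$ and let $u=U_n=(u_1,u_2)$ with $u_1^2+u_2^2\le r^2$. The assumption $S^{x'}_n\in H_\ell$ gives $y_1+u_1\le 0$, so in particular $u_1\le -y_1<0$. Writing $\pi(y)=(-y_1,y_2)$, I compute
\[
|S^{x'}_n-\pi(y)|^2 = (2y_1+u_1)^2 + u_2^2.
\]
Since $u_2^2\le r^2-u_1^2$, it suffices to verify $(2y_1+u_1)^2\le u_1^2$, i.e.\ $|2y_1+u_1|\le |u_1|=-u_1$. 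I would split on the sign of $2y_1+u_1$: if it is nonnegative, the inequality reduces to $u_1\le -y_1$, which holds by assumption; if it is negative, it reduces to $-2y_1\le 0$, which holds since $y_1>0$. Either way, $S^{x'}_n\in B_r(\pi(y))$, so by definition $N_c\le n=N_\ell$.

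The main (and only) potential obstacle is bookkeeping around the strict versus non-strict inequalities in the definitions of $H_r$ and $H_\ell$, which is why it is important that $x'\in H_r$ is strict (ensuring $N_\ell\ge 1$ and hence that the pre-jump point at the exit time lies in $H_r$). No further probabilistic input is needed; the argument is pathwise and works for every realization of $\{U_i\}$.
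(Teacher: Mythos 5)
Your proof is correct and rests on the same key observation as the paper's: a step landing in the closed left half-plane from a point in the open right half-plane must land within $r$ of the reflection of the pre-jump point. The paper packages this as the one-line inequality $|S^{x'}_{N_\ell}-\pi(S^{x'}_{N_\ell-1})|\le|\pi(S^{x'}_{N_\ell})-\pi(S^{x'}_{N_\ell-1})|=|S^{x'}_{N_\ell}-S^{x'}_{N_\ell-1}|\le r$ (both $S^{x'}_{N_\ell}$ and $\pi(S^{x'}_{N_\ell-1})$ lying in $H_\ell$), whereas you unpack exactly that inequality into coordinates; the substance is the same.
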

\begin{proof} $S^{x'}_{N_\ell}\in H_\ell$ and $\pi(S^{x'}_{N_\ell-1})\in H_\ell$ imply that 
\[|S^{x'}_{N_\ell}-\pi(S^{x'}_{N_\ell-1})|\le |\pi(S^{x'}_{N_\ell})-\pi(S^{x'}_{N_\ell-1})|=|S^{x'}_{N_\ell}-S^{x'}_{N_\ell-1}|<r.\]
The result follows.
\end{proof}

We now define $(S^{x}_n)_{n\ge 0}$ by 
\begin{equation}\label{e:sn-xdef}
  S_n^{x}=\begin{cases}\pi(S^{x'}_n)&\text{ if }n<N_c\\
S_n^{x'}&\text{ if }n\ge N_c.
\end{cases}
\end{equation}
Then $S^{x}_0=x$, and 
it follows from Lemma~\ref{Ncbound} that for $n<N_c$,
$S^{x'}_n$  is in $H_r$ and so $S_n^{x'}\neq S_n^{x}$, which
implies that  
\begin{equation}\label{Nccouple}
N_c=\min\{n\ge 0: S^{x'}_n=S^x_n\}.
\end{equation}
That is,  $N_c$ is the coupling time of $(S^x_n)$ and
$(S^{x'}_n)$. If we let $\calf^{S^{x'}}_n=\sigma(S^{x'}_m,m\le
n)$, then $N_c$ is an
$(\calf^{S^{x'}}_n)$-stopping time, 
and $S^{x}$ is $(\calf^{S^{x'}}_n)$-adapted. We next show that
$S_n^{x}$ is an $(\calf^{S^{x'}}_n)$-random walk starting at
$x$ with 
step distribution $U_1$, as the notation suggests.

\begin{lemma}\label{coupledwalks}
  For any Borel $A\subset\Rtwo$,
  $P(S_{n+1}^{x}\in A\mid\calf^{S^{x'}}_n)(\omega)=
  P(S^{x}_n(\omega)+U_{n+1}\in A)$ a.s.
\end{lemma}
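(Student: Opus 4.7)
The plan is to verify the identity separately on the two events $\{N_c \le n\}$ and $\{N_c > n\}$, both of which lie in $\calf^{S^{x'}}_n$. I will first observe that $\calf^{S^{x'}}_n = \sigma(U_1,\dots,U_n)$, so $U_{n+1}$ is independent of $\calf^{S^{x'}}_n$ and $N_c$ is an $(\calf^{S^{x'}}_k)$-stopping time; in addition $S^x_n$ is $\calf^{S^{x'}}_n$-measurable, equal to $\pi(S^{x'}_n)$ on $\{N_c > n\}$ and to $S^{x'}_n$ on $\{N_c \le n\}$.

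On $\{N_c \le n\}$ the walks have already coupled, so \eqref{e:sn-xdef} gives $S^x_{n+1} = S^{x'}_{n+1} = S^x_n + U_{n+1}$, and the claim is immediate from the independence of $U_{n+1}$ and $\calf^{S^{x'}}_n$.

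The substantive case is $\{N_c > n\}$. Fix $\omega$ in this event, write $y = S^{x'}_n(\omega)$, so that $S^x_n(\omega) = \pi(y)$, and analyze the next step using \eqref{e:sn-xdef}. On the sub-event $\{N_c > n+1\}$ (equivalently $U_{n+1}\in B_r\setminus B_r(\pi(y)-y)$) the increment $S^x_{n+1}-S^x_n$ equals $\pi(U_{n+1})$, and on $\{N_c = n+1\}$ (equivalently $U_{n+1}\in B_r\cap B_r(\pi(y)-y)$) it equals $U_{n+1} + (y-\pi(y))$. Denoting the resulting piecewise rule by $\Psi_y : B_r \to B_r$, the key step is to check that $\Psi_y$ is a Lebesgue measure-preserving bijection of $B_r$. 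Using $\pi(B_r(c))=B_r(\pi(c))$, the reflection piece maps $B_r\setminus B_r(\pi(y)-y)$ bijectively onto $B_r\setminus B_r(y-\pi(y))$, while the translation piece maps $B_r\cap B_r(\pi(y)-y)$ bijectively onto $B_r\cap B_r(y-\pi(y))$; these two images form a disjoint partition of $B_r$, and $\pi$ is an isometry of the plane while translation preserves Lebesgue measure.

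Given this, for $U_{n+1}$ uniform on $B_r$ and independent of $\calf^{S^{x'}}_n$, the conditional distribution of $\Psi_{y(\omega)}(U_{n+1})$ given $\calf^{S^{x'}}_n$ is again uniform on $B_r$, so the increment $S^x_{n+1}-S^x_n$ has the same conditional law as $U_{n+1}$; this is exactly the asserted identity $P(S^x_{n+1}\in A\mid \calf^{S^{x'}}_n)(\omega) = P(S^x_n(\omega)+U_{n+1}\in A)$ on $\{N_c>n\}$. The main obstacle is the measure-preserving property of $\Psi_y$, which is the whole mechanism of the reflection coupling and relies on $\pi(B_r)=B_r$ (since $B_r$ is centered at the fixed point $0$ of $\pi$); the rest of the argument is bookkeeping using the independence of $U_{n+1}$ and the definition \eqref{e:sn-xdef}.
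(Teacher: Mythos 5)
Your proposal is correct and is essentially the paper's argument, reorganized. The paper's proof also reduces to the event $\{N_c>n\}$, introduces the set $\hat B=B_r(S^{x'}_n)\cap B_r(\pi(S^{x'}_n))$ (which is your coupling region, shifted to positions rather than increments), splits on whether $S^{x'}_{n+1}$ lands in $\hat B$ or not, and then uses the $\pi$-invariance $\pi(\hat B)=\hat B$, the inclusion $\hat B\subset B_r(S^x_n)$, and $|\pi(C)|=|C|$ to compute Lebesgue volumes directly and identify the conditional law of $S^x_{n+1}$ as uniform on $B_r(S^x_n)$. Your version packages exactly those facts into the single statement that the increment map $\Psi_y$ is a Lebesgue-measure-preserving bijection of $B_r$; this is a cleaner way of saying the same thing and avoids the line-by-line volume bookkeeping, but there is no new idea or different mechanism. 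Both proofs rest on the same two observations: the coupling region is $\pi$-symmetric, and reflection and translation preserve Lebesgue measure on $\R^2$.
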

\begin{proof} This is obvious on $\{N_c\le n\}$ (in $\calf^{S^{x'}}_n$) since then $S^{x}_n$ and $S^{x}_{n+1}$ equal $S^{x'}_n$ and $S^{x'}_{n+1}$, respectively. 
Suppose now that $N_c>n$, and define
$\hat B=\hat B(\omega)=B_r(S^{x'}_n)\cap B_r(\pi(S^{x'}_n))$, so that 
\begin{equation}\label{hatB1}
\pi(\hat B)=\hat B,
\end{equation}
and 
\begin{equation}\label{hatB2}
\hat B\subset B_r(S_n^{x}).
\end{equation}
This last inclusion holds because $S_n^{x}=S_n^{x'}$ or
$\pi(S^{x'}_n)$ for all $n$.

For simplicity we will write $\calf_n$ for
$\calf^{S^{x'}}_n$ in the rest of this proof.  By the definition
of $S^{x}_n$, 
\begin{align*}
&P(S^{x}_{n+1}\in A|\calf_n)1(N_c>n)\\
&=P(S^{x'}_{n+1}\in B_r(\pi(S_n^{x'})), N_c>n, S_n^{x'}+U_{n+1}\in A|\calf_n)\\
&\quad +P(S^{x'}_{n+1}\notin B_r(\pi(S_n^{x'})), N_c>n, \pi(S_{n+1}^{x'})\in A|\calf_n)\\
&=P(S^{x'}_{n+1}\in \hat B\cap A, N_c>n|\calf_n)+P(S^{x'}_{n+1}\notin\hat B, N_c>n, \pi(S^{x'}_{n+1})\in A|\calf_n)\\ 
&=P(\pi(S^{x'}_{n+1})\in\hat B\cap\pi(A),N_c>n|\calf_n)+P(\pi(S_{n+1}^{x'})\in \hat B^c\cap A,N_c>n)|\calf_n)\ \text{(by \eqref{hatB1})}\\
&=[P(S_n^{x}+\pi(U_{n+1})\in \hat B\cap\pi(A)|\calf_n)+P(S_n^{x}+\pi(U_{n+1})\in \hat B^c\cap A|\calf_n)]1(N_c>n).
\end{align*}
Next introduce the dependence on $\omega$ in the above, and
use the fact that, conditionally on $\calf_n$,
$S_n^{x}(\omega)+\pi(U_{n+1})$  is uniformly distributed over $B_r(S_n^{x}(\omega))$ to see that if $|C|$ is the Lebesgue measure of $C$, then the above evaluated at $\omega$ is a.s. equal to
\begin{align*}
&[|\pi(A)\cap\hat B(\omega)\cap B_r(S_n^{x}(\omega))|+|A\cap\hat B^c(\omega)\cap B_r(S_n^{x}(\omega))|]1(N_c(\omega)>n)/|B_r|\\
&=[|A\cap\hat B(\omega)|+|A\cap \hat B^c(\omega)\cap B_r(S_n^{x}(\omega))|] 1(N_c(\omega)>n)/|B_r| \quad \text{(by $|\pi(C)|=|C|$, \eqref{hatB1}, and \eqref{hatB2})}\\
&=[|A\cap\hat B(\omega)\cap B_r(S_n^{x}(\omega))|+|A\cap \hat B^c(\omega)\cap B_r(S_n^{x}(\omega))|] 1(N_c(\omega)>n)/|B_r| \quad \text{(by \eqref{hatB2})}\\
&=|A\cap B_r(S_n^{x}(\omega)|1(N_c(\omega)>n)/|B_r|\\
&=P(S_n^{x}(\omega)+U_{n+1}\in A)1(N_c(\omega)>n).
\end{align*}
The result follows.
\end{proof}

Let $S_n=(S^{(1)}_n,S^{(2)}_n)$ denote a copy of the
random walk starting at $x'$ under $P_{x'}$. 

\begin{lemma}\label{Scouplingrate} There is a constant
  $C_{\ref{Scouplingrate}}$ so that for all $x'$ in the
  positive $x_1$-axis and all $n\in\N$, 
  $$P(N^{x,x'}_c\ge n)\le
  \frac{C_{\ref{Scouplingrate}}}{\sqrt n}
\Big (1+ \frac{|x'|}{2r}\Big).$$
\end{lemma}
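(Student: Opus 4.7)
The plan is to reduce the two-dimensional coupling time to a one-dimensional hitting time problem. By Lemma~\ref{Ncbound}, $N_c^{x,x'} \le N_\ell := \min\{n \ge 0 : S_n^{x'} \in H_\ell\}$, so it suffices to bound $P(N_\ell \ge n)$. Since $x'$ lies on the positive $x_1$-axis, writing $S_n^{x'} = (Z_n, W_n)$ we have $N_\ell = \min\{n \ge 0 : Z_n \le 0\}$ where $Z_n = |x'| + \sum_{i=1}^n U_i^{(1)}$ is a one-dimensional random walk. The increments $U_i^{(1)}$ are iid copies of the first coordinate of the uniform distribution on $B_r \subset \R^2$; they are symmetric, bounded by $r$ in absolute value, and have positive variance $\sigma_1^2 := |B_r|^{-1}\int_{B_r} x_1^2\,dx$ (equal to $r^2/4$ when $d=2$, and in particular comparable to $r^2$).

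Thus the problem reduces to the classical estimate: for a symmetric mean-zero random walk $(Z_n)$ with increments bounded by $r$ and variance $\sigma_1^2$, started at $x = |x'| > 0$, the exit time $\tau = \min\{k \ge 0 : Z_k \le 0\}$ satisfies $P_x(\tau \ge n) \le C_r (1 + x/(2r))/\sqrt{n}$ for some constant $C_r$ depending only on $r$. To establish this I would use a two-sided exit argument. Fix $A > x \vee r$, let $T_A = \min\{k : Z_k \ge A\}$ and $p_A = P_x(T_A < \tau)$. Applying optional stopping to the bounded martingales $Z_{k \wedge \tau \wedge T_A}$ and $Z_{k \wedge \tau \wedge T_A}^2 - \sigma_1^2(k \wedge \tau \wedge T_A)$, together with the jump bounds $Z_\tau \in [-r,0]$ on $\{\tau < T_A\}$ and $Z_{T_A} \in [A, A+r]$ on $\{T_A < \tau\}$, yields
\begin{align*}
x &\ge -r(1-p_A) + A\, p_A, \\
x^2 + \sigma_1^2 E[\tau \wedge T_A] &\le r^2(1 - p_A) + (A+r)^2 p_A.
\end{align*}
The first gives $p_A \le (x+r)/A$; substituting into the second (and absorbing numerical constants into $C_r$) gives $E[\tau \wedge T_A] \le C_r A(x+r)/\sigma_1^2$ whenever $A \ge r$. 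Markov's inequality together with $P_x(\tau \ge n) \le p_A + P(\tau \wedge T_A \ge n)$ then yields
\[
P_x(\tau \ge n) \le \frac{x+r}{A} + \frac{C_r A(x+r)}{\sigma_1^2 n}.
\]
Optimizing over $A$ by choosing $A \asymp \sigma_1 \sqrt{n}$ gives $P_x(\tau \ge n) \le C'_r(x+r)/\sqrt{n}$, and rewriting $x+r = 2r(1 + x/(2r))$ delivers the claim with $C_{\ref{Scouplingrate}} = 2rC'_r$.

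The only genuine obstacle is careful bookkeeping of constants: one must ensure the optimal $A$ exceeds $x \vee r$ (in the opposite regime $\sqrt{n} \lesssim 1 + x/(2r)$ and the claimed bound already exceeds $1$, so holds trivially), and track that the $r$-dependence of $\sigma_1^2$ absorbs cleanly into a single constant $C_{\ref{Scouplingrate}}$. The key structural observation enabling this plan is that the reflection coupling of Lemma~\ref{Ncbound} bounds $N_c$ by the crossing time of the first coordinate alone, collapsing the two-dimensional problem to a one-dimensional random walk hitting time estimate for which the standard optional-stopping argument above gives the $1/\sqrt{n}$ rate with the correct linear dependence on $|x'|$.
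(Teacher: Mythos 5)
Your proof is correct, but it takes a genuinely different route from the paper's. Both proofs start identically, using Lemma~\ref{Ncbound} to reduce to bounding $P_{x'}(N_\ell\ge n)$ for the one-dimensional walk given by the first coordinate. From there the paper applies the reflection principle across the (random) level $S_{N_\ell}^{(1)}$ to get $P_{x'}(N_\ell\ge n)\le P_0(|S_n^{(1)}|\le r+|x'|)$, and then invokes the $d=1$ version of the density bound \eqref{e:BRbnd} (from Bhattacharya--Rao, already cited in Lemma~\ref{l:rwfacts}) to finish in one line. You instead run the classical two-sided exit estimate: optional stopping of the linear and quadratic martingales for $Z_k$ yields $p_A\le (x+r)/A$ and $E[\tau\wedge T_A]\le C A(x+r)/\sigma_1^2$, and optimizing $A\asymp\sigma_1\sqrt n$ gives the same $1/\sqrt n$ rate with linear dependence in $x$. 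Your argument is more elementary in that it avoids the local-limit/density machinery, relying only on the first two moments and bounded steps; the trade-off is the extra bookkeeping you already identified (choosing $A$, and the trivial regime $\sqrt n\lesssim 1+x/(2r)$ where the bound exceeds one). The paper's route is shorter because the density bound has already been set up for Lemma~\ref{l:rwfacts}(a), so it is the natural in-context choice, but your argument is correct, self-contained, and would be the preferred one if \eqref{e:BRbnd} were not already available.
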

\begin{proof}
Use Lemma~\ref{Ncbound} and then the reflection principle to see that
\begin{align*}
P(N_c^{x,x'}\ge n)\le P_{x'}(N_\ell\ge n)
&=1-P_{x'}(N_\ell<n)\\
&=1-2P_{x'}(S_n^{(1)}<S_{N_\ell}^{(1)}, N_\ell<n)\\
&\le 1-2P_0(S_n^{(1)}<-|x'|-r)\\
&=P_0(|S_n^{(1)}|\le r+|x'|).
\end{align*}
The step distribution of $(S_n^{(1)})$ has
density 
$f(u)= 2\sqrt{r^2- u^2}/|B_r| \le 1/r$ on $[-r,r]$.  It follows from the 
$d=1$ version of \eqref{e:BRbnd} applied to random
variables with this distribution that for a constant $C=C(r)$,
\[
P_0(|S^{(1)}_n|\le r+|x'|) \le C\frac{2(r+|x'|)}{\sqrt n}\le\frac{4Cr}{\sqrt n}\Big(1+\frac{|x'|}{2r}\Big),
\]
so we are done.  
\end{proof}
We now use translation invariance to extend the above to
points $x,x'\in\{(x_1,0):x_1\ge 0\}$ such that $0\le |x|<|x'|$,
where now $\frac{x+x'}{2}=(m,0)$ plays the role of the origin,
$H^m_\ell=\{x:x_1\le m\}$, and $\pi^m$ is reflection in the
plane $\{x_1=m\}$.  So we define 
\begin{equation}\label{NcS2}
N^{x,x'}_c=N_c=\min\{n\ge 1:S^{x'}_n\in B_r(\pi^m(S_{n-1}^{x'}))\}\le N^{x'}_\ell=\min\{n\ge 0:S^{x'}_n\in H^m_\ell\},
\end{equation}
where the inequality is by Lemma~\ref{Ncbound}, and 
\begin{align} \label{Sndefn}S_n^{x}=\begin{cases}\pi^m(S^{x'}_n)&\text{ if }n<N_c\\
S_n^{x'}&\text{ if }n\ge N_c.
\end{cases}
\end{align}
The above results imply that both $S^x$ and $S^{x'}$ are $(\calf^{S^{x'}}_n)$-random walks with step distribution $U_1$, 
\begin{equation}\label{Scoupdef}N^{x,x'}_c=\min\{n\ge 0:S^x_n=S^{x'}_n\}
\end{equation}
is their coupling time, and 
\begin{equation}
\label{CoupleStail}
P(N_c^{x,x'}\ge n)\le \frac{C_{\ref{Scouplingrate}}}{\sqrt n}
\Bigl(1+\frac{|x'-x|}{2r}\Bigr).
\end{equation}

Next define coupled copies of the discrete time random walk with step distribution $U_1+U_2$ by 
$\hat Y_n^x=S_{2n}^x$ and $\hat Y_n^{x'}=S_{2n}^{x'}$, and also set $\hat\calf^{x'}_n=\calf^{S^{x'}}_{2n}$. 
We will write $\hat \calf_n$ for $\hat\calf^{x'}_n$ if there is no ambiguity.  Then it follows from Lemma~\ref{coupledwalks} that both $\hat Y^x_n$ and $\hat Y^{x'}_n$ are $(\hat\calf_n)$-random walks with step distribution $U_1+U_2$, that is, they are $(\hat\calf_n)$-adapted and 
\begin{equation}\label{hatmarkov}
P(\hat Y^x_{n+1}\in A|\hat \calf_n)(\omega)=P(\hat Y^x_n(\omega)+U_1+U_2\in A)\ \text{a.s.},
\end{equation}
and similarly for $\hat Y^{x'}$.  It follows easily from \eqref{Scoupdef} that 
\begin{equation}\label{hatNdefn}
\hat N_c^{x,x'}:=\min\{n\ge 0:\hat Y^x_n=\hat Y^{x'}_n\}=\Bigl\lceil{\frac{N_c^{x,x'}}{2}}\Bigr\rceil.
\end{equation}
Next use \eqref{Sndefn} and the fact that $2n\ge N_c^{x,x'}$ iff $n\ge \Bigl\lceil{\frac{N_c^{x,x'}}{2}}\Bigr\rceil$
to conclude that
\begin{align}\label{hatYdefn}\hat Y_n^{x}=\begin{cases}\pi^m(\hat Y^{x'}_n)&\text{ if }n<\hat N^{x,x'}_c\\
\hat Y_n^{x'}&\text{ if }n\ge \hat N^{x,x'}_c.
\end{cases}
\end{align}
Letting $\hat Y_n^{x',(1)}$ be the first coordinate of $\hat
Y_n^{x'}$,
define
\[\hat N^{x'}(m)=\min\{n\ge 0:\hat Y_n^{x',(1)}\le m\}.\]
\begin{lemma} \label{hatNcbound1} With $x,x',m$ as above,
$\hat N_c^{x,x'}\le \hat N^{x'}(m)$ a.s. 
\end{lemma}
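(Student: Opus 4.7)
The plan is to chain together three simple facts: the identification $\hat N_c^{x,x'}=\lceil N_c^{x,x'}/2\rceil$ from \eqref{hatNdefn}, the inequality $N_c^{x,x'}\le N^{x'}_\ell$ from \eqref{NcS2}, and an elementary observation relating $N^{x'}_\ell$ to $\hat N^{x'}(m)$. No new probabilistic input is required; the lemma is essentially a bookkeeping step that lifts the discrete-time reflection coupling bound from the walk $S^{x'}$ (with steps $U_1$) to the walk $\hat Y^{x'}$ (with steps $U_1+U_2$).

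The key observation is that at the deterministic time $2\hat N^{x'}(m)$, by the very definition of $\hat N^{x'}(m)$, the first coordinate of $S^{x'}_{2\hat N^{x'}(m)}=\hat Y^{x'}_{\hat N^{x'}(m)}$ is $\le m$, so $S^{x'}_{2\hat N^{x'}(m)}\in H^m_\ell$. Consequently $N^{x'}_\ell$, which is the first time (in the $S^{x'}$ clock) that $S^{x'}$ lands in $H^m_\ell$, satisfies $N^{x'}_\ell\le 2\hat N^{x'}(m)$ a.s.

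Combining these three facts,
\begin{equation*}
\hat N_c^{x,x'}=\Bigl\lceil \tfrac{N_c^{x,x'}}{2}\Bigr\rceil
\le \Bigl\lceil \tfrac{N^{x'}_\ell}{2}\Bigr\rceil
\le \Bigl\lceil \tfrac{2\hat N^{x'}(m)}{2}\Bigr\rceil=\hat N^{x'}(m),
\end{equation*}
which is the claimed inequality. The only mild subtlety is the parity issue when passing from $N^{x'}_\ell$ to $\lceil N^{x'}_\ell/2\rceil$: if $N^{x'}_\ell=2k+1$ is odd, then since $2\hat N^{x'}(m)$ is even and $\ge 2k+1$, one has $2\hat N^{x'}(m)\ge 2k+2$, giving $\hat N^{x'}(m)\ge k+1=\lceil N^{x'}_\ell/2\rceil$; the even case is immediate. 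There is no genuine obstacle—the work has already been done in Lemma~\ref{Ncbound} and in the reflection-coupling construction \eqref{NcS2}, \eqref{Sndefn}.
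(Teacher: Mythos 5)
Your proof is correct and relies on the same two ingredients as the paper's—Lemma~\ref{Ncbound} (giving $N_c^{x,x'}\le N^{x'}_\ell$) and the identification of $\hat Y^{x'}$ with $S^{x'}$ sampled at even times—so it is essentially the same argument. The only cosmetic difference is that you route through the explicit ceiling identity \eqref{hatNdefn} and ceiling monotonicity, whereas the paper reasons directly at the event level ($n<\hat N_c^{x,x'}$ implies $S^x_{2n}\neq S^{x'}_{2n}$ implies $S^{(1),x'}_{2n}>m$ implies $n<\hat N^{x'}(m)$).
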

\begin{proof} It follows from Lemma~\ref{Ncbound} that for
  all $n$, $S_{2n}^x\neq S_{2n}^{x'}$ implies  
that $S^{(1),x'}_{2n}>m$. This shows that $n<\hat N_c^{x,x'}$ implies $n<\hat N^{x'}(m)$ which clearly gives 
the required result.
\end{proof}
\begin{lemma} \label{Ycouplingrate} With $x,x',m$ as above,
  and for all $n\in\N$,
$$P(\hat N_c^{x,x'}\ge n)\le C_{\ref{Scouplingrate}} n^{-1/2}\Bigl(1+\frac{|x'-x|}{2r}\Bigr).$$
\end{lemma}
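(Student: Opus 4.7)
The plan is to reduce the discrete-time two-step coupling bound to the single-step bound that has already been established in \eqref{CoupleStail}. By the identification \eqref{hatNdefn},
\[
\hat N_c^{x,x'}=\Bigl\lceil \tfrac{N_c^{x,x'}}{2}\Bigr\rceil,
\]
so the event $\{\hat N_c^{x,x'}\ge n\}$ is the same as $\{N_c^{x,x'}\ge 2n-1\}$ (since $\lceil k/2\rceil\ge n$ iff $k\ge 2n-1$ for nonnegative integers $k$).

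Hence, applying \eqref{CoupleStail} with $2n-1$ in place of $n$,
\[
P(\hat N_c^{x,x'}\ge n)
=P\bigl(N_c^{x,x'}\ge 2n-1\bigr)
\le \frac{C_{\ref{Scouplingrate}}}{\sqrt{2n-1}}\Bigl(1+\frac{|x'-x|}{2r}\Bigr).
\]
Since $2n-1\ge n$ for $n\ge 1$, we have $1/\sqrt{2n-1}\le 1/\sqrt n$, which yields the asserted bound. The only mild issue is to note that \eqref{CoupleStail} was formulated with $x'$ on the positive $x_1$-axis and $x=\pi(x')$, but the version we need here is the translated one for $x,x'\in\{(x_1,0):x_1\ge0\}$ with midpoint $(m,0)$: this extension is immediate by translation invariance of the underlying walk, exactly as already indicated in the paragraph preceding \eqref{NcS2}, so no new work is required.

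There is no genuine obstacle: the lemma is a direct corollary of the discrete-time coupling tail estimate combined with the simple arithmetic identity \eqref{hatNdefn}.
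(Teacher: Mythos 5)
Your proof is correct and follows the paper's own argument essentially verbatim: convert the event via \eqref{hatNdefn} using the arithmetic fact $\lceil k/2\rceil\ge n\iff k\ge 2n-1$ and then invoke \eqref{CoupleStail}. One small remark: \eqref{CoupleStail} is already stated for the translated configuration $x,x'\in\{(x_1,0):x_1\ge 0\}$ with midpoint $m$, so your concluding caveat about having to re-derive it by translation invariance is unnecessary.
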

\begin{proof} By \eqref{hatNdefn} 
\[P(\hat N_c^{x,x'}\ge n)=P\Bigl(\Bigl\lceil{\frac{N_c^{x,x'}}{2}}\Bigr\rceil\ge n\Bigr)\le P(N_c^{x,x'}\ge 2n-1).\]
The result follows from \eqref{CoupleStail}.
\end{proof}
We move now to the continuous time random walks. Let
$N(t)$ be an independent Poisson process with rate
$\lambda=2\rho|B_r|$ and jump time sequence $(s_n)_{n\in\Z_+}$, i.e., 
$s_n=\inf\{t\ge 0:N_t=n\}$. For $K>0$ 
put $x'=(K+2r,0)$,
and let $x\in [K,K+2r)\times\{0\}$.
Define coupled
continuous time rate $\lambda$ random walks with step
distribution $U_1+U_2$, starting at $x'$ and $x$,
respectively, by
\[Y^{x'}_t=\hat Y^{x'}_{N_t}= \hat Y_n^{x'}\text{ if }s_n\le t<s_{n+1},\]
and 
\[Y^{x}_t=\hat Y^{x}_{N_t}= \hat Y_n^{x}\text{ if }s_n\le t<s_{n+1}.\]
The coupling time of these random walks is 
\begin{align}\label{Ycouplingdefn}
S_c^{x,x'}:=\inf\{t\ge 0:Y^{x}_t=Y^{x'}_t\}&
=\inf\{t\ge 0:\hat Y^{x}_{N_t}=\hat Y_{N_t}^{x'}\}\\
\nonumber&=\inf\{t\ge 0:N_t=\hat N_c^{x,x'}\}=s_{\hat N_c^{x,x'}}.
\end{align}
Note that $t<S_c^{x,x'}=s_{\hat N_c^{x,x'}}$ iff $N_t<\hat
N_c^{x,x'}$, and so by setting $n=N_t$ in \eqref{hatYdefn},
we have
\begin{align}\label{Ydefinition}
Y_t^{x}=\begin{cases}\pi^m(Y^{x'}_t)&\text{ if }t<S^{x,x'}_c\\
Y_t^{x'}&\text{ if }t\ge S^{x,x'}_c.
\end{cases}
\end{align}
Let $\calf_t$ be the right-continuous filtration generated
by $(Y^{x},Y^{x'},N)$, and  let $Y_t$ (respectively $\hat Y_n$) denote a generic
rate $\lambda$ continuous time (respectively, discrete
time) random walk  with step distribution $U_1+U_2$, 
 starting at $0$ under $P_0$.

\begin{lemma}\label{conttimeSMP}
(a) Both $Y^{x}$ and $Y^{x'}$ are rate $\lambda$ continuous
time $(\calf_t)$-random walks (and $(\calf_t)$-strong Markov
processes) with jump distribution $U_1+U_2$.  That is for
$y=x$ or $x'$, $t>0$, and any a.s. finite
$(\calf_t)$-stopping time $S$,  
\begin{equation}\label{YSMP}P(Y^y_{S+t}\in
  A|\calf_S)(\omega)=P_0(Y^y_S(\omega)+Y_t\in A)\ a.s.\text{
    for any Borel }A\subset\R^2.\end{equation} 
 (b) $S^{x,x'}_c$ is an $(\calf_t)$-stopping time.
 \end{lemma}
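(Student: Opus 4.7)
The plan is to establish (a) by first proving the simple $(\calf_t)$-Markov property for each $Y^y$ ($y=x$ or $x'$), then upgrading to the strong Markov property using the Feller property of the common transition semigroup, which is standard for a compound Poisson process with bounded jump rate and compactly supported step distribution. Part (b) is quick: since both $Y^x$ and $Y^{x'}$ are $(\calf_t)$-adapted with c\`adl\`ag paths and $(\calf_t)$ is right-continuous, $S_c^{x,x'}$ is the hitting time of the closed diagonal $\Delta=\{(y,y):y\in\R^2\}$ by $(Y^x,Y^{x'})$, hence an $(\calf_t)$-stopping time by the standard theorem on hitting times of closed sets.

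For the simple Markov property of $Y^{x'}$, I would decompose the information in $\calf_t$ as generated by $\{N_s:s\le t\}$ together with $\sigma(\hat Y^{x'}_m:m\le N_t)$, the latter encoding the pairs $(U_{2m-1},U_{2m})$ for $m\le N_t$. By the Markov property of $N$ at the deterministic time $t$, the future inter-arrival times $s_{N_t+1}-t,\, s_{N_t+2}-s_{N_t+1},\ldots$ are i.i.d.~rate $\lambda$ exponentials independent of $\calf_t$; by the independence of $N$ from $(U_i)$ together with the i.i.d.~structure of the $U_i$'s, the tail $U_{2N_t+1},U_{2N_t+2},\ldots$ is an i.i.d.~uniform sequence on $B_r$ also independent of $\calf_t$. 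Combining these shows that, conditionally on $\calf_t$, the process $(Y^{x'}_{t+s})_{s\ge 0}$ is a rate $\lambda$ compound Poisson random walk starting from $Y^{x'}_t$ with step distribution $U_1+U_2$. The argument for $Y^x$ is identical in outline, replacing the i.i.d.~steps assertion by the output of Lemma~\ref{coupledwalks}: iterating that lemma shows $\hat Y^x$ is a discrete-time $(\hat\calf_n)$-random walk with step distribution $U_1+U_2$, so its increments after index $N_t$ are i.i.d.~with this law and independent of $\hat\calf_{N_t}\subseteq\calf_t$.

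The strong Markov property then follows routinely: the common transition semigroup
\[
T_s f(y) = \sum_{n\ge 0} e^{-\lambda s}\frac{(\lambda s)^n}{n!}\int f(y+z)\,\mu_n(dz),
\]
with $\mu_n$ the $n$-fold convolution of the law of $U_1+U_2$, maps $C_0(\R^2)$ into itself and satisfies $T_sf\to f$ uniformly as $s\downarrow 0$ (the jump rate is bounded and the step distribution has compact support), so $(T_s)$ is Feller. Together with the simple Markov property, right-continuity of $(\calf_t)$, and c\`adl\`ag paths, this yields strong Markov by the standard upgrade (cf.~Chapter~4 of \cite{EK:MP}). The main subtle point is the Markov property for $Y^x$ with respect to the enlarged filtration $\calf_t$: na\"{\i}vely the future of $Y^x$ could depend on the future of $Y^{x'}$ through the reflection coupling, but Lemma~\ref{coupledwalks} is precisely designed to rule out such dependence by making $\hat Y^x$ a genuine random walk in the discrete filtration generated by $\hat Y^{x'}$, so the Poisson-subordination argument above transfers unchanged.
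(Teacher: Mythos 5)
Your outline---extract the discrete-time random walk property from Lemma~\ref{coupledwalks} (i.e., \eqref{hatmarkov}), transfer to continuous time by Poisson subordination, upgrade to the strong Markov property via Fellerity, and treat (b) as a hitting-time problem---is a sensible way to fill in what the paper compresses into ``an easy and standard consequence of \eqref{hatmarkov}'' and ``(b) is obvious.''  Two of your filtration statements are incorrect, however.  First, $\calf_t$ is not generated by $\{N_s:s\le t\}$ together with $\sigma(\hat Y^{x'}_m:m\le N_t)$: $\calf_t$ is generated by all three paths $(Y^x,Y^{x'},N)$, and $Y^x$ carries information---namely whether $\hat N_c\le n$ for each $n\le N_t$---that is \emph{not} a function of $(\hat Y^{x'},N)$, since $\hat N_c=\lceil N_c/2\rceil$ depends on the odd-indexed $S^{x'}_{2n-1}$'s.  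Relatedly, $\sigma(\hat Y^{x'}_m:m\le N_t)$ only fixes the pairwise sums $U_{2m-1}+U_{2m}$, not the individual pairs as you claim.  Second, and more consequentially, the inclusion $\hat\calf_{N_t}\subseteq\calf_t$ is false: $\hat\calf_n=\calf^{S^{x'}}_{2n}$ sees every $U_i$ with $i\le 2n$, whereas $\calf_t$ sees only the three paths.  As written, ``independent of $\hat\calf_{N_t}\subseteq\calf_t$'' is a non-sequitur and does not yield the needed independence from $\calf_t$.

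The fix is the one you are implicitly reaching for: work with the enlarged filtration $\mathcal G_t$ generated by $N|_{[0,t]}$ together with $\{S^{x'}_m:m\le 2N_t\}$.  Iterating Lemma~\ref{coupledwalks} and Poisson-subordinating shows that each of $Y^x,Y^{x'}$ is a $(\mathcal G_t)$-Markov process with the stated compound-Poisson generator; since $\calf_t\subseteq\mathcal G_t$ and $Y^x,Y^{x'}$ are $(\calf_t)$-adapted, the Markov property descends to $\calf_t$ by the tower property, and your Feller argument then delivers \eqref{YSMP}.  For (b), the general hitting-time theorem for cadlag processes is more machinery than necessary (and typically also requires a complete filtration); the direct observation that $\{S_c^{x,x'}\le t\}=\bigcup_{n\ge 0}\{N_t\ge n\}\cap\{\hat Y^x_n=\hat Y^{x'}_n\}\in\calf_t$ is simpler and is presumably what the paper means by ``obvious.''
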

 \begin{proof} (b) is obvious from the definition of $S_c^{x,x'}$.\\
  (a) This is an easy and standard consequence of \eqref{hatmarkov}.
 \end{proof}

For $y=x$ or $x'$ and $2r\le \delta<A$ we let
\[t^y_\delta=\inf\{t\ge 0:|Y^y_s|\le \delta\}, \quad
  T_A^y=\inf\{t\ge 0: |Y^y_t|\ge A\},\] 
and also set
\[t^{x,x'}_\delta=t^{x}_\delta\wedge t^{x'}_\delta,\quad
  T^{x,x'}_A=T^{x}_A\wedge T^{x'}_A.\]
We define $\hat t^{y}_\delta, \hat T_A^y, \hat t^{x,x'}_\delta$,
and $\hat T^{x,x'}_A$ in a similar way, using the discrete time
random walks $\hat Y^x,\hat Y^{x'}$, for example, 
\[\hat t_\delta^y=\min\{n\ge 0:|\hat Y^y_n|\le \delta\}.\]

\begin{lemma}\label{l:coupbnd}
  Let $K>3r$ and $x'=(K+2r,0)$.
 
\noindent (a) For all $x\in[K,K+2r)\times\{0\}$, $t^{x'}_{3r}\ge S_c^{x,x'}\vee
t^{x}_{3r}$.

\medskip\noindent
(b) For all $x\in[K,K+2r)\times\{0\}$, if
$t^{x}_{3r}<S_c^{x,x'}$, then $|Y^{x'}_{t_{3r}^{x}}|\le
2K+10r$.

\medskip\noindent
(c) 
For $\veps\in(0,1)$ and $\delta\ge 3r$ there is a constant
$C_{\ref{e:coupbnd}}=C_{\ref{e:coupbnd}}(\delta,\veps)>0$
such that for all $K>\delta\vee 1$,
\begin{equation}\label{e:coupbnd}
\inf_{x\in[K,K+2r)\times\{0\}}P(S^{x,x'}_c < t^{x,x'}_\delta \wedge T^{x,x'}_{2K})
\ge 1- \frac{C_{\ref{e:coupbnd}}}{K^{1-\vep}} .
\end{equation}
\end{lemma}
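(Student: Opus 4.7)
The plan is to prove (a) and (b) directly from the reflection–coupling construction and the extended version of Lemma~\ref{hatNcbound1}, and then establish (c) via a two-scale argument that balances the polynomial coupling-time tail from Lemma~\ref{Ycouplingrate} against the random-walk escape estimate of Lemma~\ref{l:rwfacts}(c).

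For (a), the key observation is that on account of (the extension of) Lemma~\ref{hatNcbound1}, for every $t<S_c^{x,x'}$ the walk $Y^{x'}_t$ lies in the half-plane $\{z_1>m\}$, where $m=(|x|+|x'|)/2\in[K+r,K+2r)$. The hypothesis $K>3r$ gives $m>3r$, so $|Y^{x'}_t|>m>3r$ for all $t<S_c^{x,x'}$, which immediately yields $t^{x'}_{3r}\ge S_c^{x,x'}$. Since $Y^{x'}\equiv Y^x$ after the coupling, one then considers two cases: if $t^x_{3r}\ge S_c^{x,x'}$ the equality gives $t^{x'}_{3r}=t^x_{3r}$, and if $t^x_{3r}<S_c^{x,x'}$ one trivially has $t^{x'}_{3r}\ge S_c^{x,x'}>t^x_{3r}$. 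In either case $t^{x'}_{3r}\ge S_c^{x,x'}\vee t^x_{3r}$. Part (b) is then immediate from the reflection identity $Y^{x'}_{t^x_{3r}}=\pi^m(Y^x_{t^x_{3r}})$, which is valid on $\{t^x_{3r}<S_c^{x,x'}\}$: since $\pi^m$ is an isometry with $|\pi^m(0)|=2m\le 2K+4r$, the triangle inequality gives $|Y^{x'}_{t^x_{3r}}|\le |Y^x_{t^x_{3r}}|+2m\le 3r+2K+4r\le 2K+10r$.

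For (c), I would split the complementary event and bound each piece separately:
\begin{equation*}
P(S_c^{x,x'}\ge t^{x,x'}_\delta\wedge T_{2K}^{x,x'})\le P(t^x_\delta\le S_c^{x,x'})+P(t^{x'}_\delta\le S_c^{x,x'})+P(T_{2K}^x\le S_c^{x,x'})+P(T_{2K}^{x'}\le S_c^{x,x'}).
\end{equation*}
The $Y^{x'}$ hitting term is essentially free: the same half-plane argument used for (a) gives $|Y^{x'}_t|>m>K>\delta$ for $t<S_c^{x,x'}$, so $\{t^{x'}_\delta\le S_c^{x,x'}\}=\{Y^{x'}_{S_c^{x,x'}}\in B_\delta\}=\{Y^x_{S_c^{x,x'}}\in B_\delta\}\subset\{t^x_\delta\le S_c^{x,x'}\}$, reducing it to the $Y^x$ hitting term. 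For each remaining probability $P(A\le S_c^{x,x'})$ I would introduce a deterministic scale $t_*=K^{2-\vep}$ and use
\begin{equation*}
P(A\le S_c^{x,x'})\le P(A\le t_*)+P(S_c^{x,x'}>t_*).
\end{equation*}
Since $Y^x$ and $Y^{x'}$ are honest rate-$\lambda$ random walks (Lemma~\ref{conttimeSMP}), and each must travel a distance of at least $\min(K-\delta,K-2r)\ge K/2$ (for $K$ large), Lemma~\ref{l:rwfacts}(c) applied with a moment $k$ chosen so that $\vep k\ge 1$ gives $P(A\le t_*)\le C(t_*)^k/(K/2)^{2k}\le CK^{-\vep k}\le CK^{-1}$.

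The remaining task is to control $P(S_c^{x,x'}>t_*)$, and this is where the only delicate balancing occurs: writing $S_c^{x,x'}=s_{\hat N_c^{x,x'}}$ and splitting at $n_*=\lfloor \lambda t_*/2\rfloor$ gives
\begin{equation*}
P(S_c^{x,x'}>t_*)\le P(\hat N_c^{x,x'}>n_*)+P(s_{n_*}>t_*).
\end{equation*}
Since $|x-x'|\le 2r$, Lemma~\ref{Ycouplingrate} yields $P(\hat N_c^{x,x'}>n_*)\le Cn_*^{-1/2}\le CK^{-(2-\vep)/2}=CK^{-1+\vep/2}$, while the Poisson-sum tail $P(s_{n_*}>t_*)$ decays exponentially in $K^{2-\vep}$ and is therefore negligible. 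Combining the four contributions gives a uniform bound of order $CK^{-1+\vep/2}\le CK^{-(1-\vep)}$, as desired. The main obstacle is precisely the trade-off in this final step: the coupling tail is only polynomial with exponent $1/2$, so one must push $t_*$ close to $K^2$ to get close to the $K^{-1}$ floor, but cannot quite reach $K^{-1}$, which is exactly why the statement permits an arbitrarily small loss $\vep$. All constants depend only on $r,\delta,\vep$, so the bound is uniform in $x\in[K,K+2r)\times\{0\}$.
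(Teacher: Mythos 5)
Your proof of (a) and (b) matches the paper's essentially verbatim, and your proof of (c) uses the same two ingredients balanced at the same scale: the reflection--coupling tail bound of Lemma~\ref{Ycouplingrate} and the random-walk maximal inequality of Lemma~\ref{l:rwfacts}(c), compared at a deterministic cutoff near $K^2$. The one structural difference is that the paper carries out the whole of (c) in \emph{discrete} time --- bounding $P(\hat N_c^{x,x'}<\hat t^{x,x'}_\delta\wedge \hat T^{x,x'}_{2K})$ with a single cutoff $n=\lceil K^{2-2\vep}\rceil$ --- and then transfers to continuous time by the exact identity $s_a<s_b \Leftrightarrow a<b$, whereas you work in continuous time and reduce only the coupling tail $P(S_c^{x,x'}>t_*)$ to discrete time, which incurs an extra (negligible) Poisson concentration term $P(s_{n_*}>t_*)$. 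Your half-plane observation that $\{t^{x'}_\delta\le S_c^{x,x'}\}\subset\{t^x_\delta\le S_c^{x,x'}\}$ is a small shortcut the paper does not bother with, instead treating all four hitting events symmetrically under the maximal inequality. Either choice of cutoff exponent ($2-\vep$ or $2-2\vep$) works since the maximal inequality can be pushed to any polynomial order; and both arguments require (as the paper notes and you leave implicit) the final remark that the restriction to large $K$ is removed by inflating the constant.
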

\begin{proof} (a) First consider the discrete time
  walks. For $x\in[K,K+2r)\times\{0\}$, we have \break
$m=\frac{|x+x'|}{2}>K>3r$. This shows that
$B_{3r}\subset \{x_1<m\}$ and so $\hat Y^{x'}$ must first
enter $\{x_1\le m\}$ before it can enter $B_{3r}$. That is,
$\hat t_{3r}^{x'}\ge \hat N^{x'}(m)$.  Therefore by
Lemma~\ref{hatNcbound1},
$\hat N_c^{x,x'}\le \hat t_{3r}^{x'}$ and hence
\[S_c^{x,x'}=s_{\hat N_c^{x,x'}}\le s_{\hat
    t_{3r}^{x'}}=t_{3r}^{x'}.\]
Since the random walks must couple before $Y^{x'}$ can enter
$B_{3r}$, we also have $t^{x}_{3r}\le
t^{x'}_{3r}$ and (a) follows. 

\medskip\noindent (b) Let $x$ and $m$ be as in (a). 
If 
$t^{x}_{3r}<S_c^{x,x'}$, then by the coupling definition
\eqref{Ydefinition},
\begin{equation}\label{refprop}
Y^{x}_{t^{x}_{3r}}=\pi^m(Y^{x'}_{t_{3r}^{x}}).
\end{equation} 
For any $a=(a_1,a_2)\in B_{3r}$, 
$|\pi^m(a)| = |(2m-a_1,a_2)| \le 2m+3r+3r \le 2K+10r$, so by
\eqref{refprop}, 
\[
|Y^{x'}_{t^{x}_{3r}}|= |\pi^m(Y^{x}_{t^{x}_{3r}})| \le 2K+10r.
\]
This proves (b). 

\medskip\noindent (c)  Let $\hat Y^{(1)}$ be the first
coordinate of $\hat Y$, and let $x,x',m$ be as above, with
$K>\delta\ge 3r$, so that $ \delta <|x|\le |x'| <2K$. Let $n=\lceil 
  K^{2-2\veps}\rceil$. Then, using Lemma~\ref{Ycouplingrate}
  for the second inequality and symmetry for the second to
  last inequality, we have
  \begin{equation}
    \begin{aligned}[b]
  P(\hat N_c^{x,x'}<\hat t^{x,x'}_\delta\wedge &\hat T^{x,x'}_{2K})\\
&\ge P(\hat N_c^{x,x'}\le n)-P(\hat t_\delta^{x'}\wedge \hat T_{2K}^{x'}\le n)-P(\hat t_\delta^{x}\wedge \hat T^{x}_{2K}\le n)\\ 
&\ge
1-\frac{C_{\ref{Scouplingrate}}}{\sqrt n}\Big(1+\frac{|x'-x|}{2r}\Big)-P_0(\max_{k\le
  n}|\hat Y_k|\ge (K-2r)\wedge(K+2r-\delta))\\
&\qquad-P_0(\max_{k\le n}|\hat Y_k|\ge
(K-2r)\wedge(K-\delta))\\ 
&\ge 1-\frac{2C_{\ref{Scouplingrate}}}{\sqrt{n}}-
2P_0(\max_{k\le n}|\hat Y_k|\ge K-\delta)\\
&\ge 1-\frac{C}{K^{1-\vep}}-4P_0\Bigl(\max_{k\le
  n}|\hat Y^{(1)}_k|\ge\frac{K-\delta}{\sqrt
  2}\Bigr)\\
&\ge 1-\frac{C}{K^{1-\vep}}-4P_0\Bigl(\max_{k\le
  n}|\hat Y^{(1)}_k|\ge K/2\Bigr), 
\end{aligned}\label{e:upbnd}
\end{equation}
provided $K$ is larger than some $K_0(\delta)>0$. 
We recall Theorem~21.1 in \cite{Burk73}, which in the present context
implies
\begin{equation}\label{e:Bu}
  E_0\Big[ \max_{k\le n}|\hat Y^{(1)}_k|^p\Big] \le c_p[(nE[|
\bar U^{(1)}|^2])^{p/2} + (2r)^p] \le C_p n^{p/2}
\end{equation}
for a constant $C_p>0$. By Markov's inequality,
\[
  P_0\big(\max_{k\le n}|\hat Y^{(1)}_k|\ge K/2\big)
\le \frac{C_pn^{p/2}}{(K/2)^p}
 \le \frac{C}{K^{p\vep}}.
  \]
If we take $p=p_0(\vep)$ large enough so that 
$K^{p\vep}>K^{1-\vep}$, substituting into \eqref{e:upbnd} we
obtain for a constant $C>0$ depending on $\vep$,  
\[
  P(\hat N_c^{x,x'}<\hat t^{x,x'}_\delta\wedge \hat
  T^{x,x'}_{2K})
\ge 1-\frac{C}{K^{1-\vep}},
\]
provided $K\ge K_0(\delta)$. Multiplication of $C$
by a large
enough constant depending on $\delta$ allows us to remove
the restriction $K>K_0(\delta)$.  That is, for some 
$C_{\ref{e:coupbnd}}(\delta,\vep)>0$, 
\begin{equation}\label{e:upbnd1}\inf_{x\in [K,K+2r)\times\{0\}}P(\hat
  N^{x,x'}_c<\hat t^{x,x'}_\delta\wedge
  \hat
  T^{x,x'}_{2K})\ge1-\frac{C_{\ref{e:coupbnd}}(\delta,\vep)}{K^{1-\veps}}
\end{equation}   
for all $K>\delta\vee 1$.

The corresponding inequality for
the continuous time walks follows at once. First, note that 
\[
t^{x}_\delta=\inf\{t:|\hat Y^{x}_{N_t}|\le \delta\}=
\inf\{t:N_t=\hat t^{x}_\delta\}=s_{\hat t^{x}_\delta},\]
and similarly for the other hitting times.  So in view of
\eqref{Ycouplingdefn},  for every $x\in[K,K+2r)\times\{0\}$,
\begin{align*}
P(T_c^{x,x'}<t^{x,x'}_\delta\wedge T^{x,x'}_{2K})
&=P(s_{\hat N_c^{x,x'}}<s_{\hat t^{x,x'}_\delta}
\wedge s_{\hat T^{x,x'}_{2K}})\\ 
&=P(\hat N_c^{x,x'}<
\hat t^{x,x'}_\delta\wedge T^{x,x'}_{2K}). 
\end{align*}
So (c) is now immediate from \eqref{e:upbnd1}.

\end{proof}

\begin{lemma}\label{l:incbound} There is a constant $C_{\ref{e:incbound}}=
C_{\ref{e:incbound}}(r,\phi)>0$ such that 
for $K>5r\vee2$, $A>2K+2r$, $x,x'\in [K,K+2r]\times \{0\}$  with $|x|\le
|x'|$, 
and $\vep_K= (\log K)/\sqrt K$,
\begin{equation}\label{e:incbound}
0\le \Phi(x',A) - \Phi(x,A)\le 
C_{\ref{e:incbound}}\frac{\vep_K}{\log(A^2)}.
\end{equation}
\end{lemma}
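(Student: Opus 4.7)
The plan is to exploit the reflection coupling of $Y^x, Y^{x'}$ constructed in the preceding subsection, combined with the a priori estimate of Lemma~\ref{l:logAbnd}. The monotonicity of $|y|\mapsto\Phi(y,A)$ from Lemma~\ref{l:monoup2} gives both the lower bound $\Phi(x',A)-\Phi(x,A)\ge 0$ and reduces the upper bound to the case $x=(K,0)$, $x'=(K+2r,0)$, which is exactly the setup of the coupling. Let $G=\{S_c^{x,x'} < t^{x,x'}_{3r}\wedge T^{x,x'}_{2K}\}$ be the ``coupling succeeds'' event. On $G$: (i) before $S_c^{x,x'}$ both walks lie in $\{|y|>3r\}$, where the killing rate $\phi$ (radial, nonnegative, supported in $B_{2r}$) vanishes; (ii) both remain in $B_{2K}\subset B_A$, so $T_A^x,T_A^{x'}>S_c^{x,x'}$; and (iii) $Y^x_t=Y^{x'}_t$ for $t\ge S_c^{x,x'}$, so $T_A^x=T_A^{x'}$ and the post-coupling integrals coincide. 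Hence $\exp\bigl(-\int_0^{T_A^x}\phi(Y^x_s)\,ds\bigr)=\exp\bigl(-\int_0^{T_A^{x'}}\phi(Y^{x'}_s)\,ds\bigr)$ on $G$, yielding
\begin{equation*}
0\le\Phi(x',A)-\Phi(x,A)\le E\Bigl[\1_{G^c}\exp\Bigl(-\int_0^{T_A^{x'}}\phi(Y^{x'}_s)\,ds\Bigr)\Bigr].
\end{equation*}

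To control the right-hand side I introduce the $\calf_t$-stopping time $\tau=t^{x'}_{3r}\wedge T^{x'}_{2K}$, which depends only on $Y^{x'}$ and satisfies $|Y^{x'}_\tau|\le 2K+2r<A$. The main filtration bookkeeping is the assertion $G^c\in\calf_\tau$: indeed, writing $\sigma=t^{x,x'}_{3r}\wedge T^{x,x'}_{2K}$, one has $G^c=\{\sigma\le S_c^{x,x'}\}\in\calf_{\sigma\wedge S_c^{x,x'}}\subseteq\calf_\sigma$, and $\sigma\le\tau$ since $t^{x,x'}_{3r}\le t^{x'}_{3r}$ and $T^{x,x'}_{2K}\le T^{x'}_{2K}$. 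Using nonnegativity of $\phi$ to drop the integral on $[0,\tau]$ and then the strong Markov property of $Y^{x'}$ at $\tau$ (Lemma~\ref{conttimeSMP}(a)),
\begin{equation*}
E\Bigl[\1_{G^c}\exp\Bigl(-\int_0^{T_A^{x'}}\phi(Y^{x'}_s)\,ds\Bigr)\Bigr]\le E\bigl[\1_{G^c}\Phi(Y^{x'}_\tau,A)\bigr].
\end{equation*}

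Now $|Y^{x'}_\tau|\le 2K+2r$ always, so Lemma~\ref{l:logAbnd} (applied with $k=2K+2r<A$, which satisfies $k\ge 2$ since $K>5r\vee 2$) gives $\Phi(Y^{x'}_\tau,A)\le C_{\ref{e:logAbnd}}\log(2K+2r)/\log(A^2)\le C(r)\log K/\log(A^2)$. Lemma~\ref{l:coupbnd}(c), applied with $\delta=3r$ and $\vep=\tfrac12$, yields $P(G^c)\le C(r)/\sqrt K$. Combining these two bounds produces $\Phi(x',A)-\Phi(x,A)\le C(r)\log K/(\sqrt K\log(A^2))=C\vep_K/\log(A^2)$, as required. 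The main technical hurdle is the measurability step $G^c\in\calf_\tau$ (which is why $\tau$ must be chosen to dominate $\sigma$); everything else is a fairly direct combination of the coupling estimate, the decay of $\Phi$ from Lemma~\ref{l:logAbnd}, and the strong Markov property of the single walk $Y^{x'}$.
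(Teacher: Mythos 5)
Your proof is correct, and it takes a genuinely different and cleaner route than the paper's. The paper decomposes $\Phi(x',A)-\Phi(x,A)$ into three pieces $\Delta_1,\Delta_{2a},\Delta_{2b}$ according to which of the competing stopping times ($T^{x,x'}_A$, $t^{x}_{3r}$, $t^{x'}_{3r}$, $T^{x'}_A$) occurs first, and bounds each piece separately using the hitting-probability estimates of Lemma~\ref{l:logprob}, the identity $t^{x,x'}_{3r}=t^x_{3r}$ from Lemma~\ref{l:coupbnd}(a), and the deterministic bound $|Y^{x'}_{t^x_{3r}}|\le 2K+10r$ on the coupled-failure event from Lemma~\ref{l:coupbnd}(b). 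You avoid all of that by noticing that the single stopping time $\tau = t^{x'}_{3r}\wedge T^{x'}_{2K}$ (depending only on $Y^{x'}$) both dominates the defining time $\sigma$ of the uncoupled event $G^c$, so that $G^c\in\calf_\tau$, and satisfies the deterministic bound $|Y^{x'}_\tau|\le 2K+2r$; this lets you invoke Lemma~\ref{l:logAbnd} once, uniformly, after a single application of the strong Markov property. The price is the filtration bookkeeping $\{\sigma\le S_c\}\in\calf_{\sigma\wedge S_c}\subseteq\calf_\sigma\subseteq\calf_\tau$, which you handle correctly; what you buy is that Lemma~\ref{l:logprob} and parts (a)--(b) of Lemma~\ref{l:coupbnd} are not needed at all in this step, only part (c) and Lemma~\ref{l:logAbnd}. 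Your verification that on $G$ the two exponential functionals agree (killing vanishes before $S_c$ since both walks stay outside $B_{3r}$; neither has exited $B_A$ since both stay in $B_{2K}\subset B_A$; and they coincide after $S_c$, so $T^x_A=T^{x'}_A$) is correct, and the final combination $P(G^c)\cdot\sup\Phi(Y^{x'}_\tau,A)\le C\log K/(\sqrt K\log(A^2))$ gives exactly the claimed $\vep_K$-bound.
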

\begin{proof} The first inequality in \eqref{e:incbound}
follows from monotonicity (Lemma~\ref{l:monoup2}). For the second, it suffices to take $x'=(K+2r,0)$ and $x\in[K,K+2r)\times\{0\}$.
Recall the times $t^{x,x'}_\delta=t^x_\delta\wedge
t^{x'}_\delta$, $T^{x,x'}_A=T^x_A\wedge T^{x'}_A$, etc.\ for
the coupled walks $(Y^{x},Y^{x'})$, and write 
$S_c$  for the coupling time $S^{x,x'}_c$. By Lemma~\ref{l:coupbnd}(a),
\begin{equation}\label{Torder} t^{x,x'}_{3r}=t_{3r}^{x}.\end{equation} 
If $S_c\le
t_{3r}^{x}\wedge T^{x,x'}_A$, then 
\[
\int_0^{T_A^{x'}}\phi(Y_s^{x'})ds=
\int_0^{T_A^{x}}\phi(Y_s^{x})ds.
\]
As a consequence,
\begin{align*}
  \Delta(x,x',A) &:= \Phi(x',A)-\Phi(x,A)\\
  &=
  E\Big[ \exp\Big(-\int_0^{T^{x'}_A}\phi(Y^{x'}_s)ds\Big)
-  \exp\Big(-\int_0^{T^{x}_A}\phi(Y^{x}_s)ds\Big)
  \Big]\\
  &\le
  E\Big[ 1\{S_c>
t_{3r}^{x}\wedge T^{x,x'}_A\} \exp\Big(-\int_0^{T^{x'}_A}\phi(Y^{x'}_s)ds\Big)
  \Big].
\end{align*}

We now introduce
\begin{align*}
    \Delta_1(x,x',A)&=E\Big[1\{T^{x,x'}_A<S_c,T^{x,x'}_A\le t^{x}_{3r}\}\exp
    \Bigl(-\int_0^{T_A^{x'}}\phi(|Y_s^{x'}|)ds\Bigr)\Big],\\
    \Delta_2(x,x',A)&=E\Big[1\{t^{x}_{3r}<S_c,t^{x}_{3r}<T^{x,x'}_A \}\exp
    \Bigl(-\int_0^{T_A^{x'}}\phi(|Y_s^{x'}|)ds\Bigr)\Big],
  \end{align*}
so that $\Delta=  \Delta_1 +\Delta_2$, and bound
$\Delta_1,\Delta_2$ separately.
For $\Delta_1$, using \eqref{Torder} and $T_A^{x'}\neq
t_{3r}^{x'}$,
\begin{equation}\label{delta1}
    \Delta_1(x,x',A)
\le P(T_A^{x}<S_c\wedge t^{x}_{3r})+P(T_A^{x'}<S_c\wedge t_{3r}^{x'}).
\end{equation}
It suffices to
consider the first term, as the second follows in the same way.  
By the strong Markov property 
\begin{equation}\label{DeltaIub}
  P(T_A^{x}<S_c\wedge t_{3r}^{x})\le E[1\{T^{x}_{2K}<S_c\}
  P_{Y^{x}_{T^x_{2K}}}(T_A<t_{3r})].
\end{equation}
Now taking $a=3r$ in Lemma~\ref{l:logprob}, and noting that $3r<2K\le |Y^{x}_{T_{2K}^{x}}|\le 2K+2r<A$, 
we have
\begin{align*}
P_{Y^{x}_{2K}}(T_A<t_{3r})
\le \frac{\log(|Y^{x}_{T_{2K}^{x}}|)-\log r}{\log A-\log r}
\le\frac{\log\Bigl(\frac{2K}{r}+2\Bigr)}{\log (A/r)} \ a.s.
\end{align*}
Choose $K_0$ large enough so that
$K> K_0$ implies $K^2> \frac{2K}r + 2$. If, in addition we have $K> K_0$ and
$A>r^2$, then
\[
  P_{Y^{x}_{2K}}(T_A<t_{3r}) \le
  \frac{\log(K^2)}{\log \sqrt{A}} = \frac{8\log K}{\log
    (A^2)} .
\]
By replacing 8 with a sufficiently large constant $C$
we may drop the additional conditions $K>K_0$ and $A>r^2$,
and so obtain for all $K>5r\wedge 2$ and $A>2K+2r$, 
\begin{equation*}
P_{Y^x_{T_{2K}}}(T_A<T_{3r})\le C\frac{\log K}{ \log(A^2)}.
\end{equation*}
Plug this bound into \eqref{DeltaIub} and use the coupling bound
Lemma~\ref{l:coupbnd}(c) with $\delta=3r,\vep=1/2$ to obtain
\[
  P(T_A^{x}<S_c\wedge t^{x}_{3r})
  \le C\frac{\log K}{
    \log(A^2)}P(T_{2K}^{x}<S_c) 
\le
C_{\ref{e:coupbnd}}(3r,\tfrac12) C\frac{\log K}{\sqrt{K}
  \log(A^2)} .
\]
The above 
and \eqref{delta1} imply
\begin{equation}\label{delta1bnd}
\Delta_1(x,x',A)\le C\frac{\log K}{\sqrt K\log(A^2)}.
\end{equation}

Now consider $\Delta_2$.  Recalling from \eqref{Torder}
that $t_{3r}^{x}\le t_{3r}^{x'}$, $\Delta_2$ is bounded by
the sum of
\begin{align*}
  \Delta_{2a}(x,x',A)&=E\Big[1\{t^{x}_{3r}<S_c,t^{x}_{3r}\le
 t^{x'}_{3r}<T^{x'}_A \}\exp
    \Bigl(-\int_0^{T_A^{x'}}\phi(|Y_s^{x'}|)ds\Bigr)\Big],\\
    \Delta_{2b}(x,x',A)&=E\Big[1\{t^{x}_{3r}<S_c,t^{x}_{3r}<T^{x'}_A<t^{x'}_{3r} \}\exp
    \Bigl(-\int_0^{T_A^{x'}}\phi(|Y_s^{x'}|)ds\Bigr)\Big].
  \end{align*}
In $\Delta_{2a}$,  the event in the indicator function
belongs to $\calf_{t_{3r}^{x'}}$, and so by the strong
Markov property,
$|Y^{x'}_{t_{3r}^{x'}}|\le 5r\le K$, monotonicity from Lemma~\ref{l:monoup2}, 
the coupling bound 
\eqref{e:coupbnd} with $\vep=1/2$ and $\delta=3r$, and
Lemma~\ref{l:logAbnd},  
\begin{equation}
  \begin{aligned}[b]\label{delta2abnd}
\Delta_{2a}(x,x',A)&=E\left[1\{t_{3r}^{x}<S_c,t_{3r}^{x}\le
t^{x'}_{3r}<T_A^{x'}\}E_{Y^{x'}_{t_{3r}^{x'}}}\Big[\exp\Bigl(
-\int_0^{T_A}\phi(|Y_s|)ds\Bigr)\Big]\right]\\
&\le
P(t_{3r}^{x}<S_c)\Phi((K,0),A)\\
&\le C\frac{\log K}{\sqrt K\log (A^2)}.
\end{aligned}
\end{equation}

Finally, consider $\Delta_{2b}$.  Dropping the exponential
and applying the strong Markov property
to $Y^{x'}$ at time $t_{3r}^{x}$, we have
\begin{equation}
  \begin{aligned}[b]  \label{DelIIii}
    \Delta_{2b}(x,x',A)&\le
P(t_{3r}^{x}<S_c, t_{3r}^{x}\le
T_A^{x'}<t^{x'}_{3r})\\
&= E\Big[ 1\{t_{3r}^{x}<S_c\wedge T^{x'}_A\}
P_{Y^{x'}_{t^x_{3r}}}(T_A<t_{3r})\Big].
\end{aligned}
\end{equation}
By Lemma~\ref{l:coupbnd}(b),
on the event $\{t_{3r}^{x}<S_c\}$,
$ |Y^{x'}_{t^{x}_{3r}}|\le 2K+10r$. Let $K_0$ be large enough
so that $K>K_0$ implies $K^2>(2K/r)+10$, and assume
additionally that
$K>K_0$ and  $A>(2K+10r)\vee r^2$.
By the hitting probability bound \eqref{e:aA}, with $a=3r$
we see that if $|Y^{x'}_{t_{3r}^{x}}|>3r$, then
\begin{align*}
P_{Y^{x'}_{t_{3r}^{x}}}(T_A<t_{3r})\le
\frac{\log(\frac{2K}r+10)}{\log \frac Ar}
\le \frac{\log (K^2)}{\log(\sqrt{A})}
=\frac{8\log K}{\log(A^2)}.
\end{align*} 
The same bound holds if $|Y^{x'}_{T_{3r}^x}|\le
3r$ because then the left-hand side is zero. Now the
additional restrictions on $K,A$ can be dropped by replacing
8 with a larger constant $C$,
so we may conclude
that for $A,K$ as in the Lemma and on
$\{t_{3r}^{x}<S_c\}$,
\[P_{Y^{x'}_{t_{3r}^{x}}}(T_A<t_{3r})\le C\frac{\log K}{\log (A^2)}.\]
Insert this into \eqref{DelIIii}, apply
Lemma~\ref{l:coupbnd}(c) as before, to obtain
\begin{equation}\label{delta2bnd}
\Delta_{2b}(x,x',A)\le
P(t^{x}_{3r}<S_c)\,\frac{C\log K}{\log(A^2)}
\le C \frac{\log K}{\sqrt{K} \log(A^2)}.
\end{equation}
Combine \eqref{delta1bnd}, \eqref{delta2abnd} and
\eqref{delta2bnd} to complete the proof with $\veps_K$ as
claimed. 
\end{proof}

\begin{theorem}\label{t:Philim}
For all $x\ne 0$ there exists $c_\phi(x)>0$ such that
\begin{equation}\label{e:Philim}
\lim_{A\to\infty}  \log(A^2)\Phi(x,A) = c_\phi(x)
\end{equation}
\end{theorem}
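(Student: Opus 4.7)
My plan is to derive a multiplicative identity via the strong Markov property at $T_K$ for an intermediate radius $K$, and then use it in a Cauchy argument. Since $Y$ is rotationally invariant in law, $\Phi(x,A)$ depends only on $|x|$; write $\Phi_K(A) := \Phi((K,0),A)$. For $|x| < K$ with $K$ large and $A > 2K+2r$, the strong Markov property at $T_K$, the bound $|Y_{T_K}| \in [K, K+2r]$, and Lemma~\ref{l:incbound} combine to give
\[
\Phi(x,A) = \Phi_K(A)\,\Phi(x,K) + R(x,K,A), \qquad |R(x,K,A)| \le C\varepsilon_K/\log(A^2),
\]
with $\varepsilon_K = (\log K)/\sqrt K$. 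Writing $\tilde\Phi(x,A) := \log(A^2)\,\Phi(x,A)$, multiplication by $\log(A^2)$ yields the pivotal identity
\[
\tilde\Phi(x,A) = \tilde\Phi_K(A)\,\Phi(x,K) + O(\varepsilon_K). \qquad (\ast)
\]

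The first step is to show $\tilde\Phi_{K_0}(A)$ converges as $A \to \infty$ for some fixed $K_0 > 2r$. Applying $(\ast)$ with $x=(K_0,0)$, $K > K_0$, and two values $A, A' > 2K+2r$, and subtracting, gives
\[
|\tilde\Phi_{K_0}(A) - \tilde\Phi_{K_0}(A')| \le |\tilde\Phi_K(A) - \tilde\Phi_K(A')|\,\Phi_{K_0}(K) + C\varepsilon_K.
\]
By Lemma~\ref{l:logAbnd}, $\tilde\Phi_K(A),\tilde\Phi_K(A') \le C\log K$, while $\Phi_{K_0}(K) \le C\log K_0/\log(K^2)$. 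The right-hand side is therefore $O(\log K_0/\log K) + O(\varepsilon_K)$, which tends to $0$ as $K\to\infty$, uniformly in $A,A'>2K+2r$. Hence $\{\tilde\Phi_{K_0}(A)\}_A$ is Cauchy, and $c_\phi((K_0,0)) := \lim_A \tilde\Phi_{K_0}(A)$ exists. Returning to $(\ast)$ for general $x\ne 0$ with $|x| < K$ and letting $A \to \infty$ bounds the oscillation of $\tilde\Phi(x,A)$ by $2C\varepsilon_K$; sending $K\to\infty$ yields existence of $c_\phi(x) := \lim_A \tilde\Phi(x,A)$.

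For positivity, observe that $\phi \equiv 0$ outside $B_{2r}$, so on $\{T_A < t_{2r+\delta}\}$ we have $\int_0^{T_A}\phi(Y_s)ds = 0$, whence $\Phi(x,A) \ge P_x(T_A < t_{2r+\delta})$. For $|x| > 2r+\delta$, optional stopping applied to the martingale $\log|Y_{s\wedge T_A\wedge t_{2r+\delta}}|$ (as in the proof of Lemma~\ref{l:logprob}), together with $|Y_{t_{2r+\delta}}|\le 2r+\delta$ and $|Y_{T_A}|\le A+2r$, yields
\[
P_x(T_A < t_{2r+\delta}) \ge \frac{\log|x| - \log(2r+\delta)}{\log(A+2r) - \log(2r+\delta)}.
\]
Multiplying by $\log(A^2)$, letting $A\to\infty$ and then $\delta\downarrow 0$, gives $c_\phi(x) \ge 2\log(|x|/(2r)) > 0$. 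For $0 < |x| \le 2r$, the strong Markov property at $T_{3r}$ (a.s.\ finite by recurrence of $Y$) together with Lemma~\ref{l:monoup2} gives $\Phi(x,A) \ge \Phi_{3r}(A)\,\Phi(x,3r)$; here $\Phi(x,3r) > 0$ because with positive probability the walk exits $B_{3r}$ while remaining outside $B_{|x|/2}$, an event on which $\phi(Y_s)$ is uniformly bounded. Taking $A\to\infty$ yields $c_\phi(x) \ge \Phi(x,3r)\,c_\phi((3r,0)) > 0$.

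The main technical obstacle is calibrating the Cauchy argument above: the error $O(\varepsilon_K)$ in $(\ast)$ is $A$-independent, so taking $A\to\infty$ first controls the oscillation of $\tilde\Phi_{K_0}(A)$ by $2C\log K\cdot\Phi_{K_0}(K) + O(\varepsilon_K)$, and it is precisely the logarithmic upper bound on $\tilde\Phi_K$ from Lemma~\ref{l:logAbnd} balanced against the $1/\log K$ decay of $\Phi_{K_0}(K)$ that forces this quantity to vanish as $K \to \infty$. Proposition~\ref{p:reduction} then converts \eqref{e:Philim} into the desired limit \eqref{e:noncoal2d}.
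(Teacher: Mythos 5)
Your pivotal identity $(\ast)$, namely $\tilde\Phi(x,A) = \tilde\Phi_K(A)\Phi(x,K) + O(\vep_K)$, is exactly the paper's equation~\eqref{e:PhikA1}, and the positivity argument at the end (a direct lower bound $\liminf_A \log(A^2)\Phi(x,A)\ge 2\log(|x|/2r)$ via~\eqref{e:aA}) is a clean alternative to the one in the paper. However, the Cauchy step that is supposed to deliver existence of the limit contains a genuine error that invalidates the argument.

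From the estimate
\[
|\tilde\Phi_{K_0}(A) - \tilde\Phi_{K_0}(A')| \le |\tilde\Phi_K(A) - \tilde\Phi_K(A')|\,\Phi_{K_0}(K) + C\varepsilon_K
\]
together with the bounds $\tilde\Phi_K(A),\tilde\Phi_K(A') \le C\log K$ and $\Phi_{K_0}(K) \le C\log K_0/\log(K^2)$, the first term on the right is bounded by $2C\log K \cdot C\log K_0/(2\log K) = C^2\log K_0$, a constant that does \emph{not} decay as $K\to\infty$. Your claim that the right-hand side is $O(\log K_0/\log K)$ is off by a factor of $\log K$: the $O(\log K)$ upper bound on $\tilde\Phi_K$ exactly cancels the $O(1/\log K)$ decay of $\Phi_{K_0}(K)$. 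This is not a fixable slip: by the positivity you establish, $\tilde\Phi_K(A)$ really is of size $\Theta(\log K)$ for $K,A$ large, and without \emph{a priori} control on the oscillation $|\tilde\Phi_K(A)-\tilde\Phi_K(A')|$ beyond the trivial $O(\log K)$ bound, the product cannot be made small. So the Cauchy argument does not close.

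What is missing is the renewal rearrangement the paper performs at~\eqref{e:PhikA2}--\eqref{e:PhikA3}: apply the strong Markov property once more at $t_{3r}$ (legal since $\phi$ vanishes off $B_{2r}$), then again at $T_K$, and use Lemma~\ref{l:incbound} to replace $\Phi(Y_{T_K},A)$ by $\Phi(x_K,A)$. This converts the additive identity into
\[
\Phi(x_K,A)=\frac{P_{x_K}(T_A<t_{3r})+\cale_2}{1-\alpha(x_K,A)},\qquad |\cale_2|\le C\vep_K/\log(A^2),
\]
where $\alpha(x_K,A)=E_{x_K}[\,1\{t_{3r}<T_A\}\,\Phi(Y_{t_{3r}},K)\,]<1$. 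Now the $A$-dependence has been isolated in a hitting probability whose $\log(A^2)$-asymptotics are \emph{known} explicitly from~\eqref{e:Aalimit}, and $\alpha(x_K,A)\to\alpha(x_K,\infty)$ by recurrence. Substituting back into $(\ast)$ then sandwiches $\log(A^2)\Phi(x,A)$ between two $K$-dependent constants whose gap is $O(\vep_K)$, which is what actually proves the limit exists. In short: you need the explicit renewal formula for $\Phi(x_K,A)$, not just the multiplicative decomposition of $\Phi(x,A)$, to control the $A$-asymptotics.
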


\begin{proof} We may assume $x$ is on the positive  
$x_1$-axis, and for now that $|x|>3r$.  Assume \break 
$K>(|x|+2r)\vee 2$
and $A>2K+2r$, put $x_K=(K,0)$ and $\vep_K = (\log K)/\sqrt
K$ as in Lemma~\ref{l:incbound}.  By the strong Markov property,  
\begin{equation}\label{e:Philim0}
\Phi(x,A) = E_x\left[\exp\Big(-\int_0^{T_{K}}\phi(Y_s)ds
\Big) \Phi(Y_{T_K},A)\right] .
\end{equation}
By Lemma~\ref{l:incbound}, noting that $|Y_{T_{K}}|\in[K,K+2r]$,
\begin{equation}\label{e:PhikA}
|\Phi(Y_{T_k},A) - \Phi(x_K,A)| \le C_{\ref{e:incbound}}
\frac{\vep_K}{\log(A^2)}.
\end{equation}
Using this bound in \eqref{e:Philim0} we obtain
\begin{equation}\label{e:PhikA1}
\Phi(x,A) = \Phi(x,K)\Phi(x_K,A) + \cale_1
\end{equation}
where $|\cale_1| \le C \vep_K/\log(A^2)$.  
Now consider $\Phi(x_K,A)$. On account of 
$\phi(Y^{x_K}_s)=0$ for $s>t_{3r}$, \eqref{e:PhiMarkov} and the strong Markov
property, we see that
\begin{equation}\label{e:PhikA2}
\begin{aligned}[b]
\Phi(x_K,A) &= P_{x_K}(T_A<t_{3r}) + E_{x_K}\Big[ 1\{t_{3r}<T_A\}
E_{Y_{t_{3r}}}\Big[\exp\Big(- \int_{0}^{T_K}\phi(Y_s)ds\Big)
\Phi(Y_{T_K},A)\Big]\\
&= P_{x_K}(T_A<t_{3r}) + \Phi(x_K,A)E_{x_K}\Big[ 1\{t_{3r}<T_A\}
\Phi(Y_{t_{3r}},K) \Big] +\cale_2
\end{aligned}
\end{equation}
where $ |\cale_2| \le C\frac{\vep_K}{\log(A^2)}$
by \eqref{e:PhikA}. 
If we set 
\[\alpha(x_K,A) = E_{x_K}\big[ 1\{t_{3r}<T_A\}\Phi(
Y_{t_{3r}},K)\big]<1\] 
then \eqref{e:PhikA2} can be rearranged to obtain 
\begin{equation}\label{e:PhikA3}
\Phi(x_K,A) 
= \frac{P_{x_K}(T_{A}<t_{3r}) + \cale_2}{1-\alpha(x_K,A)} .
\end{equation}
Plugging this into \eqref{e:PhikA1} gives
\begin{align*}
\Phi(x,A) = 
\Phi(x,K)
\frac{P_{x_K}(T_A<t_{3r})+\cale_2}{1-\alpha(x_K,A)}
+\cale_1,
\end{align*} and  thus
\begin{equation}\label{e:PhikA4}
\log(A^2) \Phi(x,A) =
\frac{\Phi(x,K)}{1-\alpha(x_K,A)}\, \log(A^2)P_{x_K}(T_A<t_{3r})+ \cale_3
\end{equation}
where $|\cale_3| \le C\vep_K/(1-\alpha(x_K,A))$.

Now consider $\alpha(x_K,A)$. By recurrence,
$1\{t_{3r}<T_A\}\to 1$ $P_{x_K}$-a.s.\ as 
$A\to\infty$, which implies the limit 
\[
\alpha(x_K,\infty) := \lim_{A\to\infty}\alpha(x_K,A)
= E_{x_K}\big[\Phi(Y_{t_{3r}},K)\big]
\]
exists. By monotonicity (Lemma~\ref{l:monoup2}), $|Y_{t_{3r}}|\le
3r$, and recurrence, we have $\alpha(x_K,\infty)\le \Phi((3r,0),K) \to 0$ as
$K\to\infty$. If we let $K_0$ satisfy 
$\alpha(x_K,\infty)\le 1/2$ for all $K\ge K_0$,
then $|\cale_3|\le 2C\vep_K$.  Thus, assuming in addition that 
$K>K_0$, we have from \eqref{e:PhikA4} and the above,
\begin{equation}\label{e:PhikA5}
\limsup_{A\to\infty}\left|
\log(A^2) \Phi(x,A) -
\Phi(x,K)\frac{\log(A^2)P_{x_K}(T_A<t_{3r})}
{1-\alpha(x_K,A)}
\right| \le C\vep_K .
\end{equation}
By \eqref{e:Aalimit}, $(\log A^2)P_{x_K}(T_A<t_{3r})$ is
bounded above for large $A$ and approaches $ p(3r,x_K):=
2\large(\log|x_K|-E_{x_K}[\log|Y_{t_{3r}}|]\large)>0$
as $A\to\infty$. It follows from \eqref{e:PhikA5} that 
\begin{equation}\label{e:PhikAlim}
\limsup_{A\to\infty}\left|
\log(A^2) \Phi(x,A) -
\Phi(x,K)\frac{p(3r,x_K)}{1-\alpha(x_K,\infty)}
\right| \le C\vep_K.
\end{equation}
The fact that $\vep_K\to 0$ as $K\to\infty$ now implies
$A\mapsto \log(A^2)\Phi(x,A)$ as $A\to\infty$ is 
Cauchy, hence there exists $c_\phi(x)\in[0,\infty)$ such that
$\lim_{A\to\infty} \log(A^2) \Phi(x,A)=c_\phi (x)$.

To check that $c_\phi(x)>0$, note that (for $K$ large as above) \eqref{e:PhikAlim} implies
\[
\liminf_{A\to\infty} \log(A^2)\Phi(x,A) \ge
p(3r,x_K)\frac{\Phi(x,K)}{1-\alpha(x_K,\infty)} - C\vep_K
\ge
p(3r,x_K)\Phi(x,K) - C\vep_K .
\]
The right-hand side above is positive if, for large $K$,
 $\Phi(x,K)/\vep_{K}\to\infty$ as $K\to\infty$ (note that $p(3r,x_K)\to\infty$ as $K\to\infty$).
Using
$\Phi(x,K)\ge P_x(T_K<t_{3r})$, we have 
\[
\frac{\Phi(x,K)}{\vep_K} \ge \frac{\sqrt K}{\log K}
P_x(T_K<t_{3r}) = \frac{\sqrt K}{(\log K)^2}
\log(K)P_x(T_K<t_{3r}) 
\to\infty\text{ as }K\to\infty
\]
by Lemma~\ref{l:logprob}. Hence, $c_\phi(x)>0$. 

Finally, suppose $0<|x|\le 3r$. By the strong Markov
property, for $A>6r$,
\[
\Phi(x,A) = E_x\Big[\exp(-\int_0^{T_{4r}}\phi(Y_s)ds\Big)
\Phi(Y_{T_{4r}},A)\Big]
\]
By monotonicity (Lemma~\ref{l:monoup2}) and Lemma~\ref{l:logAbnd}, 
$\log(A^2)\Phi(Y_{T_{4r}},A)\le \log(A^2)\Phi((6r\vee2,0),A)\le 
C_{\ref{e:logAbnd}}\log(6r\vee 2)$,
and also converges to $c_\phi(Y_{T_{4r}})>0$ a.s. as $A\to\infty$, by the above and $|Y_{T_{4r}}|>3r$.  Apply bounded
convergence to obtain existence of the limit
\[
\lim_{A\to\infty}\log(A^2)\Phi(x,A)= 
E_x\Big[\exp\Big(-\int_0^{T_{4r}}\phi(Y_s)ds\Big)
c_\phi(Y_{T_{4r}})\Big]>0 .
\]
This completes the proof of \eqref{e:Philim}.
\end{proof}

\begin{proof}[Proof of Proposition~\ref{t:noncoal} ($d=2$)]
Let $\phi(x)=k(x)$, so $\Phi(x,A) = E_x\Big[
\exp\Big(-\int_0^{T_A}k(Y_s)ds\Big)\Big] $. 
By Theorem~\ref{t:Philim}, for $x\ne 0$,
the positive limit $c_k(x)=\lim_{A\to\infty}\log(A^2)\Phi(x,A)$
exists. By Lemma~\ref{l:logAbnd}, 
there is a constant $C>0$ such that for all large $A$, 
$(\log A^2)\Phi(x,A)\le C$ for all $|x|\le 2r$. Therefore,
by bounded convergence, if $Y_0=\bar U$, the limit
\begin{equation}
\lim_{A\to\infty}(\log (A^2)) E_{Y_0}\Big[\exp\Big(-\int_0^{T_A}
k(Y_s)ds\Big)\Big]=\lim_{A\to\infty} E(\log(A^2)\Phi(Y_0,A))=E(c_k(Y_0))>0
\end{equation}
exists. That is, \eqref{e:reduction1} holds for $Y_0=\bar
U$, and by Proposition~\ref{p:reduction}, the limit
\[
\lim_{t\to\infty} (\log t) P_{Y_0}(\kappa>t) = E(c_k(Y_0))>0
\]
exists, which is exactly \eqref{e:noncoal2d}. 
  \end{proof}

 \begin{remark} Theorem~\ref{t:Philim} is proved for
continuous time rate $2\rho|B_r|$ random walks $Y_t$ with
step distribution \eqref{e:barUdens}. The result is needed
in our proof of Proposition~\ref{t:noncoal} because the dual particle
difference $\txi_t$ behaves like $Y_t$ when $|\txi_t|>2r$
in our fixed radius case of the SLFV. With a view to the variable radius case 
in \eqref{varradcase}, we remark that 
Theorem~\ref{t:Philim} holds for more
general radially symmetric step distributions. Assume
$r_{\text{max}}>0$ and suppose $\nu$ is a probability measure on
$[0,r_{\text{max}}]$ with a bounded density such that
$\int_0^{r_{\text{max}}}\rho^{-2}\nu(d\rho)<\infty$, and consider the
step distribution on $\Rtwo$ given by
\begin{equation}\label{e:newrep}
\int_0^{r_{\text{max}}} \nu(d\rho) P(\rho U\in\cdot),
\end{equation}
where $U$ is uniform on $B_1(0)$. A bit of calculus shows that
in the variable radius case \eqref{varradcase}, $\tilde \xi_t$ behaves like a fixed rate random walk with step distribution satisfying \eqref{e:newrep} for appropriate $\nu$ when $|\tilde\xi_t|>2r_{\text{max}}$. Theorem~\ref{t:Philim} holds 
when  $Y_t$ is a continuous
time rate $\lambda>0$ with this step distribution
\eqref{e:newrep}.  
The  modifications needed to prove this are minor. 
Of course this will not immediately give Proposition~\ref{t:noncoal} as we no longer have an identity like \eqref{e:taukappa} which arose from our time-change representation for the difference of the dual components.
\end{remark}

\noindent{\bf Acknowledgement.} We thank Sarah Penington for answering our queries on \cite{AFPS}. Some
of this work was carried out while the first author was visiting the Mathematics Department at the University of British Columbia, or while the second author was visiting the Mathematics Department at Syracuse University. We thank both Institutions for their hospitality.

\end{document}